\theoremstyle{remark}
\newtheorem{remark}{Remark}[section]
\theoremstyle{definition}
\newtheorem{theorem}{Theorem}[section]
\newtheorem{definition}[theorem]{Definition}
\newtheorem{proposition}[theorem]{Proposition}
\newtheorem{lemma}[theorem]{Lemma}
\newtheorem{corollary}[theorem]{Corollary}
\newtheorem{algorithm}[theorem]{Scheme}
\DeclareMathOperator{\R}{\mathbb{R}}
\DeclareMathOperator{\C}{\mathcal{C}}
\DeclareMathOperator{\ra}{\rightarrow}
\DeclareMathOperator{\de}{\text{d}}
\DeclareMathOperator{\tr}{tr}
\newcommand{\f}[1]{{\pmb{ #1}}}
\DeclareMathOperator{\di}{\nabla\cdot}
\newcommand{\tu}{\tilde{\f u}}
\newcommand{\tq}{\tilde{\f q}}
\newcommand{\ov}[1]{\overline{{#1}}}
\renewcommand{\t}{\partial_t}
\newcommand{\vv}{\tilde{\f v}}
\newcommand{\dd}{\tilde{\f d}}
\newcommand{\tP}{\tilde{\Phi}}
\newcommand{\tn}{\tilde{n}}
\newcommand{\fn}[1]{\f {{#1}}^{j}}
\newcommand{\fnn}[1]{\f {{#1}}^{j-1}_h}
\newcommand{\fii}[1]{\f {{#1}}^{j}}
\newcommand{\fiii}[1]{\f {{#1}}^{j-1}}
\newcommand{\fh}[1]{\f {{#1}}^{j-1/2}}
\newcommand{\fno}[1]{\f {{\overline{#1}}}^{k}_h}
\newcommand{\fnu}[1]{\f {{\underline{#1}}}^{k}_h}
\newcommand{\fnuo}[1]{\f {{\overline{\underline{#1}}}}^{k}_h}
\newcommand{\fnf}[1]{\f {{{#1}}}^{k}_h}
\newcommand{\td}{{d}_t  }
\renewcommand{\o}{\otimes}
\newcommand{\sumi}{k \sum_{j=1}^J }
\newcommand{\sumii}[1]{k\sum_{j=1}^J\left [{ #1} \right ]}
\newcommand{\tPj}{{\tP}^j}
\newcommand{\tqj}{{\tq}^j}
\newcommand{\PL}{\mathcal{P}_{{\mathbb L}^2 } }
\newcommand{\vvi}{\tilde{\f v}^j}
\newcommand{\ddi}{\tilde{\f d}^j}
\newcommand{\ddj}{\tilde{\f d}^j}
\newcommand{\ddh}{\tilde{\f d}^{j-1/2}}
\newcommand{\vvii}{\tilde{\f v}^{j-1}}
\newcommand{\ddii}{\tilde{\f d}^{j-1}}
\title{Numerical Analysis for Nematic Electrolytes}
\author{\v{L}ubom\'ir Ba\v{n}as\footnote{Fakult\"at f\"ur Mathematik,
Postfach 100 131,
D-33501 Bielefeld, Germany. email: {\tt banas@math.uni-bielefeld.de}}, \qquad 
Robert Lasarzik\footnote{Weierstra\ss -Institut,
Mohrenstra\ss e 39,
D-10117 Berlin, Germany. email: {\tt robert.lasarzik@wias-berlin.de}}, \qquad
Andreas Prohl\footnote{
Mathematisches Institut,
Universit\"{a}t T\"{u}bingen,
Auf der Morgenstelle 10,
D-72076 T\"{u}bingen, Germany. email: {\tt prohl@na.uni-tuebingen.de}}}
\date{April 22, 2020}
\begin{document}

\maketitle
\begin{abstract}
We consider a system of nonlinear PDEs modeling nematic electrolytes, and construct a dissipative solution with the help of its implementable, structure-inheriting  space-time discretization. 
Computational studies are performed to study the mutual effects of electric, elastic, and viscous effects onto the molecules in a nematic electrolyte. 
 \\
 \textit{MSC(2010):} 35Q35, 35Q70, 65M60, 74E10 
  \\
{\em Keywords:
existence, approximation, {Navier--Stokes}, Ericksen--Leslie, Nernst--Planck, nematic electrolytes, finite element method, fully discrete scheme, convergence analysis
}
\end{abstract}


\tableofcontents
\section{Introduction}
We consider a nonlinear system of PDEs to model electrokinetics in \textit{nematic electrolytes}, and show convergence of an implementable discretization to its solution.
\textit{Electrokinetics} is a term describing electrically driven flows, either of a fluid with respect to a solid surface (electroosmosis), or of particles dispersed in a fluid (electrophoresis).
For electrokinetics to occur, it is essential that electric charges of opposite polarities are spatially separated such that an electric field can trigger motion in a fluid. 
Recent studies show that if an {\em anisotropic} electrolyte replaces an isotropic one, the resulting electrokinetic flows may show very different responses to an induced electric field; see~\cite{experi}, and also Figure \ref{fig_stat_dip_phi} in Section \ref{sec:comp}.
Additionally, mechanisms triggering electrokinetics in isotropic electrolytes are far more restrictive, \textit{i.e.},
 an anisotropic medium lifts the constraints on the electric properties of the transported particles and additionally allows for alternating currents to induce a static flow field in the medium (see~\cite{experi} or~\cite{noel}),  and also Figure \ref{fig_appl1_vel_ac} in Section \ref{sec:comp}.

\medskip

In this work, we show the solvability for a model proposed in~\cite[(2.51)--(2.52), (2.55)--(2.56), (2.65)]{noel} (in simplified form) via an implementable (finite-element based) approximation, which is then used for computational studies. The nonlinear PDE system uses 
\begin{itemize}
\item the simplified
Ericksen--Leslie equations for the director field $\f d$ representing the spatio-temporal distribution of average orientations of elongated molecules in the 
liquid-crystalline phase, which is coupled with 
\item the  Nernst--Planck--Poisson system to model phenomena due to  the electrolyte. 
\end{itemize}
Let $\Omega\subset \R^d$, for $d=2,3$ be a bounded convex Lipschitz domain. The
PDE system is as follows:
\begin{subequations}\label{simp}
\begin{align}
\t \f  v + ( \f v \cdot \nabla) \f v - \nu \Delta \f v + \nabla \f d^\top \bigl( \Delta \f d + \varepsilon_a (\f d \cdot \nabla \Phi) \nabla \Phi \bigr) +(n^+-n^-)\nabla \Phi + \nabla p &{}=0 \,,\label{simp:v}\\
\di  \f v &{}=0\,, \label{simp:incomp}\\
\t \f d + (\f v \cdot \nabla ) \f d - ( {I} - \f d \otimes \f d )  \bigl( \Delta \f d + \varepsilon_a (\f d \cdot \nabla \Phi) \nabla \Phi \bigr)&{}=0\,,\label{simp:d}\\
|\f d| &{}=1\,,\label{simp:norm}\\
- \di \left ( \varepsilon (\f d ) \nabla \Phi \right ) &{}= n^+-n^- \,,\label{simp:Phi}\\
\t n^{\pm} + (\f v \cdot \nabla ) n^{\pm} - \di \Bigl( \varepsilon (\f d) \left ( \nabla n^{\pm} \pm n^{\pm} \nabla \Phi \right ) \Bigr) &{}=0\label{simp:c}\,,
\end{align}
\end{subequations}
where $\f v:\ov\Omega \times [0,T] \ra \R^d$  denotes the macroscopic velocity of the nematic fluid, $\f d: \ov\Omega \times [0,T] \ra \R^d $ the local orientation of the nematic liquid crystalline molecules, $\Phi : \ov\Omega \times [0,T] \ra \R $ the electric potential, $ n^{\pm}:\ov\Omega \times [0,T] \ra \R$ the concentrations of positive and negative ions in the liquid crystal, and $p : \ov\Omega \times [0,T] \ra \R$ the pressure of the nematic electrolyte resulting from the incompressibility constraint~\eqref{simp:incomp}.
The matrix
$\varepsilon (\f d ) : = {I} + \varepsilon_a \f d \otimes \f d $ for $\varepsilon_a >0$ is called the dielectric permittivity matrix, which describes
 the relation between the electric displacement $\f D$ of the nematic electrolyte and the electric field $\f E= - \nabla \Phi$: for static dielectric constants measured in the direction of the molecular orientation $\f d$ ($\varepsilon_{\|} $) and perpendicular to it ($\varepsilon_{\bot}$, which we normalized to $1$), this relationship is given by $ \f D =- \varepsilon(\f d ) \nabla \Phi $, where $\varepsilon_a = \varepsilon_{\|}-\varepsilon_{\bot} \geq 0$.  
The system is supplemented with initial data 
\begin{align*}
\f v (0)=\f v_0 \,, \qquad \f d (0)= \f d_0 \quad \text{with} \quad |\f d_0|=1 \,, \qquad n^{\pm}(0)=n^{\pm}_0\in[0,1] \qquad \text{in }\Omega\,,\\
\intertext{and boundary conditions}
\f v = \f 0 \,, \qquad   
\f d = \f d_1
  \,, \qquad  \f n \cdot \Bigl (\f v n^{\pm}- \varepsilon(\f d) \bigl[ \nabla n^{\pm}\pm n^{\pm} \nabla \Phi \bigr] \Bigr ) = 0 \,, \qquad  \f n \cdot  \varepsilon(\f d) \nabla \Phi=0 \qquad \text{on }\partial \Omega \,.
\end{align*}
where the initial and boundary conditions for the director are assumed to fulfill the usual compatibility conditions, \textit{e.g.}, $ \f d _0 = \f d_1 $ on $\partial \Omega$. 

\medskip

The equations (\ref{simp}) are deduced from corresponding ones in~\cite{noel}, and are expected to describe relevant physical effects of the original model. Applied simplifications here include
\begin{itemize}
\item the choice of equal elastic constants in the Oseen-Frank elastic energy, and omitting body forces, as well as inertia effects acting on the director field,
\item setting to zero all Leslie constants in the dissipation potential, except from 
the one that corresponds to the classical Newtonian part of the stress tensor,
\item that electrokinetic effects are initiated from only two species of particles (with related densities $n^{\pm}$),
\item that the interaction matrix in the Nernst-Planck-Poisson part is set to be $\varepsilon(\f d)$, where all appearing constants are set equal to $1$ --- apart from  $\varepsilon_a$, which scales inherent anisotropy and coupling effects.
\end{itemize}
 The goal of this work is to establish a {\em practically} useful solvability concept for (\ref{simp}), where a sequence of functions is generated via an implementable space-time discretization, and the solution of (\ref{simp}) is the limit of  a properly selected convergent subsequence. There are always two main obstacles for such a result:
\begin{itemize}
\item[1.] (\textbf{A structure-inheriting discretization scheme \& stability}) 
It turns out that the construction of 
a sequence of approximate  solutions of practical schemes (here: obtained via finite element method), each of
which, in particular, inherit the properties (\ref{simp:incomp}) and (\ref{simp:norm}) in proper sense, and contain $[0,1]$-valued approximate concentrations,  as well as relevant Lyapunov structures  is still not sufficient to construct a {\em weak solution} of (\ref{simp})  from it as the limit of a proper, convergent subsequence when discretization parameters tend to zero ($d=3$). 
\item[2.] (\textbf{Convergence \& solution concept}) Instead, only a {\em measure-valued solution} is known to exist even in this case for the Ericksen-Leslie system (which is a sub-problem of (\ref{simp}); see
\cite{masswertig}) --- for whose {\em practical} construction no implementable scheme is known so far.
We here show convergence to a {\em dissipative solution} instead. 
\end{itemize}
In particular, we present a structure-preserving space-time discretization for (\ref{simp}) which satisfies {\em all} properties outlined in 1.~in Section \ref{sec:dis}, and show the 
{\em practical} constructability of a {\em dissipative solution} of (\ref{simp}) through it --- see Section \ref{dissipativesolutions} for a definition of this solution concept --- 
as described in item 2. We provide further details of these main results in this work in the 
following discussion.

\medskip

The analysis of models for nematic electrolytes so far is rare in the literature: an interesting approach is~\cite{nemaelecana}, where the authors show \textit{a priori} estimates and weak sequential compactness properties for a  model similar to~\eqref{simp}. In their model, the (pointwise) property of $\f d$ to be a unit vector field (see (\ref{simp:norm})) is approximated by a Ginzburg-Landau-type singular logarithmic potential term that is added to the free energy functional; this additional term is then crucial to validate relevant bounds for the director field. Unfortunately, this additional term blows up for $| \f d| \nearrow 1$, thus leaving open convergence of this model to (\ref{simp}); additionally, it has been
pointed out in \cite{nemaelecana} that it is not obvious how to construct such
approximate sequences that satisfy the assumed properties \cite[(2.26)]{nemaelecana}.

A first sub-problem of (\ref{simp}) is the Navier--Stokes-Nernst--Planck--Poisson system, which corresponds to formally setting $\f d$ constant in~\eqref{simp:v}, and ignoring
(\ref{simp:d})--(\ref{simp:norm}). For this sub-problem, complete analytic resp.~numerical works are available, which prove the global existence of a {\em weak solution}
(see~\textit{e.g.}~\cite{schmuck,const}), as well as their {\em practical} constructability by a
finite element-based, structure-preserving space-time discretization in~\cite{numap}: 
approximate space-time solutions generated from corresponding time-iterates of this scheme satisfy a discrete energy law, as well as a discrete maximum principle (for charges),
from which we may identify the limit of a proper, convergent (sub-)sequence of approximate solutions (for numerical parameters independently tending to zero) as a weak solution.

A second sub-problem of (\ref{simp}) are the simplified Ericksen--Leslie equations
(\ref{simp:v})--(\ref{simp:norm}), where we set $\Phi\equiv n^{\pm} \equiv 0$. We mention several results on the local existence of classical solutions or global existence of classical solutions under smallness conditions (see~\cite{Pruess2,hong,localoseen,linwangdim3} for a similar setting) --- but
our goal here is a global solution concept that copes with possible singular behaviors, and its practical constructability.
For $d=2$, a {\em weak solution} is constructed in \cite{twodim}, thanks to known
properties  of the singularity set of solutions of the harmonic map heat flow to the unit sphere ${\mathbb S}^2$, local energy arguments, and a continuation procedure in time to cope with the elastic stress tensor in (\ref{simp:v}).  Unfortunately, a corresponding existence result for a {\em weak solution} so-far is not known to hold for (the practically relevant case) $d=3$ and general (initial) data, which is why a Ginzburg--Landau penalization is chosen to approximate (\ref{simp:norm}) in (\ref{simp}). {Here, a {\em weak solution} may be constructed (see \cite{linliu1}) for every positive penalization parameter; moreover, different structure-inheriting numerical methods of varying complexity are available in the literature which construct a {\em weak solution} for vanishing discretization parameters: while the first work \cite{liuwal} required 
Hermite-type finite element methods to validate a discrete energy estimate, later ones
\cite{prohl,noel2} only require mixed methods to serve this purpose.
Again,
 passing to the limit with the penalization parameter to validate (\ref{simp:norm}) 
  is open to yield a {\em weak solution} of the simplified Ericksen--Leslie equations.

Instead, a {\em measure-valued solution} is constructed in this way in \cite{masswertig}
for the full Ericksen--Leslie system equipped with the Oseen--Frank energy,
satisfying (\ref{simp:norm}) almost everywhere: its construction 
considers ([sequences of] weak solutions of) the Ginzburg-Landau penalization {\em first, and then} tends the penalization
parameter to zero to efficiently cope with the extra viscous stress tensor in (\ref{simp:v}) in terms of a generalized gradient Young measures. This construction strategy of {\em first} tending discretization parameters to zero in available, structure-inheriting schemes (see \cite{prohl,noel2}) for the Ginzburg-Landau penalization, and only {\em afterwards} tending the penalization parameter to zero clearly excludes 
a practical construction of a {\em measure-valued solution}. 
Regarding this generalized solution concept, a relevant property of it is the
 weak-strong (or rather measure-valued-strong) uniqueness~\cite{weakstrong}, \textit{i.e.}, {\em measure-valued solutions} coincide with the local strong solution emanating from the same initial data  as long as such a strong solution exists.  
 
 A practical shortcoming of the relaxed solution concept in terms of parametrized measures is its complexity; in fact,
the  first moment of a measure-valued solution is the physically relevant quantity in (\ref{simp}) 
which fulfills the so-called {\em dissipative formulation} (see Definition~\ref{def:diss} below, and~\cite{diss} for details). To get this formulation, the solution concept is not relaxed in terms of parametrized measures%
, but the weak formulation of equation~\eqref{simp:v} is relaxed to a {\em relative energy inequality} (see~\eqref{relencont} below). 
The relations of the different solution concepts  for the full Ericksen--Leslie system equipped with the Oseen--Frank energy can be summarized as follows: global {\em weak solutions} exist for the Ginzburg--Landau penalization to approximate the norm restriction (\ref{simp:norm}). In the limit of this approximation, these solutions converge to a {\em measure-valued solution}:
the  first moment of the measure-valued solution is then a {\em dissipative solution} \cite{diss}, which also coincides with the local strong solution as long as the latter exists.

The concept of a dissipative solution was first introduced by P.-L.~Lions in the context of the Euler equations~\cite[Sec.~4.4]{lionsfluid}, with ideas originating from the Boltzmann equation~\cite{LionsBoltzman}. 
It is also applied in the context of incompressible viscous
electro-magneto-hydro\-dy\-na\-mics (see%
~\cite{raymond})
 and equations of viscoelastic diffusion in polymers~\cite{viscoelsticdiff}.
Our first goal in this work is to construct a dissipative solution to~\eqref{simp} via a {\em practical} scheme (see~\eqref{dis} in Section~\ref{sec:dis}). For this purpose, related iterates have to inherit relevant properties of~\eqref{simp}, including a discrete energy law, a discrete unit length property for the director field, and a discrete maximum principle for the charges (see Theorem~\ref{thm:disex}). Upon unconditionally passing to the limit with respect to  the discretization parameters then generates a ($6$-tuple of) limiting functions which may be identified as a dissipative solution of~\eqref{simp}. We remark that the
proposed scheme seems as well to be the first for nematic materials (\textit{i.e.}, including a convection term) which preserves the norm restriction $|\f d|=1$ at every nodal point of the triangulation. 
Another  new ingredient in this article then is to show that the solution to the fully discrete scheme fulfills an \textit{approximate relative energy inequality} (see Proposition~\ref{prop:disrel}), which eventually establishes that a proper limit of this sequence of approximate solutions is a dissipative solution of~\eqref{simp}. 
As a by-product, we show strong convergence to the unique classical solution, as long as this more regular solution exists.

The paper is organized as follows: in the following section, we collect some notations and preliminaries. Section~\ref{sec:cont} is dedicated to the continuous system and collects associated \textit{a priori} estimates and the definition of a dissipative solution. Section~\ref{sec:dis} introduces the fully discrete scheme, its solvability, associated \textit{a priori} estimates, the approximate relative energy inequality, and the convergence to a dissipative solution. 
Section~\ref{sec:comp} discusses computational experiments.

\section{Notation and preliminaries\label{secc:not}}
We denote by $\pmb{\mathcal{V}}:=\{ \f v \in \mathcal{C}_{c}^\infty(\Omega;\R^d)| \di \f v =0\}$ 
the space of smooth solenoi\-dal functions with compact support. By ${\mathbb H} $ and ${\mathbb V}$
we denote the closure of $\pmb{\mathcal{V}}$ with respect to the norm of $\f L^2(\Omega):= {\mathbb L}^2 $  and $ \f H^1( \Omega):= {\mathbb W}^{1,2}$,
respectively.
Note that ${\mathbb H}$ can be characterized by ${\mathbb H} = \{ \f v \in {\mathbb L}^2 | \di \f v =0 \text{ in }\Omega\, , \f n \cdot \f v = 0 \text{ on } \partial \Omega \} $, where the first condition has to be understood in the distributional sense and the second condition in the sense of the trace in ${H}^{-1/2}(\partial \Omega )$. 
The dual space of a Banach space ${\mathbb X}$ is always denoted by ${\mathbb X}^*$ and is equipped with the standard norm; the duality pairing is denoted by $\langle\cdot, \cdot \rangle$. We use the standard notation $( {\mathbb H}^1_0)^*= {\mathbb H}^{-1}$. 
By $\mathbb W^{1,2}/_{\R}$ we denote the functions $f \in \mathbb W^{1,2} $ with $\int_\Omega f \de \f x = 0$. 
We define $ | \f a |_{\varepsilon(\f d)}^2 = \f a \cdot \varepsilon(\f d) \f a $ for $\f a$, $\f d\in \R^d$.
 The inner product in ${\mathbb L}^2$ is denoted by brackets, \textit{i.e.,} $ (\cdot,\cdot)$, and the associated norm is $\| \cdot \|_{{\mathbb L}^2}$. 
 We define the dyadic product of a vector and a matrix by $( \f a \otimes \f A )_{ijk}= \f a _i \f A_{kl}$, and the cross product for two matrices, as well as of a vector and a matrix using the well-known Levi-Cevita symbol $\Upsilon \in \R^{d\times d\times d} $ by
 $ (\f A \times \f B )_{ijk}  = \Upsilon_{ilm} \f A_{jl}\f B_{mk}$, as well as $( \f a \times \f A )_{ij} = \Upsilon_{ilm} \f a_l \f A_{mj}$ , where $ \f a \in \R^d$ and $\f A $, $\f B\in \R^{d\times d} $. For simplicity, we denote the matrix vector multiplication without a sign, \textit{i.e.}, $( \f A \f a) _i = \f A _{ij} \f a_j $, where $ \f a \in \R^d$ and $\f A \in \R^{d\times d} $.
\subsection{Discrete time derivative}
Given a time-step size $k>0$, and a sequence $\{ \varphi^j\}$ for $0\leq j\leq J$ in some Banach space ${\mathbb X}$, we set $\td \varphi ^j := k^{-1} ( \varphi^j- \varphi^{j-1})$ for $j\geq 1$ and $\td \varphi^0=0$. 

The following lemma provides a tool to mimic the Gronwall inequality for the relative energy inequality on the discrete level. 

\begin{lemma}\label{lem:disgron}
Consider sequences $\{ f^j\}_{0\leq j\leq J}\subset \R, \{ g_1^j\}_{0\leq j\leq J}$, $ \{ g_2^j\}_{0\leq j\leq J}$, $\{ y^j\}_{0\leq j\leq J}\subset  \R^+_0  $.  If we have
\begin{align}
\td y^j + f^j \leq g_1^j y^j + g_2^j y^{j-1} \quad \text{for } 1\leq j\leq J\,,\label{grondiff}
\end{align}
then it holds true for $k$ sufficiently small  that
\begin{align*}
- k \sum_{j=0}^{J-1} \td \phi^{j+1} \left ( y^j \prod _{l=1}^j \frac{1}{\omega^l}\right )  + k \sum_{j=1}^{J-1} \phi^j \frac{f^j}{1-kg_1^j} \left ( \prod _{l=1}^j\frac{1}{\omega^l}\right )  \leq \phi(0) y^0 
\end{align*}
for all $ \phi\in\C^\infty_c([0,T)) ${ with } $\phi \geq 0 $, and $ \phi'\leq 0$ on $[0,T]$, where $\phi^j= \phi(jk) $ and $\td \phi^{j+1}= ( \phi^{j+1}-\phi^j)/k$ for $0\leq j \leq J$ with $J = \lfloor (T/k)\rfloor$ as well as  $ \omega^j := \frac{1+kg_2^j}{1-kg_1^j}$.
\end{lemma}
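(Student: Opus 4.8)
The plan is to turn the one–step inequality \eqref{grondiff} into a discrete product (telescoping) identity, mimicking the classical integrating–factor trick for Gronwall's inequality. First I would rewrite \eqref{grondiff} as
\begin{align*}
y^j \leq \omega^j y^{j-1} - \frac{k f^j}{1-kg_1^j}\,, \qquad 1\leq j\leq J\,,
\end{align*}
which is legitimate provided $k$ is small enough that $1-kg_1^j>0$ for all $j$ (this is where the hypothesis ``$k$ sufficiently small'' enters). Multiplying through by the positive integrating factor $\prod_{l=1}^{j}(\omega^l)^{-1}$ and setting $Y^j := y^j\prod_{l=1}^j (\omega^l)^{-1}$ (with $Y^0=y^0$) yields the discrete monotonicity relation
\begin{align*}
Y^j + \frac{k f^j}{1-kg_1^j}\prod_{l=1}^{j}\frac{1}{\omega^l} \leq Y^{j-1}\,, \qquad 1\leq j\leq J\,,
\end{align*}
i.e. $\td Y^j \leq - \frac{f^j}{1-kg_1^j}\prod_{l=1}^{j}(\omega^l)^{-1}$.

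Next I would test this relation against the nonnegative, nonincreasing discrete sequence $\phi^j=\phi(jk)$ and sum by parts (the discrete analogue of integrating $\phi\,\td Y$ by parts). Concretely, multiply the $j$-th inequality by $k\phi^j\geq 0$, sum over $1\leq j\leq J-1$, and use the summation–by–parts identity
\begin{align*}
k\sum_{j=1}^{J-1}\phi^j\,\td Y^j = -\,k\sum_{j=0}^{J-1}(\td\phi^{j+1})\,Y^j + \phi^{J-1}Y^{J-1} - \phi^0 Y^0\,.
\end{align*}
Here the key sign considerations are: $\phi^{J-1}Y^{J-1}\geq 0$ since $Y^{J-1}\geq 0$ and $\phi\geq 0$, so it can be dropped from the left after moving it to the right with the correct sign; and $-\phi^0 Y^0 = -\phi(0)y^0$. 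Rearranging, and using $\phi^j\geq 0$, $1-kg_1^j>0$, and $\prod_{l=1}^j(\omega^l)^{-1}\geq 0$ to keep the $f$–term on the left, gives exactly
\begin{align*}
-\,k\sum_{j=0}^{J-1}\td\phi^{j+1}\Bigl(y^j\prod_{l=1}^{j}\frac{1}{\omega^l}\Bigr) + k\sum_{j=1}^{J-1}\phi^j\frac{f^j}{1-kg_1^j}\Bigl(\prod_{l=1}^{j}\frac{1}{\omega^l}\Bigr)\leq \phi(0)y^0\,,
\end{align*}
which is the claim. (Note $\phi$ has compact support in $[0,T)$, so $\phi^J=0$ if $J=\lfloor T/k\rfloor$, which is consistent with truncating the sums at $J-1$; one should check the boundary index bookkeeping carefully here.)

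The genuinely delicate point — more of a bookkeeping hazard than a deep obstacle — is the combination of three things at once: the nonautonomous integrating factor $\prod_l(\omega^l)^{-1}$ whose definition mixes $g_1^j$ and $g_2^j$; the need for $1-kg_1^j>0$ (hence $\omega^j>0$, so all products are positive and inequalities are preserved under multiplication), which forces the smallness assumption on $k$; and getting the summation–by–parts index ranges and the discarded nonnegative boundary term $\phi^{J-1}Y^{J-1}$ to land precisely on the stated ranges $0\leq j\leq J-1$ and $1\leq j\leq J-1$. I expect no analytic difficulty beyond these; the sign of $\phi'\leq 0$ is not actually needed for the stated conclusion (only $\phi\geq 0$ and compact support are used), but it is the natural hypothesis under which one later passes $k\to 0$ to recover the continuous relative energy inequality, so it is harmless to keep.
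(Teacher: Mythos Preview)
Your approach is essentially identical to the paper's: rewrite \eqref{grondiff} as $y^j-\omega^j y^{j-1}\le -\tfrac{k}{1-kg_1^j}f^j$, multiply by the integrating factor $\prod_{l=1}^j(\omega^l)^{-1}$, and perform an Abel/summation-by-parts computation against $\phi^j$, dropping the nonnegative boundary term. Your flagged caveat is apt---the summation-by-parts identity you wrote has a small index slip (the $Y$-sum should run to $J-2$, with the leftover term absorbed by $\phi^{J-1}Y^{J-1}\ge 0$, or alternatively sum to $J$ as the paper does and use $\phi^{J+1}=0$)---but the argument is otherwise the same.
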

\begin{proof}
From~\eqref{grondiff}, we find 
\begin{align*}
y^j - \omega^j y^{j-1} \leq - \frac{k}{1-kg_1^j} f^j \,.
\end{align*}
By some elementary calculation, we observe (where we understand $ \prod_{l=1}^0 1/\omega^l $ as $1$)
\begin{align*}
- k \sum_{j=0}^{J} \td \phi^{j+1} \left ( y^j \prod_{l=1}^j \frac{1}{\omega^l}\right ) ={}& \phi^0 y^0 - \phi^{J+1} y^{J} \prod_{l=1}^J  \frac{1}{\omega^l} + \sum_{j=1}^J \phi^j \prod_{l=1}^j \frac{1}{\omega^l} \left ( y^j - \omega^jy^{j-1} \right ) \\
\leq{}& \phi^0 y^0 - k\sum_{j=1}^J \phi^j\left ( \frac{f^j}{(1-kg_1^j)} \right ) \prod_{l=1}^j \frac{1}{\omega^l}  \,.
\end{align*}
Note that the term $\phi^{J+1} $ vanishes since $ \phi\in\C^\infty_c([0,T)) $. 

\end{proof}
\subsection{Finite element spaces}
Let $\mathcal{T}_h = \bigcup_{\ell} K_\ell$ be a quasi-uniform triangulation of $\Omega\subset \R^d$ ($d=2,3$) into triangles or tetrahedrons $K_\ell$ of maximal diameter $h>0$; see~\cite{brenner}. 
We additionally assume:
\begin{itemize}
\item[{\bf (A1)}]  $\mathcal{T}_h$ is a strongly acute triangulation, or for $d=2$ a Delaunay triangulation.
\end{itemize}
A triangulation is strongly acute, if  the sum of opposite angles to the common side of any two adjacent triangles is $\leq \pi - \theta$ with $\theta >0$ independent of $h$; see~{\em e.g.}~\cite{ciarletraviart,nochettoverdi}.
For strongly acute meshes, we may verify the $M$-matrix property for the Nernst--Planck--Poisson sub-system, which establishes its unique solvability, and a discrete maximum principle. Moreover, we often use $\mathcal{N}_h = \{ \f x _l \}_{1 \leq l \leq L} $, which is the set of all nodes of $\mathcal{T}_h$, on which we validate a unit-length property of iterates for the director, for example.

Let ${\mathbb P}_l(K; {\mathbb R}^d)$ denote the set of ${\mathbb R}^d$-valued polynomials in $d$ variables of degree $\leq l$ on a triangle/tetrahedron $K \in {\mathcal T}_h$. We introduce the following spaces
\begin{align*}
{Y}_h ={}& \{ y \in
   \C
  (\ov \Omega  )\cap \mathbb L^2_0 \, : \, y \bigl\vert _{K} \in {\mathbb P}_1 ( K ) \quad \forall K \in \mathcal{T}_h \} \\
{Z}_h ={}& \{ y \in
   \C
  (\ov \Omega  ) \, : \, y \bigl\vert _{K} \in {\mathbb P}_1 ( K ) \quad \forall K \in \mathcal{T}_h \} \\
\f Y_h ={}& \{ \f v \in \C_0(\ov \Omega ; \R^d ) \, : \, \f v \bigl\vert_{K} \in {\mathbb P}_1 ( K; \R^d ) \quad \forall K \in \mathcal{T}_h \} \\
\f Z_h ={}& \{ \f v \in \C(\ov \Omega ; \R^d ) \, : \, \f v \bigl\vert_{K} \in {\mathbb P}_1 ( K; \R^d ) \quad \forall K \in \mathcal{T}_h \} \\
\f B^l_h ={}& \{ \f v \in [{\mathbb H}_0^1]^d \,:\, \f v \bigl\vert_K \in {\mathbb P}_l(K;\R^d)  \quad \forall K\in\mathcal{T}_h \}\\
\f X_h ={}& \f Y_h \cup \f B_h^3 \\
M_h ={}& \{ q \in {\mathbb L}^2_0  \cap \C(\ov\Omega)\,:\, q \bigl|_K \in {\mathbb P}_1 (K) \quad \forall K\in \mathcal{T}_h\}\,,
\end{align*}
where $\C_0(\Omega ;\R^d) := \{ \f v \in \C(\ov\Omega) \,:\, \f v = {\bf 0} \text{ on }\partial\Omega \}$ and ${\mathbb L}^2_0  := \{ q\in {\mathbb L}^2 \,:\, (q,1)=0\}$. 
We assume the discrete inf-sup of Babu\v{s}ka-Brezzi condition to be fulfilled, \textit{i.e.}, there exists a constant $C>0$ independent of $h$, {\em s.t.}
\begin{align*}
\sup _{\f v \in \f X_h } \frac{( \di \f v , q ) }{\| \nabla \f v \|_{{\mathbb L}^2}} \geq C \| q \|_{{\mathbb L}^2} \quad \forall\, q\in M_h\,.
\end{align*}
A well-known example complying to this condition is given by the MINI-element given by $\f X_h$ and $M_h$. 
Define 
\begin{align*}
\f V_h := \{ \f v \in \f X_h \, :\, ( \di \f v , q) = 0  \quad \forall\, q\in M_h \} \,.
\end{align*}
We remark the following compatibility condition of spaces from~\cite{numap} is valid above that accounts for coupling effects in the electro-hydro\-dynamical system:
\begin{itemize}
\item[{\bf (A2)}]  $Y_h \cap L^2_0 (\Omega) \subset M_h$.
\end{itemize}
\begin{lemma}[Inverse inequality]\label{lem:inv}
For the considered triangulations ${\mathcal T}_h$, there exists  $C>0$ such that
\begin{align*}
\| \nabla y \|_{{\mathbb L}^p} \leq C h^{-\gamma} \| y \|_{{\mathbb L}^q} \quad 
\qquad \forall\, y  \in Y_h\,,
\end{align*}
where $\gamma = 1 +\min\{0, d / q -d/p\}  $, and $ p,q\in [1,\infty] $.
\end{lemma}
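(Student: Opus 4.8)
The plan is to prove the inverse inequality in Lemma~\ref{lem:inv} by the standard scaling (reference-element) argument, exploiting quasi-uniformity of $\mathcal{T}_h$. First I would fix the reference element $\widehat K$ (the unit simplex in $\R^d$) and, for each $K\in\mathcal{T}_h$, the affine map $F_K:\widehat K\to K$, $\widehat{\f x}\mapsto B_K\widehat{\f x}+\f b_K$. Quasi-uniformity gives the two-sided bounds $\|B_K\|\lesssim h$, $\|B_K^{-1}\|\lesssim h^{-1}$, and $|\det B_K|\simeq h^d$, with constants depending only on the shape-regularity of the family. For $y\in Y_h$ set $\widehat y:=y\circ F_K\in {\mathbb P}_1(\widehat K)$. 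The chain rule gives $\widehat\nabla\widehat y=B_K^\top(\nabla y)\circ F_K$, hence pointwise $|\nabla y\circ F_K|\le \|B_K^{-\top}\|\,|\widehat\nabla\widehat y|\lesssim h^{-1}|\widehat\nabla\widehat y|$.

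Next I would pass to norms. Changing variables,
\begin{align*}
\|\nabla y\|_{{\mathbb L}^p(K)}^p=\int_K|\nabla y|^p\,\de\f x=|\det B_K|\int_{\widehat K}|\nabla y\circ F_K|^p\,\de\widehat{\f x}\lesssim h^{d}h^{-p}\|\widehat\nabla\widehat y\|_{{\mathbb L}^p(\widehat K)}^p.
\end{align*}
On the finite-dimensional space ${\mathbb P}_1(\widehat K)$ all norms are equivalent, so $\|\widehat\nabla\widehat y\|_{{\mathbb L}^p(\widehat K)}\lesssim \|\widehat\nabla\widehat y\|_{{\mathbb L}^q(\widehat K)}\lesssim\|\widehat y\|_{{\mathbb L}^q(\widehat K)}$, the last step being the norm equivalence between the $W^{1,q}$ and $L^q$ norms on the fixed finite-dimensional space ${\mathbb P}_1(\widehat K)$ (or, equally, equivalence of $\|\widehat\nabla\widehat y\|_{L^q}$ and $\|\widehat y\|_{L^q}$ modulo constants, noting constants are annihilated by $\widehat\nabla$). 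Transforming back, $\|\widehat y\|_{{\mathbb L}^q(\widehat K)}^q=|\det B_K|^{-1}\|y\|_{{\mathbb L}^q(K)}^q\simeq h^{-d}\|y\|_{{\mathbb L}^q(K)}^q$. Combining the three estimates yields the local bound $\|\nabla y\|_{{\mathbb L}^p(K)}\lesssim h^{d/p-1-d/q}\|y\|_{{\mathbb L}^q(K)}$ on each $K$.

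It remains to sum over $K\in\mathcal{T}_h$ to obtain the global estimate, and here the exponent $\gamma=1+\min\{0,d/q-d/p\}$ enters through the direction of the $\ell^p$--$\ell^q$ embedding on the (finitely many, but $h$-dependent in number) elements. If $p\ge q$, then $\ell^q\hookrightarrow\ell^p$ for the sequence $(\|\cdot\|_{{\mathbb L}^q(K)})_K$ with constant $1$, and raising the local bound to the $p$-th power and summing gives $\|\nabla y\|_{{\mathbb L}^p(\Omega)}\lesssim h^{d/p-1-d/q}\big(\sum_K\|y\|_{{\mathbb L}^q(K)}^p\big)^{1/p}\le h^{d/p-1-d/q}\|y\|_{{\mathbb L}^q(\Omega)}$; since $d/p-1-d/q=-\gamma$ in this regime, the claim follows. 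If $p<q$, one instead applies H\"older's inequality across the $\simeq h^{-d}$ elements to pass from $\ell^p$ to $\ell^q$, which costs an extra factor $(\#\mathcal{T}_h)^{1/p-1/q}\simeq h^{-d(1/p-1/q)}=h^{d/q-d/p}$; multiplying this by the local factor $h^{d/p-1-d/q}$ gives $h^{-1}$, and indeed $\gamma=1$ in this regime since $\min\{0,d/q-d/p\}=0$. For $p=\infty$ or $q=\infty$ the same computation goes through with the obvious modifications (replace $\ell^p$-sums by maxima). The only genuinely delicate bookkeeping, and the step I would be most careful with, is getting the exponent of $h$ right in the two regimes and the correct $p\leftrightarrow q$ H\"older step when $p<q$; the reference-element scaling itself is routine.
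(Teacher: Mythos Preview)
The paper does not give its own proof of this lemma; it simply states that ``This result is a special case of~\cite[Thm.~4.5.11]{brenner}.'' Your reference-element scaling argument is exactly the standard proof one finds in that reference, so in approach there is nothing to compare.

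Your local scaling computation is correct and yields $\|\nabla y\|_{{\mathbb L}^p(K)}\lesssim h^{d/p-1-d/q}\|y\|_{{\mathbb L}^q(K)}$, and your summation argument correctly produces the global exponent $d/p-1-d/q$ when $p\ge q$ and $-1$ when $p<q$. However, your final matching of these exponents to the stated $\gamma=1+\min\{0,d/q-d/p\}$ is wrong in both cases. When $p\ge q$ one has $d/q-d/p\ge 0$, hence $\min\{0,d/q-d/p\}=0$ and $\gamma=1$, not $1+d/q-d/p$ as you claim; and when $p<q$ one has $d/q-d/p<0$, hence $\min\{0,d/q-d/p\}=d/q-d/p\ne 0$, so your assertion ``$\min\{0,d/q-d/p\}=0$'' there is false. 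What you have actually proved is the inequality with exponent $\gamma'=1+\max\{0,d/q-d/p\}$, which is the standard (and correct) inverse inequality. The stated lemma with $\min$ appears to be a typo: take $y=\varphi_{\f z}$ a single hat function, $p=\infty$, $q=1$; then $\|\nabla y\|_{{\mathbb L}^\infty}\simeq h^{-1}$ while $h^{-1}\|y\|_{{\mathbb L}^1}\simeq h^{d-1}$, which refutes $\gamma=1$ for $p>q$. You should have flagged the mismatch between your derived exponent and the stated one rather than forcing an incorrect case identification.
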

This result is a special case of~\cite[Thm.~4.5.11]{brenner}.
We use the nodal interpolation operator $\mathcal{I}_h: \mathcal{C}(\ov \Omega ) \ra  Y_h $ such that
\begin{align*}
\mathcal{I}_h ( y) := \sum_{\f z \in \mathcal{N}_h} y (\f z) \varphi _{\f z} \,,
\end{align*}
where $\varphi_{\f z} \in Y_h$ denotes the basis function associated to the 
nodal point $\f z \in \mathcal{N}_h $. 
For functions $y_1, y_2\in\C(\ov\Omega)$, we define mass-lumping
\begin{align*}
\left ( y_1,y_2\right )_h := \int_\Omega \mathcal{I}_h \left ( y_1 y_2\right ) \de \f x = \sum_{\f z \in \mathcal{N}_h} y_1(\f z ) y_2( \f z) \int_\Omega \varphi_{\f z} \de \f x  \,, \qquad \text{ and } \qquad \| y_1 \|_h^2 : = ( y_1, y_1)_h\,.
\end{align*}
For all $ y_1, y_2 \in Y_h$, there exists $C>0$ independent of $h>0$, such that~\cite{raviart, ciavaldini}
\begin{align}
\| y_1 \|_{{\mathbb L}^2} \leq \| y_1 \|_h \leq C \| y_1 \|_{{\mathbb L}^2} \,, 
\qquad \text{and} \qquad | ( y_1 , y_2)_h - ( y_1 , y_2) | \leq C h \| y_1 \|_{{\mathbb L}^2} \| \nabla y_2 \|_{{\mathbb L}^2}  
\,.\label{dismass}
\end{align}
 
 We will need several properties of the interpolation operator.
\begin{lemma}[Interpolation estimate]\label{lem:interpolation}
Let $p\in [1,\infty]$. There exists a constant $C>0$ such that
\begin{align*}
\| y - \mathcal{I}_h( y) \|_{{\mathbb W}^{\ell, p}} \leq{}& C h \| \nabla^{\ell+1} y \|_{{\mathbb L}^p} \quad \forall\, y \in {\mathbb W}^{\ell,p}  \qquad (\ell \in \{0,1\})\, .
\end{align*}
Furthermore, for $1 \leq p<\infty$ and $y \in {\mathbb L}^p$ there holds 
$\| y - \mathcal{I}_h y \|_{{\mathbb L}^p} \ra{} 0$ for $h \ra 0$\,.
\end{lemma}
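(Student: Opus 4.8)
The plan is to run the classical localization-and-scaling argument for Lagrange interpolation. First I would reduce to the reference simplex $\hat K$ via the affine maps $F_K(\hat{\f x})=B_K\hat{\f x}+\f b_K$, $F_K:\hat K\to K$, and record the shape-regularity bounds (which are part of quasi-uniformity of $\mathcal{T}_h$): $\|B_K\|\le C h_K$, $\|B_K^{-1}\|\le C h_K^{-1}$ and $c\,h_K^d\le|\det B_K|\le C h_K^d$, where $h_K$ is the diameter of $K$. On $\hat K$ the $\mathbb{P}_1$-Lagrange interpolant $\hat{\mathcal{I}}$ is a bounded map $\mathbb{W}^{\ell+1,p}(\hat K)\to\mathbb{W}^{\ell,p}(\hat K)$ for $\ell\in\{0,1\}$, because in the relevant range of $(d,p)$ one has the Sobolev embedding $\mathbb{W}^{\ell+1,p}(\hat K)\hookrightarrow\C(\hat K)$, so vertex evaluations are well defined and continuous, and moreover $\hat{\mathcal{I}}$ reproduces affine polynomials. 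The Bramble–Hilbert lemma then gives a constant $\hat C=\hat C(\hat K,\ell,p)$ with
\[
\|\hat y-\hat{\mathcal{I}}\hat y\|_{\mathbb{W}^{\ell,p}(\hat K)}\le\hat C\,|\hat y|_{\mathbb{W}^{\ell+1,p}(\hat K)}\qquad\forall\,\hat y\in\mathbb{W}^{\ell+1,p}(\hat K),
\]
where for $\ell=1$ one applies Bramble–Hilbert separately to the $\mathbb{L}^p$-part and to the first-order seminorm of the left-hand side, both being controlled by the degree-two seminorm on the right.

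Second, I would transfer this to $K$ with the standard change-of-variables bounds for seminorms: $|v\circ F_K|_{\mathbb{W}^{m,p}(\hat K)}\le C\|B_K\|^{m}\,|\det B_K|^{-1/p}\,|v|_{\mathbb{W}^{m,p}(K)}$ together with the reverse estimate involving $B_K^{-1}$. Since nodal interpolation commutes with the affine pull-back, $\widehat{\mathcal{I}_h y}=\hat{\mathcal{I}}\hat y$ on $\hat K$, and combining the two scaling bounds with the reference estimate yields
\[
\|y-\mathcal{I}_h y\|_{\mathbb{W}^{\ell,p}(K)}\le C\,\|B_K^{-1}\|^{\ell}\,\|B_K\|^{\ell+1}\,|y|_{\mathbb{W}^{\ell+1,p}(K)}\le C\,h_K\,\|\nabla^{\ell+1}y\|_{\mathbb{L}^p(K)}.
\]
Summing the $p$-th powers over $K\in\mathcal{T}_h$ (or taking the maximum over $K$ when $p=\infty$) and using $h_K\le h$ gives the asserted $\|y-\mathcal{I}_h y\|_{\mathbb{W}^{\ell,p}}\le C h\|\nabla^{\ell+1}y\|_{\mathbb{L}^p}$, with $C$ independent of $h$ precisely because of shape regularity.

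Third, for the convergence statement I would argue by density using the $\mathbb{L}^\infty$-stability of $\mathcal{I}_h$: on each simplex the $\mathbb{P}_1$-Lagrange basis is nonnegative and forms a partition of unity, so $\|\mathcal{I}_h w\|_{\mathbb{L}^\infty}\le\|w\|_{\mathbb{L}^\infty}$ and hence $\|\mathcal{I}_h w\|_{\mathbb{L}^p}\le|\Omega|^{1/p}\|w\|_{\mathbb{L}^\infty}$ uniformly in $h$. Given $y$ (taken continuous, which is what is needed for $\mathcal{I}_h y$ to be defined; for merely $\mathbb{L}^p$ data the statement is read with $\mathcal{I}_h$ acting on a continuous representative) and $\varepsilon>0$, pick $\psi\in\C^\infty(\ov\Omega)$ with $\|y-\psi\|_{\mathbb{L}^\infty}<\varepsilon$ (Stone–Weierstrass), and split
\[
\|y-\mathcal{I}_h y\|_{\mathbb{L}^p}\le\|y-\psi\|_{\mathbb{L}^p}+\|\psi-\mathcal{I}_h\psi\|_{\mathbb{L}^p}+\|\mathcal{I}_h(\psi-y)\|_{\mathbb{L}^p}.
\]
The first and third terms are each $\le|\Omega|^{1/p}\varepsilon$, while the middle one is $\le Ch\|\nabla\psi\|_{\mathbb{L}^p}\to0$ by the first part of the lemma; hence $\limsup_{h\to0}\|y-\mathcal{I}_h y\|_{\mathbb{L}^p}\le2|\Omega|^{1/p}\varepsilon$, and $\varepsilon\to0$ finishes.

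I expect the only genuinely delicate point to be the bookkeeping around the admissible parameters: because $\mathcal{I}_h$ uses point values, the reference-element step rests on the (in some cases borderline) embedding $\mathbb{W}^{\ell+1,p}(\hat K)\hookrightarrow\C(\hat K)$, and for $\ell=0$ this implicitly forces $y$ to be continuous. The remaining ingredients — Bramble–Hilbert, the affine scaling identities, and the summation over elements — are routine; the one thing to be careful about is tracking the Bramble–Hilbert and scaling constants so that the final $C$ is manifestly independent of $h$, which is where quasi-uniformity/shape regularity of $\mathcal{T}_h$ is used.
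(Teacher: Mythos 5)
Your argument is the standard Bramble--Hilbert plus affine-scaling proof, followed by a density argument for the convergence claim, which is exactly the proof of the result the paper invokes without proof by citing \cite[Thm.~4.6.19]{brenner}. So the proposal is correct and takes essentially the same (textbook) route as the paper's reference, including the appropriate caveat that nodal interpolation requires point values, i.e.\ the borderline Sobolev embedding into $\C(\ov\Omega)$.
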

The result is standard and  can be found for example  in~\cite[Thm.~4.6.19]{brenner}.

\begin{lemma}\label{lem:masslumping}
 Let $p\in [1,\infty )$ and $q \in [1,\infty] $.
There exists a constant $C>0$ such that 
\begin{align*}
\| ( I - \mathcal{I}_h) ( f_h g ) \|_{{\mathbb L}^p} \leq C h  \| f_h \| _{{\mathbb L}^{pq}} \left (  \|\nabla  g \|_{{\mathbb W}^{1,r }}+ \| \nabla g \|_{{\mathbb L}^{s}}\right ) \,,
\end{align*}
for all $f_h\in Y_h$ and $g\in \C^\infty_c(\Omega)$
with $r = \min\{p,  dpq/((p+d)q-d)\}$ and $s = pq/(q-1)$ for $q>1$ and $s = \infty $ for $q=1$.
Note that for $q>1$, we may estimate further $  \| \nabla g \|_{{\mathbb L}^{s}}\leq \|\nabla  g \|_{{\mathbb W}^{1,r}}$. 

\end{lemma}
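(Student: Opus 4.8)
The plan is to work locally element by element and then sum up, exploiting that $f_h$ is a piecewise linear function so that $\nabla f_h$ is piecewise constant, and that $g$ is smooth. On a fixed simplex $K \in \mathcal{T}_h$, I would first use a scaled trace/approximation estimate for the nodal interpolant: since $f_h g - \mathcal{I}_h(f_h g)$ vanishes at the vertices of $K$ and $f_h$ is affine on $K$, one has the standard local bound $\| (I-\mathcal{I}_h)(f_h g) \|_{\mathbb{L}^p(K)} \leq C h_K^{1+d/p} | D^2(f_h g) |_{L^\infty(K)}$ (or an $L^p$-based variant), and because $D^2(f_h g) = 2\nabla f_h \otimes \nabla g + f_h \nabla^2 g$ on $K$, this splits into a term controlled by $\|\nabla f_h\|_{L^\infty(K)} \|\nabla g\|_{L^\infty(K)}$ and one controlled by $\|f_h\|_{L^\infty(K)} \|\nabla^2 g\|_{L^\infty(K)}$. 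To handle the first term, I would invoke the inverse inequality of Lemma \ref{lem:inv} in its local form to absorb the gradient on $f_h$: $\|\nabla f_h\|_{L^\infty(K)} \leq C h_K^{-1-d/(pq)}\|f_h\|_{L^{pq}(K)}$ when $pq$ is finite (the endpoint $q=1$ being handled by the $s=\infty$ convention), at the cost of one power of $h$, which is exactly the power we can afford.

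Next I would assemble the local bounds. Summing the $p$-th powers over $K \in \mathcal{T}_h$, the first contribution yields a term of the form $C h \|f_h\|_{L^{pq}(\Omega)} \|\nabla g\|_{L^s(\Omega)}$ with $s = pq/(q-1)$ (by Hölder in the summation over elements, pairing the $L^{pq}$-norm of $f_h$ against the conjugate $L^s$-norm of $\nabla g$; for $q=1$ this is the trivial pairing $L^p \times L^\infty$). The second contribution, carrying $f_h \nabla^2 g$, is the one where the delicate choice of $r$ enters: here I do not want to measure $f_h$ in $L^\infty$, so instead I keep $h_K^{1+d/p}$ and trade it partially against $h_K^{-d/\tilde r}$ via an inverse-type / Hölder argument on $\|f_h\|_{L^\infty(K)} \lesssim h_K^{-d/(pq)} \|f_h\|_{L^{pq}(K)}$ again, and measure $\nabla^2 g$ in $L^{r}$ with $r$ chosen precisely so that the remaining powers of $h_K$ combine to a single clean factor $h$ after Hölder over elements; this forces $r = \min\{p, dpq/((p+d)q - d)\}$, the two cases corresponding to whether the unmodified exponent $p$ is already small enough or the Sobolev-type balance is binding. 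The final remark in the statement — that for $q > 1$ one may replace $\|\nabla g\|_{L^s}$ by $\|\nabla g\|_{W^{1,r}}$ — follows from the Sobolev embedding $W^{1,r}(\Omega) \hookrightarrow L^s(\Omega)$, which holds for the indicated $r$ and $s$ on the bounded domain $\Omega$ (and is why both terms can ultimately be absorbed into the single right-hand side displayed in the lemma).

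The main obstacle I expect is bookkeeping the exponents correctly: one must verify that the various applications of Hölder's inequality over the elements are with conjugate pairs, that the scaling powers $h_K^{1+d/p}$ from the interpolation estimate minus $h_K^{-d/(pq)}$ from the inverse inequality minus $h_K^{-d/r'}$-type factors from the local Hölder steps really do sum to exactly $h^1$ uniformly in $K$ (using quasi-uniformity so that $h_K \simeq h$), and that the two branches in the definition of $r$ are precisely the ranges where the argument is consistent (in particular that $r \leq p$ always, so $\nabla^2 g \in L^r$ makes sense on the smooth compactly supported $g$, and that $s \geq r$ so the embedding in the concluding remark is valid). Once the exponent arithmetic is pinned down, the estimate itself is a routine combination of Lemma \ref{lem:interpolation}, Lemma \ref{lem:inv}, and Hölder's inequality; no new ideas beyond the local-then-global strategy are needed.
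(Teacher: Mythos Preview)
Your proposal is correct and follows essentially the same route as the paper: local interpolation estimate with $h^2$ and second derivatives, the observation that $\nabla^2 f_h = 0$ elementwise so $D^2(f_h g) = 2\nabla f_h \otimes \nabla g + f_h \nabla^2 g$, then H\"older and the inverse inequality of Lemma~\ref{lem:inv} to trade $\nabla f_h$ for $h^{-1} f_h$. The only cosmetic difference is that the paper works throughout with the $L^p$-based interpolation bound $\|(I-\mathcal{I}_h)(f_h g)\|_{\mathbb{L}^p} \leq C h^2 \|\nabla^2(f_h g)\|_{\mathbb{L}^p}$ rather than passing through local $L^\infty$ norms, and for the $f_h\nabla^2 g$ term it applies H\"older with the Sobolev-conjugate exponent $pqd/(d-pq)$ on $f_h$ and then uses the embedding $\|f_h\|_{\mathbb{L}^{pqd/(d-pq)}} \lesssim \|\nabla f_h\|_{\mathbb{L}^{pq}}$, which makes the exponent bookkeeping you flag as the ``main obstacle'' somewhat cleaner than tracking $h_K^{1+d/p - d/(pq) - d/r'}$ powers by hand.
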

\begin{proof}

From Lemma~\ref{lem:interpolation}, we observe 
\begin{align*}
\| ( I - \mathcal{I}_h) ( f_h g ) \|_{{\mathbb L}^P} \leq C h^2 \| \nabla^2( f_h g ) \|_{{\mathbb L}^p} = C h^2 \sum_{T\in\mathcal{T}_h}\| \nabla^2( f_h g ) \|_{{\mathbb L}^p(T)} \,.
\end{align*}
The subsequent argumentation will be done of a certain element of the triangulation $\mathcal{T}_h$, summation over all element provides the assertion on the whole domain. 
Due to the product rule, we find $ \nabla ^2 ( f_h g) = \nabla ^2 f _h g +2 \nabla f_h \otimes \nabla  g + f \nabla ^2 g $, where the first term vanishes since $f_h$ is a polynomial of degree $\leq 1 $.
Applying H\"older's inequality for the two remaining terms, we find
\begin{align*}
C h^2 \| \nabla^2( f_h g ) \|_{{\mathbb L}^p(T)} \leq C h^2 \left ( \| \nabla f_h \| _{{\mathbb L}^{pq}(T)} \|\nabla  g \|_{{\mathbb L}^{pq'}(T)} +  \|  f_h \| _{{\mathbb L}^{pqd/(d-pq)}(T)} \|\nabla^2  g \|_{{\mathbb L}^{dpq'/(d+pq')}(T)} \right )
\end{align*}
 where we set $pqd/(d-pq)= \infty$ and $dpq'/(d+pq')=p$ as soon as $pq \geq d$.   
Summing over all elements, standard embeddings, and Lemma~\ref{lem:inv} imply
 \begin{align*}
\| ( I - \mathcal{I}_h) ( f_h g ) \|_{{\mathbb L}^p} \leq{}& C h^2 \| \nabla f_h \| _{{\mathbb L}^{pq}}  \left (  \|\nabla^2  g \|_{{\mathbb L}^{dpq'/(d+pq')}}  + \| \nabla g \|_{{\mathbb L}^{pq'}}\right ) \\\leq{}& C h \|  f_h \| _{{\mathbb L}^{pq}} \left (  \|\nabla^2  g \|_{{\mathbb L}^{dpq'/(d+pq')}}  + \| \nabla g \|_{{\mathbb L}^{pq'}}\right )  \,.
\end{align*}

\end{proof}

By ${\mathcal P}_{{\mathbb L}^2}$, we denote the standard ${\mathbb L}^2$-projection ${\mathcal P}_{{\mathbb L}^2} : {\mathbb L}^2 \ra  \f Z_h$, which is denoted in the same way for matrices. 
The ${\mathbb L}^2$-projection onto the finite element space $Y_h$ and $\f Y_h$ are denoted accordingly, whenever the underlying finite element space will be clear in the context. 
For quasi-uniform meshes ${\mathcal T}_h$, 
the ${\mathbb H}^1$-stability of ${\mathcal P}_{{\mathbb L}^2}$ is well-known (see \textit{e.g.}~\cite{bramblepasciaksteinbach}), and the following error estimate is valid (see \textit{e.g.}~\cite{brenner}),
\begin{align*}
\| \f y - {\mathcal P}_{{\mathbb L}^2} (\f y) \|_{{\mathbb L}^2} \leq  C h^{\ell}  \| \nabla^{\ell} \f y \|_{{\mathbb L}^2} \qquad (\ell =1,2)\,.
\end{align*}
We also use the {projection ${\mathcal P}_h: C(\overline{\Omega}) \rightarrow Y_h$} via
$\bigl( \phi - {\mathcal P}_h (\phi),y)_h = 0$ for all $y \in Y_h$.  We use the
discrete Lapacian $\Delta_h
: [{\mathbb H}^1_0]^d \rightarrow 
 \f Y_h
 $, where 
\begin{equation}\label{disklapl}
(-\Delta_h \f \phi, y) = (\nabla \f \phi, \nabla \f y) \qquad \forall\, \f y \in \f Y_h 
\,.
\end{equation}
For a sum $\f d = \ov{\f d} + \ov{\f d}_1 $, with $\ov{\f d} \in \mathbb H^1_0 $ and $\ov{\f d}_1\in \mathbb W^{2,2} $  we denote accordingly
$\Delta _h  \f d = \Delta_h \ov{\f d}  + \mathcal{P}_{\mathbb L^2} \Delta  \ov{\f d}_1 $, where $\mathcal{P}_{\mathbb L^2}$ denotes the $\mathbb L^2$-projection onto the finite element space $\f Z_h$.

A discrete Sobolev interpolation inequality is a consequence of
\cite[Lemma 4.4]{heywood},
\begin{equation}\label{discrete_sobolev}
\Vert \nabla \f y\Vert_{{\mathbb L}^4} \leq C \Vert \nabla \f y\Vert_{{\mathbb L}^2}^{\frac{4-d}{4}} 
\Vert  \Delta_{ h} \f y\Vert_{{\mathbb L}^2}^{\frac{d}{4}} \qquad \forall\, \f y \in \f Y_h\,.
\end{equation}
This holds for homogeneous Dirichlet boundary conditions, for imhomogeneous Dirichlet boundary conditions, we find in combination with the standard Gagliardo--Nirenberg inequality
\begin{align*}
\Vert \nabla ( \ov{\f d} + \mathcal{I}_h[\ov{\f d}_1])\Vert_{{\mathbb L}^4}\leq{}& \Vert \nabla \ov{\f d}\Vert_{{\mathbb L}^4} + C \Vert \nabla \ov{\f d}_1 \Vert_{{\mathbb L}^4}\\   \leq{}& C \Vert \nabla \ov{\f d}\Vert_{{\mathbb L}^2}^{\frac{4-d}{4}} 
\Vert {\Delta_{ h} \ov{\f d}}\Vert_{{\mathbb L}^2}
^{\frac{d}{4}} 
+ C  \Vert \nabla \ov{\f d}_1 \Vert_{{\mathbb L}^2}^{\frac{4-d}{4}} 
\Vert   \ov{\f d}_1 \Vert_{{\mathbb W}^{2,2}}
^{\frac{d}{4}} 
\qquad \forall\, \ov{\f d} \in \f Y_h \text{ and }  \ov{\f d}_1\in \mathbb{W}^{2,2}\,.
\end{align*}
Additionally, we note that the boundedness of the discrete Laplacian of $\ov{\f d}$ follows from the boundedness of the discrete Laplacian of $\ov{\f d}+\ov{\f d}_1$, \textit{i.e.}, 
\begin{align*}
\Vert {\Delta_{ h} \ov{\f d}}\Vert_{{\mathbb L}^2} \leq{}& \Vert {\Delta_{ h} \ov{\f d}} + \mathcal{P}_{\mathbb L^2} \Delta  \ov{\f d}_1 \Vert_{{\mathbb L}^2 } + \Vert \mathcal{P}_{\mathbb L^2}  \Delta  \ov{\f d}_1 \Vert_{{\mathbb L}^2 }\\ \leq{}& \Vert \Delta_{ h} \f d\Vert_{{\mathbb L}^2 } + \Vert   \Delta  \ov{\f d}_1 \Vert_{{\mathbb L}^2 } \qquad \qquad\qquad\qquad\forall\, \ov{\f d} \in \f Y_h \text{ and }  \ov{\f d}_1\in \mathbb{W}^{2,2}\,
\end{align*}
with $\Delta _h  \f d = \Delta_h \ov{\f d}  + \mathcal{P}_{\mathbb L^2} \Delta  \ov{\f d}_1 $ as defined above.
This will help us to infer some $h$-dependent bound  for the director on the discrete level. 

\section{Continuous system\label{sec:cont}}
 The main obstacle which prevents the construction of a {\em weak solution} ($d=3$) even for a
sub-problem of (\ref{simp}) --- the simplified Ericksen--Leslie equations
(\ref{simp:v})--(\ref{simp:norm}), where we set $\Phi\equiv n^{\pm} \equiv 0$ --- is the  extra elastic stress tensor in the Navier--Stokes equation, \textit{i.e.}, the fourth term in~\eqref{simp:v}.  
This highly nonlinear term is difficult to be identified in the limit for solutions of an approximate scheme, due to limited
regularity estimates.
In~\cite{masswertig}, it was found that using a suitable regularization
 in the equation in order to pass to the limit in the extra elastic stress tensor may lead to undesired error terms coming from the chosen regularization procedure: the oscillatory effects introduced herewith do not vanish in the limit and give rise to an additional defect measure (see also Remark~\ref{rem:measure} below). This is circumvented by approximating the system via a Galerkin approximation with point-wise norm constraint; see also Section \ref{sec:dis}. 
Unfortunately, it still seems not possible to pass to the limit in the weak formulation of the Navier--Stokes-like equation even in this case due to the fact that the extra elastic stress tensor is a nonlinear function of
 $\nabla \f d$, the associated sequence of which only converges weakly. 
However, it is possible to pass to the limit in the {\em dissipative solution} framework given in Section \ref{dissipativesolutions} by only exploiting this weak convergence property of gradients of approximate director fields for the  extra elastic stress tensor: for this solution concept, only weakly lower semi-continuity is needed at this place to retain the relevant {\em relative energy inequality} in Section \ref{convergence1} --- which then settles the construction of a {\em dissipative solution} of (\ref{simp}). 
---
We start this section with a collection of relevant properties of a classical solution of~\eqref{simp}.

\subsection{\textit{A priori} estimates}
\begin{theorem}\label{thm:cont}
Let $T>0$, and $(\f v ,\f d , \Phi, n^{\pm})$ be a classical solution of~\eqref{simp}. Then the following energy equations and norm restrictions are fulfilled for any $0\leq t\leq T$, 
\begin{enumerate}
\item[i)\label{itemi}] energy conservation
\begin{multline}
\frac{1}{2} \left ( \| \f v \|_{{\mathbb L}^2}^2 + \| \nabla \f d \|_{{\mathbb L}^2}^2 + \int_\Omega | \nabla \Phi |_{\varepsilon(\f d)}^2 \de \f x \right ) \Bigg|_0^t \\
+ \int_0^t\left ( \nu \| \nabla \f v \|_{{\mathbb L}^2}^2 
+ \left \| \f d \times \left ( \Delta \f d + \varepsilon_a ( \nabla \Phi \cdot \f d ) \nabla \Phi \right ) \right \|_{{\mathbb L}^2}^2 \right ) \de s 
\\
+\int_0^t \left ( \| n^+-n^- \| _{{\mathbb L}^2}^2 +\int_\Omega (n^++n^-) | \nabla \Phi |_{\varepsilon(\f d )}^2 \de \f x \right ) \de s = 0\,,
\end{multline}
\item[ii)] \label{itemii} charge conservation
\begin{multline*}
\frac{1}{2}\left ( \| n^+  \|_{{\mathbb L}^2}^2+ \| n^- \|_{{\mathbb L}^2}^2\right ) \Bigg|_0^t  + \int_0^t\int_\Omega | \nabla n^+|_{\varepsilon(\f d)}^2 + | \nabla n^-|_{\varepsilon(\f d)}^2 \de \f x \de s \\ 
+\frac{1}{2}\int_0^t \left ( n^+-n^-,[n^+]^2-[n^-]^2\right ) \de s  = 0\,,
\end{multline*}
\item[iii)] \label{itemiii} norm restriction
\begin{align*}
| \f d (\f x ,t ) | = 1 \qquad \text{for a.e.~}(\f x ,t) \in \Omega \times (0,T)\,,
\end{align*}
\item[iv)] \label{itemiiii} maximum principle
\begin{align*}
0\leq  n^{\pm}(\f x ,t) \leq 1  \qquad \text{for a.e.~}(\f x ,t) \in \Omega \times (0,T)\,,
\end{align*}
\item[v)]elliptic regularity
\begin{align}
\| \Phi  \|_{L^\infty(0,T;{\mathbb L}^p)} \leq C  \qquad \text {for some }p>2\,.\label{addregPhi}
\end{align}
\end{enumerate}

\end{theorem}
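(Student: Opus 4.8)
The plan is to derive each assertion of Theorem~\ref{thm:cont} by testing the individual equations of~\eqref{simp} with suitable multipliers and exploiting the structural cancellations built into the model; throughout, I assume the solution is smooth enough that all manipulations (integration by parts, chain rule) are justified, which is exactly the meaning of ``classical solution'' here.

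\textbf{Part iii) (norm restriction) first, since it is used elsewhere.} Testing~\eqref{simp:d} with $\f d$ pointwise: because $(I-\f d\otimes\f d)\f w$ is always orthogonal to $\f d$, the term $(I-\f d\otimes\f d)(\Delta\f d+\varepsilon_a(\f d\cdot\nabla\Phi)\nabla\Phi)\cdot\f d=0$, and $(\f v\cdot\nabla)\f d\cdot\f d=\tfrac12(\f v\cdot\nabla)|\f d|^2$, so $\tfrac12\big(\t|\f d|^2+(\f v\cdot\nabla)|\f d|^2\big)=0$. This is a transport equation for $|\f d|^2$ along the (divergence-free) flow of $\f v$ vanishing on $\partial\Omega$, so $|\f d(\f x,t)|^2$ equals its initial value $|\f d_0|^2=1$ along characteristics; hence iii) holds. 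I will then freely use $|\f d|=1$, e.g.\ $\f d\cdot\t\f d=0$ and the identity $-(I-\f d\otimes\f d)\f w=\f d\times(\f d\times\f w)$.

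\textbf{Part i) (energy balance).} I would test~\eqref{simp:v} with $\f v$, integrate over $\Omega$, and use $\di\f v=0$ together with $\f v|_{\partial\Omega}=0$ to kill the convective term $((\f v\cdot\nabla)\f v,\f v)=0$ and the pressure term $(\nabla p,\f v)=0$; the Laplacian yields $\nu\|\nabla\f v\|_{\Le}^2$. This leaves the coupling terms $\big(\nabla\f d^\top(\Delta\f d+\varepsilon_a(\f d\cdot\nabla\Phi)\nabla\Phi),\f v\big)$ and $\big((n^+-n^-)\nabla\Phi,\f v\big)$. Simultaneously I test~\eqref{simp:d} with $-(\Delta\f d+\varepsilon_a(\f d\cdot\nabla\Phi)\nabla\Phi)=:-\f h$: the transport term produces $-((\f v\cdot\nabla)\f d,\f h)$, which (after moving $\nabla\f d$ around) cancels the first coupling term from the velocity equation; the term $\t\f d\cdot(-\f h)$ reconstructs the time derivative of the Oseen--Frank/dielectric energy $\tfrac12\|\nabla\f d\|_{\Le}^2+\tfrac12\int_\Omega|\nabla\Phi|_{\varepsilon(\f d)}^2$ — here one uses $\t\f d\cdot(-\Delta\f d)=\tfrac12\t|\nabla\f d|^2$ modulo a boundary term that vanishes because $\f d|_{\partial\Omega}=\f d_1$ is time-independent, and the $\varepsilon_a$ part pairs with the time derivative of $\int_\Omega\varepsilon_a(\f d\cdot\nabla\Phi)^2$ once one also differentiates the Poisson equation~\eqref{simp:Phi} in time and tests with $\Phi$; and the projector term gives exactly $\|(I-\f d\otimes\f d)\f h\|_{\Le}^2=\|\f d\times\f h\|_{\Le}^2$ (using $|\f d|=1$). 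It remains to handle the electrostatic/charge contributions: test~\eqref{simp:Phi} with $\t\Phi$ and the two equations~\eqref{simp:c} with $\Phi$; summing, the convective terms $((\f v\cdot\nabla)n^\pm,\Phi)$ combine (after integration by parts, using $\di\f v=0$ and the no-flux boundary conditions) with $((n^+-n^-)\nabla\Phi,\f v)$ to cancel, the diffusion/drift term yields $\int_\Omega(n^++n^-)|\nabla\Phi|_{\varepsilon(\f d)}^2$ plus a mixed term that reproduces $\partial_t$ of $\tfrac12\int_\Omega|\nabla\Phi|_{\varepsilon(\f d)}^2$ together with the $\t\Phi$-testing, and the reaction-type term gives $\|n^+-n^-\|_{\Le}^2$ via $-\di(\varepsilon(\f d)\nabla\Phi)=n^+-n^-$. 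Collecting all contributions and integrating in time from $0$ to $t$ gives i). The bookkeeping of the $\varepsilon(\f d)$-weighted energy — making sure the time derivative hitting $\varepsilon(\f d)=I+\varepsilon_a\f d\otimes\f d$ is correctly accounted for by the $\varepsilon_a(\f d\cdot\nabla\Phi)\nabla\Phi$ terms in~\eqref{simp:v}, \eqref{simp:d} — is the delicate part and the main obstacle; this is precisely the structure the model was designed to have.

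\textbf{Parts ii), iv), v).} For ii) I test~\eqref{simp:c} with $n^\pm$ respectively: the convective terms vanish by $\di\f v=0$ and no-flux boundary conditions, the diffusion term gives $\int_\Omega|\nabla n^\pm|_{\varepsilon(\f d)}^2$, and the drift term $\pm(\di(\varepsilon(\f d)n^\pm\nabla\Phi),n^\pm)=\mp(\varepsilon(\f d)n^\pm\nabla\Phi,\nabla n^\pm)$; rewriting $n^\pm\nabla n^\pm=\tfrac12\nabla[n^\pm]^2$ and integrating by parts, this becomes $\pm\tfrac12([n^\pm]^2,\di(\varepsilon(\f d)\nabla\Phi))=\mp\tfrac12([n^\pm]^2,n^+-n^-)$ by~\eqref{simp:Phi}; adding the $+$ and $-$ equations yields ii). For iv), the equations~\eqref{simp:c} are linear parabolic equations in $n^\pm$ (with the drift coefficient $\varepsilon(\f d)\nabla\Phi$ given) in divergence form with no-flux boundary conditions, whose right-hand side is zero; a standard maximum/minimum principle (e.g.\ testing with $(n^\pm)_- := \min\{n^\pm,0\}$ and with $(n^\pm-1)_+$, using that the drift term produces the required sign once one notes $\di(\varepsilon(\f d)\nabla\Phi)=-(n^+-n^-)$ and the structure of the nonlinearity) gives $0\le n^\pm\le1$. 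Finally for v): from i), $\int_\Omega|\nabla\Phi|_{\varepsilon(\f d)}^2$ is bounded in $L^\infty(0,T)$, and since $\varepsilon(\f d)=I+\varepsilon_a\f d\otimes\f d\ge I$ this gives a uniform $L^\infty(0,T;\He)$ bound on $\Phi$ (using a Poincaré inequality together with the mean-zero normalization implicit in the no-flux Poisson problem); the Sobolev embedding $\He\hookrightarrow\mathbb L^p$ for some $p>2$ (for $d=2,3$) then yields~\eqref{addregPhi}. Actually one even has elliptic regularity: since $\Omega$ is convex Lipschitz and $\|n^+-n^-\|_{\Le}$ is controlled (again by i)), the Poisson equation~\eqref{simp:Phi} with the Lipschitz coefficient $\varepsilon(\f d)$ bounded and uniformly elliptic gives $\Phi\in\He$ with the stated bound, which is all that is needed.
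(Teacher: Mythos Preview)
Your arguments for parts i)--iv) follow essentially the same route as the paper: testing \eqref{simp:v} with $\f v$, \eqref{simp:d} with $-(\Delta\f d+\varepsilon_a(\f d\cdot\nabla\Phi)\nabla\Phi)$, and combining \eqref{simp:Phi}, \eqref{simp:c} tested with $\t\Phi$ resp.~$\pm\Phi$ for the energy law; testing \eqref{simp:c} with $n^\pm$ and using \eqref{simp:Phi} for ii); and the Stampacchia-type truncations for iv). One point you gloss over in iv) is that the upper bound requires testing the $n^+$- and $n^-$-equations \emph{simultaneously} with $(n^+-1)_+$ and $(n^--1)_+$: the cross term arising from the drift, after using \eqref{simp:Phi}, takes the form $\bigl(n^+-n^-,\,f(n^+-1)-f(n^--1)\bigr)$ with $f(a)=\tfrac12(a)_+^2+(a)_+$ monotone, which has the right sign. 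The paper then uses the already-established upper bound $n^\mp\le 1$ to close the Gronwall argument for the lower bound. Treating each charge separately as a ``standard maximum principle'' with given drift does not quite work, since the drift coefficient $\pm\varepsilon(\f d)\nabla\Phi$ has no a~priori sign.

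For v) there is a genuine discrepancy. Your Sobolev embedding $\He\hookrightarrow{\mathbb L}^p$ does prove the inequality as literally written, but this is not what the paper actually establishes or subsequently uses: the cited Meyers estimate gives $\nabla\Phi\in L^\infty(0,T;{\mathbb L}^p)$ for some $p>2$, i.e.\ a ${\mathbb W}^{1,p}$ bound, which is the regularity entering the solution space \eqref{reg:weak} and the compactness argument \eqref{StrongPhi}. Your aside about ``elliptic regularity with Lipschitz coefficient $\varepsilon(\f d)$'' is not justified --- from the a~priori bounds one only has $\f d\in L^\infty(0,T;{\mathbb W}^{1,2})$, so $\varepsilon(\f d)$ is merely bounded measurable, not Lipschitz, and convexity of $\Omega$ does not help here. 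The correct tool is precisely Meyers' higher-integrability theorem for divergence-form elliptic equations with $L^\infty$ coefficients, which yields the needed $p>2$ without any smoothness of $\varepsilon(\f d)$.
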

\begin{proof}
In order to prove the energy equality~\ref{itemi}i), we multiply~\eqref{simp:v} by $\f v$ and integrate over $\Omega$ to obtain
\begin{align}
\frac{1}{2}\frac{\de}{\de t}  \| \f v \|_{{\mathbb L}^2}^2 + \nu \| \nabla \f v \|_{{\mathbb L}^2}^2 + \Bigl( \nabla \f d ^T \bigl( \Delta \f d + \varepsilon_a ( \nabla \Phi \cdot \f d ) \nabla \Phi \bigr) , \f v \Bigr) + \Bigl(( n^+-n^-) \nabla \Phi , \f v \Bigr) =0\,.\label{en:v}
\end{align}
Multiplying~\eqref{simp:d} by $- \Delta \f d - \varepsilon_a (\f d \cdot \nabla \Phi) \nabla \Phi$ and integrating over $\Omega$  gives
\begin{multline}
\frac{1}{2}\frac{\de}{\de t }\| \nabla \f d \|_{{\mathbb L}^2}^2 - \varepsilon_a \Bigl( \t \f d \cdot\nabla \Phi , \f d \cdot \nabla \Phi \Bigr) 
-\Bigl(  ( \f v \cdot \nabla) \f d , \Delta \f d + \varepsilon_a (\f d \cdot \nabla \Phi) \nabla \Phi \Bigr) \\ +  \bigl \| \f d \times \left ( \Delta \f d + \varepsilon_a ( \nabla \Phi \cdot \f d ) \nabla \Phi \right ) \bigr \|_{{\mathbb L}^2}^2 =0\,.\label{en:d}
\end{multline}
Multiplying~\eqref{simp:Phi} by $n^+-n^--\t \Phi$, adding~\eqref{simp:c} multiplied by $\pm\Phi$ and  integrating over $\Omega$, we observe
\begin{multline}
- \Bigl( \t \nabla \Phi, \varepsilon(\f d ) \nabla \Phi \Bigr) + \Bigl( n^+-n^- , \t \Phi \Bigr) + \Bigl( \t (n^+-n^-) , \Phi \Bigr)\\-  \Bigl( \nabla  (n^+-n^- ) ,\varepsilon(\f d ) \nabla \Phi \Bigr)  + \|n^+-n^-\|_{{\mathbb L}^2}^2   \\ - \bigl( n^+-n^- , \f v \nabla \Phi \bigr ) + \Bigl(\nabla [n^+-n^-] ,\varepsilon(\f d ) \nabla \Phi \Bigr) + \int_\Omega (n^++n^-)| \nabla \Phi|_{\varepsilon(\f d )}^2 \de \f x=0 \,.\label{en:Phi}
\end{multline}
By the product formula, 
\begin{align*}
\left ( n^+-n^- , \t \Phi \right ) + \Bigl( \t (n^+- n^-) , \Phi \Bigr) = \t \left ( n^+-n^- ,  \Phi \right ) \,,
\end{align*}
and
\begin{align*}
- \Bigl( \t \nabla \Phi, \varepsilon(\f d ) \nabla \Phi \Bigr) - \varepsilon_a \bigl( \t \f d \cdot\nabla \Phi , \f d \cdot \nabla \Phi \bigr)  = - \frac{1}{2}\t \int_\Omega| \nabla \Phi |_{\varepsilon(\f d )}^2 \de \f x \,.
\end{align*}

Adding the three equations~\eqref{en:v},~\eqref{en:d}, and~\eqref{en:Phi}, we find
\begin{multline}\label{en:last}
\frac{1}{2}\frac{\de}{\de t} \left ( \| \f v \|_{{\mathbb L}^2}^2 + \| \nabla \f d \|_{{\mathbb L}^2}^2 +2 \left ( n^+-n^- ,  \Phi \right )- \int_\Omega | \nabla \Phi |_{\varepsilon(\f d)}^2 \de \f x \right ) \\
+\left ( \nu \| \nabla \f v \|_{{\mathbb L}^2}^2 
+ \left \| \f d \times \left ( \Delta \f d + \varepsilon_a ( \nabla \Phi \cdot \f d ) \nabla \Phi \right ) \right \|_{{\mathbb L}^2}^2 \right )  
\\
+\|n^+-n^-\|^2_{{\mathbb L}^2}+\int_\Omega (n^++n^-)| \nabla \Phi |_{\varepsilon(\f d )}^2 \de \f x  = 0\,.
\end{multline}
Integrating~\eqref{en:last} in time and  using equation~\eqref{simp:Phi}  gives the assertion~i).

In order to prove assertion~\ref{itemii}ii), we multiply equation~\eqref{simp:c} by $n^{\pm} $,  integrate over $\Omega$, to find
\begin{align*}
\frac{1}{2} \frac{\de }{\de t} \left ( \| n^{+}\|_{{\mathbb L}^2}^2+\| n^{-}\|_{{\mathbb L}^2}^2\right )  + \int_\Omega | \nabla n^{+} |_{\varepsilon(\f d) } ^2 +| \nabla n^{-} |_{\varepsilon(\f d) } ^2 \de \f x \\
+ \frac{1}{2}\Bigl (\varepsilon(\f d) \nabla \Phi , \nabla ( n^{+})^2 - \nabla (n^-)^2\Bigr )  = 0 \,.
\end{align*}
Adding~\eqref{simp:Phi} multiplied  by $\frac{1}{2}\bigl((n^+)^2-(n^-)^2\bigr)$ and integrating over $\Omega$, leads via another integration over $ (0,T)$ to the assertion.

The unit norm restriction of the director~\ref{itemiii}iii) is implied by 
multiplying equation~\eqref{simp:d} by $( | \f d |^2 -1)\f d$. Integrating the resulting equation over $\Omega\times (0,T)$, we observe that
\begin{align*}
\frac{1}{4} \int_\Omega \t (|\f d|^2-1 )^2 + (\f v \cdot \nabla )(  | \f d |^2-1 )^2 \de \f x = 0 \,.
\end{align*}
Since the initial value fulfills $| \f d_0|=1$ a.e.~in $\Omega$ and~\eqref{simp:incomp} is valid, we find that $| \f d |=1 $ a.e.~in $\Omega \times (0,T)$.

Standard maximum principles are applied to prove~\ref{itemiiii}iv). 
Indeed, multiplying~\eqref{simp:c} for the positive charges by $(n^+-1)_+$ and for the negative charges by $(n^--1)_+$, and integrating in space, we find
\begin{multline}
\frac{1}{2} \frac{{\rm d}}{{\rm d}t} \left ( \| (n^+-1)_+\|_{{\mathbb L}^2}^2 +  \| (n^--1)_+\|_{{\mathbb L}^2}^2\right ) 
+ \int_\Omega | \nabla ( n^+-1)_+ |_{\varepsilon(\f d)}^2 + | \nabla ( n^--1)_+ |_{\varepsilon(\f d)}^2 \de \f x 
 \\ + \Bigl ( \varepsilon(\f d) \nabla \Phi , n^+ \nabla (n^+-1)_+ - n^-\nabla (n^--1)_+ \Bigr )=0\,.\label{positivityNpm}
\end{multline}
The last term on the left-hand side can be transformed using~\eqref{simp:Phi} to
\begin{multline*}
\Bigl ( \varepsilon(\f d) \nabla \Phi , n^+ \nabla (n^+-1)_+ - n^-\nabla (n^--1)_+ \Bigr )\\= \Bigl( \varepsilon(\f d) \nabla \Phi , \nabla \bigl[ \frac{1}{2}(n^+-1)^2_+ -\frac{1}{2} (n^--1)_+^2 + (n^+-1)_+ - (n^--1)_+\bigr]  \Bigr )
\\=
 \Bigl (n^+-1 -(n^--1),  \frac{1}{2} (n^+-1)^2_+ - \frac{1}{2}(n^--1)_+^2 + (n^+-1)_+ - (n^--1)_+  \Bigr)\geq 0 \,.
\end{multline*}
The inequality follows by observing that the right-hand side may be written as $ (a^+-a^-,f(a^+)-f(a^-))$ with $a^{\pm}= n^{\pm}-1$ and due to the monotony of the function $ f :a \mapsto (1/2)(a)_+^2+(a)_+$. 
Integrating~\eqref{positivityNpm} in time implies due to the condition on the initial condition the upper bound in~\ref{itemiiii}iv).  
Using this $L^\infty$-bound on the charges, we may show their non-negativity.
Multiplying~\eqref{simp:c} by $-(n^{\pm})_-= - max \{ - n^{\pm} , 0\} $, we find 
\begin{align}
\frac{1}{2} \frac{{\rm d}}{{\rm d}t} \| (n^{\pm})_ -\| ^2_{{\mathbb L}^2} + \int_\Omega | \nabla (n^{\pm})_- |_{\varepsilon(\f d)}^2 \de \f x  \pm \left ( \varepsilon(\f d) \nabla \Phi , \frac{1}{2}\nabla \bigl[ (n^{\pm})_-^2\bigr]\right ) = 0 \,.\label{chargepositivity}
\end{align}
For the last term on the left-hand side, we observe due to~\eqref{simp:Phi} and the upper bound on $n^{\mp}$ in~\ref{itemiiii}iv)
\begin{align*}
\pm \Bigl ( \varepsilon(\f d) \nabla \Phi ,  \nabla ( (n^{\pm})_-^2)\Bigr )  =  \left ( ( n^{\pm})_-^2 , n^{\pm}\right ) - \left (  (n^{\pm})_-^2  ,  n^{\mp}\right )\geq  \| ( n^{\pm})_- \|_{{\mathbb L}^3}^3-\| (n^{\pm})_ -\| ^2_{{\mathbb L}^2} \,.
\end{align*}
Reinserting this into~\eqref{chargepositivity} implies
\begin{align*}
\frac{{\rm d}}{{\rm d}t} \| (n^{\pm})_ -\| ^2_{{\mathbb L}^2} \leq \| (n^{\pm})_ -\| ^2_{{\mathbb L}^2} 
\end{align*}
and via Gronwall's inequality the lower bound of~\ref{itemiiii}iv). 

The additional regularity of $\Phi$ follows from elliptic regularity theory (see~\cite[Theorem~1]{RegPhi}).

\end{proof}
\begin{remark}
By the formulation of the equation, it is implied that the mass of the charges is conserved. Indeed, integrating over $\Omega$ the equation~\eqref{simp:c} and using the associated boundary conditions implies that
\begin{align*}
\int_\Omega n^{\pm} (t) \de \f x = \int_\Omega n^{\pm}(0) \de \f x \,.
\end{align*}
Integrating the equation~\eqref{simp:Phi} over $\Omega$ even implies that 
\begin{align*}
\int_\Omega n^+_0 \de \f x = \int_\Omega n^+ ( t) \de \f x = \int_\Omega n^- ( t) \de \f x = \int_\Omega n^-_0 \de \f x \,,
\end{align*}
which is a hidden compatibility condition for the initial values of the charges. 
\end{remark}
We remark that the verification of Theorem~\ref{thm:cont} involves \textit{nonlinear} functions of the classical solution of~\eqref{simp} to be multiplied with~\eqref{simp}, which prevents an immediate corresponding argumentation in Section~\ref{sec:dis}, where a finite element-based space-time discretization is discussed. 

\subsection{Dissipative solutions}\label{dissipativesolutions}
The concept of a dissipative solution heavily relies on the formulation of an appropriate relative energy. This relative energy serves as a natural comparison tool for two different solutions $\f u:=(\f v , \f d, \Phi, n^\pm)$ and $\tu:=(\vv,\dd,\tP,\tilde{n}^\pm)$. 
One possibility to interpret the corresponding {\em relative energy inequality} is as a variation of the energy equality. 
Thus in comparison to weak solutions which fulfill the equation in a generalized sense, the dissipative solution rather fulfills the energy dissipation  mechanism in a weakened sense. 
We decide to use the variation of the energy principle~i) in Theorem \ref{thm:cont}, therefore, the charges are not present in the relative energy. 
It is also possible to derive a relative energy inequality for the energy principle~ii) in Theorem \ref{thm:cont}, but this is not necessary, since~$n^{\pm}$ inherits enough regularity in the limit to perform the calculations to get~ii), and thus the relative energy inequality in the limit rigorously. 
This is also due to the fact that the weak and dissipative solution coincide, if the solution inherits enough regularity to be unique (compare to~\cite{maxdiss}). The charges are regular enough such that the equations~\eqref{simp:c} may be tested with the solution $n^{\pm}$ in the limit.

We introduce the underlying Banach spaces  $\mathbb{X}$ and $\mathbb{Y}$ to denote 
$\f u:=(\f v , \f d, \Phi, n^\pm)\in \mathbb{X}$, if
\begin{subequations}\label{reg:weak}
\begin{align}
\f v \in{}& L^\infty(0,T; {\mathbb H})  \cap L^2(0,T; {\mathbb V})\,,
\\
\f d \in {}& L^\infty(0,T; {\mathbb W}^{1,2})\cap W^{1,2}(0,T;{\mathbb L}^{3/2}) \,, \\ 
\nonumber
\Phi \in{}& L^\infty(0,T;{\mathbb W}^{1,2}/_{\R})
\cap L^\infty(0,T; {\mathbb W}^{1,p}) \quad \text{for some }p>2 \,,\\
n^{\pm} \in{}& L^\infty (0,T;{\mathbb L}^\infty) \cap L^{2}(0,T;{\mathbb H}^1)\cap W^{1,2}\bigl(0,T;({\mathbb H}^1)^*\bigr)\,, \\ \nonumber
\end{align}
\end{subequations}
and $\tu:=(\vv,\dd,\tP,\tilde{n}^\pm) \in \mathbb{Y}$, if $ \tu \in \mathbb{X}$  and additionally
\begin{align}
\begin{split}
\vv \in{}& \C^1 ([0,T];{\mathbb H})\cap L^4(0,T;
{\mathbb L^\infty})\,,
\\
\dd \in {}& L^4(0,T;{\mathbb W}^{1,4})\cap \C^{1}([0,T];\mathbb{W}^{1,2} \cap \mathbb{L}^\infty) \cap L^4(0,T;{\mathbb W}^{2,3}) \cap L^2(0,T; \mathbb{W}^{3,2}) \,, \\ \nonumber
\tP \in{}& L^2(0,T;{\mathbb W}^{2,2} \cap \mathcal{C}^{1}([0,T]; {\mathbb W}^{1,3})\cap L^8(0,T;{\mathbb W}^{1,\infty}) \,,\\
\tilde{n}^{\pm} \in{}& L^{1}(0,T;{\mathbb W}^{1,3}) 
\,.
\end{split}
\end{align}

As will be detailed in Definition \ref{def:diss} below, the space $\mathbb{X}$ will be the solution space, and $ \mathbb{Y}$ the space of test functions. 
The relative energy $\mathcal{R}:\mathbb{X}\times {\mathbb{Y}} \ra \R$  is
 defined 
 for a.e.~$t \in (0,T)$ by 
\begin{align}
\begin{split}
\mathcal{R}(\f u| \tu)={}&   \frac{1}2 \| \nabla ( \f d - \dd)  \|_{{\mathbb L}^2}^2
   + \frac{1}{2} \left \| \f v - \vv \right \| _{{\mathbb L}^2}^2 + \frac{1}{2} \int_\Omega | \nabla (\Phi - \tP) |_{\varepsilon(\f d)}^2 \de \f x 
  \,
  \end{split}
  \label{relEnergy}
\end{align}
and the relative dissipation $\mathcal{W}:\mathbb{X}\times \mathbb{Y} \ra \R$ for a.e.~$t \in (0,T)$ by
\begin{align}
\begin{split}
\mathcal{W}(\f u| \tu ) ={}&
\nu \| \nabla (\f v - \vv) \|_{{\mathbb L}^2}^2 + \| \f d \times  \f q  -\dd \times \tq \|_{{\mathbb L}^2}^2 
\\
&+ \int_\Omega ( n^++n^-)  | \nabla (\Phi - \tP) |_{\varepsilon(\f d )}^2 
\, \de \f x 
+ \left  \| (n^+-n^-)-(\tilde{n}^+-\tilde{n}^-) \right \|_{{\mathbb L}^2}^2
  \, . 
\end{split}\label{relW}
\end{align}
We introduce the potential $\mathcal{K}: \mathbb{Y}\ra \R$ for a.e.~$t \in (0,T)$ via
\begin{align*}
\mathcal{K}(\tu)= C\Big  (& \| \vv \|^4_{{\mathbb L}^\infty} + \| \tq\|^4_{{\mathbb L}^3} + \| \nabla \dd\|_{{\mathbb L}^3}^4
 + \bigl\| \dd\times ( ( \vv \cdot \nabla ) \dd + \tq ) \bigr\|_{{\mathbb W}^{1,3}} \\
 &+ \| \dd\times ( ( \vv \cdot \nabla ) \dd + \tq ) \|_{{\mathbb L}^\infty}  
+ \| \t \dd \|_{L^\infty(\Omega)}^{4/3} + \| \nabla \tP \|_{{\mathbb L}^\infty}^8 \\&
+ \| \nabla ^2  \tP \|_{{\mathbb L}^2} ^{2} + \| \nabla \tilde{n}^- \|_{{\mathbb L}^3} + \| \nabla \tilde{n}^+\|_{{\mathbb L}^3} + \| \t \nabla \tP\|_{{\mathbb L}^3} +1 
\Big )\,,
\end{align*}
 which measures the regularity of the test function $\tu \in \mathbb{Y}$,
and finally the solution operator $\mathcal{A}: \mathbb{Y}\ra \mathbb{X}^*$,
 which incorporates the classical formulation of  system~\eqref{simp} evaluated at the test functions $(\vv ,\dd,\tP,\tn^{\pm})$ by
\begin{align}
\langle \mathcal{A}(\tu), \bullet \rangle  = \left \langle  \begin{pmatrix}
   \t \vv + (\vv \cdot \nabla ) \vv - \nu \Delta \vv + (\nabla \dd )^T \left ( \Delta \dd +\varepsilon \nabla \tP \cdot\dd \nabla \tP \right )  + \nabla \tP (\tn^+-\tn^-)  \\
 \t \dd + (\vv \cdot \nabla ) \dd - \dd \times ( \dd \times \tq) 
\\
\t \tn^{\pm}+ (\vv \cdot \nabla ) \tn^{\pm} - \di \left (\nabla \tn^{\pm} \pm \tn^{\pm} \nabla \tP \right ) 
\end{pmatrix}
, \bullet \right \rangle 
\label{A}\,,
\end{align}
where $ \tq$ is given by $ 
 \tq :={} - \Delta \dd - \varepsilon_a (\nabla \tP \cdot \dd ) \nabla \tP \in L^2(0,T;{\mathbb L}^3)$, and $\tP$ solves $ - \di ( \varepsilon(\f d) \nabla \tP ) = \tn^+-\tn^-$. The mapping $\mathcal{A}: \mathbb{Y}\ra \mathbb{X}^*$  measures `how well' the test function $\tu \in {\mathbb Y}$  `approximately solves'  the problem, 
it is a well-defined mapping, this can be read of the regularity requirements of $\mathbb{X}$ and $\mathbb{Y}$.

\begin{definition}\label{def:diss}
The function 
$\f u:=(\f v , \f d, \Phi, n^\pm)\in \mathbb{X}$ 
is called a dissipative solution to the system~\eqref{simp}, if
there exists a $\f q\in L^2 (0,T; ( \mathbb H^1_0 \cap \mathbb L^{p/(p-2)})^*) $
with
 $\f d \times \f q \in{} L^2(0,T; {\mathbb L}^2) $, where $p$ is given in~\eqref{reg:weak}  such that
 \begin{align}
\int_0^T 
 \left \langle \t n^{\pm} , e^{\pm} \right \rangle
 \de t - \int_0^T \int_\Omega \f v n^{\pm} \cdot \nabla e^{\pm} \de \f x \de t + \int_0^T \int_\Omega \varepsilon(\f d) (\nabla n^{\pm}\pm   n^{\pm}   \nabla \Phi ) \cdot \nabla e^{\pm} \de \f x \de t &{}= 0 \,, \label{weak:c}
\intertext{for all $e ^{\pm} \in L^2(0,T;{\mathbb H}^1) $ and $ n^\pm \in [0,1] $ a.e.~in $\Omega \times (0,T)$, as well as }
- \di \bigl ( \varepsilon (\f d) \nabla \Phi \bigr ) =  n^+-n^- \quad\text{a.e.~in } \Omega \times (0,T)  \,,\label{weak:Phi}
\\
 \t \f d + (\f v \cdot \nabla )\f d -\f d \times (\f d\times  \f q)  = 0 \quad\text{a.e.~in } \Omega \times (0,T)  \,,\label{weak:d}
\end{align}
where  
\begin{align}
\int_0^T \left \langle  \f q , \f b \right \rangle \de t = \int_0^T \int_\Omega \nabla \f d \cdot \nabla \f b -\varepsilon_a   \nabla \Phi (\f d \cdot \nabla \Phi ) \cdot \f b\de \f x \de t\,\label{weak:q}
\end{align}
for all $\f b \in \C^\infty_c( \Omega \times (0,T); {\mathbb R}^d)$. 
The norm restriction is fulfilled almost everywhere,\textit{ i.e.},  $ | \f d ( \f x , t)| = 1$  a.e.~in $\Omega \times (0,T)$, as well as $ \tr(\f d ) = \f d_1 \in H^{1/2}(\partial \Omega)$, 
and the {\em relative energy inequality}
\begin{multline}
\mathcal{R}(\f u | \tu)(t) +\frac{1}{2} \int_0^t \mathcal{W}(\f u| \tu) e^{\int_s^t\mathcal{K}(\tu) \de \tau} \de s \leq   \mathcal{R}(\f u_0| \tu(0))
 e^{\int_0^t \mathcal{K}(\tu) \de s } \\+ \int_0^t \left \langle  \mathcal{A}(\tu) , \begin{pmatrix}
 \vv - \f v \\ 
  \tq -\f q + 
\f A(  \tP) ( \dd - \f d ) \\
\tP-\Phi  
\end{pmatrix} \right \rangle e^{\int_s^t\mathcal{K}(\tu) \de \tau} \de s  \\
+\int_0^t  \left ( \frac{1}{2} \| n^+-\tn^+\|_{{\mathbb L}^2}^2 +\frac{1}{2} \| n^--\tn^-\|_{{\mathbb L}^2}^2 \right )e^{\int_s^t\mathcal{K}(\tu) \de \tau} \de \tau  \,,\label{relencont}
\end{multline}
holds for a.e.~$t\in (0,T)$ and all test functions $\tu\in \mathbb{Y}$, where $ \tr(\dd ) = \f d _1 $, 
\begin{align*}
\f A(\tP) := \left (\varepsilon_a \nabla \tP \otimes \nabla \tP \right )\,, 
\quad
\tq := - \Delta \dd - \varepsilon_a (\nabla \tP \cdot \dd ) \nabla \tP \,, \quad \text{and } - \di \left ( \varepsilon(\dd) \nabla \tP \right ) = \tn^+-\tn^-   \,. 
\end{align*}.

\end{definition}
\begin{theorem}
Let $\Omega \subset \R^d$ for $d= 2,3$ be a bounded convex Lipschitz domain. Let $\bigl(\f v_0,\f d_0, n^{\pm}_0\bigr)
 \in {\mathbb V} \times \bigl[{\mathbb W}^{2,2} \bigr]^3 \times \bigl[ {\mathbb L}^\infty\bigr]^2$, with $|\f d_0 |= 1$ and $n^{\pm}_0\in[0,1]$ a.e.~in $\Omega$ such that $\int_\Omega n_0^+-n_0^-\de \f x = 0$. 
 We additionally assume that there exists a $\bar{\f d}_1\in \mathbb W^{2,2}(\Omega)$ such that $ \tr (\bar{\f d}_1) = \f d _1 = \tr(\f d_0)$. 
Then there exists a dissipative solution according to Definition~\ref{def:diss}. 
\end{theorem}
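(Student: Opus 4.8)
\textbf{Strategy.} The plan is to realise the dissipative solution as an \emph{unconditional} limit (as $h,k\to0$) of the iterates of the structure-preserving fully discrete scheme~\eqref{dis}. Three things have to be done: (a) solvability of the scheme together with the discrete analogues of the \textit{a priori} estimates of Theorem~\ref{thm:cont}; (b) compactness and identification of the limit in~\eqref{weak:c}--\eqref{weak:q}; (c) an \emph{approximate relative energy inequality} for the scheme which, after passing to the limit, gives~\eqref{relencont}.

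\textbf{Step 1 (scheme, structure, uniform bounds).} For each $(h,k)$ one first shows existence of iterates $(\f v^j_h,\f d^j_h,\Phi^j_h,n^{\pm,j}_h)$ by a Brouwer/degree argument on each time level (Theorem~\ref{thm:disex}), arranged so that they inherit: a discrete energy inequality; the nodal unit-length property $|\f d^j_h(\f x_\ell)|=1$ for every $\f x_\ell\in\mathcal{N}_h$ (via mass-lumping and the skew form of the director equation); mass conservation for the charges; and the discrete maximum principle $n^{\pm,j}_h\in[0,1]$, the latter being exactly where assumptions~\textbf{(A1)} (strong acuteness, the $M$-matrix property) and~\textbf{(A2)} are used. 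The discrete energy law then supplies bounds \emph{independent of $(h,k)$}: $\f v_h$ in $L^\infty(\mathbb{H})\cap L^2(\mathbb{V})$, $\nabla\f d_h$ in $L^\infty(\mathbb{L}^2)$, $\nabla\Phi_h$ in $L^\infty(\mathbb{L}^2)$ and, by discrete elliptic regularity as in Theorem~\ref{thm:cont}~v), in $L^\infty(\mathbb{L}^p)$ for some $p>2$; the discrete director torque $\f d_h\times\f q_h$ in $L^2(\mathbb{L}^2)$, where $\f q^j_h$ is the surrogate of $-\Delta\f d^j_h-\varepsilon_a(\nabla\Phi^j_h\cdot\f d^j_h)\nabla\Phi^j_h$ built from the discrete Laplacian~\eqref{disklapl}; and $n^\pm_h$ in $L^\infty(\mathbb{L}^\infty)\cap L^2(\mathbb{H}^1)$, with discrete time derivative bounded in $L^2((\mathbb{H}^1)^*)$.

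\textbf{Step 2 (compactness, limiting equations).} Interpolating the iterates in time and combining the uniform bounds with a discrete Aubin--Lions argument, extract a subsequence converging weakly-$*$ in the spaces defining $\mathbb{X}$ and strongly (and a.e.~in $\Omega\times(0,T)$) in $L^2(0,T;\mathbb{L}^2)$ for $\f v_h,\f d_h,n^\pm_h$. The nodal constraint together with Lemma~\ref{lem:interpolation} yields $\mathcal{I}_h(|\f d_h|^2)\to1$, hence $|\f d|=1$ a.e., while $\tr\f d=\f d_1$ is inherited from the fixed boundary data. The Poisson identity~\eqref{weak:Phi} passes to the limit by strong convergence of $\f d_h$ and weak convergence of $\nabla\Phi_h$; the Nernst--Planck equations~\eqref{weak:c} pass because $n^\pm$ carries enough regularity, the products $\f v n^\pm$ and $n^\pm\nabla\Phi$ being handled by strong convergence and the extra $\mathbb{L}^p$-bound on $\nabla\Phi$ and the mass-lumping errors vanishing by~\eqref{dismass} and Lemma~\ref{lem:masslumping}; the director equation~\eqref{weak:d} and the defining relation~\eqref{weak:q} for $\f q$ pass by identifying $\f d_h\times\f q_h\rightharpoonup\f d\times\f q$ in $L^2(\mathbb{L}^2)$ and testing against $\f b\in\C^\infty_c$, with $\f q$ itself controlled only in the weaker dual $L^2((\mathbb{H}^1_0\cap\mathbb{L}^{p/(p-2)})^*)$. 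This gives $\f u\in\mathbb{X}$ and all equations of Definition~\ref{def:diss} except~\eqref{relencont}.

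\textbf{Step 3 (approximate relative energy inequality; the main obstacle).} For every test function $\tu\in\mathbb{Y}$ one tests the discrete momentum, director and charge equations with finite-element interpolants of $\vv-\f v^j_h$, $\tq-\f q^j_h+\f A(\tP)(\dd-\f d^j_h)$, $\tP-\Phi^j_h$ and their discrete increments, expands the quadratic relative energy $\mathcal{R}$ via the discrete product rule (absorbing the $\tfrac k2\|\td(\cdot)\|^2$ terms), collects the dissipation $\mathcal{W}$, and sorts the remainder into terms dominated by $\mathcal{K}(\tu)\,\mathcal{R}$ --- so that Lemma~\ref{lem:disgron} produces the weights $e^{\int_s^t\mathcal{K}(\tu)\,\de \tau}$, with $\mathcal{A}(\tu)$ collecting the residual of the scheme evaluated at $\tu$ --- and into a discretization error $\mathcal{E}_{h,k}$ (Proposition~\ref{prop:disrel}). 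Letting $(h,k)\to0$: $\mathcal{R}$ and $\int\mathcal{W}$ are weakly lower semicontinuous --- this is precisely where only weak convergence of $\nabla\f d_h$ and of $\nabla(\Phi_h-\tP)$ is needed, and the reason the dissipative rather than the weak formulation is reachable for the extra elastic stress tensor --- the lower-order terms converge by strong convergence, and $\mathcal{E}_{h,k}\to0$ by Lemmas~\ref{lem:interpolation},~\ref{lem:masslumping},~\ref{lem:inv} and~\eqref{dismass}, yielding~\eqref{relencont}. The \textbf{main difficulty} is the control of $\mathcal{E}_{h,k}$: the nodal unit-length constraint commutes neither with the $\mathbb{L}^2$-projections nor with $\Delta_h$, so testing the director equation generates consistency errors involving $\Delta_h\f d^j_h$, whose $\mathbb{L}^2$-norm is only bounded $h$-dependently and must be balanced against the powers of $h$ from Lemma~\ref{lem:masslumping} (this is why $\dd,\tP$ carry the high regularity built into $\mathbb{Y}$ and $\mathcal{K}$, and why the inverse inequality enters, possibly with a mild coupling of $h$ and $k$); the cross terms between the elastic stress and the Nernst--Planck--Poisson part are the other delicate point, handled through the $\mathbb{L}^p$, $p>2$, bound on $\nabla\Phi$ and the $\mathbb{W}^{1,\infty}$- and $\mathbb{W}^{2,3}$-bounds on $\tP$ and $\dd$. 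Finally, inserting the local strong solution --- which belongs to $\mathbb{Y}$ --- for $\tu$ in~\eqref{relencont} and applying Gronwall's inequality yields, as a by-product, strong convergence to the classical solution for as long as the latter exists.
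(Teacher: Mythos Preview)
Your outline follows the paper's own route (Scheme~\ref{algo1} $\to$ Theorem~\ref{thm:disex} $\to$ Proposition~\ref{prop:disrel} $\to$ Section~\ref{convergence1}) and the decomposition into the three steps is right. Two points, however, are misstated and would cause trouble if taken literally.

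First, in Step~2 you claim strong convergence of $\f v_h$ in $L^2(0,T;\mathbb L^2)$. This is not available: there is no uniform bound on $\partial_t \f v_h$ in any dual space (the elastic stress $\nabla\f d^\top\Delta\f d$ is only in $L^1$-type spaces), so Aubin--Lions does not apply to the velocity. The paper extracts only weak-$*$ convergence of $\f v_h$ (see~\eqref{conv}), and this is precisely why the momentum equation cannot be passed to the limit and the dissipative formulation is used instead. Fortunately none of your limit identifications actually needs strong convergence of $\f v$: in~\eqref{weak:c} and~\eqref{weak:d} the products $\f v n^\pm$ and $(\f v\cdot\nabla)\f d$ pass by weak--strong pairing, and in Step~3 you correctly invoke weak lower semicontinuity.

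Second, the $\mathbb L^p$-bound on $\nabla\Phi$ with $p>2$ is not obtained at the discrete level by ``discrete elliptic regularity''; it holds for the \emph{limit} $\Phi$ via Meyers' estimate (Theorem~\ref{thm:cont}~v)). The paper then uses this extra regularity of the limit, together with strong convergence of $\f d_h$ and $n^\pm_h$, to upgrade $\nabla\Phi_h\rightharpoonup\nabla\Phi$ to strong $L^2$-convergence by the energy-difference trick~\eqref{StrongPhi}; this strong convergence is what allows you to pass to the limit in~\eqref{weak:q}. Relatedly, in Step~1 you omit the $h^\alpha$- and $h^\beta$-regularisation terms in~\eqref{dis:v} and~\eqref{dis:q}: these are not cosmetic, they are what produces the $h$-dependent $\mathbb L^6$-bound on $\nabla\Phi^j$ (via~\eqref{phip}) needed for the $M$-matrix argument, and the $h^\beta\|\Delta_h\f d^j\|^2$-control you yourself invoke in Step~3 when estimating the consistency error $\mathcal E_{h,k}$.
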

We are going to prove the theorem by the convergence of a fully discrete, implementable scheme in Section~\ref{sec:dis}. 

\begin{remark}
The variational derivative $\f q$ may be interpreted via
\begin{align*}
\int_0^T \left ( \f d \times   \f q , \f b \right )  \de t = \int_0^T \int_\Omega  \f d \times \nabla \f d \cdot \nabla \f b -\varepsilon_a   \f d \times \nabla \Phi (\f d \cdot \nabla \Phi ) \cdot \f b\de \f x \de t\,,
\end{align*}
since~\eqref{weak:q} may be tested by $ \f b = \f d \times \f h $ for $ \f h \in \C_0^\infty (\Omega \times (0,T))$. Note that in this formulation, all integrals are well defined, since $ \f d \times \f q \in  L^2(0,T; \mathbb L^2 )$ and $\Phi \in L^\infty (0,T ; \mathbb W^{1,p})$, where $p>2$, but $p$ can be arbitrarily close to $2$. 
\end{remark}
\begin{remark}[Continuity in time]
Considering functions $\f u= ( \f v , \f d , \Phi, n^{\pm})\in \mathbb{X}$, we may deduce additional regularity in time. 
From the regularity of $\f d$, we observe by a standard result (see for instance~\cite[Lemma~6]{simon}) that $\f d \in \C_w([0,T];\mathbb{W}^{1,2}) $. Using compact embeddings and the uniform bounds on $
\f d$ implies $ \f d \in \C([0,T]; \mathbb{L}^p)$ for any $p\in [1,\infty)$. 
For the charges we find from the standard embedding $ L^2(0,T;\mathbb{W}^{1,2})\cap W^{1,2}(0,T; (\mathbb{W}^{1,2})^*) \hookrightarrow \C([0,T]; \mathbb{L}^2)$ that $ n^{\pm} \in \C([0,T]; \mathbb{L}^2)$. From the uniform  boundedness, we even observe  $ n^{\pm} \in \C([0,T]; \mathbb{L}^p)$ for any $p\in (1,\infty)$. 
For the electric field, $\Phi$, we may deduce $ \Phi \in \C([0,T]; \mathbb{W}^{1,2}) $ by the following calculation, which employs~\eqref{simp:Phi}: 
\begin{align*}
\int_\Omega | \nabla \Phi (t) - \nabla \Phi(t_n) |^2_{\varepsilon( \f d(t_n))} \de \f x ={}& \Bigl ( \nabla \Phi(t),  \bigl[ \varepsilon \bigl(\f d(t_n) \bigr) - \varepsilon\bigl(\f d (t)\bigr) \bigr]  \nabla \bigl(\Phi( t) - \Phi (t_n)\bigr) \Bigr) \\&  + \Bigl( n^+ ( t) - n^-(t) - \bigl[n^+(t_n ) - n^-(t_n)\bigr], \nabla \bigl(\Phi (t) -  \Phi(t_n)\bigr) \Bigr) 
\\
\leq{}& \frac{1}{2}\int_\Omega | \nabla \Phi (t) - \nabla \Phi(t_n) |^2_{\varepsilon( \f d(t_n))} \de \f x\\&  +C   \| \nabla \Phi(t) \|_{\mathbb{L}^p}^2 \| \varepsilon(\f d(t_n) ) - \varepsilon(\f d (t)) \|_{\mathbb{L}^{2p/(p-2)}} ^2\\& + C\Bigl( \| n^+(t)-n^+(t_n)\|_{\mathbb{L}^2} + \| n^-(t)-n^-(t_n)\|_{\mathbb{L}^2} \Bigr) \,,
\end{align*}
where we used the uniform coercivity of the matrix $\varepsilon$, and that $p$ is given according to~\eqref{reg:weak}. 

We note that we do not claim any continuity in time for the velocity field, since we lack any uniform control on its time-derivative. In the dissipative solution framework, this additional regularity is not needed to give sense to the initial values as in the weak solution framework. 
If we would instead show (additionally) that~\eqref{simp:v} is fulfilled in a measure-valued sense, we would gain the additional regularity $\f v \in \C_w([0,T]; \mathbb{H})$. 
\end{remark}
\begin{remark}[Measure-valued formulation]\label{rem:measure}
Already our initial formulation~\eqref{simp} relies on an inte\-gra\-tion-by-parts formula, which we took from~\cite{noel}.
In our simplified case, for smooth functions, it takes the form
\begin{align}
\begin{split}
\nabla \f d^T \left ( \Delta \f d + \varepsilon_a ( \f d \cdot \nabla \Phi ) \nabla \Phi \right ) + ( n^+-n^-) \nabla \Phi 
={}& \nabla \f d^T \Delta \f d +  \varepsilon_a \nabla \f d ^T  ( \f d \cdot \nabla \Phi ) \nabla \Phi - \di ( \varepsilon(\f d) \nabla \Phi) \nabla \Phi 
\\
={}& \di \left ( \nabla \f d ^T \nabla \f d\right ) - \frac{1}{2}\nabla | \nabla \f d|^2 +  \varepsilon_a \nabla \f d ^T  \nabla \Phi   ( \f d \cdot \nabla \Phi ) \\&- \di \left ( \nabla \Phi \otimes \varepsilon( \f d) \nabla \Phi \right )  + \nabla ^2 \Phi \varepsilon ( \f d ) \nabla \Phi 
\\={}& \di \left ( \nabla \f d ^T \nabla \f d -  \nabla \Phi \otimes \varepsilon( \f d) \nabla \Phi \right )  \\&- \frac{1}{2}\nabla \left ( | \nabla \f d|^2 - | \nabla \Phi |_{\varepsilon(\f d)}^2 \right ) \,,
\end{split}
\label{intbypart}
\end{align}
where the term in the last line can be incorporated into a reformulation of the pressure. 
Using this reformulation of~\eqref{simp:v} allows to show the weak-sequential stability of a measure-valued solution concept,  where the relative energy inequality~\eqref{relencont} of Definition~\ref{def:diss} would be replaced by 
\begin{multline*}
\int_0^T  \nu\left ( \nabla \f v , \nabla \varphi\right ) - \left (  \f v , \t\varphi \right ) - \left ( \f v \otimes \f v , \nabla \varphi \right )  - \left ( \nabla \f d ^T \nabla \f d - \nabla \Phi \otimes \varepsilon( \f d ) \nabla \Phi , \nabla \varphi \right ) - \left \langle \f m , \nabla \varphi \right \rangle \de t \\= \int_\Omega \f v_0 \varphi \de \f x  \,
\end{multline*}
for all $ \varphi \in \C_c^\infty([0,T)) \otimes \pmb{\mathcal{V}}$, and the energy inequality
\begin{multline}
\frac{1}{2} \left ( \| \f v \|_{{\mathbb L}^2}^2 + \| \nabla \f d \|_{{\mathbb L}^2}^2 + \int_\Omega | \nabla \Phi |_{\varepsilon(\f d)}^2 \de \f x + \langle \f m , I \rangle  \right ) \Bigg|_0^t 
+ \int_0^t\left ( \nu \| \nabla \f v \|_{{\mathbb L}^2}^2 
+ \left \| \f d \times \f q  \right \|_{{\mathbb L}^2}^2 \right ) \de s 
\\
+\int_0^t \left ( \| n^+-n^- \| _{{\mathbb L}^2}^2 +\int_\Omega (n^++n^-) | \nabla \Phi |_{\varepsilon(\f d )}^2 \de \f x \right ) \de s \leq  0\,,
\end{multline}
for a.e.~$t\in (0,T)$, 
where $\f m \in L^\infty_{w^*}(0,T;\mathcal{M}( \ov \Omega ; M^{d\times d}_+)$ and $M^{d\times d}_+ $ denotes the set of symmetric semi-positive matrices. We set $\f m (0) =0$. 
For this formulation, one could also gain the additional regularity $\f  v \in W^{1,2}(0,T; (\mathbb{W}^{1,4}\cap \mathbb{V})^*)$ such that $ \f v \in \C_w([0,T];\mathbb{H})$. 

We decided against this formulation, since it seems difficult to numerically keep track of the defect measure $\f m$. Additionally, to show convergence to this formulation, basically requires that an integration-by-parts formula similar to~\eqref{intbypart} holds on the approximate level, as long as structure preserving approximations (complying to a discrete energy principle) are concerned. 
We will rather focus on a scheme that preserves the structure of the continuous system, but where an according integration-by-parts formula is not known to hold.
\end{remark}
\subsection{Relative energy inequality} 
The construction of a {\em dissipative solution} for (\ref{simp}) in Section \ref{convergence1} heavily relies on an (approximate) relative energy inequality for the approximate problem in Section \ref{sec:dis}.
For convenience, we prove the relative energy inequality formally. 
\begin{proposition}[Relative energy inequality]\label{prop:cont}
Let $\f u \in {\mathbb X}$ be a classical  solution of (\ref{simp}).
Then it fulfills Definition~\ref{def:diss}.
\end{proposition}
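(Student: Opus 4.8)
The plan is to verify that a classical solution $\f u=(\f v,\f d,\Phi,n^\pm)$ satisfies each clause of Definition~\ref{def:diss}: the regularity $\f u\in\mathbb X$, the weak/strong equations~\eqref{weak:c}--\eqref{weak:q}, the pointwise norm restriction, and — the substantial part — the relative energy inequality~\eqref{relencont}. The first three items are essentially bookkeeping: membership in $\mathbb X$ follows from the \textit{a priori} estimates of Theorem~\ref{thm:cont} (energy and charge conservation give the $L^\infty_tL^2$ and $L^2_tH^1$ bounds, the maximum principle gives $n^\pm\in L^\infty$, elliptic regularity gives $\Phi\in L^\infty_t\mathbb W^{1,p}$, and the time-derivative bounds come from reading off the equations); equations~\eqref{weak:c}, \eqref{weak:Phi}, \eqref{weak:d} with $\f q:=-\Delta\f d-\varepsilon_a(\nabla\Phi\cdot\f d)\nabla\Phi$ and~\eqref{weak:q} are just the classical equations~\eqref{simp:c}, \eqref{simp:Phi}, \eqref{simp:d} integrated against test functions, using the algebraic identity $(I-\f d\otimes\f d)\f a=-\f d\times(\f d\times\f a)$ valid when $|\f d|=1$; and the norm restriction is clause~iii) of Theorem~\ref{thm:cont}.

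The core of the argument is to derive~\eqref{relencont}. I would expand the three quadratic terms of $\mathcal R(\f u|\tu)$ as $\frac12\|\f v\|^2+\frac12\|\vv\|^2-(\f v,\vv)$, and similarly for $\nabla\f d$ and $\nabla\Phi$ (taking care that the weight $\varepsilon(\f d)$ in the last term depends on the solution, not the test function). The ``diagonal'' terms $\frac12\|\f v\|^2+\frac12\|\nabla\f d\|^2+\frac12\int|\nabla\Phi|^2_{\varepsilon(\f d)}$ are handled by the energy equality~i) of Theorem~\ref{thm:cont}, which converts their time derivative into exactly $-\mathcal W$-type dissipation terms (the $\nu\|\nabla\f v\|^2$, $\|\f d\times\f q\|^2$, $\int(n^++n^-)|\nabla\Phi|^2_{\varepsilon(\f d)}$, $\|n^+-n^-\|^2$ contributions). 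The ``test-function diagonal'' terms $\frac12\|\vv\|^2+\dots$ evolve by the same computation applied to $\tu$, which is \emph{not} an exact solution — this is precisely where $\mathcal A(\tu)$ enters: differentiating and inserting $\tq,\tP$ produces $\langle\mathcal A(\tu),(\vv,\tq,\tP)^\top\rangle$ plus genuine dissipation. The cross terms $-(\f v,\vv)$, $-(\nabla\f d,\nabla\dd)$, $-\int\nabla\Phi\cdot\varepsilon(\f d)\nabla\tP$ are differentiated by the product rule, using equation~\eqref{simp:v} tested with $\vv$ and equation for $\tu$ tested with $\f v$, etc.; after integration by parts the convective and elastic-stress terms must be rearranged so that the resulting quadratic remainders are controlled, via Young's inequality, by $\mathcal K(\tu)\,\mathcal R(\f u|\tu)$ plus a fraction of $\mathcal W(\f u|\tu)$ that can be absorbed. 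The awkward correction $\f A(\tP)(\dd-\f d)$ inside the second slot of $\mathcal A(\tu)$ arises exactly from reconciling the director cross-product terms $\|\f d\times\f q-\dd\times\tq\|^2$ with the mixed terms coming from the $\varepsilon_a$-coupling in the $\f q$-equation; I would introduce it precisely to cancel the non-sign-definite leftover. Collecting everything yields a differential inequality $\frac{d}{dt}\mathcal R+\frac12\mathcal W\le\mathcal K\,\mathcal R+\langle\mathcal A(\tu),\dots\rangle+\frac12(\|n^+-\tn^+\|^2+\|n^--\tn^-\|^2)$, and an application of Gronwall's lemma with weight $e^{\int_s^t\mathcal K\,d\tau}$ produces~\eqref{relencont}.

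The main obstacle is the elastic stress tensor $\nabla\f d^\top(\Delta\f d+\varepsilon_a(\f d\cdot\nabla\Phi)\nabla\Phi)$ in~\eqref{simp:v} together with the matching anisotropic term $-\varepsilon_a(\nabla\Phi\cdot\f d)\nabla\Phi$ in the director equation: in the cross terms one must integrate by parts and use the identity $(\nabla\f d^\top\f q,\vv)=-(\f q,(\vv\cdot\nabla)\f d)$ (solenoidality of $\vv$), then combine this with the $\dd$-equation tested against the appropriate director comparison quantity so that all occurrences of $\f q$ and $\tq$ assemble into $\|\f d\times\f q-\dd\times\tq\|^2$ up to terms bounded by $\mathcal K\mathcal R$. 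Keeping track of the $\varepsilon_a$-coupled electric terms throughout this rearrangement — and verifying that every leftover is either a perfect square, absorbable into $\tfrac12\mathcal W$, or of the form (regularity of $\tu$)$\times\mathcal R(\f u|\tu)$ so that it feeds the Gronwall weight $\mathcal K(\tu)$ — is the delicate bookkeeping that the precise definitions of $\mathcal W$, $\mathcal K$, and the $\f A(\tP)$-correction were designed to make work; the authors announce they do this computation ``formally'', so I would present it at the same level, deferring the fully rigorous version to the discrete analysis of Section~\ref{sec:dis}.
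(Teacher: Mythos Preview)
Your proposal is correct and follows essentially the same approach as the paper: decompose $\mathcal R(\f u|\tu)$ into diagonal solution terms, diagonal test-function terms, and cross terms; handle the first by the energy equality of Theorem~\ref{thm:cont}, the second by the analogous computation for $\tu$ (which produces $\langle\mathcal A(\tu),\cdot\rangle$), and the cross terms by testing the equations of $\f u$ with $\vv,\tq,\tP$ and vice versa; then rearrange all remainders so they are either absorbed into $\tfrac12\mathcal W$ or bounded by $\mathcal K(\tu)\mathcal R(\f u|\tu)$, and conclude with Gronwall. The paper likewise treats the computation at the formal level here and carries out the full bookkeeping only in the discrete setting, exactly as you anticipate.
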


\begin{proof}
The only thing we have to prove is that $\f u$ fulfills the relative energy inequality~\eqref{relencont}. 
All calculations hold for all $t\in [0,T]$. 
 For any $\tu \in {\mathbb Y}$, we
 decompose the relative energy into the related two energy parts, as well as a mixed part:
\begin{align}
\begin{split}
\mathcal{R}( \f u | \tu) ={}& \frac{1}{2}\left ( \| \f v \|_{{\mathbb L}^2}^2 + \| \nabla \f d \|_{{\mathbb L}^2}^2 + \int_\Omega | \nabla \Phi |_{\varepsilon(\f d)}^2 \de \f x  \right )  +\frac{1}{2}\left ( \| \vv \|_{{\mathbb L}^2}^2 + \| \nabla \dd \|_{{\mathbb L}^2}^2 + \int_\Omega | \nabla \tP |_{\varepsilon(\f d)}^2\de \f x  \right ) \\
&-\bigl ( \nabla \f d, \nabla \dd\bigr) -( \f v , \vv)  
- \bigl( \nabla \Phi , \varepsilon(\f d) \nabla \tP \bigr)
\,.
\end{split}
\end{align}
Similarly, we obtain for the relative dissipation
\begin{align*}
\mathcal{W}(\f u| \tu ) ={}&
\mathcal{W}(\f u | \bf 0 ) + \mathcal{W}(\bf 0|\tu) - 2 \nu \left ( \nabla \f v, \nabla \vv\right ) - 2 \left ( \f d \times \f q , \dd \times \tq\right ) 
\\&-  2 \left (n^+-n^-,\tn^+-\tn^- \right )-
2 \int_\Omega ( n^+-n^-) \nabla \Phi \cdot \varepsilon(\f d )\nabla \tP \de \f x 
  \, . 
\end{align*}

The energy inequality for the solution $\f u $ is given by
\begin{multline*}
\frac{1}{2} \left ( \| \f v \|_{{\mathbb L}^2}^2 + \| \nabla \f d \|_{{\mathbb L}^2}^2 + \int_\Omega | \nabla \Phi |_{\varepsilon(\f d)}^2 \de \f x \right ) \Big|_0^t 
+ \int_0^t\left ( \nu \| \nabla \f v \|_{{\mathbb L}^2}^2 
+ \left \| \f d \times \f q \right \|_{{\mathbb L}^2}^2 \right ) \de s 
\\
+\int_0^t\left ( \| n^+-n^- \| _{{\mathbb L}^2}^2 + \int_\Omega  (n^++n^-) | \nabla \Phi |_{\varepsilon(\f d )}^2 \de \f x \right )\de s \leq 0\,.
\end{multline*}
For the test function, we observe that
\begin{multline*}
\frac{1}{2}\left ( \| \vv \|_{{\mathbb L}^2}^2 + \| \nabla \dd \|_{{\mathbb L}^2}^2 + \int_\Omega | \nabla \tP |_{\varepsilon(\dd)}^2\de \f x  \right )+ \frac{1}{2}\int_\Omega \left (| \nabla \tP |_{\varepsilon(\f d)}^2 - | \nabla \tP |_{\varepsilon(\dd)}^2 \right )\de \f x  \\
+ \int_0^t\left ( \nu \| \nabla \vv \|_{{\mathbb L}^2}^2 
+ \left \| \dd \times\tq \right \|_{{\mathbb L}^2}^2 \right ) \de s 
\\
+\int_0^t\left (  \| \tn^+-\tn^- \| _{{\mathbb L}^2}^2 + \int_\Omega (\tn^++\tn^-)| \nabla \tP |_{\varepsilon(\dd )}^2 \de \f x\right )  \de s =
 \int_0^t \left \langle\mathcal{A}(\tu) , \begin{pmatrix}
 \vv\\ \tq\\ \tP
\end{pmatrix} \right  \rangle \de s \,.
\end{multline*}
Multiplying~\eqref{simp:v} by $\vv$, integrating,  and mimicking the same calculations for the test function tested by $\f v$ implies  
\begin{multline*}
- \bigl( \f v , \vv\bigr) \Big |_0^t = \int_0^t \bigl( ( \f v \cdot \nabla) \f v ,\vv \bigr) + \bigl( ( \vv\cdot\nabla) \vv , \f v \bigr) +2 \nu( \nabla \f v , \nabla \vv )  - \left \langle \mathcal{A}(\tu) , \begin{pmatrix}
\f v \\ 0\\0
\end{pmatrix}\right \rangle  \de s
\\-\int_0^t \bigl ( ( \vv \cdot \nabla) \f d , \f q  \bigr ) + \Bigl ( ( \f v \cdot \nabla ) \dd , \tq \Bigr )- \Bigl ((n^+-n^-) \nabla \Phi , \vv \Bigl ) - \Bigl ( (\tn^+-\tn^-) \nabla \tP , \f v \Bigr )  \de s \,.
\end{multline*}
Multiplying~\eqref{simp:d} by $\tq$, integrating,  and mimicking the same calculations for the test function tested by $\f q$ implies 
\begin{multline}
- \left ( \nabla \f d , \nabla \dd \right ) \Big|_0^t = \int_0^t \bigl ( ( \f v \cdot \nabla) \f d , \tq \bigr) + \left ( (\vv\cdot\nabla) \dd , \f q \right ) + \left ( \f d \times \f q , \f d \times \tq\right ) + \left ( \dd \times \tq , \dd \times \f q \right ) \de s 
\\- \varepsilon _a  \int_0^t \Bigl( \t \f d \nabla \tP , \nabla \tP \cdot \dd \Bigr) + \Bigl( \t \dd \cdot \nabla \Phi , \nabla \Phi \cdot \f d \Bigr) \de s - \int_0^t\left \langle \mathcal{A}(\tu) \begin{pmatrix}
0\\\f q \\0
\end{pmatrix}\right  \rangle \de s \,.
\end{multline}

For the remaining term in the relative energy, we use  equation~\eqref{simp:Phi}, 
\begin{align*}
- \bigl( \nabla \Phi ,\varepsilon(\f d) \nabla \tP \bigr)
={}&-
\left (n^+-n^-  ,  \tP\right ) 
\end{align*}

Multiplying~\eqref{simp:Phi} by $\t \tP$ and adding~\eqref{simp:c} multiplied by $\tP$, as well as mimicking the same for $\tu$ leads to
\begin{align*}
- \Bigl( (n^+-n^-), \tP \Bigr) \Bigg|_0^t ={}&- \int_0^t  \Bigl( \bigl(\f v (n^+-n^-) , \nabla \tP \bigr)+ \bigl(\vv(\tn^+-\tn^-),\nabla \Phi \bigr) \Bigr) \de s 
\\& -\int_0^t \Bigl( \varepsilon(\f d ) \nabla \Phi  , \nabla\t \nabla \tP \Bigr) - \Bigl( \t \bigl( \varepsilon(\dd) \nabla\tP\bigr) , \nabla\Phi \Bigr) \de s \\
& + \int_0^t \Bigl( \varepsilon(\f d ) \nabla (n^+-n^-) ,\nabla \tP \Bigr) + \Bigl(\varepsilon(\dd) \nabla (\tn^+-\tn^-) , \nabla \Phi \Bigr) \de s 
\\& +\int_0^t \Bigl( \varepsilon(\f d) \nabla \Phi  , (n^++n^-)  \nabla \tP \Bigr) +\Bigl( \varepsilon(\dd) \nabla \tP , (\tn^++\tn^-) \nabla \Phi \Bigr)  \de s 
\\
&  - \int_0^t\left \langle  \mathcal{A}(\tu), \begin{pmatrix}
\f 0\\ \f 0\\ \tP
\end{pmatrix}\right \rangle \de s\,.
\end{align*}
For the different terms on the right-hand side, we infer
\begin{align*}
\Bigl( \varepsilon(\f d ) \nabla n^{\pm} , \nabla  \tP \Bigr) +& \Bigl(\varepsilon(\dd) \nabla \tn^{\pm} , \nabla \Phi \Bigr)  
\\={}& \Bigl( \varepsilon(\dd ) \nabla n^{\pm} , \nabla  \tP \Bigr) + \Bigl(\varepsilon(\f d ) \nabla \tn^{\pm} , \nabla \Phi \Bigr) \\&+ \Bigl( (\varepsilon(\f d )-\varepsilon(\dd)) \nabla n^{\pm}, \nabla \tP \Bigr) + \Bigl( \bigl(\varepsilon(\dd)-\varepsilon(\f d) \bigr) \nabla \tn^{\pm} , \nabla \Phi \Bigr) 
\\={}& 
\Bigl( \tn^+-\tn^-,n^{\pm}\Bigr) +  \left ( n^+-n^-, \tn^{\pm}\right )
 \\&+ \Bigl( \bigl(\varepsilon(\f d )-\varepsilon(\dd)\bigr) \nabla (n^{\pm}- \tn^{\pm}) , \nabla \tP \Bigr ) + \Bigl( \bigl(\varepsilon(\dd)-\varepsilon(\f d) \bigr) \nabla \tn^{\pm} , \nabla( \Phi - \tP) \Bigr) \,,
\end{align*}
and
\begin{multline*}
 \Bigl ( \varepsilon(\f d) \nabla \Phi  , (n^++n^-)  \nabla \tP \Bigr) +\Bigl ( \varepsilon(\dd) \nabla \tP , (\tn^++\tn^-) \nabla \Phi \Bigr )
\\
= 2\Bigl ( \varepsilon(\f d) \nabla \Phi  , (n^++n^-)  \nabla \tP \Bigr)  + \Bigl( \varepsilon(\dd) (\tn^++\tn^+)- \varepsilon(\f d) (n^++n^-), \nabla \tP \o\nabla \Phi \Bigr )\, .
\end{multline*}
The next term is
\begin{align*}
 \Bigl ( \varepsilon(\f d ) \nabla \Phi  , \t \nabla \tP \Bigr ) - \Bigl ( \t ( \varepsilon(\dd) \nabla\tP) , \nabla\Phi \Bigr ) 
 ={}&  \Bigl ( (\varepsilon(\f d )- \varepsilon(\dd) ) \nabla \Phi  , \t \nabla \tP \Bigr) - \Bigl ( \t  \varepsilon(\dd)\nabla \tP  , \nabla\Phi \Bigr)
 \\
 ={}& \Bigl ( (\varepsilon(\f d )- \varepsilon(\dd) ) (\nabla \Phi -\nabla\tP) , \t \nabla \tP \Bigr )\\&+\Bigr( (\varepsilon(\f d )- \varepsilon(\dd) ) \nabla\tP , \t \nabla \tP \Bigr) - \Bigl ( \t  \varepsilon(\dd)\nabla \tP  , \nabla\Phi \Bigr)\,.
\end{align*}

From calculating the derivative of
\begin{align*}
 \frac{1}{2} \Bigl (   \varepsilon( \f d ) - \varepsilon(\dd) , \nabla \tP \otimes \nabla \tP  \Bigr )\Bigg|_0^t
={}&  \varepsilon_a \int_0^t  \Bigl(    \t \f d , \nabla \tP (\f d \cdot \nabla \tP) \Bigr) -\Bigl( \t \dd , \nabla \tP (\dd \cdot \nabla \tP) \Bigr ) \de s \\&+\frac{1}{2}\int_0^t\Bigl (  \t \left ( \nabla \tP \otimes \nabla \tP \right ),   \varepsilon( \f d ) - \varepsilon(\dd)    \Bigr )\de s \,,
\end{align*}
we find for the terms incorporating $\varepsilon_a$ that
\begin{align*}
& - \int_0^t\varepsilon_a \Bigl ( \f d \cdot \nabla \Phi , \nabla \Phi \t \dd \Bigr ) + \varepsilon_a \Bigl ( \dd \cdot \nabla \tP, \t \f d \cdot \nabla \tP \Bigr)  \de s+  \frac{1}{2} \Bigl(   \varepsilon( \f d ) - \varepsilon(\dd) , \nabla \tP \otimes \nabla \tP  \Bigr ) \Bigg\vert_0^t 
\\&  -\int_0^t \Bigl( (\varepsilon(\f d )- \varepsilon(\dd) ) \nabla\tP , \t \nabla \tP \Bigr) - \Bigl( \t  \varepsilon(\dd)\nabla \tP  , \nabla\Phi \Bigr)\de s \\
& =- \int_0^t\varepsilon_a \Bigl( \f d \cdot \nabla \Phi , \nabla \Phi \t \dd \Bigr) + \varepsilon_a \Bigl( \dd \cdot \nabla \tP, \t \f d \cdot \nabla \tP \Bigr)   \de s
\\&\quad+  \varepsilon_a \int_0^t  \Bigl(    \t \f d , \nabla \tP (\f d \cdot \nabla \tP)\Bigr) -\Bigl(\t \dd , \nabla \tP (\dd \cdot \nabla \tP) \Bigr) \de s
 +\int_0^t\Bigl( \t  \varepsilon(\dd)\nabla \tP  , \nabla\Phi \Bigr)\de s \\
 &= \varepsilon_a \int_0^t \Bigl( \t\f d - \t \dd, \nabla \tP \left (\nabla \tP \cdot ( \f d -\dd ) \right )\Bigr) + \Bigl( \t \dd ( \nabla \Phi - \nabla \tP ), \nabla \tP \cdot \dd - \nabla\Phi \cdot \f d \Bigr) \de s  \\
 & \quad + \varepsilon_a \int_0^t \Bigl( \t\dd \cdot \nabla \tP , ( \f d - \dd ) \cdot ( \nabla \tP - \nabla \Phi) \Bigr) \de s \,.
\end{align*}

Putting the pieces together, we obtain the inequality
\begin{align*}
\mathcal{R}(\f u| \tu) \Big|_0^t +{}& \int_0^t \mathcal{W}(\f u | \tu) \de s \\
\leq{}& \int_0^t \bigl( ( \f v \cdot \nabla) \f v ,\vv \bigr) + \bigl( ( \vv\cdot\nabla) \vv , \f v \bigl)   \de s
\\&+\int_0^t\bigl ( ( \f v \cdot \nabla) \f d , \tq \bigr ) + \bigl ( (\vv\cdot\nabla) \dd , \f q \bigr) -  \bigl ( ( \vv \cdot \nabla) \f d , \f q  \bigr) - \bigl( ( \f v \cdot \nabla ) \dd , \tq \bigr)   \de s 
\\
&+ \int_0^t   \bigl ( \f d \times \f q , \f d \times \tq\bigr) + \bigl( \dd \times \tq , \dd \times \f q \bigr)- 2  \bigl ( \dd \times \tq , \f d \times \f q \bigr ) \de s 
\\
 &+ \int_0^t \Bigl( (n^+-n^-) \nabla \Phi , \vv \Bigr) + \Bigl( (\tn^+-\tn^-) \nabla \tP , \f v \Bigr) \de s \\
 & - \int_0^t \Bigl( \bigl(\f v (n^+-n^-) , \nabla \tP \bigr)+ \bigl(\vv (\tn^+-\tn^-),\nabla \Phi \bigr) \Bigr)  \de s 
\\
&+ \int_0^t   \Bigl( \bigl(\varepsilon(\f d )-\varepsilon(\dd) \bigr) \bigl (  \nabla (n^+ - n^-)-\nabla (\tn^+-\tn^-) \bigr) , \nabla  \tP \Bigr) \de s 
\\& +\int_0^t \Bigl( \bigl(\varepsilon(\dd)-\varepsilon(\f d) \bigr)\bigl(  \nabla (\tn^+-\tn^-)+ \t \nabla \tP \bigr ) , \nabla (\Phi -\tP) \Bigr ) \de s 
\\&
+\int_0^t\Bigl ( \varepsilon(\dd) (\tn^++\tn^-)- \varepsilon(\f d) (n^++n^-), \nabla \tP \o\bigl(\nabla( \Phi - \tP)\bigr) \Bigr)
 \de s 
\\
&+ \varepsilon_a \int_0^t \Bigl ( \t \dd ( \nabla \Phi - \nabla \tP ), \nabla \tP \cdot \dd - \nabla\Phi \cdot \f d \Bigr ) + \Bigl( \t\dd\cdot \nabla \tP , ( \f d - \dd ) \cdot \nabla (\tP - \Phi) \Bigr) \de s \,.
\\
&+  \int_0^t \varepsilon_a \Bigl ( \t(\f d -  \dd)\cdot \nabla \tP ,   \nabla \tP\cdot ( \f d -\dd ) \Bigr ) + \left \langle \mathcal{A}(\tu), \begin{pmatrix}
\vv-\f v \\\tq-\f q \\\tP-\Phi
\end{pmatrix}\right \rangle \de s \,.
\end{align*}

For the difference of the time-derivatives of $\f d $ and $\dd$, we observe
\begin{align*}
\t \f d -{}& \t \dd + \mathcal{A}(\tu) \cdot ({\bf 0} ,{\bf 1},0)^\top= ( \vv \cdot \nabla) \dd - ( \f v \cdot\nabla)\f d + \f d \times( \f d \times\f q) - \dd \times (\dd \times \tq) \\={}& \bigl(( \vv -\f v ) \cdot \nabla \bigr) \dd - (\f v \cdot\nabla) ( \dd - \f d ) + \f d \times \bigl( \f d \times \f q - \dd \times \tq\bigr) + (\f d - \dd ) \times ( \dd \times \tq)  \,.
\end{align*}
Due to the norm restriction on $\f d$ and the test function $\dd$, we find that $\nabla | \f d|^2 = 0 = \nabla | \dd |^2 $.
Thus, we observe by some manipulations that
\begin{align*}
&\bigl( ( \f v \cdot \nabla) \f d , \tq \bigr) + \left ( (\vv\cdot\nabla) \dd , \f q \right ) -  \left ( ( \vv \cdot \nabla) \f d , \f q  \right ) - \left ( ( \f v \cdot \nabla ) \dd , \tq \right )  
\\
&= \Bigl( \f d \times \bigl[( \f v \cdot \nabla) \f d\bigr] ,\f d \times  \tq \Bigr) + \Bigl(\dd \times ( (\vv\cdot\nabla) \dd ), \dd \times \f q \Bigr) \\
&\quad -  \Bigl( \f d \times (( \vv \cdot \nabla) \f d) ,\f d \times  \f q  \Bigr ) - \Bigl( \dd \times (( \f v \cdot \nabla ) \dd) ,\dd \times  \tq \Bigr)  
\\
&=
\Bigl( \dd \times ( ( \vv \cdot\nabla) \dd , ( \dd -\f d ) \times ( \f q - \tq) \Bigr ) + \Bigl( \f d\times \left ( ( (\f v - \vv) \cdot \nabla) ( \f d - \dd ) \right ) , \f d \times \tq\Bigr)  \\
&\quad + \Bigl( ( \f d -\dd ) \times\left (  (( \f v -\vv) \cdot \nabla )\dd \right ), \f d \times \tq\Bigr) \\ & \quad + \Bigl( \dd \times \bigl[ ( \f v -\vv) \cdot\nabla ) \dd \bigr] , ( \f d - \dd ) \times \tq\Bigr) 
+ \Bigl( \f d \times \bigl[ ( \vv \cdot\nabla ) (\f d - \dd )\bigr], \dd \times \tq-\f d \times \f q\Bigr) \\
&\quad + \Bigl( ( \f d - \dd ) \times \left ( (\vv \cdot \nabla) \dd \right ) , \dd \times \tq - \f d \times \f q \Bigr) \\
&\quad + \Bigl( \f d \times \bigl[ ( \vv \cdot \nabla) ( \f d - \dd )\bigr] , ( \f d - \dd ) \times \tq\Bigr) + \Bigl((\f d - \dd ) \times \left ( ( \vv\cdot\nabla)\dd\right ) , ( \f d - \dd ) \times \tq\Bigr) \,
\end{align*}
and similarly 
\begin{eqnarray*}
&&  \left ( \f d \times \f q , \f d \times \tq\right ) + \left ( \dd \times \tq , \dd \times \f q \right )- 2  \left ( \dd \times \tq , \f d \times \f q \right ) \\
&&\qquad \qquad \qquad   =
  \left ( \f d \times \f q - \dd \times \tq , ( \f d - \dd ) \times \tq \right ) + \left ( \dd \times \tq , ( \dd - \f d) \times ( \f q - \tq )\right ) \,.
\end{eqnarray*}
We use (\ref{weak:q}), and integration by parts to estimate the terms including the difference in $\f q$ and $\tq$.
Indeed, for any smooth enough function $\f a$, we find
\begin{align*}
\left ( \f a , ( \dd-\f d ) \times ( \f q - \tq)\right ) ={}& - \left ( \nabla \f a , ( \dd -\f d ) \times \nabla[\f d - \dd ] \right ) - \left ( \f a \otimes I ,(
  \nabla [\dd - \f d]  )^\top\times  \nabla [\f d - \dd]
  \right ) 
\\{}&+\varepsilon_a \Bigl( \f a ,  ( \dd - \f d ) \times  \left [ \nabla \Phi (\nabla \Phi \cdot \f d) - \nabla \tP ( \nabla \tP \cdot \dd)\right ] \Bigr) \,.
\end{align*}
Additionally, we use integration by parts in the term incorporating the derivative of $n^{\pm}$ to find
\begin{eqnarray*}
&&\Bigl ( \bigl[ \varepsilon(\f d) - \varepsilon(\dd) \bigr]\left [ \nabla ( n^+-n^-) - \nabla (\tn^+-\tn^-)\right ], \nabla \Phi\Bigr ) \\
&&\quad = - \Bigl( ( n^+-n^-) -  (\tn^+-\tn^-), \di \left [ ( \varepsilon(\f d) - \varepsilon(\dd) ) \cdot 
\nabla \Phi\right ] \Bigr ) \\
&&\quad =  -  \Bigl( ( n^+-n^-) -  (\tn^+-\tn^-), \di \left [ (\varepsilon(\f d) - \varepsilon(\dd) \right ] \cdot 
\nabla \Phi\Bigr )  \\
&&\qquad -  \Bigl( ( n^+-n^-) -  (\tn^+-\tn^-), \left [ (\varepsilon(\f d) - \varepsilon(\dd) \right ] : 
\nabla^2 \Phi\Bigr )\,.
\end{eqnarray*}
Note that the boundary terms vanish, since $\f d$ and $\dd$ fulfill the same inhomogeneous Dirichlet boundary condition. 

Inserting this back into the relative energy inequality, we find
\begin{align*}
{}& \mathcal{R}\bigl(\f u| \tu \bigr)(t)  + \int_0^t \mathcal{W}(\f u | \tu) \de s \\
{}&\quad \leq
\mathcal{R}(\f u| \tu)(0)+ 
 \int_0^t \Bigl( \bigl[ (\f v-\vv) \cdot \nabla]( \f v-\vv) ,\vv\Bigr)   \de s
 \\
 &\qquad +\int_0^t \Bigl( ( \f d\times \left [ ( (\f v - \vv) \cdot \nabla) ( \f d - \dd ) \right ] , \f d \times \tq\Bigr )  + \Bigl( ( \f d -\dd ) \times\left[  \bigl(( \f v -\vv) \cdot \nabla \bigr)\dd \right ], \f d \times \tq\Bigr ) 
\de s \\ 
& \qquad +\int_0^t \Bigl ( \dd \times \left [ ( \f v -\vv) \cdot\nabla ) \dd \right ], ( \f d -\dd) \times \tq\Bigr) 
 +
  \Bigl ( \f d \times \left [ ( \vv \cdot\nabla ) (\f d - \dd )\right ], \dd \times \tq-\f d \times \f q\Bigr) \de s 
 \\
 &\qquad +\int_0^t  \Bigl ( ( \f d - \dd ) \times \left [ (\vv \cdot \nabla) \dd \right ] , \dd \times \tq - \f d \times \f q \Bigr )
 + \Bigl ( \f d \times \left [ ( \vv \cdot \nabla) ( \f d - \dd )\right] , ( \f d - \dd ) \times \tq\Bigr ) 
 \de s 
 \\&\qquad + \int_0^t  \Bigl ((\f d - \dd ) \times \left [ ( \vv\cdot\nabla)\dd\right ] , ( \f d - \dd ) \times \tq\Bigr)+  \Bigl ( \f d \times \f q -\dd\times \tq, (\f d-\dd) \times \tq\Bigr )  \de s 
 \\&\qquad -\int_0^t
  \Bigl ( \nabla ( \dd \times (( \vv \cdot \nabla) \dd + \tq) ), ( \dd -\f d ) \times \nabla \left  [\f d - \dd \right ] \Bigr ) \de s 
\\&\qquad -\int_0^t   
   \Bigl( ( \dd \times (( \vv \cdot \nabla) \dd + \tq)) \otimes I ,( \nabla (\dd - \f d)  )^\top \times  \nabla (\f d - \dd)\Bigr) \de s 
\\{}&\qquad +\varepsilon_a \int_0^t \Bigl ( \dd \times (( \vv \cdot \nabla) \dd + \tq) , ( \dd - \f d ) \times \left [ \nabla \Phi (\nabla \Phi \cdot \f d) - \nabla \tP ( \nabla \tP \cdot \dd)\right] \Bigr ) 
  \de s 
\\
 &\qquad + \int_0^t \Bigl (\left ( (n^+-n^-) -(\tn^+-\tn^-)\right ) (\nabla \Phi-\nabla\tP) , \vv \Bigr )\de s \\ &\qquad  + \int_0^t \Bigl ( (n^+-n^-) -(\tn^+-\tn^-) \nabla\tP , \vv -\f v \Bigr ) \de s 
\\&\qquad- \int_0^t   \Bigl (  (n^+ - n^-)- (\tn^+-\tn^-)  , \di \bigl(\varepsilon(\f d )-\varepsilon(\dd) \bigr) \cdot \nabla  \tP + (\varepsilon(\f d )-\varepsilon(\dd)) : \nabla^2 \tP \Bigr ) \de s 
\\&\qquad +\int_0^t \Bigl ((\varepsilon(\dd)-\varepsilon(\f d))\left (  \nabla (\tn^+-\tn^-)+ \t \nabla \tP \right ) , \nabla (\Phi - \tP) \Bigr ) \de s 
\\&\qquad
+\int_0^t\Bigl ( \bigl(\varepsilon(\dd) -\varepsilon(\f d)\bigr)(\tn^++\tn^-)+ \varepsilon(\f d)\left[(\tn^++\tn^-)- (n^++n^-) \right], \nabla \tP \o\nabla (\Phi - \tP) \Bigr)\de s 
\\&\qquad+\int_0^t\left  \langle  \mathcal{A}(\tu), \begin{pmatrix}
\vv-\f v \\\tq-\f q + \f A( \tP) ( \dd -\f d )  \\\tP-\Phi
\end{pmatrix} \right \rangle
 \de s 
\\&\qquad+ \varepsilon_a \int_0^t \Bigl ( \t \dd \cdot \nabla (\Phi -  \tP ), \nabla \tP \cdot \dd - \nabla\Phi \cdot \f d \Bigr) + \Bigl( \t\dd\cdot \nabla \tP , ( \f d - \dd ) \cdot \nabla (\tP -  \Phi) \Bigr) \de s 
\\&\qquad 
 + \int_0^t \Bigl(  \f A(\tP) , \left [\bigl(( \vv -\f v ) \cdot \nabla \bigr) \dd  + \f d \times ( \f d \times \f q - \dd \times \tq)  \right ]\otimes ( \f d - \dd )\Bigr) \de s 
\\&\qquad + \int_0^t  \Bigl (
\f A(\tP) , \left [ (\f d - \dd ) \times ( \dd \times \tq)  \right ]\otimes ( \f d - \dd ) \Bigr ) \de s  -\frac{1}{2}\int_0^t \Bigl (  \left (\f v\cdot\nabla\right )\f A(\tP)  : ( \f d - \dd ) \otimes ( \f d - \dd )\Bigr)
\de s  \,,
\end{align*}
where we defined
\begin{align*}
\f A(\tP) := \left (  \varepsilon_a  \nabla \tP \otimes \nabla \tP \right )\,.
\end{align*}

Estimating the right-hand side in terms of the relative energy and relative dissipation
leads to
\begin{align*}
\mathcal{R}(\f u| \tu) \Big|_0^t +{}\frac{1}{2} \int_0^t \mathcal{W}(\f u | \tu) \de s 
\leq {}&
  \mathcal{R}(\f u_0| \tu(0))+\int_0^t \left \langle \mathcal{A}(\tu), \begin{pmatrix}
\vv-\f v \\\tq-\f q + \f A( \tP) ( \dd -\f d )  \\\tP-\Phi
\end{pmatrix}\right \rangle 
 \de s 
\\&+ \int_0^t \mathcal{K}(\tu) \mathcal{R}(\f u|\tu) \de s +\frac{1}{2} \int_0^t \| n^+-\tn^+\|_{{\mathbb L}^2}^2 + \| n^+-\tn^+\|_{{\mathbb L}^2}^2 \de s 
\end{align*}
such that Gronwall's estimate implies~\eqref{relencont}.

\end{proof}
\section{Fully discrete system}\label{sec:dis}
 The aim is this work is a {\em practical} construction of a {\em dissipative solution} for (\ref{simp}). The related discussion in the introduction of existing schemes \cite{prohl,noel2} for
the simplified Ericksen--Leslie equations
(\ref{simp:v})--(\ref{simp:norm}) as a sub-problem of (\ref{simp})  shows that they are
not practical, since involved discretization and penalization parameters have to {\em independently} tend to zero to construct a related {\em measure-valued solution}. In this respect, a different effort is made in \cite{approx}, where a dissipative solution 
to this sub-problem is constructed with the help of a spatial  discretization, whose iterates satisfy an approximate relative energy inequality, and related director fields are (uniformly) bounded. While this scheme avoids the construction of a solution via (sequences generated by) a (Ginzburg-Landau) penalization approach, and thus is exempted from the above unpractical scenario of admissible parameter choices, it is still not practical, since it is only a semi-discretization in space. In this section, we propose a practical/implementable construction of a {\em dissipative solution} for (\ref{simp}), where related iterates inherit physically relevant properties, including 
a discrete sphere-property for director fields, that approximate concentrations take values in $[0,1]$ only, and a discrete/approximate (relative) energy (in)equality.
 For this purpose, the Scheme \ref{algo1}
below, with solutions in the finite element space
 $${\f U}_h := {\f V}_h \times [{\f Y}_h]^2 
 \times [Z_h] ^2 \times Y_h    \, .$$
uses different numerical tools which make this possible:
\begin{itemize}
\item mass lumping $(\cdot, \cdot)_h$ in equations (\ref{dis:N}) and strongly acute meshes ${\mathcal T}_h$ to validate an $M$-matrix property which ensures that discrete charges take values in $[0,1]$ only,
\item regularizing terms in (\ref{dis:v}) (scaled by $h^{\alpha}$) resp.~(\ref{dis:d}) via (\ref{dis:q}) (scaled by $h^{\beta}$) to limit the spatial variation of discrete velocities
resp.~director fields, and thus allow the $M$-matrix property for equations (\ref{dis:N}); cf.~Theorem \ref{thm:disex} below,
\item mass lumping $(\cdot, \cdot)_h$ in reformulation (\ref{dis:d}) of (\ref{simp:d}) to obtain unit-vector director fields at nodal points of the space-time mesh,
\item  a proper discretization of the (coupling) elastic stress tensor in (\ref{dis:v}) to allow for a discrete energy law for non-trivial fluid-flow velocities; cf.~Theorem \ref{thm:disex} below. This strategy requires the evaluation of (local) averages of
gradients of the discrete director field at nodal points of ${\mathcal T}_h$.
\item 
the introduction of a new variable $\f q$ for the variational derivative of the free energy functional with respect to $\f d$  in order to cope with the nonlinear contribution in this term.
\end{itemize}

 The discrete dynamics starts with initial data
 $\bigl(\f v^{0}, \f d ^{0}, [n^\pm]^{0}\bigr)\in {\f V}_h \times {{\f Z}_h} \times [Z_h]^2
  $, such that
$$\vert \f d^{0}(\f z)\vert = 1 \quad \forall \, \f z \in {\mathcal N}_h\,, \qquad
[n^\pm]^{0} \in [0,1]\qquad \left ( [n^+]^0-[n^-]^0, 1 \right ) = 0\,.  $$
Below, we denote $\f d ^{j-1/2}\  := \frac{1}{2} \bigl(\f d^j + \f d ^{j-1}\bigr)$ for brevity. 
Additionally, we define $ \overline{\f d}^j = \f d^j - \mathcal{I}_h [ \bar{\f d}_1]  $ 
 for $1\leq j \leq  J$ in order to work on linear finite element spaces, where $\bar{\f d}_1$ is the function of the assumptions on the boundary conditions in Theorem~\ref{thm:cont}. 
 Note that the discrete Laplacian is defined according to the Definition at the end of Section~\ref{secc:not}. 
 
  Let $ \alpha \in (0, (6-d)/d)$ and $\beta \in ( 2-(2d)/3, (4-d)^2/d) $. 
\begin{algorithm}\label{algo1} 
Let $\alpha, \beta>0$.
For every $1 \leq j \leq J$, find the solution $\bigl(\f v^{j}, \ov{\f d} ^{j}, {\f q}^{j}, [ n^\pm]^{j}, \Phi^{j} \bigr)\in \f  U_h$
for the given $6$-tuple $(\f v^{j-1}, \ov{\f d} ^{j-1}, {\f q}^{j-1}, [ n^\pm]^{j-1}, \Phi^{j-1})\in \f  U_h$, such
that for all $(\f a, \f b, \f c, e^{\pm}, g)\in  \f  U_h$ holds
\begin{subequations}\label{dis}
\begin{align}
\begin{split}
\left ( d_t \f v ^j, \f a\right  ) + \left ( \nabla \f v^j , \nabla \f a \right ) + h^{\alpha} \left ( \nabla d_t \f v^j , \nabla \f a \right ) + \left ( ( \f v^{j-1} \cdot \nabla ) \f v^j , \f a \right ) + \frac{1}{2} \Bigl ( [\di \f v ^{j-1}] \f v^j , \f a \Bigr )\qquad \\
+ \Bigl(  ([n^+]^j-[n^-]^j )  \nabla \Phi^j , \f a \Bigr) + \left ( \bigl[\PL(\nabla \f d ^{j-1})\bigr]^\top  \Bigl[ \f d ^{j-1/2} \times \bigl( \f d ^{j-1/2}  \times \f q ^{j} \bigr)\Bigr] , \f a \right )_h={}& 0\,,\end{split}\label{dis:v}
\\ {h^{\beta} \bigl( \Delta_h \f d^{j-1/2}, \Delta_h \f b \bigr)} +
 \left ( \nabla \fh d, \nabla \f b \right ) - \varepsilon_a \left ( \nabla \Phi^{j} ( \f d^{j-1} \cdot \nabla \Phi^j), \f b \right )- \left (  \f q^j ,\f b \right ) _h  ={}&0\,,\label{dis:q}\\ 
\left ( d_t \f d^j , \f c \right ) _h+ \Bigl ( \f d ^{j-1/2}\times \bigl[ \PL (\nabla \f d ^{j-1}) \fn v  \bigr] ,  \f d ^{j-1/2} \times \f c \Bigr)_h + \left (\f d ^{j-1/2}\times \f q^j , \f d ^{j-1/2}\times  \f c \right )_h={}&0\,,\label{dis:d}\\
\Bigl(d_t [ n ^{\pm}]^j , e^{\pm} \Bigr)_h  + \Bigl(  \varepsilon(\f d^{j})  \nabla [ n ^{\pm}]^j , \nabla e ^{\pm} \Bigr) 
\pm \Bigl([n^{\pm}]^j  \varepsilon(\f d^{j})  \nabla \Phi^{j} , \nabla e^{\pm} \Bigr)   
- \Bigl ( \f v^j [ n^{\pm}]^j, \nabla e^{\pm} \Bigr) 
={}&0\,,
\label{dis:N}
\\
\Bigl(  \varepsilon(\f d ^{j}) \nabla \Phi ^j , \nabla  g \Bigr)-\Bigl( [ n^+]^j- [n^-]^j, g \Bigr)_h= {}&0  \,.\label{dis:Phi}
\end{align}
\end{subequations}
\end{algorithm}
\begin{remark}
We use  the regularizing terms in~\eqref{dis:v} and~\eqref{dis:q}   to establish the $M$-matrix property of the linear system associated to~\eqref{dis:N}. In the computational studies elaborated in Section~\ref{sec:comp},  the positiveness and boundedness of the charges (which follows form the $M$-matrix property in our analysis) were observed in the experiments even without these additional regularizing terms. Possibly these regularizing terms are only needed in turbulent or oscillating regimes. 
In particular, the $h^\beta$ regularization in~\eqref{dis:q} is used to deduce 
$h$-independent bounds for $\Phi^j$ in stronger norms to validate an
$M$-matrix property in~\eqref{dis:N}.
In the $2d$-case, the $h^\beta$ regularization in~\eqref{dis:q}  is not needed to deduce the $M$-matrix property in~\eqref{dis:N}, but it is essential in the $3d$-case. 
\end{remark}

\subsection{Construction of a solution for Scheme \ref{algo1} via an auxiliary Scheme \ref{balgo2}}\label{discr0b}
We first consider an auxiliary problem (Scheme \ref{balgo2}), for which we show the existence of a $6$-tuple $\bigl(\f v^{j}, \ov{\f d} ^{j}, {\f q}^{j}, [ n^\pm]^{j}, \Phi^{j} \bigr)\in \f  U_h$ via Brouwer's fixed-point theorem; for a (slightly) restricted class of 
space-time meshes, we then show that this $6$-tuple already solves Scheme \ref{algo1}.

For this purpose, we introduce a non-increasing function $\phi \in C^{\infty}\bigl( [0,\infty); [0,1]\bigr)$ which satisfies
\begin{equation}\label{wia1}
\phi = \left \{  \begin{array}{l}
1  \quad \mbox{on } [0,1] \\
0  \quad \mbox{on } [2,\infty) 
\end{array}\,, \right. \qquad \mbox{{\em s.t.}} \qquad 
-\phi' \in \left \{  \begin{array}{l}
[0,2]  \quad \mbox{on } [0,1] \\
\{0\}  \quad \, \mbox{ on } [2,\infty) 
\end{array} \right.\, .
\end{equation}
For every $\gamma >0$, let $\phi_{\gamma}(s) := \phi(\gamma s)$ for all $s \in [0,\infty)$. 

\begin{algorithm}\label{balgo2} 
Fix
 $\alpha, {\beta}, \gamma
>0$.
For every $1 \leq j \leq J$, find the solution $\bigl(\f v^{j}, \ov{\f d }^{j}, {\f q}^{j}, [ n^\pm]^{j}, \Phi^{j} \bigr)\in \f  U_h$
 for the given $6$-tuple $(\f v^{j-1}, \ov{\f d }^{j-1}, {\f q}^{j-1}, [ n^\pm]^{j-1}, \Phi^{j-1})\in \f  U_h$, such
that for all $(\f a, \f b, \f c, e^{\pm}, g)\in  \f  U_h$ holds
\begin{subequations}\label{bdis}
 \begin{align}
\begin{split}
\left ( d_t \f v ^j, \f a\right  ) + \left ( \nabla \f v^j , \nabla \f a \right ) + h^{\alpha} \left ( \nabla d_t \f v^j , \nabla \f a \right ) + \left ( ( \f v^{j-1} \cdot \nabla ) \f v^j , \f a \right ) + \frac{1}{2} \Bigl ( [\di \f v ^{j-1}] \f v^j , \f a \Bigr )\qquad \\ 
+ \Biggl(   \Bigl[ 
\phi_{\gamma } \bigl( \vert [n^+]^j \vert \bigr) [n^+]^j
- 
 \phi_{\gamma } \bigl( \vert [n^-]^j \vert \bigr) [n^-]^j
 \Bigr]\nabla \Phi^j , \f a \Biggr) \qquad \\
+ \Biggl ( \bigl[\PL(\nabla \f d ^{j-1})\bigr]^\top  \Biggl[ 
\f d ^{j-1/2} 
 \times \Bigl(
 \f d ^{j-1/2}  \times \f q ^{j}
  \Bigr)\Biggr] , \f a \Biggr)_h={}& 0\,,\end{split}\label{bdis:v}
\\ {h^{\beta} \left( \Delta_h \f d^{j-1/2}, \Delta_h \f b\right)} +
 \left ( \nabla \fh d, \nabla \f b \right ) - \varepsilon_a \left ( \nabla \Phi^{j} ( \f d^{j-1} \cdot \nabla \Phi^j), \f b \right )- \left (  \f q^j ,\f b \right ) _h  ={}&0\,,\label{bdis:q}\\ 
\left ( d_t \f d^j , \f c \right ) _h+ \Bigl ( \f d ^{j-1/2}\times \bigl[ \PL (\nabla \f d ^{j-1}) \fn v  \bigr] ,  \f d ^{j-1/2} \times \f c \Bigr)_h + \left (
 \f d ^{j-1/2}\times \f q^j
  , \f d ^{j-1/2}\times  \f c \right )_h={}&0\,,\label{bdis:d}\\
\begin{split}
\Bigl(d_t [ n ^{\pm}]^j , e^{\pm} \Bigr)_h  
+  \Bigl( 
\varepsilon
(\f d^j)
 \nabla [ n ^{\pm}]^j , \nabla e ^{\pm} \Bigr) 
- \Bigl ( \f v^j 
\phi_{\gamma }\bigl(\vert [ n^\pm]^j\vert \bigr) [ n^\pm]^j
 , \nabla e^{\pm} \Bigr)\quad \\
\pm \Bigl(
\phi_{\gamma }\bigl(\vert [ n^\pm]^j\vert \bigr) [ n^\pm]^j
  \varepsilon
  (\f d^j)
   \nabla \Phi^{j} , \nabla e^{\pm} \Bigr) 
={}&0\,,
\end{split}
\label{bdis:N}
\\
 \Bigl(
   \varepsilon
   (\f d^j)
   \nabla \Phi ^j , \nabla  g \Bigr)-
 \Bigl(
  [ n^+]^j 
  - 
 [n^-]^j
 , g \Bigr)_h= {}&0  \,.\label{bdis:Phi}
\end{align} 
\end{subequations}
\end{algorithm}
\begin{lemma}\label{exist-lemma}
Let $
k \leq 1/ (8\gamma)
$
, and $k \leq k_0(\Omega)$ and $h \leq h_0(\Omega)$ be sufficiently small. There exists a solution $\bigl(\f v^{j}, \ov{\f d} ^{j}, {\f q}^{j}, [ n^\pm]^{j}, \Phi^{j} \bigr)\in \f  U_h$ of Scheme \ref{balgo2}.
\end{lemma}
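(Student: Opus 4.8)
The plan is to prove existence of a solution to the fully coupled nonlinear system (\ref{bdis}) by Brouwer's fixed-point theorem, applied in the finite-dimensional space $\f U_h$. The cut-off $\phi_\gamma$ was introduced precisely so that the nonlinear terms coming from the charges (the coupling $\phi_\gamma(|[n^\pm]^j|)[n^\pm]^j \nabla\Phi^j$ in the momentum balance and the transport/migration terms in the Nernst--Planck equations) are globally bounded, which is what makes a fixed-point map well-defined without \emph{a priori} control on $[n^\pm]^j$; the condition $k\le 1/(8\gamma)$ controls the resulting perturbation in the charge estimate. So the first step is to set up the right fixed-point map: given $(\f v^{j-1},\ov{\f d}^{j-1},\f q^{j-1},[n^\pm]^{j-1},\Phi^{j-1})$, define an operator $\mathcal F:\f U_h\to\f U_h$ whose zeros are exactly the solutions of (\ref{bdis}); concretely one tests the scheme with the unknowns themselves, collects all terms into a single functional $\langle \mathcal F(\f w),\f w\rangle$ on $\f w=(\f v^j,\ov{\f d}^j,\f q^j,[n^\pm]^j,\Phi^j)$, and shows $\langle\mathcal F(\f w),\f w\rangle>0$ for $\|\f w\|_{\f U_h}=\rho$ with $\rho$ large. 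Brouwer (in the form: a continuous map on a ball with $\langle\mathcal F(\f w),\f w\rangle\ge0$ on the sphere has a zero inside) then gives the solution.

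The heart of the argument is therefore the \emph{a priori} estimate obtained by testing (\ref{bdis}) with the solution itself, mimicking the continuous energy argument in Theorem~\ref{thm:cont}\,i)--ii) on the discrete level. I would choose the test functions $\f a=\f v^j$ in (\ref{bdis:v}), $\f b = d_t\f d^j$ in (\ref{bdis:q}), $\f c = \f q^j$ in (\ref{bdis:d}), $e^\pm = [n^\pm]^j$ in (\ref{bdis:N}), and $g = \Phi^j$ together with $g = d_t\Phi^j$ (and $g$ equal to suitable combinations) in (\ref{bdis:Phi}), exactly as in the continuous proof. The discrete time derivative identity $2(d_t\varphi^j,\varphi^j)_h = d_t\|\varphi^j\|_h^2 + k\|d_t\varphi^j\|_h^2$ turns the leading terms into a telescoping energy plus nonnegative numerical dissipation; the skew-symmetrized convection term $(\f v^{j-1}\cdot\nabla)\f v^j + \tfrac12(\di\f v^{j-1})\f v^j$ tested against $\f v^j$ vanishes, as does the corresponding transport term in (\ref{bdis:N}) and (\ref{bdis:d}) by the same antisymmetry; the regularizing terms $h^\alpha$ and $h^\beta$ produce nonnegative contributions; the coupling terms between $\f q^j$, the elastic stress in (\ref{bdis:v}), and (\ref{bdis:d}) cancel pairwise just as $(\nabla\f d^\top(\Delta\f d+\dots),\f v)$ cancels with the stretching term at the continuous level; and the electrostatic couplings between (\ref{bdis:v}), (\ref{bdis:q}), (\ref{bdis:N}), (\ref{bdis:Phi}) telescope into the $|\nabla\Phi^j|^2_{\varepsilon(\f d^j)}$ energy exactly as in (\ref{en:last}). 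What does \emph{not} cancel is controlled by the cut-off: since $|\phi_\gamma(|s|)s|\le 2/\gamma$ pointwise, the ``bad'' charge-transport and migration terms are bounded by $C(\gamma)\|\nabla\Phi^j\|_{\f L^2}\|\nabla[n^\pm]^j\|_{\f L^2} + C(\gamma)\|\f v^j\|_{\f L^2}\|\nabla[n^\pm]^j\|_{\f L^2}$, absorbed by Young's inequality into the dissipative $\|\nabla[n^\pm]^j\|^2$ and $\nu\|\nabla\f v^j\|^2$ terms, with the smallness requirement $k\le 1/(8\gamma)$ ensuring the discrete-in-time Gronwall step from level $j-1$ to level $j$ closes. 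This yields a bound $\|\f v^j\|_{\f H^1}^2 + \|\f d^j\|_{\f H^2}^2 + \|\f q^j\|_{\f L^2}^2 + \|[n^\pm]^j\|_{\f L^2}^2 + \|\Phi^j\|_{\f H^1}^2 \le C(k,h,\gamma,\text{data at } j-1)$, i.e.\ coercivity of $\mathcal F$ on a sufficiently large ball. Note $\|\f q^j\|_{\f L^2}$ and $\|\Delta_h\f d^j\|_{\f L^2}$ are controlled through (\ref{bdis:q}) tested with $\f q^j$ and an inverse inequality (Lemma~\ref{lem:inv}), using the $h^\beta$ term — this is where the $h$-dependence enters and why it is harmless here.

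The second ingredient is continuity of $\mathcal F$, which is routine in finite dimensions: all the nonlinearities — $\PL(\nabla\f d^{j-1})$ is a \emph{fixed} datum, $\f d^{j-1/2}\times(\cdots)$, $\varepsilon(\f d^j)=I+\varepsilon_a\f d^j\otimes\f d^j$, the smooth cut-off $\phi_\gamma$, and the mass-lumped products — are polynomials or smooth compositions of the coordinates of $\f w$, and the bilinear/mass-lumped forms are continuous on $\f U_h$; one should note there is no division, so no issue of the map being undefined. Hence $\mathcal F$ is continuous, and combined with the coercivity estimate, Brouwer's theorem delivers $\f w\in\f U_h$ with $\mathcal F(\f w)=0$, which by construction is the sought solution of Scheme~\ref{balgo2}. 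I expect the \textbf{main obstacle} to be bookkeeping rather than conceptual: correctly tracking that every coupling term which fails to cancel is dominated by the cut-off bound $2/\gamma$ (not by any norm of the charges), and verifying that the discrete Gronwall step in Lemma-type form closes under exactly $k\le 1/(8\gamma)$ while keeping $\rho$ independent of $\f w$; a secondary care point is the choice of test function for (\ref{bdis:q}) (using $d_t\f d^j$ rather than $\f q^j$ alone) so that the $h^\beta(\Delta_h\f d^{j-1/2},\Delta_h\f b)$ term combines telescopically with the $(\nabla\f d^{j-1/2},\nabla\f b)$ term to produce the director energy rather than an uncontrolled mixed term.
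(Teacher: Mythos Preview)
Your overall strategy (Brouwer's theorem plus a discrete energy estimate exploiting the cutoff) is exactly the paper's approach, and your identification of the cutoff bound $\vert \phi_\gamma(\vert s\vert)s\vert \le 2/\gamma$ and of the secondary care point about testing (\ref{bdis:q}) with $d_t\f d^j$ is correct. But there is a genuine gap in your list of test functions that prevents the argument from closing.

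When you test (\ref{bdis:q}) with $\f b=d_t\f d^j$, you produce the term $-\varepsilon_a\bigl(\nabla\Phi^j(\f d^{j-1}\cdot\nabla\Phi^j),\,d_t\f d^j\bigr)$. This term contains no factor of $[n^\pm]^j$, so the cutoff does \emph{not} bound it; and it is cubic in the unknowns (two copies of $\nabla\Phi^j$ --- which via (\ref{bdis:Phi}) scales like $\Vert[n^\pm]^j\Vert_h$ --- and one copy of $d_t\f d^j\sim k^{-1}\f d^j$), so it destroys coercivity on a large sphere. The only way to handle it is to make it telescope into the electrostatic energy $\tfrac12\bigl(\varepsilon(\f d^j)\nabla\Phi^j,\nabla\Phi^j\bigr)$, and for that you must \emph{also} test (\ref{bdis:N}) with $e^\pm=\pm\Phi^j$ and (\ref{bdis:Phi}) with $g=-([n^+]^j-[n^-]^j)-d_t\Phi^j$. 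These are precisely the test functions used in the continuous calculation (\ref{en:Phi})--(\ref{en:last}) you invoke, but you omitted them from your explicit list; the paper records the resulting discrete product rule as identity (\ref{starr1}). Once you include them, a new sign-indefinite term $\bigl((\phi_\gamma[n^+]^j+\phi_\gamma[n^-]^j)\varepsilon(\f d^j)\nabla\Phi^j,\nabla\Phi^j\bigr)$ appears from the migration part of (\ref{bdis:N}); \emph{this} is where the cutoff bound and the hypothesis $k\le 1/(8\gamma)$ actually enter --- to absorb that term into the positive contribution $\tfrac{1}{2k}\bigl(\varepsilon(\f d^j)\nabla\Phi^j,\nabla\Phi^j\bigr)$ coming from $\tfrac12 d_t$ of the electrostatic energy --- not in a generic discrete Gronwall step. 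Your test $e^\pm=[n^\pm]^j$ is still needed, but only as an auxiliary second estimate (the paper's (\ref{Npm})) to control $\Vert[n^\pm]^j\Vert_h$, added with a small weight to the main energy identity (\ref{energyincrement}) to produce the final coercivity bound (\ref{energylawex}).

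A minor point: the transport term in (\ref{bdis:N}) tested with $[n^\pm]^j$ does \emph{not} vanish by antisymmetry --- the cutoff blocks the $\tfrac12\nabla(n^2)$ rewriting, and $\f v^j\in\f V_h$ is only discretely divergence-free against $M_h$. You correctly bound it via the cutoff a few lines later; just drop the earlier claim.
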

\begin{proof}
For every $1 \leq j \leq J$, Scheme \ref{balgo2}  defines a continuous map  
$\pmb{\mathcal F}_j : {\f U}_h \rightarrow 
 {\f U}_h$  in a canonical way,  whose zero is the next iterate $\bigl(\f v^{j}, \ov{\f d} ^{j}, \f q^{j}, [ n^\pm]^{j}, \Phi^{j} \bigr)$; 
 to show its existence, we use Brouwer's fixed-point theorem  in the following form    \begin{equation}\label{coerc1}
\bigl\langle \pmb{\mathcal F}_j(\overline{\f w}), \overline{\f w}\bigr\rangle_{{\f U}_h} \geq 0
\qquad \forall\, \overline{\f w} \in \{\pmb{\phi} \in  {\f U}_h:\, 
\Vert \pmb{\phi}\Vert_{ {\f U}_h} \geq R_j\}\, ,
\end{equation}
for a number
$R_j \geq 0$. The following argumentation establishes this property, but complies
already to the energetic principle~\eqref{energylawex}, by formally choosing
$$\ov{\f w}=\bigl(\f a, \f b, \f c, e^{\pm}, g \bigr) =  \Bigl( \fn v , \td \fn d , \fn q , \Phi^j + \frac{[n^+]^j}{8}  , \frac{[n^-]^j}{8} - \Phi^j ,  - ( [ n^+ ]^j - [n^-]^j) -2  \td \Phi^j - \frac{\Phi^j}{k}  \Bigr)$$
in Scheme \ref{balgo2}. 
%
Testing~\eqref{bdis:v} by $\f v ^j$  implies 
\begin{multline}
\frac{1}{2}d_t \| \f v ^j \|^2_{{\mathbb L}^2} +  \frac{k}{2} \| d_t \f v _j \|^ 2_{{\mathbb L}^2}  + \nu\| \nabla \f v^j \|^2_{{\mathbb L}^2} + \frac{h^\alpha}{2} d_t \| \nabla \f v^j \|^2_{{\mathbb L}^2} + k\frac{h^\alpha}{2} \| \nabla d_t \f v^j \|^2_{{\mathbb L}^2}  
\\- \left ( \f d ^{j-1/2} \times \bigl[ \PL(\nabla  \f d^{j-1})\fn v \bigr] , \f d ^{j-1/2} \times \f q^j \right )_h  \\
+ \Bigl( \phi_{\gamma}\bigl(\vert [ n^+]^j\vert \bigr) [ n^+ ] ^j -\phi_{\gamma}\bigl(\vert [ n^-]^j\vert \bigr)  [ n^-]^j ,  \nabla \Phi ^j \cdot \f v^j \Bigr) 
= 0 \,.\label{disen:v}
\end{multline}
Testing~\eqref{bdis:q} by~$d_t \f d^j $, and adding this equation to~\eqref{bdis:d} tested by~$\f q ^j $ lead to
\begin{multline}
 \frac{h^{\beta}}{2}d_t \| \Delta_h \fn d \| ^2_{{\mathbb L}^2} +
\frac{1}{2}d_t \| \nabla \fn d \| ^2_{{\mathbb L}^2} 
- \varepsilon_a \left ( d_t \f d^j \nabla \Phi^j, \f d^{j-1} \cdot \nabla \Phi^j \right ) \\+ \left ( \f d ^{j-1/2} \times \bigl[ \PL(\nabla  \f d^{j-1})\fn v  \bigr], \f d ^{j-1/2} \times \f q^j \right )_h + \| \f d ^{j-1/2} \times \f q^j \|_h^2 =0\,.\label{disen:d}
\end{multline}
In the following we test~\eqref{bdis:N} by $\pm \Phi^j $ and add~\eqref{bdis:Phi} tested by $-([n^+]^j-[n^-]^j)-d_t \Phi^j 
$:
\begin{multline}
\Bigl ( d_t\left ( [ n^+]^j-[n^-]^j\right ) , \Phi^j \Bigr)_h+ \Bigl( \varepsilon(\f d ^j) \nabla \left ( [ n^+]^j-[n^-]^j\right ), \nabla \Phi^j \Bigr) 
\\- \Bigl(  \phi_{\gamma } \bigl(\vert [ n^+]^j\vert \bigr) [ n^+ ] ^j - \phi_{\gamma } \bigl(\vert [ n^-]^j\vert \bigr) [ n^-]^j ,  \f v^j \nabla \Phi ^j \Bigr) 
\\+ \Bigl ( (\phi_{\gamma } \bigl(\vert [ n^+]^j\vert \bigr) [n^+]^j +\phi_{\gamma } \bigl(\vert [ n^-]^j\vert \bigr) [ n^-]^j) \varepsilon(\f d^{j}) \nabla \Phi^j , \nabla\Phi^j \Bigr)\\ - \Bigl ( \varepsilon(\f d^j) \nabla \Phi^j , \nabla d_t \Phi ^j \Bigr) + \Bigl(  [ n^+]^j-[n^-]^j, d_t \Phi^j \Bigr)_h
\\
+ \| [ n^+]^j-[n^-]^j\|_h^2 - \Bigl( \varepsilon(\f d^j) \nabla \Phi^j , \nabla \left ([ n^+]^j-[n^-]^j\right )\Bigr) = 0\,.  \label{disen:N}
\end{multline}
When adding the three equations~\eqref{disen:v},~\eqref{disen:d}, and~\eqref{disen:N}, we observe that the terms in the second and third line of~\eqref{disen:v} cancel with the first term in the second line of~\eqref{disen:d} and the  two terms in the second line of~\eqref{disen:N}. 
Additionally, the second and the last term in~\eqref{disen:N} cancel.

For the following, it is crucial to observe the integration by parts formula
\begin{eqnarray}\label{starr1}
&&\Bigl( d_t\left ( [ n^+]^j-[n^-]^j\right ) , \Phi^j \Bigr)_h + \Bigl( [ n^+]^j-[n^-]^j, d_t \Phi^j \Bigr)_h  \\ \nonumber
&&\qquad - \Bigl( \varepsilon(\f d^j) \nabla \Phi^j , \nabla d_t \Phi ^j \Bigr)   - \varepsilon_a \Bigl ( d_t \f d^j \nabla \Phi^j, \f d^{j-1} \cdot \nabla \Phi^j \Bigr)  
\\ \nonumber
&&\qquad \qquad = \frac{1}{2} d_t \Bigl( \varepsilon(\f d^j) \nabla \Phi^j , \nabla \Phi^j \Bigr) + k \frac{1}{2} \left (\varepsilon(\f d^{j-1} ) , \left ( d_t \nabla \Phi^j \otimes d_t \nabla \Phi^j  \right ) \right ) + k\frac{\varepsilon_a}{2} \| \nabla \Phi^j \cdot d_t \f d ^j \|^2_{{\mathbb L}^2} \,.
\end{eqnarray}
Indeed, by the standard discrete integration by parts formula and equation~\eqref{bdis:Phi}, we find
\begin{eqnarray*}
&&\Bigl( d_t\left ( [ n^+]^j-[n^-]^j\right ) , \Phi^j \Bigr)_h + \Bigl( [ n^+]^j-[n^-]^j, d_t \Phi^j \Bigr)_h \\  
&&\qquad = d_t \Bigl([ n^+]^j-[n^-]^j , \Phi^j \Bigr)_h+ k \Bigl( d_t \left ( [ n^+]^j-[n^-]^j\right ) , d_t \Phi^j \Bigr)_h
\\
&&\qquad =d_t \Bigl(\varepsilon(\f d ^j) \nabla\Phi^j , \nabla\Phi^j \Bigr) + k \Bigl( d_t \left[ \varepsilon(\f d ^j) \nabla\Phi^j\right ]  , \nabla d_t \Phi^j \Bigr)\,
\end{eqnarray*}
and additionally, we observe
\begin{multline*}
k \Bigl( d_t \left [\varepsilon(\f d ^j) \nabla\Phi^j\right ]  , \nabla d_t \Phi^j \Bigr) - \Bigl ( \varepsilon(\f d^j) \nabla \Phi^j , \nabla d_t \Phi ^j \Bigr)  
= - \left ( \varepsilon(\f d^{j-1}) \nabla \Phi^{j-1} , \nabla d_t \Phi ^j \right )   
\\=\frac{1}{2k} \left [ \left ( \varepsilon(\f d ^{j-1}) \nabla \Phi^{j-1} , \nabla \Phi^{j-1} \right ) -  \left ( \varepsilon(\f d ^{j-1}) \nabla \Phi^{j} , \nabla \Phi^{j} \right ) + k^2  \left ( \varepsilon(\f d ^{j-1}) , d_t \nabla \Phi^j \otimes d_t \nabla \Phi^j \right ) \right]\,.
\end{multline*}
We combine the remaining terms, and recall the definition of $\varepsilon(\f d^{j-1})$ to find
\begin{eqnarray*}
&&d_t \Bigl(\varepsilon(\f d ^j) \nabla\Phi^j , \nabla\Phi^j \Bigr) - \varepsilon_a \left ( d_t \f d^j \nabla \Phi^j, \f d^{j-1} \cdot \nabla \Phi^j \right ) \\
&&\qquad +\frac{1}{2k} \left[ \left ( \varepsilon(\f d ^{j-1}) \nabla \Phi^{j-1} , \nabla \Phi^{j-1} \right ) -  \left ( \varepsilon(\f d ^{j-1}) \nabla \Phi^{j} , \nabla \Phi^{j} \right )\right]  
\\
&&\quad = \frac{1}{2}
d_t  \Bigl(\varepsilon(\f d ^j) \nabla\Phi^j , \nabla\Phi^j \Bigr) +\frac{1}{2k} \left ( \varepsilon(\f d ^{j}) - \varepsilon(\f d ^{j-1})- 2 \varepsilon_a  [\f d ^j - \f d^{j-1}]\o \f d ^{j-1} , \nabla \Phi^{j} \o \nabla \Phi^{j} \right )
\\
&&\quad =\frac{1}{2}
d_t  \Bigl(\varepsilon(\f d ^j) \nabla\Phi^j , \nabla\Phi^j \Bigr)  + k \frac{\varepsilon_a }2 \Bigl( d_t \f d^j \o d_t \f d^j , \nabla \Phi^{j} \o \nabla \Phi^{j} \Bigr)\,.
\end{eqnarray*}
This argumentation settles (\ref{starr1}). We may now use it in (\ref{disen:d}) and
(\ref{disen:N}), and combine the result with (\ref{disen:v}), which leads to
\begin{multline}
\frac{1}{2}d_t \| \f v ^j \|^2_{{\mathbb L}^2} + k \frac{1}{2} \| d_t \f v _j \|^ 2_{{\mathbb L}^2}  + \nu\| \nabla \f v^j \|^2_{{\mathbb L}^2} \\
+ \frac{h^\alpha}{2} d_t \| \nabla \f v^j \|^2_{{\mathbb L}^2} + k\frac{h^\alpha}{2} \| \nabla d_t \f v^j \|^2_{{\mathbb L}^2}  +\frac{1}{2}d_t \| \nabla \f d^j \|^2_{{\mathbb L}^2} 
+
\frac{h^\beta}{2}d_t \| \Delta_h \f d^j \|^2_{{\mathbb L}^2}
 \\+ \Bigl(\left (\phi_{\gamma } \bigl(\vert [ n^+]^j\vert \bigr)  [n^+]^j +\phi_{\gamma } \bigl(\vert [ n^-]^j\vert \bigr)  [n^-]^j\right ) \varepsilon(\f d^{j}) \nabla \Phi^j , \nabla\Phi^j \Bigr ) + \| [ n^+]^j-[n^-]^j\|_h^2 
\\+\frac{1}{2} d_t \Bigl( \varepsilon(\f d^j) \nabla \Phi^j , \nabla \Phi^j \Bigr) + k \frac{1}{2} \left (\varepsilon(\f d^{j-1} ) , \left ( d_t \nabla \Phi^j \otimes d_t \nabla \Phi^j  \right ) \right ) + k\frac{\varepsilon_a}{2} \| \nabla \Phi^j \cdot d_t \f d ^j \|^2_{{\mathbb L}^2} =0\,.\label{energyincrement}
\end{multline}
Note that only the first term in the third line of~\eqref{energyincrement} is not necessarily non-negative. Due to the truncation, it may be absorbed into the first term of the fourth line of~\eqref{energyincrement},
if
\begin{align*}
\left | \Bigl(\left (\phi_{\gamma } \bigl(\vert [ n^+]^j\vert \bigr)  [n^+]^j +\phi_{\gamma } \bigl(\vert [ n^-]^j\vert \bigr)  [n^-]^j\right ) \varepsilon(\f d^{j}) \nabla \Phi^j , \nabla\Phi^j \Bigr ) \right | 
\leq{}& 2 \gamma  \Bigl( \varepsilon(\f d^j) \nabla \Phi^j , \nabla \Phi^j \Bigr)\\ \leq{}& \frac{1}{4k}  \Bigl( \varepsilon(\f d^j) \nabla \Phi^j , \nabla \Phi^j \Bigr)\,
\end{align*}
and thus $ k \leq  1 / (8 \gamma )$. 
In the next step, we
test~\eqref{bdis:N} with $[ n^{\pm}]^j$,
\begin{multline}
\frac{1}{2} d_t \| [ n^{\pm}]^j\|_h^2 +k\frac{1}{2}\| d_t [ n^{\pm}]^j\|_h^2  
+\Bigl( \varepsilon(\f d ^j) \nabla [ n^{\pm}]^j , \nabla [n^{\pm}]^j \Bigr) \\
= \Bigl( \f v^j [ n^{\pm}]^j\phi_{\gamma } \bigl(\vert [ n^{\pm}]^j\vert \bigr), \nabla [ n^{\pm}]^j \Bigr) 
\mp \Bigl( \phi_{\gamma } \bigl(\vert [ n^{\pm}]^j\vert \bigr)[ n^{\pm}]^j \varepsilon(\f d^{j}) \nabla \Phi^j,\nabla [ n^{\pm}]^j\Bigr)\,.\label{Npm}
\end{multline}
Note that
\begin{align*}
&\Bigl |\Bigl (  \phi_{\gamma } \bigl(\vert [ n^{\pm}]^j\vert \bigr) [ n^{\pm}]^j \varepsilon(\f d^{j}) \nabla \Phi^j,\nabla [ n^{\pm}]^j\Bigr)\Bigr |  \leq {}  \frac{1}{4} \Bigl ( \varepsilon(\f d^{j}) \nabla [ n^{\pm}]^j,\nabla [ n^{\pm}]^j\Bigr ) + {4 \gamma^2}\Bigl( \varepsilon(\f d^{j}) \nabla \Phi^j,\nabla \Phi^j\Bigr) \,,
\\
& \Bigl( \f v^j  \phi_{\gamma } \bigl(\vert [ n^{\pm}]^j\vert \bigr) [ n^{\pm}]^j, \nabla [ n^{\pm}]^j \Bigr)  \leq{} 2{\gamma} \| \f v^j \|_{\mathbb L^2}  \| \nabla[n^{\pm}]^j \|_{\mathbb L^2}   \leq  \frac{1}{4} \Bigl( \varepsilon(\f d^{j}) \nabla [ n^{\pm}]^j,\nabla [ n^{\pm}]^j\Bigr) + {8\gamma ^2}
  \| \f v^j \|^2_{{\mathbb L}^2
 }
 \,. 
\end{align*} 
Note that $\varepsilon(\f d^j)$ is a positive definite matrix. 
Adding~\eqref{energyincrement} and~\eqref{Npm} multiplied by $1/(8\gamma ^2)$, we find 
\begin{multline}
\frac{1}{4}d_t \| \f v ^j \|^2_{{\mathbb L}^2} + k \frac{1}{2} \| d_t \f v _j \|^ 2_{{\mathbb L}^2}  + \nu\| \nabla \f v^j \|^2_{{\mathbb L}^2} + \frac{h^\alpha}{2} d_t \| \nabla \f v^j \|^2_{{\mathbb L}^2}\\
 + k\frac{h^\alpha}{2} \| \nabla d_t \f v^j \|^2_{{\mathbb L}^2}  +\frac{1}{2}d_t \| \nabla \f d^j \|^2_{{\mathbb L}^2} + {\frac{h^\beta}{2}d_t \| \Delta_h \f d^j \|^2_{{\mathbb L}^2}}
\\+\frac{1}{2} d_t \Bigl( \varepsilon(\f d^j) \nabla \Phi^j , \nabla \Phi^j \Bigr) + k \frac{1}{2} \left (\varepsilon(\f d^{j-1} ) , \left ( d_t \nabla \Phi^j \otimes d_t \nabla \Phi^j  \right ) \right ) + k\frac{\varepsilon_a}{2} \| \nabla \Phi^j \cdot d_t \f d ^j \|^2_{{\mathbb L}^2} 
\\
+ \frac{1}{16\gamma ^2} d_t \| [ n^{\pm}]^j\|_h^2 +k\frac{1}{16\gamma ^2}\| d_t [ n^{\pm}]^j\|_h^2  
+\frac{1}{32\gamma ^2}\Bigl( \varepsilon(\f d ^j) \nabla [ n^{\pm}]^j , \nabla [n^{\pm}]^j \Bigr) + \| [ n^+]^j-[n^-]^j\|_h^2 
\leq 0\,.\label{energylawex}
\end{multline}
This argumentation implies (\ref{coerc1}), where $R_j$ depends on the data
from the previous iteration.
\end{proof}

This auxiliary result will now be used to validate that a solution of Scheme \ref{balgo2} already solves Scheme \ref{algo1}, provided the space-time mesh satisfies certain criteria. 
\begin{lemma}\label{exist-lemma2}
Suppose {\bf (A1)}, {\bf (A2)}
 for Scheme \ref{balgo2}, where
 \begin{equation}\label{constrb}
 0< \alpha < \frac{6}{d}-1\qquad \mbox{and} \qquad 0< \beta < \frac{(4-d)^2}{d}
  \,,
 \end{equation}
$ \gamma = {C_2}/{2}  h^{\frac{d}{2}} $
for some existing constant $C_2$ independent of the discretization parameters, which are assumed 
to be be sufficiently small, \textit{i.e.}, $k \leq k_0(\Omega)$ and $h \leq h_0(\Omega)$. 
Then, the solution $$\bigl(\f v^{j}, {\ov{\f d}}^{j}, {\f q}^{j}, [ n^\pm]^{j}, \Phi^{j} \bigr)\in \f  U_h$$ of Scheme \ref{balgo2} already solves
 Scheme \ref{algo1}. Moreover,
 $$ \vert \f d^j(\f z) \vert = 1 \quad \forall\, \f z \in {\mathcal N}_h\,, \qquad
 0 \leq [n^{\pm}]^j \leq 1
 \qquad (1 \leq j \leq J)\, .$$
\end{lemma}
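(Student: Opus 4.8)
The plan is to start from the solution $(\f v^j,\ov{\f d}^j,\f q^j,[n^\pm]^j,\Phi^j)\in\f U_h$ of the auxiliary Scheme~\ref{balgo2} provided by Lemma~\ref{exist-lemma}, show that with $\gamma=\tfrac{C_2}{2}h^{d/2}$ the cut-offs $\phi_\gamma(|[n^\pm]^j|)$ are identically $1$ so that this tuple already solves Scheme~\ref{algo1}, and then read the sphere- and maximum-principle properties off the (un-truncated) equations \eqref{dis:d} and \eqref{dis:N}--\eqref{dis:Phi}.

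\emph{Step 1: a priori bounds.} Summing the discrete energy identity \eqref{energylawex}, which holds for any solution of Scheme~\ref{balgo2} once $k\le 1/(8\gamma)$ (and for $\gamma=\tfrac{C_2}{2}h^{d/2}$ this is implied by $k\le k_0(\Omega)$), over $j$, and using that $\gamma^2=\tfrac{C_2^2}{4}h^d\to0$ together with the bounds on the initial data ($\f v^0\in\f V_h$, $|\f d^0(\f z)|=1$, $[n^\pm]^0\in[0,1]$, and $\|\Delta_h\f d^0\|_{{\mathbb L}^2}\le C$, which follows from $\f d_0\in{\mathbb W}^{2,2}$), yields, uniformly in $1\le j\le J$,
\begin{align*}
\|\f v^j\|_{{\mathbb L}^2}^2+\|\nabla\f d^j\|_{{\mathbb L}^2}^2+\bigl(\varepsilon(\f d^j)\nabla\Phi^j,\nabla\Phi^j\bigr)+\|[n^\pm]^j\|_h^2\le C\,,\qquad h^\alpha\|\nabla\f v^j\|_{{\mathbb L}^2}^2+h^\beta\|\Delta_h\f d^j\|_{{\mathbb L}^2}^2\le C\,.
\end{align*}
Combining these with the inverse inequality (Lemma~\ref{lem:inv}) and the discrete Sobolev inequality \eqref{discrete_sobolev}, and using discrete elliptic $W^{1,q}$-stability for \eqref{dis:Phi} --- which profits from the $h^\beta$-term, as it controls $\|\f d^j\|_{{\mathbb W}^{1,4}}$ and hence $\varepsilon(\f d^j)$ in a norm above ${\mathbb L}^\infty$ and so provides an $h$-independent $L^q$-bound on $\nabla\Phi^j$ for some $q>d$ (the discrete analogue of Theorem~\ref{thm:cont}\,v)) --- produces $j$-uniform, $h$-dependent bounds on $\|[n^\pm]^j\|_{{\mathbb L}^\infty}$, $\|\f v^j\|_{{\mathbb L}^\infty}$, $\|\nabla\f d^j\|_{{\mathbb L}^\infty}$, and $\|\nabla\Phi^j\|_{{\mathbb L}^\infty}$; the admissible ranges \eqref{constrb} for $\alpha,\beta$ are precisely those for which $\|[n^\pm]^j\|_{{\mathbb L}^\infty}\le C\,h^{-d/2}$ with $C$ independent of $h$, and for which $h\,\|\f v^j\|_{{\mathbb L}^\infty}\to0$, $h\,\|\nabla\f d^j\|_{{\mathbb L}^\infty}\to0$, $h\,\|\nabla\Phi^j\|_{{\mathbb L}^\infty}\to0$ as $h\to0$.

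\emph{Step 2: deactivating the cut-off, and the unit-length property.} Since $[n^\pm]^j$ is piecewise affine, $\|[n^\pm]^j\|_{{\mathbb L}^\infty}=\max_{\f z\in\mathcal N_h}|[n^\pm]^j(\f z)|\le C_2'h^{-d/2}$ with $C_2'$ independent of $h$; choosing $C_2:=1/C_2'$ gives $\gamma\,|[n^\pm]^j(\f x)|\le\tfrac12\le1$ for all $\f x\in\Omega$, so $\phi_\gamma(|[n^\pm]^j|)\equiv1$ and \eqref{bdis} coincides with \eqref{dis}. For the sphere property I would test \eqref{dis:d} with $\f c=\varphi_{\f z}\f e_i$ for every node $\f z$ and coordinate vector $\f e_i$; mass lumping localises the resulting identity to $\f z$, and the elementary vector identity $(\f a\times\f Y)\cdot(\f a\times\f e_i)=|\f a|^2Y_i-a_i(\f a\cdot\f Y)$ gives $d_t\f d^j(\f z)=-|\f d^{j-1/2}(\f z)|^2\f Y^j(\f z)+\bigl(\f d^{j-1/2}(\f z)\cdot\f Y^j(\f z)\bigr)\f d^{j-1/2}(\f z)$ with $\f Y^j(\f z)=[\PL(\nabla\f d^{j-1})](\f z)\,\f v^j(\f z)+\f q^j(\f z)$; in particular $d_t\f d^j(\f z)\cdot\f d^{j-1/2}(\f z)=0$, i.e.\ $|\f d^j(\f z)|^2=|\f d^{j-1}(\f z)|^2$, so $|\f d^j(\f z)|=1$ by induction from $|\f d^0(\f z)|=1$.

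\emph{Step 3: discrete maximum principle.} With the cut-off removed, \eqref{dis:N}--\eqref{dis:Phi} is, for each fixed $j$, a linear system for the nodal values of $([n^+]^j,[n^-]^j,\Phi^j)$; I would show its matrix is an $M$-matrix, whence $0\le[n^\pm]^{j-1}\le1$ propagates to $0\le[n^\pm]^j\le1$ by the usual sub-/super-solution comparison (the constants $0$ and $1$ being discrete sub-/super-solutions by the partition-of-unity relation $\sum_{\f z'}\nabla\varphi_{\f z'}=\f 0$, the discrete solenoidality of $\f v^j$, and the compatibility {\bf (A2)}). The $M$-matrix property requires the diffusion off-diagonal entries $\bigl(\varepsilon(\f d^j)\nabla\varphi_{\f z'},\nabla\varphi_{\f z}\bigr)$, $\f z\neq\f z'$, to be $\le-c\,h^{d-2}$ --- which holds under the strong acuteness {\bf (A1)} once the elementwise oscillation of $\varepsilon(\f d^j)$, of order $h\,\|\nabla\f d^j\|_{{\mathbb L}^\infty}$, is small --- and the off-diagonal contributions of the convective term $(\f v^j[n^\pm]^j,\nabla e^\pm)$ and of the drift term $\pm([n^\pm]^j\varepsilon(\f d^j)\nabla\Phi^j,\nabla e^\pm)$, of sizes $O(\|\f v^j\|_{{\mathbb L}^\infty}h^{d-1})$ and $O(\|\nabla\Phi^j\|_{{\mathbb L}^\infty}h^{d-1})$, to be $o(h^{d-2})$; by Step~1 all three smallness conditions hold exactly when $\alpha,\beta$ satisfy \eqref{constrb} and $h$ is small. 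Verifying this $M$-matrix property --- in particular keeping track of the variable coefficient $\varepsilon(\f d^j)$ under the strong-acuteness hypothesis, and of the $h$-dependent bounds that make the convection and drift terms subordinate --- is the technical heart of the argument and the step I expect to be the main obstacle; given the a priori estimates, the remaining bookkeeping is routine.
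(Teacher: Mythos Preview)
Your overall strategy---bounds, remove the cut-off, sphere property, $M$-matrix---matches the paper's, and Steps~2 and~3 are essentially what the paper does. The genuine gap is in Step~1, at the bound on $\nabla\Phi^j$.

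You claim an \emph{$h$-independent} $L^q$-bound on $\nabla\Phi^j$ for some $q>d$, as a ``discrete analogue of Theorem~\ref{thm:cont}\,v)''. This is not justified, and for $d=3$ it is false: Meyers' estimate only gives $p$ slightly above $2$, not above $d$. The $h^\beta$-term does control $\|\Delta_h\f d^j\|_{{\mathbb L}^2}$, but only as $\le C h^{-\beta/2}$, so by \eqref{discrete_sobolev} one gets $\|\nabla\f d^j\|_{{\mathbb L}^4}\le C h^{-\beta d/8}$, which is \emph{$h$-dependent}; any higher-integrability bound on $\nabla\Phi^j$ that uses this will also blow up in $h$. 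The paper does not attempt an $h$-independent bound. Instead it proves
\[
\|\nabla\Phi^j\|_{{\mathbb L}^6}\le C\,h^{1-d/3}\bigl(1+h^{-\beta d/(8-2d)}\bigr)
\]
via a three-step argument: (i) solve the \emph{continuous} elliptic problem with coefficient $\varepsilon(\f d^j)$ and right-hand side $[n^+]^j-[n^-]^j$ in non-divergence form to get an $H^2$-bound that costs $\|\nabla\f d^j\|_{{\mathbb L}^4}^{4/(4-d)}$; (ii) compare the mass-lumped and non-lumped discrete problems; (iii) use the Galerkin approximation error and an inverse estimate. This $h$-dependent $L^6$-bound is then fed into the $M$-matrix estimate via the H\"older splitting $|(\varphi_\beta\nabla\Phi^j,\nabla\varphi_{\beta'})|\le\|\nabla\Phi^j\|_{{\mathbb L}^6}\|\varphi_\beta\nabla\varphi_{\beta'}\|_{{\mathbb L}^{6/5}}$, and the requirement that this be $o(h^{d-2})$ is precisely what produces the constraint $\beta<(4-d)^2/d$. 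Your Step~3, by contrast, would need no constraint on $\beta$ at all if the $h$-independent bound held, so the role of \eqref{constrb} is obscured in your outline.

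Two smaller points. First, the sphere property should be established \emph{before} the $\Phi$-bounds (the paper does this in part~a)), since uniform ellipticity of $\varepsilon(\f d^j)$ with eigenvalues in $[1,1+\varepsilon_a]$ is what makes the elliptic argument for $\Phi^j$ work; the director equation \eqref{bdis:d} is not truncated, so the argument you give in Step~2 applies verbatim to the auxiliary scheme. Second, your worry about the elementwise oscillation of $\varepsilon(\f d^j)$ in the stiffness matrix is unnecessary: since $\nabla\varphi_\beta$ is elementwise constant, only the element averages $\int_K\varepsilon(\f d^j)\,\mathrm d\f x$ enter $k_{\beta\beta'}$, and these are symmetric positive definite with eigenvalues in $[1,1+\varepsilon_a]$ by the sphere property, so strong acuteness {\bf (A1)} alone yields $k_{\beta\beta'}\le -\overline C_{\theta_0}h^{d-2}$ for adjacent nodes.
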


\begin{proof}
Let again $1 \leq j \leq J$ be fixed; by Lemma \ref{exist-lemma}, there exists
$ {\f u}^j :=\bigl(\f v^j,\ov{\f d}^{j}, \f q^j, [ n^{\pm}]^j, \Phi^j \bigr) \in  {\f U}_h$
which solves Scheme \ref{balgo2}. 

{\em a)} In particular, there exists $\f d^{j-1/2} \in {\f Z}_h$.
  On choosing $\f d^{j-1/2}(\f z) \varphi_{\f z}$ as test function in (\ref{bdis:d}), where $\varphi_{\f z}$ is the nodal basis function attached to $\f z \in {\mathcal N}_h$ with $\f z \in \Omega$ (\textit{i.e.,} $\f z$ is an inner point), we recover $\vert \f d^{j}(\f z)\vert = 1$ for all inner  points $\f z \in {\mathcal N}_h$. For the boundary points $\f z \in{\mathcal N}_h$ , we immediately  observe that $| \f d ^j (\f z) | = | \mathcal{I}_h[\ov{\f d}_1 ](\f z)| = 1 $, since $ \ov{\f d}^j \in \f Y_h$.

 {\em b)} There exists $C_2 \equiv C_2(\Omega,\varepsilon_a)>0$,  {\em s.t.}
\begin{equation}\label{phip}
\Vert \nabla \Phi^j\Vert_{{\mathbb L}^6
}
 \leq 
C h^{1-d/3}\bigl(1+h^{-
\beta d /(8-2d)
}\bigr)\, .
\end{equation}
{\em b$_1$)} Let $\f d^j_h \equiv \f d^j \in \f Z_h$, and $[ n^{\pm}_h]^j \equiv [ n^{\pm}]^j \in Z_h$. For given $\f d_h^j \in \f Z_h$ and $f^j_h \equiv [n^+_h]^j - [n^-_h]^j\in Y_h$ we consider the solution $\widehat{\Phi}^j_h \in {\mathbb H}^1/{\mathbb R}$ to the elliptic PDE
 \begin{equation}\label{tadis:Phi}
 \Bigl(
   \varepsilon (\f d^j_h)
   \nabla \widehat{\Phi} ^j_h , \nabla  g \Bigr)-
 \bigl(f^j_h, g \bigr)=  0  \qquad \forall\, g \in {\mathbb H}^1\,.
 \end{equation}
 Note that $\f x \mapsto \varepsilon \bigl(\f d^j(\f x) \bigr) \in C \bigl( \Omega; {\mathbb R}^{d \times d}_{\rm sym})$, where by {\em a)}
 $$\forall\, \f x \in \Omega: \qquad \vert \pmb{\xi}\vert^2_{{\mathbb R}^d} \leq \bigl\langle \varepsilon\bigl( \f d_h^j(\f x)\bigr) \pmb{\xi}, \pmb{\xi}\bigr\rangle_{{\mathbb R}^d} \leq (1 + \varepsilon_a)\vert \pmb{\xi}\vert^2_{{\mathbb R}^d}
 \quad \forall\, \pmb{\xi} \in {\mathbb R}^d\, .$$
We use elliptic regularity theory to obtain an estimate for the solution
 of (\ref{tadis:Phi}) in the ${\mathbb H}^2$-norm:
on restating (\ref{tadis:Phi}) in non-divergence form and using {\em a)}, we find a constant $C \equiv C(\Omega)>0$, {\em s.t.}
$$\frac{1}{C} \Vert \Delta \widehat{\Phi}^j_h\Vert_{{\mathbb L}^2} \leq 
 \Vert \nabla \varepsilon(\f d^j_h)\Vert_{{\mathbb L}^
4 
 } \Vert \nabla \widehat{\Phi}^j_h\Vert_{{\mathbb L}^
4
 } + \Vert f^j_h\Vert_{{\mathbb L}^2}\, . $$
Since $\Vert \nabla \widehat{\Phi}^j_h\Vert_{{\mathbb L}^
4}
 \leq C \Vert \nabla \widehat{\Phi}^j_h\Vert^{
 {(4-d)}/{4}
 }_{{\mathbb L}^2} \Vert {
 \Delta \widehat{\Phi}^j_h\Vert^{
 d/4
 }_{{\mathbb L}^2}}$,
  by   {\em a)}, Young's inequality, (\ref{energylawex}), and an inverse estimate,
  \begin{eqnarray}\nonumber
\frac{1}{C} \Vert \Delta \widehat{\Phi}^j_h\Vert_{{\mathbb L}^2} 
&\leq&  \Vert \nabla  \f d^j_h \Vert^{
4/(4-d)
}_{{\mathbb L}^
4
} \Vert \nabla \widehat{\Phi}^j_h\Vert_{{\mathbb L}^2}+ \Vert f^j_h\Vert_{{\mathbb L}^2} \\ \label{h2_esti}
&\leq& Ch^{
-\beta d / (8-2d)
}\Vert \nabla \widehat{\Phi}^j_h\Vert_{{\mathbb L}^2}+ \Vert f^j_h\Vert_{{\mathbb L}^2} \leq C\bigl(1+h^{-
\beta d / (8-2d)
}\bigr) \, .
 \end{eqnarray}

{\em b$_2$)} We  consider the following auxiliary problem, which accounts for the effect of mass lumping of the right-hand side in (\ref{bdis:Phi}): given $(\f d_h^j, [n_h^{\pm}]^j)$, find $\widetilde{\Phi}_h^j \in Y_h$, {\em s.t.}
\begin{equation}\label{tbdis:Phi}
 \bigl(
   \varepsilon (\f d_h^j)
   \nabla \widetilde{\Phi} ^j_h , \nabla  g \bigr)-
 \bigl( f_h^j, g \bigr)= {} 0  \qquad \forall\, g \in Y_h\,.
 \end{equation}
Subtraction of (\ref{tbdis:Phi}) from (\ref{bdis:Phi}), choosing $g = \Phi^j - \widetilde{\Phi}^j_h$, and then
using estimates (\ref{dismass}) and (\ref{energylawex}) leads to 
$$\Vert \nabla (\Phi^j - \widetilde{\Phi}^j_h)\Vert_{{\mathbb L}^2} \leq Ch 
\bigl\Vert  f^j_h \bigr\Vert_{{\mathbb L}^2} \leq Ch\, .$$
By an inverse estimate, we then infer
$\Vert \nabla (\Phi^j - \widetilde{\Phi}^j_h)\Vert_{{\mathbb L}^
6
} \leq  C$.

{\em b$_3$)} Since $\widetilde{\Phi}_h^j \in Y_h$ is the Galerkin projection of 
$\widehat{\Phi}_h^j \in {\mathbb H}^1/{\mathbb R}$, a standard estimate, and
(\ref{h2_esti}) yields 
\begin{equation}\label{esti2}\Vert \nabla (\widehat{\Phi}_h^j - \widetilde{\Phi}_h^j)\Vert_{{\mathbb L}^2} 
\leq  
C h \bigl(1+h^{-
\beta d / (8-2d)
}\bigr)
\, . 
\end{equation}
Putting steps {\em b$_1$)}--{\em b$_3$)} together, an inverse estimate then shows (\ref{phip}), since
\begin{eqnarray}\nonumber \Vert \nabla \Phi^j\Vert_{{\mathbb L}^
6
} 
&\leq& \Vert \nabla (\Phi^j-\widetilde{\Phi}_h^j)\Vert_{{\mathbb L}^
6
} + \Vert \nabla (\widetilde{\Phi}_h^j- 
\widehat{\Phi}_h^j)\Vert_{{\mathbb L}^
6
} + C\Vert \nabla  \widehat{\Phi}_h^j \Vert_{{\mathbb L}^
6}
\\ \nonumber
&\leq& 
C h^{1-d/3}\bigl(1+h^{-
\beta d /(8-2d)
}\bigr)
\end{eqnarray}
{\em c)} In (\ref{bdis:N}), and consequently (\ref{bdis:v}), we have $[n^{\pm}]^j \in [0,1]$, provided that 
   {\bf (A1)} and  {\bf (A2)},
 hold. The proof of this assertion adapts a corresponding argument in \cite[Steps 3 \& 4 in Section 4.1]{numap}; we consider (\ref{bdis:N}) as two {\em linear} problems, where $\varepsilon(\f d^j), \phi_{\gamma}\bigl( \vert [n^\pm]^j\vert\bigr), \f v^j, \Phi^j$ are given. Its {\em algebraic}  representation  then leads to a system matrix ${\mathcal B} \in {\mathbb R}^{2L \times 2L}$ which is an $M$-matrix --- which then validates $[n^{\pm}]^j \in [0,1]$; see {\em c$_1$)} and {\em c$_2$)} below.

{\em c$_1$)} It holds $0 \leq [n^\pm]^j$ ($1 \leq j \leq J$), provided that 
 {\bf (A1)}, {\bf (A2)}, and (\ref{constrb}) hold.
This property follows from the 
$M$-matrix property of ${\mathcal B}$ for the two equations (\ref{bdis:N}) for $1 \leq j \leq J$ fixed (but arbitrary), which is assembled via the nodal basis functions $\{\varphi_{\beta} \}_{\beta=1}^L \subset Y_h$, with entries (${\mathcal M}, {\mathcal K} \in {\mathbb R}^{L \times L}$)
\begin{eqnarray*}
m_{\beta \beta'} &\equiv& \bigl( {\mathcal M}\bigr)_{\beta \beta'} 
:= \bigl( \varphi_{\beta}, \varphi_{\beta'}\bigr)_h\,, \\
k_{\beta \beta'} &\equiv& \bigl( {\mathcal K}\bigr)_{\beta \beta'} := \bigl(  \varepsilon(\f d^j) \nabla \varphi_{\beta}, \nabla \varphi_{\beta'} \bigr)\,, \\
c^{1,\pm}_{\beta \beta'} &\equiv& \bigl( {\mathcal C}_1(\f v^j)\bigr)_{\beta \beta'}
:= - \Bigl( { \phi_{\gamma}}\bigl(\vert [n^{\pm}]^{j}\vert
\bigr)\f v^j \varphi_\beta, \nabla \varphi_{\beta'}\Bigr)\,, \\
c^{2,\pm}_{\beta \beta'} &\equiv& \bigl( {\mathcal C}^{\pm}_2(\Phi^j)\bigr)_{\beta \beta'}:= \pm \Bigl( { \phi_{\gamma}}\bigl( \vert [n^{\pm}]^j\vert\bigr) \varepsilon(\f d^j)\varphi_{\beta} \nabla \Phi^j, \nabla \varphi_{\beta'}\Bigr)\, .
\end{eqnarray*}
Hence, the entries $b^{\pm}_{\beta \beta'} \equiv ({\mathcal B})^{\pm}_{\beta \beta'}$ of the system matrix ${\mathcal B} = {\rm diag}[{\mathcal B}^+, {\mathcal B}^-]$ read
\begin{equation}\label{mmatr}
{\mathcal B}^{\pm} := \frac{1}{k} {\mathcal M} + {\mathcal K} + {\mathcal C}_1^{\pm}(\f v^j) + {\mathcal C}_2^{\pm}(\Phi^j)\, ,
\end{equation}
and $[n^{\pm}]^{j} := \sum_{\beta=1}^L x_{\beta}^{\pm} \varphi_{\beta}$
solves
$$\begin{pmatrix}
{\mathcal B}^+ & \f 0 \\
\f 0 & {\mathcal B}^-
\end{pmatrix}
\begin{pmatrix}
\f x^{+} \\
\f x^{-}
\end{pmatrix}
 = 
 \begin{pmatrix}
 [\f f^+]^{j-1} \\
 [\f f^-]^{j-1}
 \end{pmatrix}
\,, $$
where $\f x^{\pm} = (x^{\pm}_1,\ldots, x^{\pm}_L)^\top$, and
$[\f f^\pm]^{j-1}_l := \frac{1}{k} \bigl([n^{\pm}]^{j-1}, \varphi_l\bigr)_h$.
{Note that the matrix ${\mathcal K}$ is an $M$-matrix, since its defining properties (see {\em c$_{11})$--c$_{13})$} below) --- which follow from an element-wise consideration --- are a consequence of 
$$k_{\beta \beta'} = \sum_{\{ K \in {\mathcal T}_h:\,  {{\rm supp} \, \varphi_{\beta} \cap  {\rm supp}\,  \varphi_{\beta'}} \cap K \neq \emptyset\}} \Biggl\langle \Bigl(\int_{K} 
\varepsilon(\f d^j)\, {\rm d}\f x\Bigr) \nabla \varphi_{\beta}, \nabla \varphi_{\beta'} \Biggr\rangle_{{\mathbb R}^{d \times d}}\,,$$
since $\nabla \varphi_{\beta} \in {\mathbb R}^d$ is element-wise constant,
and the minimum resp.~maximum eigenvalue of the symmetric, positive definite-valued function $\f x \mapsto \varepsilon\bigl(\f d^j(\f x)\bigr)$ is bigger resp.~less than $1$ resp.~$1+\varepsilon_a$.} We now guarantee its dominating influence as part of ${\mathcal B}^{\pm}$ via a dimensional argument --- and hence $M$-matrix property of ${\mathcal B}^{\pm}$:
\begin{enumerate}
\item[{\em c$_{11}$)}] Non-positivity of off-diagonal entries of ${\mathcal B}^{\pm}$, {\em i.e.}, 
$\bigl({\mathcal B}^{\pm}\bigr)_{\beta \beta'} \leq 0$ for all $\beta \neq \beta'$. Since
${\mathcal T}_h$ satisfies {\bf (A1)}, there exists $\overline{C}_{\theta_0}>0$, such that
$k_{\beta\beta'} \leq -\overline{C}_{\theta_0} h^{d-2} < 0$ uniformly for $h>0$, for any pair of adjacent nodes. The remaining parts of $\bigl({\mathcal B}^{\pm}\bigr)_{\beta \beta'}$  will be bounded independently, and we start with $c_{\beta \beta'}^{1,\pm}$:
 on using an embedding property, $\Vert \f v^j\Vert_{{\mathbb L}^{6}} \leq C \Vert \nabla \f v^j\Vert_{{\mathbb L}^2}^{d/3} \Vert \f v^j \Vert_{\mathbb L^2}^{(3-d)/3}$ 
  and (\ref{energylawex}), we conclude
\begin{equation}\label{nonpos1}\Bigl\vert \Bigl( { \phi_{\gamma}}\bigl( \vert [n^{\pm}]^j\vert\bigr) \f v^j \varphi_{\beta},
\nabla \varphi_{\beta'} \Bigr)\Bigr\vert 
\leq \Vert \f v^j\Vert_{{\mathbb L}^{
6
}} \Vert \varphi_{\beta} \nabla \varphi_{\beta'}\Vert_{{\mathbb L}^
6/5
} \leq C_2 h^{5d/6-1} C
 h^{
-\alpha d / 6 
}\, .
\end{equation}
We use a dimensional (asymptotic) argument, which ensures that this term may be bounded by $\overline{C}_{\theta_0} h^{d-2}$ --- and thus may be controlled by the corresponding negative term in ${\mathcal K}$: we find that 
\begin{equation}\label{constr1}
d-2 < \frac{5d}{6} - 1 - \frac{\alpha d }{ 6 }\qquad \Longrightarrow \qquad   \alpha  <  \frac{6}{d} -1 
\end{equation}
validates this requirement. 

Below, we use $C \equiv C(\Omega)>0$. We proceed similarly with $c_{\beta \beta'}^{2,\pm}$, utilizing (\ref{phip}) 
\begin{eqnarray}\nonumber
\Bigl\vert \bigl( \varphi_{\beta} \nabla \Phi^j, \nabla \varphi_{\beta'}\bigr)\Bigr\vert 
&\leq& \Vert  \nabla \Phi^j\Vert_{{\mathbb L}^
6
} \Vert \varphi_\beta \nabla \varphi_{\beta'}\Vert_{{\mathbb L}^{
6/5
 }} \leq 
 C h^{1-d/3}(1+h ^{- (\beta d )/(8-2d) } )h^{5d/6-1} 
\label{nonpos2}
\end{eqnarray}
By the same dimensional argument as below (\ref{nonpos1}), we deduce
\begin{equation}\label{constr10}
d-2 < \frac{5d}{6}-1 - \beta \frac{d}{8-2d} + 1- \frac{d}{3}\qquad \Longrightarrow \qquad \beta < \frac{(4-d)^2}{d}
\end{equation}
Such that, we have  $\vert k_{\beta \beta'}\vert > \vert c_{\beta \beta'}^{2,\pm}\vert$, for $h$ small enough. 

Finally, non-positivity of off-diagonal entries of ${\mathcal M}$ holds. Therefore, 
off-diagonal entries of ${\mathcal B}^{\pm}$ are non-positive, if
$h \leq h_0(\Omega)$ is small enough and (\ref{constr1}) holds.
\item[{\em c$_{12}$)}] Strict positivity of the diagonal entries of ${\mathcal B}^{\pm}$: we have to verify that
$$\frac{1}{k} m_{\beta \beta} + k_{\beta \beta} + c^1_{\beta \beta}  + c^{2,\pm}_{\beta \beta} > 0\, .$$
By {\bf (A1)}, we know that there exists $\underline{C}_{\theta_0}>0$, such that
$$\frac{1}{k} m_{\beta \beta} \geq \underline{C}_{\theta_0}h^d\,, \qquad \mbox{and} \qquad
k_{\beta \beta} \geq \underline{C}_{\theta_0}h^{d-2}\, .$$
Moreover, from (\ref{nonpos1}) and (\ref{nonpos2}), we conclude
$$\vert c^{1,\pm}_{\beta \beta}\vert + \vert c^{2,\pm}_{\beta \beta} \vert \leq 
C_2 h^{5d/d-1} C  h^{-\alpha d /6 
} + 
 Ch^{1-d/3}
(1+h ^{- \beta d/(8-2d) } )h^{5d/6-1} 
=: \eta(h)\, .$$
Hence, $\underline{C}_{\theta_0} h^{d-2} - \eta(h)  >0$ is valid by the same dimensional argument as in {\em c$_{11}$)} provided (\ref{constr1}) and  (\ref{constr10})
are valid.
\item[{\em c$_{13}$)}] ${\mathcal B}^{\pm}$ is strictly diagonal dominant, {\em i.e.}, 
$\sum_{\beta' \neq \beta} \vert ({\mathcal B}^{\pm})_{\beta \beta'}\vert < \vert ({\mathcal B}^{\pm})_{\beta \beta}\vert$ for all $1 \leq \beta \leq L$. We use the fact that the number of neighboring nodes $\f x_{\beta'} \in {\mathcal N}_h$ for each $\f x_{\beta}$ is bounded independently of $h>0$, and that this property is inherited from ${\mathcal K}$. Hence, there exists a constant
$\overline{C} := \overline{C} \bigl( \{ \# \beta':\, k_{\beta \beta'} \neq 0\}\bigr)>0$, such that, thanks to $c_{11}), c_{12})$ and for $k \leq k_0(\Omega)$ and
$h \leq h_0(\Omega)$ sufficiently small,
\begin{eqnarray*}
({\mathcal B}^{\pm})_{\beta \beta} &\geq& \frac{1}{k} \underline{C}_{\theta_0} h^d + \underline{C}_{\theta_0} h^{d-2} -
\eta(h)  > \overline{C} \max_{\beta' \neq \beta} \vert ({\mathcal B}^{\pm})_{\beta \beta'} \vert\\
&=& \overline{C} \bigl\vert -\overline{C}_{\theta_0} h^{d-2}-\eta (h)\bigr\vert
\end{eqnarray*}
\end{enumerate}
The properties $c_{11}) -c_{13})$ then guarantee the $M$-matrix property of
${\mathcal B}^{\pm}$ for $k \leq k_0(\Omega)$ and
$h \leq h_0(\Omega)$  small enough, under the given mesh constraints: this
property then implies $0 \leq [n^{\pm}] ^j \in Y_h$  via the discrete maximum principle.

{\em c$_2$)} It holds $ [n^\pm]^j \leq 1$ ($1 \leq j \leq J$), provided that 
{\bf (A1)},  {\bf (A2)}, and (\ref{constrb})  hold. 

First, we identify that $ \phi_{\gamma } ( | [n^{\pm}]^j|) =1$. 
From the bound~\eqref{energylawex} multiplied by $k$, we infer a bound on the $\mathbb{L}^2$-norm of the charges $ [n^{\pm}]^j$ such that an inverse estimate helps to conclude 
\begin{align*}
\| [n^{\pm}]^j\|_{\mathbb L^\infty} \leq C h^{-d/2} \| [n^{\pm}]^j \|_{\mathbb L^2 } \leq C h^{-d/2}  \| [n^{\pm}]^j \|_ h \leq C_2  h^{-d/2} \,.
\end{align*}
Choosing $\gamma^{-1} = \frac{{C_2}}{2} \bigl(h^{-\frac{d}{2}}\bigr)$ yields $
 \phi_{\gamma }
 \bigl(\vert [\widetilde{n}^{\pm}]^j \vert \bigr)=1$.

By induction,  we may
assume that $[n^{\pm}]^{j-1} \leq 1$ for some fixed $1 \leq j \leq J$.  

In the following, we consider the system which is solved by $ ( \mathcal{I}_h {1} - [ n^+]^j, \mathcal{I}_h 1 - [ n^-]^j )^T$. 
First, we observe that 
\begin{align*}
\left ( \td \mathcal{I}_h [1] , e^{\pm}\right )_h = 0 
\,.
\end{align*}
Secondly, for the convection term, we use integration by parts, the definition of $\f V_h$, and  {\bf (A2)}  to conclude
\begin{align*}
 (  \f v^j , 
 \nabla e^{\pm}    )  ={}&  -\bigl( {\rm div}\, \f v^j \, ,
 [{e}^{\pm}]^j - \lambda\bigr) = 0\,,
\qquad \text{where} \qquad \lambda ={}  \frac{1}{\vert \Omega\vert}
\int_{\Omega} e^{\pm} 
\, {\rm d}\f x\, .
\end{align*}
Thirdly, we find 
\begin{align*}
\pm \left ( \varepsilon(\f d^j) \nabla \Phi , \nabla e^{\pm} \right ) = \pm  \left ( [ n^{+}]^j - [n^-]^j, e^{\pm} \right )_h 
\end{align*}
Combining these equations, we observe that 
$ ( \mathcal{I}_h [1] - [ n^+]^j, \mathcal{I}_h [1] - [ n^-]^j )^\top$
 solves
\begin{align*}
\left ( \td ( \mathcal{I}_h  [1] - [ n^+]^j ), e^+ \right ) _h &+{} \left (  \td ( \mathcal{I}_h [1] - [n^-]^j), e^- \right ) _h \\&+ \left ( \varepsilon( \f d^j ) \nabla ( \mathcal{I}_h  [1] - [ n^+]^j ), \nabla e^+ \right ) + \left ( \varepsilon( \f d^j ) \nabla ( \mathcal{I}_h  [1] - [ n^-]^j ), \nabla e^- \right )  
\\
&- \left ( v^j (\mathcal{I}_h [1] - [n^+]^j ) , \nabla e^+\right ) - \left ( v^j ( \mathcal{I}_h [1] - [n^-]^j), \nabla e^- \right ) \\&+ \left ( \varepsilon(\f d^j ) \nabla \Phi ( 1- [n^{+}]^j ) , \nabla e^{+}\right ) - \left (  \varepsilon(\f d^j ) \nabla \Phi ( 1- [n^{-}]^j ) , \nabla e^{-}\right ) 
\\
&+ \left ( \mathcal{I}_h[1] - [n^+]^j - ( \mathcal{I}_h[1] - [n^-]^j ) , e^+\right )_h \\& + \left ( \mathcal{I}_h [1] - [n^-]^j- ( \mathcal{I}_h [1] - [n^+]^j ) , e^- \right )_h =  \left ( \varepsilon(\f d ^j ) \nabla \mathcal{I}_h[ 1]  , \nabla (e^{+}+e^-) \right )
\end{align*}

Going back to~\eqref{mmatr}, we observe that
 $(\mathcal{I}_h[1]-n^{\pm}]^{j}) := \sum_{\beta=1}^L x_{\beta}^{\pm} \varphi_{\beta}$
solves
\begin{align}
\begin{pmatrix}
{\mathcal B}^+  + \mathcal{M}& \f -\mathcal{M} \\
\f -\mathcal{M} & {\mathcal B}^-+ \mathcal{M}
\end{pmatrix}
\begin{pmatrix}
\f x^{+} \\
\f x^{-}
\end{pmatrix}
 = 
 \begin{pmatrix}
 [\f f^+]^{j-1}+ \mathcal{K}\f 1 \\
 [\f f^-]^{j-1}+ \mathcal{K}\f 1
 \end{pmatrix}
\,, \label{matrixsys}
\end{align}
where $\f x^{\pm} = (x^{\pm}_1,\ldots, x^{\pm}_L)^\top$, $ \f 1 =  (1,\ldots, 1)^\top$, and
$[\f f^\pm]^{j-1}_l := \frac{1}{k} \bigl((\mathcal{I}_h [1]- [n^{\pm}]^{j-1}), \varphi_l\bigr)_h$.
We already proved that $\mathcal{B}^{\pm}$ are \textit{M}-matrices and since $\mathcal{M}$ is a diagonal matrix with positive entries due to the mass lumping, we deduce that the system matrix is also an $M$-matrix. The right-hand side remains positive, since $\mathcal{K}$ is an $M$-matrix (this term even vanishes, since constant functions are in the kernel of $\mathcal{K}$) such that $[n^{\pm}]\leq 1$.

\medskip

The arguments in {\em a)}--{\em c)} now show that
$ {\f u}^j \in  {\f U}_h$ solves Scheme \ref{algo1}. By choosing $e^{\pm} =1$ in~\eqref{bdis:N}, we observe that the mass of the iterates $[n^{\pm}]^j$ is conserved. 
\end{proof}
We combine the assumptions of Lemma~\ref{exist-lemma} and Lemma~\ref{exist-lemma2} to:
\begin{itemize}
\item[{\bf (A3)}]  
An admissible step size $(k, h)$  satisfies 
$k \leq C h^{d/2}$.
\end{itemize}
\begin{remark}
The proof of the assertion~\textit{c$_2$)} of the previous Lemma follows a different argument than the associated proof in~\cite{numap}. By considering the system matrix~\eqref{matrixsys} and showing its $M$-matrix property, we can eliminate the previous $k$ and $h$ coupling for this part of the proof. 
The remaining step-size assumption \textbf{(A3)} is only needed to guarantee the existence of solutions. It can probably be improved and was not observed in the numerical computations. 
\end{remark}


\subsection{The structure-inheriting Scheme \ref{algo1}}\label{discr0a}
Scheme \ref{algo1} was designed to inherit key properties of system (\ref{simp}); while such a scheme is of independent relevancy, these properties will be crucial in later sections to construct a dissipative solution of (\ref{simp}) via (proper sequences of)  solutions of Scheme \ref{algo1} in the limit of vanishing discretization parameters.
Below, we use the discrete energy
$$ E (\f v^j,\f d^j , \Phi^j) : = \frac{1}{2}\| \f v^j \|^2_{{\mathbb L}^2} + \frac{1}{2}\| \nabla \fn d  \|^2_{{\mathbb L}^2} + \frac{1}{2} \Bigl( \varepsilon(\f d ^j ) \nabla \Phi^j , \nabla \Phi ^j \Bigr)\, . $$
\begin{theorem}\label{thm:disex}
Let $\Omega \subset \R^d$ for $d= 2,3$ be a bounded convex Lipschitz domain. 
 We additionally assume that there exists a $\bar{\f d}_1\in \mathbb W^{2,2}(\Omega)$ such that $ \tr (\bar{\f d}_1) = \f d _1 $. 
 Suppose {\bf (A1)}, {\bf (A2)}, and \textbf{(A3)}.
 Assume 
$k\leq k_0(\Omega)$ and $h\leq h_0(\Omega)$ to be sufficiently small. 
For every $1 \leq j \leq J$, there exists a
 solution $\bigl\{ (\f v^{j}, \ov{\f d} ^{j}, {\f q}^{j}, [n^\pm]^{j}, \Phi^{j});\, 1 \leq j \leq J\bigr\} \subset {\f U}_h$ of Scheme \ref{algo1}, with the following properties: 
\begin{enumerate}
\item[i)] conservation of properties: For all $1 \leq j \leq J$,
 \begin{align*}
0\leq  [n^{\pm}]^j  \leq 1  \,,
\qquad \mbox{and} \qquad \vert \f d^j(\f z)\vert = 1 \quad \forall\, \f z \in {\mathcal N}_h\, .
\end{align*}

\item[ii)] discrete energy equality: For all $1 \leq j \leq J$, \begin{multline}
E \bigl(\f v^j,\f d^j , \Phi^j\bigr) +  \frac{h^\alpha}{2}\| \nabla \f v^j \|^2_{{\mathbb L}^2}
+ \frac{h^\beta}{2} \| \Delta_h \fii d \|_h^2 
\\ +\frac{ k^2}{2} \sum_{\ell=1}^j \Bigl[ \| d_t \f v^\ell \|^2_{{\mathbb L}^2}+ h^\alpha \| d_t \nabla \f v^\ell \| ^2_{{\mathbb L}^2} 
+ \Bigl(\varepsilon(\f d^{\ell-1})\nabla d_t \Phi^\ell , \nabla d_t \Phi^\ell \Bigr) + \varepsilon_a \| \nabla \Phi^\ell \cdot d_t \f d^\ell \|^2_{{\mathbb L}^2}   \Bigr]
\\
+ k \sum_{\ell=1}^j \Bigl[ \nu \| \nabla \f v^\ell \|^2_{{\mathbb L}^2} + \| \f d ^{\ell-1/2} \times \f q^\ell \|^2_h + \Bigl( ([n^+]^\ell+[n^-]^\ell) \nabla \Phi^\ell , \varepsilon(\f d^{\ell} ) \nabla \Phi ^\ell \Bigr)   \\
+  \|[n^+]^\ell-[n^-]^\ell\|^2_h  \Bigr] = E \bigl(\f v^0,\f d^0 , \Phi^0\bigr) +  \frac{h^\alpha}{2}\| \nabla \f v^0 \|^2_{{\mathbb L}^2}+ \frac{h^\beta}{2} \Vert \Delta_h \f d^0 \Vert _{h}^2 \,, \label{disenin}
\end{multline}
\item[iii)] bounds for discrete charges: For all $1 \leq j \leq J$,\begin{multline*}
 \| [ n^{+}]^j\|_h^2+\| [ n^{-}]^j\|^2_h 
+ k^2 \sum_{\ell=1}^j  \Bigl[\| d_t [ n^{+}]^\ell\|^2_i+ \| d_t [ n^{-}]^\ell\|^2_h\Bigr]
 \\
 +k \sum_{\ell=1}^j \left[
 \left ( \varepsilon(\f d ^\ell) \nabla [ n^{+}]^\ell , \nabla [n^{+}]^\ell \right) +\left ( \varepsilon(\f d ^\ell) \nabla [ n^{-}]^\ell , \nabla [n^{-}]^\ell \right )
  \right]
\\ \leq 
 \| [  n^{+}]^0\|^2_h+\| [ n^{-}]^0\|^2_h + C\left (  E( \f v ^0,\f d^0,\Phi^0) + \frac{h^\alpha}{2}\| \nabla \f v^0\|^2_{{\mathbb L}^2}+ \frac{h^\beta}{2} \Vert \Delta_h \f d^0 \Vert _{h}^2\right ) \,.
\end{multline*}
\item[iv)]  bounds for temporal variation: For all $1 \leq j \leq J$,
\begin{multline}
k \sum_{j=1}^J \left[ \| d_t[ n^{\pm}]^j \| _{({\mathbb H}^{1})^*} ^2 + \| d_t \f d ^j \|_{{\mathbb L}^{4/3}}^2 \right] +  \| \f q^j \|_{  ( \mathbb W^{2,2}\cap \mathbb W^{1,2}_0 )^* } \\
 \leq C \Bigl[E\bigl( \f v^0,\f d^0 , \Phi^0\bigr)+ \frac{h^\alpha}{2}\| \nabla \f v^0\|^2_{{\mathbb L}^2} + \frac{h^\beta}{2} \Vert \Delta_h \fii d \Vert _{h}^2 +1 \Bigr] ^2\,.\label{estdistime}
\end{multline}
\end{enumerate}
\end{theorem}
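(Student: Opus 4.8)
The plan is to assemble the statement from the two auxiliary results already established. By induction on $j$, Lemma~\ref{exist-lemma} provides a solution of the truncated Scheme~\ref{balgo2} via Brouwer's fixed-point theorem for each $1\le j\le J$ (feeding the previous iterate as data), and Lemma~\ref{exist-lemma2} shows that under {\bf (A1)}, {\bf (A2)}, {\bf (A3)} and the calibration $\gamma=\tfrac{C_2}{2}h^{d/2}$ this $6$-tuple already solves Scheme~\ref{algo1} and satisfies $\vert\f d^j(\f z)\vert=1$ at every node of ${\mathcal N}_h$ and $0\le[n^\pm]^j\le1$. This gives existence and property~i) at once.

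For the discrete energy equality~ii) I would test~\eqref{dis:v} with $\f a=\f v^j$, \eqref{dis:q} with $\f b=d_t\f d^j$, \eqref{dis:d} with $\f c=\f q^j$, \eqref{dis:N} with $e^\pm=\pm\Phi^j$, and \eqref{dis:Phi} with $g=-([n^+]^j-[n^-]^j)-d_t\Phi^j$, then add the five identities. The elastic coupling terms in~\eqref{dis:v} and~\eqref{dis:d} cancel against each other, the diffusion term in~\eqref{dis:N} cancels the term produced by~\eqref{dis:Phi}, and the discrete integration-by-parts identity~\eqref{starr1} (already proven inside Lemma~\ref{exist-lemma}) converts the mixed $\Phi$/$\f d$ contributions into the clean increment of $\tfrac12(\varepsilon(\f d^j)\nabla\Phi^j,\nabla\Phi^j)$ plus the nonnegative $O(k)$ numerical-dissipation remainders. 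Since Scheme~\ref{algo1} contains no truncation $\phi_\gamma$, every step is an identity; multiplying by $k$ and summing over $\ell=1,\dots,j$ yields~\eqref{disenin}. Note that the constraint $k\le1/(8\gamma)$ needed in Lemma~\ref{exist-lemma} plays no role here.

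Property~iii) follows by testing~\eqref{dis:N} with $e^\pm=[n^\pm]^j$: the left-hand side produces $\tfrac12 d_t\|[n^\pm]^j\|_h^2+\tfrac{k}{2}\|d_t[n^\pm]^j\|_h^2+(\varepsilon(\f d^j)\nabla[n^\pm]^j,\nabla[n^\pm]^j)$, and the convection and electromigration terms on the right are controlled by $0\le[n^\pm]^j\le1$ (so $\|[n^\pm]^j\|_{{\mathbb L}^\infty}\le1$), Young's inequality and the uniform coercivity of $\varepsilon(\f d^j)$; a fixed fraction of the Dirichlet term is absorbed, leaving a right-hand side bounded by $\|\f v^j\|_{{\mathbb L}^2}^2$ and $(\varepsilon(\f d^j)\nabla\Phi^j,\nabla\Phi^j)$. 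Multiplying by $k$, summing over $\ell$, and invoking~ii) (which controls $k\sum_\ell\|\nabla\f v^\ell\|_{{\mathbb L}^2}^2$ and $\sup_\ell(\varepsilon(\f d^\ell)\nabla\Phi^\ell,\nabla\Phi^\ell)$ by the initial energy) gives~iii).

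For the temporal-variation bounds~iv) I would treat the three quantities separately, reading the relevant scheme equation as the definition of the corresponding functional. From~\eqref{dis:N}, $\langle d_t[n^\pm]^j,e^\pm\rangle$ splits into a diffusion, an electromigration and a convection term, each estimated by $C\|\nabla e^\pm\|_{{\mathbb L}^2}$ times quantities controlled by~ii)--iii) and by the $\mathbb{L}^6$-bound~\eqref{phip} on $\nabla\Phi^j$, the mass-lumping discrepancy on the left being handled through~\eqref{dismass}. From~\eqref{dis:d}, $\langle d_t\f d^j,\f c\rangle$ is bounded using $\vert\f d^{j-1/2}\vert\le1$ at the nodes, the bound $\|\f d^{j-1/2}\times\f q^j\|_h\le C$ from~ii), and, for the convective part, $\|\PL(\nabla\f d^{j-1})\f v^j\|_{\mathbb{L}^{3/2}}\le\|\nabla\f d^{j-1}\|_{{\mathbb L}^2}\|\f v^j\|_{\mathbb{L}^6}$ together with Lemmas~\ref{lem:interpolation}--\ref{lem:masslumping} to pass between the mass-lumped and the standard $\mathbb{L}^2$ pairings; summing yields the stated $\mathbb{L}^{4/3}$-bound. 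Finally, from~\eqref{dis:q}, $\langle\f q^j,\f b\rangle$ is the sum of $h^\beta(\Delta_h\f d^{j-1/2},\Delta_h\f b)$ --- harmless since~ii) gives $\|\Delta_h\f d^j\|_h\le Ch^{-\beta/2}$, leaving a residual factor $h^{\beta/2}$ --- the Dirichlet term $(\nabla\f d^{j-1/2},\nabla\f b)$, and the quadratic term $\varepsilon_a(\nabla\Phi^j(\f d^{j-1}\cdot\nabla\Phi^j),\f b)$, bounded again via~\eqref{phip}. I expect this last block to be the main obstacle: one must estimate every mass-lumped nonlinear term without producing uncontrolled negative powers of $h$, which is precisely what the admissible ranges of $\alpha,\beta$ and the interpolation/inverse estimates of Section~\ref{secc:not} are arranged to ensure.
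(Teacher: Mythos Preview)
Your outline for existence, i), ii) and iii) is exactly the paper's argument: existence and i) are read off from Lemmas~\ref{exist-lemma}--\ref{exist-lemma2}; ii) is the identity~\eqref{energyincrement} (with $\phi_\gamma\equiv 1$ once i) is known, so the potentially sign-indefinite term becomes nonnegative) summed over $\ell$; and iii) is obtained by testing~\eqref{dis:N} with $[n^\pm]^j$, using $\|[n^\pm]^j\|_{{\mathbb L}^\infty}\le 1$ and absorbing half of the Dirichlet form, exactly as you describe.

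For iv) your skeleton is also correct, but there is one misdirection worth flagging. You invoke the $\mathbb{L}^6$ bound~\eqref{phip} on $\nabla\Phi^j$ both for the $d_t[n^\pm]^j$ estimate and for the quadratic term $(\nabla\Phi^j(\f d^{j-1}\!\cdot\!\nabla\Phi^j),\f b)$ in the $\f q^j$ estimate. This is neither needed nor advisable: \eqref{phip} carries a negative power of $h$, and squaring it (as the quadratic term would require) produces a factor that the admissible ranges of $\alpha,\beta$ do \emph{not} compensate. The paper instead uses only the uniform $\mathbb{L}^2$ bound on $\nabla\Phi^j$ from ii). For $d_t[n^\pm]^j$ this suffices because $[n^\pm]^j\in[0,1]$ kills the extra factor; for $\f q^j$ it suffices because the dual space is $(\mathbb{W}^{2,2}\cap\mathbb{W}^{1,2}_0)^*$ and $\mathbb{W}^{2,2}\hookrightarrow\mathbb{L}^\infty$ in $d\le 3$, so $\vert(\nabla\Phi^j(\f d^{j-1}\!\cdot\!\nabla\Phi^j),\f b)\vert\le\|\nabla\Phi^j\|_{\mathbb{L}^2}^2\|\f b\|_{\mathbb{L}^\infty}$. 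The remaining mass-lumping discrepancies are handled via~\eqref{dismass}, Lemma~\ref{lem:masslumping} and inverse estimates, producing residual positive powers of $h$; your expectation that ``no uncontrolled negative powers of $h$'' appear is correct, but for the simpler reason that~\eqref{phip} never enters.
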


\medskip

\begin{proof} 
Assertion~$i)$ follows from Lemma~\ref{exist-lemma2} and Assertion~$ii)$ from~\eqref{energyincrement}, Assertion~$i)$, and summation. 
We start from~\eqref{Npm} with $\phi_{\gamma} ( |[n^\pm]^j|)=1$ 
and observe by~\eqref{thm:disex} i) and the coercivity of $\varepsilon(\f d^j)$  that
\begin{align*}
\Bigl |\Bigl ( [ n^{\pm}]^j \varepsilon(\f d^{j}) \nabla \Phi^j,\nabla [ n^{\pm}]^j\Bigr)\Bigr |  \leq {}&  \frac{1}{4} \Bigl ( \varepsilon(\f d^{j}) \nabla [ n^{\pm}]^j,\nabla [ n^{\pm}]^j\Bigr ) +\Bigl( \varepsilon(\f d^{j}) \nabla \Phi^j,\nabla \Phi^j\Bigr) \,,
\\
 \Bigl( \f v^j [ n^{\pm}]^j, \nabla [ n^{\pm}]^j \Bigr)  \leq{}& \| \f v \|_{\mathbb L^2} \| [n^{\pm}]^j \|_{\mathbb L^\infty} \| \nabla[n^{\pm}]^j \|_{\mathbb L^2}   \leq  \frac{1}{4} \Bigl( \varepsilon(\f d^{j}) \nabla [ n^{\pm}]^j,\nabla [ n^{\pm}]^j\Bigr) + 
  \| \f v^j \|^2_{{\mathbb L}^2
 }
 \,, 
\end{align*} 
where the second terms on the right-hand sides are bounded due to~\eqref{disenin} and the first ones may be absorbed into the left hand side of~\eqref{Npm}. 
Note that $\varepsilon(\f d^j)$ is a positive definite matrix. 
Summing up implies the assertion of $iii)$.

To verify assertion $iv)$, we use approximation properties of the 
${\mathbb L}^2$-projection {${\mathcal P}_{{\mathbb L}^2}$}:
 By the ${\mathbb H}^{1}$-stability of ${\mathcal P}_{{\mathbb L}^2}$, see~\cite{stab}, and~\eqref{dismass}, and with the help of (\ref{dis:N}) and the second assertion in i), we find
\begin{align*}
\| d_t [n^{\pm}]^j\|_{ {(\mathbb H}^{1})^*} \leq{}& \sup_{\varphi \in {\mathbb H}^1} \left | \frac{ \left (  d_t [n^{\pm}]^j , {\mathcal P}_{{\mathbb L}^2} \varphi \right )_ h  } { \| \varphi \|_{{\mathbb H}^1} } \right | 
+ \sup_{\varphi \in {\mathbb H}^1} \left | \frac{\left (  d_t [n^{\pm}]^j , {\mathcal P}_{{\mathbb L}^2} \varphi \right )- \left (  d_t [n^{\pm}]^j , {\mathcal P}_{{\mathbb L}^2} \varphi \right )_ h  } { \| \varphi \|_{{\mathbb H}^1} } \right | \\
\leq{}& C\Bigl( \| \f v^j \|_{{\mathbb L}^2}+ \| \nabla [n^{\pm}]^j \| _{{\mathbb L}^2} + \| \nabla \Phi^j \| _{{\mathbb L}^2} + h \| \td [n^{\pm}]^j \|_{{\mathbb L}^2} \Bigr ) \,.
\end{align*}
Note that due to~\eqref{dis:N} and an inverse inequality
\begin{eqnarray*}
\| \td [n^{\pm}]^j \|^2 _h &\leq&  \Bigl ( \| \nabla [n^{\pm}]^j \|_{{\mathbb L}^2} + (\| \nabla \Phi ^j \|_{{\mathbb L}^2} + \| \f v ^j \|_{{\mathbb L}^2})\| [n^{\pm}]^j\|_{{{\mathbb L}^
\infty}}  \Bigr) \| \nabla \td [n^{\pm} ]^j \|_{{\mathbb L}^2} \\
&\leq& \Bigl( \| \nabla [n^{\pm}]^j \|_{{\mathbb L}^2} + \| \nabla \Phi ^j \|_{{\mathbb L}^2} + \| \f v ^j \|_{{\mathbb L}^2}  \Bigr)h^{-1} \|  \td [n^{\pm} ]^j \|_{{\mathbb L}^2} \,.
\end{eqnarray*}
Applying the discrete integral operator implies the desired bound. 

 Using again the stability of the ${\mathbb L}^2$-projection, and~\eqref{dismass}, imply
\begin{align}
\begin{split}
\| d_t \f d^j \|_{{\mathbb L}^{4/3}} ={}& \sup_{\varphi \in {\mathbb L}^4} \left | \frac{\left( d_t \f d^j , \PL( \varphi)\right )_{{ h}} }{\| \varphi \| _{{\mathbb L}^4}} \right |+ \sup_{\varphi \in {\mathbb L}^4} \left | \frac{\left( d_t \f d^j , \PL( \varphi )\right ) _h - \left( d_t \f d^j , \PL( \varphi) \right ) }{\| \varphi \| _{{\mathbb L}^4}} \right | \\ \leq{}&  \sup_{\varphi \in {\mathbb L}^4} \| \varphi \|_{{\mathbb L}^3}\Bigl (  \| \f d^{j-1/2}\|_{{\mathbb L}^\infty}^2\| \f v^j \| _{{\mathbb L}^6} \| \PL(\nabla \f d ^{j-1}) \| _{{\mathbb L}^2}  
\\
&\qquad \qquad +\| \f d^{j-1/2}\|_{{\mathbb L}^\infty} \| \f d^{j-{1/2}} \times \f q^j \|_{{\mathbb L}^2}
\Bigr) + 
 C h ^{d/4}\| \td \f d ^j \| _{{\mathbb L}^{2}}
 \, .
\end{split}
\label{esttdd}
\end{align}
The last term on the right-hand side of the previous inequality stems from  the error due to the mass lumping, it can be seen by a proof similar to the one in~\cite{ciavaldini}.
From~\eqref{dismass}, we find by  Lemma~\ref{lem:inv}, and the stability of the projection that
\begin{subequations}\label{estdtime}
 \begin{align}\begin{split}
%
 \| ( \mathcal{I}_h - I) \bigl[ d_t \f d^j  \PL( \varphi)  \bigr]\|_{{\mathbb L}^1} 
 \leq{}& C
 h^2 \sum_{T \in \mathcal{T}_h} \| 1 \|_{\mathbb L^4(T)} \| \nabla    \td \fn d \|_{{\mathbb L}^{2}(T)} \| \nabla \PL( \varphi) \|_{{\mathbb L}^4(T)}
 \\ \leq{}& C h^{d/4}  \|    \td \fn d  \|_{{\mathbb L}^{2}} \| \varphi \|_{{\mathbb L}^{4}}
   \,.
\end{split}\label{masslumpingest}
\end{align}
Similar, we find for the $L^2$-norm of the time derivative
\begin{align}
\begin{split}
\| \td \f d ^j \| _h ^2 ={}& -\left ( \fh d \times ( \PL [ \nabla \fiii d ] \fii v ), \fh d \times \td \fn d \right )_h  - \left ( \fh d \times \fn q , \fh d \times \td \fn d \right ) _h 
\\
\leq{}&\| \fh d \|_{\mathbb L^\infty(\Omega)}^2 \| \f v^j \| _{\mathbb L^6(\Omega)} \|\PL[ \nabla \f d ^{j-1}] \| _{\mathbb L^2(\Omega)} \| \td \fn d \|_{\mathbb L^3(\Omega)} \\
& +\| \f d^{j-1/2}\|_{\mathbb L^\infty(\Omega)} \| \f d^{j-{1/2}} \times \f q^j \|_{\mathbb L^2(\Omega)}\| \td \fn d \|_{\mathbb L^3(\Omega)} \,,
\end{split}
\intertext{and by the Gagliardo--Nirenberg inequality,  the inverse inequality, and ~\eqref{dismass}, }
\| \td \fn d\|_{\mathbb L^3(\Omega)} \leq{}& C \| \td \fn d  \|_{\mathbb{L}^2}^{(6-d)/6} \| \nabla \td \fn d \|_{\mathbb{L}^2}^{d/6} \leq C h^{-d/6} \| \td \fn d \|_h\,.
\end{align}
Such that the last term on the right-hand side of~\eqref{esttdd} is bounded independently of $h$ and even vanishes for $h\ra 0$.
\end{subequations}
Considering the ${\mathbb L}^2$-norm in time, we find
\begin{align*}
k\sum_{j=1}^J \| d_t \f d^j \|_{{\mathbb L}^{4/3}} ^2 \leq{} c \Big( & \max_{j\in\{1,\ldots ,J\}}\| \f d^{j-1/2}\|_{{\mathbb L}^\infty}^4k\sum_{j=1}^J \| \f v^j \| _{{\mathbb L}^6}^2 \max_{j\in\{1,\ldots ,J\}} \| \nabla \f d ^{J} \| _{{\mathbb L}^2}^2\\&+\max_{j\in\{1,\ldots ,J\}}\| \f d^{j-1/2}\|^2_{{\mathbb L}^\infty}k\sum_{j=1}^J  \| \f d^{j-{1/2}} \times \f q^j \|_{{\mathbb L}^2}^2\Big)\,.
\end{align*}
Due to~\eqref{dis:q}, we may estimate $\f q^j$ via
\begin{subequations}\label{qmass}
\begin{align}
\begin{split}
\| \f q^j \|_{ ( \mathbb W^{2,2}\cap \mathbb W^{1,2}_0 )^* } ={}& \sup _{ \varphi \in \mathbb \mathbb W^{2,2}\cap \mathbb W^{1,2}_0 }\left | \frac{( \f q ^j , \PL  ( \varphi))_h }{\| \varphi \|_{\mathbb W^{2,2}\cap \mathbb W^{1,2}_0}} \right |+\sup _{ \varphi \in \mathbb W^{2,2}\cap \mathbb W^{1,2}_0} \left | \frac{( \f q ^j , \PL  ( \varphi))_h -( \f q ^j , \PL  ( \varphi)) }{\| \varphi \|_{\mathbb W^{2,2}\cap \mathbb W^{1,2}_0}} \right |
\\
\leq{}&  \sup _{ \varphi \in \mathbb W^{2,2}\cap \mathbb W^{1,2}_0 } \| \varphi \|_{\mathbb{ H}^1} \| \nabla \fh d \| _{\mathbb L^2} +  \sup _{ \varphi \in \mathbb W^{2,2}\cap \mathbb W^{1,2}_0 } \| \varphi \|_{ \mathbb L^\infty} \| \nabla \Phi^j \|_{ \mathbb L^2 }^2 \| \fh d \|_{\mathbb L^\infty } \\&+C \sup _{ \varphi \in \mathbb W^{2,2}\cap \mathbb W^{1,2}_0 } h^\beta \| \Delta _h \fh d \|_{\mathbb{L}^2} \| \varphi \| _{\mathbb W^{2,2}}  + C h^{1+d/3}  \| \f q^j \|_{\mathbb{L}^{2}} \,
\end{split}
\end{align}
for all $ \varphi \in \mathbb W^{2,2}\cap \mathbb W^{1,2}_0$,  where the error due to mass lumping is estimated similar to~\eqref{masslumpingest}, by 
\begin{align}\begin{split}
\| ( \mathcal{I}_h - I) \bigl[ \fii q   \PL( \varphi) \bigr]\|_{{\mathbb L}^1} 
 \leq{}& C
 h^2 \sum_{T \in \mathcal{T}_h} \| 1 \|_{\mathbb L^3(T)} \| \nabla  \fn q \|_{{\mathbb L}^{2}(T)} \| \nabla \PL( \varphi) \|_{{\mathbb L}^6(T)}
 \\ \leq{}& C h^{1+d/3}  \|  \fn q  \|_{{\mathbb L}^{2}} \| \varphi \|_{{\mathbb W}^{1,6}}
   \,.
\end{split}
\end{align}
From testing~\eqref{dis:q} by $\f q^j
$ and Lemma~\ref{lem:inv}, we find
\begin{align*}
\| 
\f q^j
\|_h^2 \leq{}& \| \nabla  \fh d \| _{\mathbb L^{2}}  \|
 \nabla\f q^j
\|_{\mathbb{ L}^2} +\| \nabla \Phi^j \|_{ \mathbb L^{2} }^2 \| \fh d \|_{\mathbb L^\infty }  \|
 \f q^j
  \|_{ \mathbb L^\infty }  + h^\beta \| \Delta _h \fh d \|_{\mathbb L^{2} } \| 
 \Delta _h  \fii q
   \|_{\mathbb L^2 } \\ \leq{}& C ( h^{-1}  +h^{-d/2}+ h^{\beta/2-2 } ) \| \f q^j \|_{\mathbb L^{2}} \,,
\end{align*}
which implies the asserted bound on $\f q^j$, (note that we have to choose $ \beta > 2/3$ for $d=2$)
$$ \| \f q^j \|_{  (\mathbb W^{2,2}\cap \mathbb W^{1,2}_0  )^* }  \leq C\,.$$ 
\end{subequations}
\end{proof}
\begin{remark}
Note that in the continuous case the term due to the mass lumping in~\eqref{esttdd} vanishes and we can even deduce the  bound on the time-derivative asserted in~\eqref{reg:weak}. 
\end{remark}

\subsection{Approximate relative energy inequality}\label{approximate}
 In \cite{approx}, an {\em approximate relative energy inequality} has been derived for a (semi-)discretization of the Ericksen-Leslie system (\ref{simp:v})--(\ref{simp:norm}).
The approximate relative energy is a new tool for the construction of
a {\em dissipative solution}, which here is employed for the  space-time discretization (\ref{dis}): instead of showing the convergence of its approximate solutions directly, the result in Proposition \ref{prop:disrel} essentially bounds the distance between 
approximate solutions and a related regular test function in terms of how well the chosen test function solves problem (\ref{simp}). The {\em approximate relative energy inequality} is essential in Section \ref{convergence1} to construct a
{\em dissipative solution} for \eqref{dis} via proper convergent sequences of functions that are generated from the discrete system~\eqref{dis}. 
In the proof of the approximate relative energy inequality, several difficulties arise due to the different discretization steps. To focus on the ideas of the proof of the relative energy inequality, the reader is rather referred to the proof in the continuous case (see Proposition~\ref{prop:cont}).

The next result is a  discrete relative energy inequality
for a solution of Scheme \ref{algo1}, which employs modifications
$\mathcal{K}_1$, $\mathcal{K}_2$ and $\mathcal{K}_d$, and $\mathcal{W}_d$ of related ones used in (\ref{relencont}): the different regularity measures~$\mathcal{K}_1$, $\mathcal{K}_2$ and $\mathcal{K}_d$ are given by
\begin{align*}
\mathcal{K}_1 ( \tu^j , \tu^{j-1}) := {}& {C} \Big(  \| \nabla \ddii \|_{{\mathbb L}^3}^4 + \| \tqj\|_{{\mathbb L}^3}^4 + \| \vvi\|_{{\mathbb L}^\infty}^4 + \| \nabla \tPj \|_{{\mathbb L}^\infty}^8 + \| \nabla \ddj \|_{{\mathbb L}^3}^4 + \| \nabla^2 \tPj \|_{{\mathbb L}^3}^2 \\ & \quad + \| [\nabla \tn^+]^j \|_{{\mathbb L}^3}+ \| [\nabla \tn^-]^j\|_{{\mathbb L}^3}  + \| \t \nabla \tPj \|_{{\mathbb L}^3} + \| [\tn^+]^j + [\tn^-]^j \|_{{\mathbb L}^\infty}^2   \\& \quad + \| \td \ddj \|_{{\mathbb L}^\infty}^{4/3}
+ \bigl\| \ddh\times ( ( \vvi \cdot \nabla ) \ddii + \tqj ) \bigr\|_{{\mathbb L}^\infty} \left ( \| \nabla \tPj\|_{L^\infty(0,T;{\mathbb L}^3)} ^2 + 1\right )  \\& \quad + \bigl\| \ddh\times \bigl( ( \vvi \cdot \nabla \bigr) \ddii + \tqj ) \bigr\|_{{\mathbb W}^{1,3}}  + 1 \Big )
\\
\mathcal{K}_2 ( \tu^j , \tu^{j-1}) : = {}&  {C} \Big(  \| \nabla \ddii \|_{{\mathbb L}^3}^4 + \| \tqj\|_{{\mathbb L}^3}^4 + \| \vvi\|_{{\mathbb L}^\infty}^4 + \| \t \nabla \tPj \|_{{\mathbb L}^3} + \| \nabla \tPj \|_{{\mathbb L}^\infty}^8   \\& \quad 
+ \bigl\| \ddh\times ( ( \vvi \cdot \nabla ) \ddii + \tqj ) \bigr\|_{{\mathbb L}^\infty} \left ( \| \nabla \tPj\|_{L^\infty(0,T; {\mathbb L}^3)} ^2 + 1\right )  \\&  \quad + \bigl \| \ddh\times ( ( \vvi \cdot \nabla ) \ddii + \tqj ) \bigr\|_{{\mathbb W}^{1,3}}   + 1 \Big ) 
\\
\mathcal{K}_d ( \tu^j) := {}& k \varepsilon_a \| \td \ddj \| _{{\mathbb L}^\infty}^2 \,. 
\end{align*}
Note that  $ \mathcal{K}_1 ( \tu|\tu)+ \mathcal{K}_2( \tu, \tu) = \mathcal{K}(\tu)$ and that $\mathcal{K}_d (\tu) \ra 0 $ as $k\ra 0$. Additionally, we define  the discrete dissipation distance by 
\begin{align*}
\mathcal{W}_d( \fn u , \tu^j) ={}& \nu \| \nabla (\fn v - \vvi)\|
^2 + \| \fh d \times \fn q - \dd^{j-1/2} \times \tqj \|_h^2 \\&+ \int_\Omega ( [n^+]^j+ [n^-]^j ) | \nabla \Phi^j-\nabla \tPj |_{\varepsilon(\fn d )}^2 \de \f x  + \bigl\| ([n^+]^j-[n^-]^j)- ( [\tn^+]^j-[\tn^-]^j) \bigr\|_h^2 \,.
\end{align*}
The  relative energy  $\mathcal{R}$ is given by~\eqref{relEnergy}.
 Moreover, we use the abbreviative notation
$\tilde{\f a}^j  = \tilde{\f a}( j\cdot k)$ for a continuous function $\tilde{\f a} \in \C(\Omega\times (0,T))$, where $0\leq j \leq J$ with $J = \lfloor (T/k)\rfloor$.

\begin{proposition}[Relative energy inequality]\label{prop:disrel}
Let $\fn u = (\fn v , \ov{\f d}^j , \fn q, [ n^{\pm}]^j,\Phi^j ) \in \f U_h $ be the solution of the fully discrete system~\eqref{dis} according to Theorem~\ref{thm:disex}. Let $\tu = ( \vv,\dd,\tP,\tn^{\pm}) \in {\mathbb Y}$ be a smooth test function.
Then the discrete relative energy inequality 

\begin{align}
\begin{split}
&\td  \mathcal{R}( \f u^j | \tu^j ) + \frac{h^\alpha }{2}\td \| \nabla 
\f v^j
\|^2_{{\mathbb L}^2}  
+ \frac{h^\beta }{2} \| \Delta _h
 \fii d 
  \|_h^2 
 +\td r_k ^1(h)+
 \frac{1}{2} 
  \mathcal{W}_d( \fn u , \tu^j)\\&\quad
 \leq \Bigl ( \mathcal{K}_1( \tu^j, \tu^{j-1}) + \mathcal{K}_d(\tu^j)\Bigr)   \left (\mathcal{R}( \f u^j | \tu^j ) 
 + \frac{h^\beta }{2} \| \Delta _h
 \fii d 
  \|_h^2 
  \right ) + \mathcal{K}_2(\tu^j, \tu^{j-1})  \mathcal{R}( \f u^{j-1} | \tu^{j-1} )
 \\&\qquad 
  +\frac{1}{2}\left [  \| [n^+]_h^j-[\tn^+]^j \|_{{\mathbb L}^2}^2 + \| [n^-]_h^j-[\tn^-]^j \|_{{\mathbb L}^2}^2  \right ]
 \\&\qquad+ 
 \left \langle \mathcal{A}_d^j(\tu^j) , \begin{pmatrix}
\vvi-\fn  v \\ \tqj - \fn q + 
\nabla \tPj\left ( \nabla \tPj \cdot ( \ddj - \fn d )\right )  \\ \tPj- \Phi_h^j \end{pmatrix}  \right \rangle
 + r_k^2(h)
 \,.\end{split}
\label{relendis}
\end{align}
holds for any $1\leq j \leq J $ ,where $r^1_k(h) \ra 0$ and $ \sumi | r^2_k(h)| \ra 0$ as $h\ra0$. 
Here, we defined the discrete solution operator $\mathcal{A}_d^j $ by 
\begin{align}
\begin{split}
&\left \langle \mathcal{A}^j_d ( \tu^j), \begin{pmatrix}
\f a \\ \f c \\ e^{\pm}
\end{pmatrix}\right \rangle \\ :=& 
\left ( d_t \vv ^j, \f a\right  ) + \left ( \nabla \vv^j , \nabla \f a \right ) 
+ \Bigl( ( \vv^{j-1} \cdot \nabla ) \vv^j , \f a \Bigr) + \frac{1}{2} \left ( \di \vv ^{j-1} \vv^j , \f a \right ) \\
&\quad
+ \Bigl ( ([\tn^+]^j-[\tn^-]^j )\nabla \tP^j , \f a \Bigr) 
+ \Bigl ((\nabla \dd ^{j-1})^\top  \bigl[\dd ^{j-1/2} \times (\dd ^{j-1/2}\times \tq ^{j})\bigr] , \f a \Bigr )_h
\\
&\quad
+\left ( d_t \dd^j , \f c \right )
+ \left ( \dd ^{j-1/2}\times(( \vv ^j \cdot \nabla ) \dd ^{j-1} ) ,  \dd ^{j-1/2} \times \f c \right )_h + \left (\dd ^{j-1/2}\times \tq^j , \dd ^{j-1/2}\times  \f c \right )_h\\
&\quad 
+\left (d_t [ \tn ^{\pm}]^j , e^{\pm} \right )_h + \left ( \varepsilon(\dd^{j})\nabla [ \tn ^{\pm}]^j , \nabla e ^{\pm} \right ) \\
&\quad
\pm \left ([\tn^{\pm}]^j \varepsilon(\dd^{j} ) \nabla \tP^{j} , \nabla e^{\pm} \right )  - \left ( \vv^j [ \tn^{\pm}]^j, \nabla e^{\pm} \right ) \,,
\end{split}\label{Adj}
\end{align}
where 
$ \tq^j = - \Delta \ddh + \varepsilon_a \nabla \tPj \nabla \tPj \cdot \dd^{j-1} $
and   $\tP$ is given as the solution of
$  ( \varepsilon(\dd)\nabla \tP , \nabla g)  = (\tn^+-\tn^-, g)_h $ for all $g \in \C^\infty_c(\Omega \times (0,T))$,
 as well as $| \dd| =1 $  a.e.~in $\Omega\times (0,T)$. 
\end{proposition}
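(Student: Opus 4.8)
The plan is to mirror, step by step, the formal computation carried out for the continuous system in the proof of Proposition~\ref{prop:cont}, but now at the level of Scheme~\ref{algo1}, keeping careful track of the errors introduced by mass lumping, by the projection $\PL$ acting on $\nabla\f d^{j-1}$, by the local averaging in the elastic stress tensor, and by the regularizing terms scaled with $h^\alpha$ and $h^\beta$. As in the continuous case I would first decompose the relative energy $\mathcal{R}(\f u^j\,|\,\tu^j)$ from~\eqref{relEnergy} into the discrete energy $E(\f v^j,\f d^j,\Phi^j)$ of the iterate, an analogous energy-type quantity built from the test function $(\vv^j,\dd^j,\tq^j,[\tn^\pm]^j,\tP^j)$, and a mixed bilinear part made of $-(\nabla\f d^j,\nabla\dd^j)$, $-(\f v^j,\vv^j)$ and $-(\nabla\Phi^j,\varepsilon(\f d^j)\nabla\tP^j)$. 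The discrete energy identity of Theorem~\ref{thm:disex}~ii) controls the first block; evaluating Scheme~\ref{algo1} at the smooth functions and testing the resulting equations as in~\eqref{energyincrement} produces the corresponding identity for the test-function block, up to the defect functional $\mathcal{A}_d^j(\tu^j)$ defined in~\eqref{Adj} (together with the $\varepsilon$-discrepancy term $\tfrac12\int(|\nabla\tP^j|_{\varepsilon(\f d^j)}^2-|\nabla\tP^j|_{\varepsilon(\dd^j)}^2)$, treated exactly as in Proposition~\ref{prop:cont}).

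Next I would differentiate the mixed part with the discrete operator $\td$: $\td$ of $-(\f v^j,\vv^j)$ is obtained by testing~\eqref{dis:v} with $\vv^j$ and the $\tu$-version of~\eqref{dis:v} with $\f v^j$; $\td$ of $-(\nabla\f d^j,\nabla\dd^j)$ by testing~\eqref{dis:q} with $\td\dd^j$ and~\eqref{dis:d} with $\f q^j$, plus the symmetric counterparts; and $\td$ of $-(\nabla\Phi^j,\varepsilon(\f d^j)\nabla\tP^j)$ by testing~\eqref{dis:Phi} and~\eqref{dis:N} against $\pm\tP^j$, $\td\tP^j$, $[n^+]^j-[n^-]^j$ as in the continuous argument. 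The discrete product rule $\td(a^jb^j)=(\td a^j)b^j+a^{j-1}\td b^j$ forces $O(k)$ remainders, which is exactly where the already-established discrete integration-by-parts-in-time identity~\eqref{starr1} (and its analogues for $\f d$ and $\f v$) is re-used; these $O(k)$ terms are collected into $r_k^1(h)$, which then appears as $\td r_k^1(h)$ on the left of~\eqref{relendis}. The algebraic manipulations of the cross-product contributions $\bigl(\f d^{j-1/2}\times\cdot,\f d^{j-1/2}\times\cdot\bigr)_h$ are identical to those in the continuous proof, with one essential simplification: since all of them sit inside the lumped inner product, only the nodal values of $\f d^j$ enter, where $|\f d^j(\f z)|=1$ by Theorem~\ref{thm:disex}~i), so the continuous identity $\nabla|\f d|^2=0$ is simply replaced by this nodal property and no defect arises from its failure on elements.

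The electrostatic mixed terms, the $\varepsilon_a$-terms and the charge terms are handled as in Proposition~\ref{prop:cont}, replacing exact integrations by parts by the discrete ones available in the finite-element setting. All commutator errors of the form $(y_1,y_2)_h-(y_1,y_2)$, as well as the interpolation and mass-lumping errors, are bounded via~\eqref{dismass} and Lemmas~\ref{lem:interpolation} and~\ref{lem:masslumping}, and — using the $h$-uniform a priori bounds of Theorem~\ref{thm:disex} on $\f v^j$, $\nabla\f d^j$, $\Delta_h\f d^j$, $\f q^j$, $[n^\pm]^j$ and on the discrete temporal variations, together with the smoothness of $\tu\in\mathbb Y$ encoded in $\mathcal K$ — are shown to assemble into a quantity $r_k^2(h)$ with $k\sum_{j=1}^J|r_k^2(h)|\to0$ as $h\to0$. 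The genuine (non-error) terms are then estimated, precisely as in the continuous proof, by the relative energy and the relative dissipation: Young's inequality absorbs half of $\mathcal{W}_d(\f u^j\,|\,\tu^j)$ to the left, and the prefactors multiplying $\mathcal{R}(\f u^j\,|\,\tu^j)$ resp.\ $\mathcal{R}(\f u^{j-1}\,|\,\tu^{j-1})$ are exactly what defines $\mathcal{K}_1+\mathcal{K}_d$ resp.\ $\mathcal{K}_2$, the split reflecting which contributions naturally carry the index $j$ and which the index $j-1$ after the discrete product rule. No closing Gronwall argument is performed here: the discrete Gronwall is deferred to Lemma~\ref{lem:disgron} in the convergence analysis.

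The main obstacle, as in the continuous case but aggravated here, is the elastic coupling term $\bigl(\bigl[\PL(\nabla\f d^{j-1})\bigr]^\top[\f d^{j-1/2}\times(\f d^{j-1/2}\times\f q^j)],\f a\bigr)_h$ in~\eqref{dis:v}: since no discrete analogue of the integration-by-parts identity~\eqref{intbypart} holds, one must, after combining this term with~\eqref{dis:d} tested against the appropriate discrete test function, carry out a \emph{discrete} integration by parts in space, control the resulting ${\mathbb W}^{1,3}$- and ${\mathbb L}^\infty$-norms of $\dd^{j-1/2}\times((\vv^j\cdot\nabla)\dd^{j-1}+\tq^j)$ that show up in $\mathcal{K}_1,\mathcal{K}_2$, and handle the mismatch $\nabla\f d^{j-1/2}-\PL(\nabla\f d^{j-1})$, which contributes both an $O(k)$ piece (into $r_k^1(h)$) and an $h$-consistency piece (into $r_k^2(h)$ via Lemma~\ref{lem:masslumping} and the bound on $\f q^j$ in Theorem~\ref{thm:disex}~iv)). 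Making all of this bookkeeping consistent simultaneously — and without disturbing the $M$-matrix-based $[0,1]$-bounds on the charges already in force — is where the bulk of the work lies; once these estimates are in place, inequality~\eqref{relendis} follows directly.
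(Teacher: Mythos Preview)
Your overall strategy matches the paper's proof closely: decompose $\mathcal{R}$ into the two energy blocks plus mixed terms, use the discrete energy identity of Theorem~\ref{thm:disex}~ii) for the first block, mimic the same testing for the smooth $\tu$ to produce $\mathcal{A}_d^j(\tu^j)$, apply the discrete product rules to the mixed terms, separate the result into a ``continuous-type'' part $I$ (estimated exactly as in Proposition~\ref{prop:cont}), a part $I_k$ generating $\mathcal{K}_d$, and an $h$-error part $I_h$ feeding $r_k^2(h)$.

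There is one misidentification worth correcting. The term $r_k^1(h)$ does \emph{not} collect the $O(k)$ remainders from the discrete product rule; those remainders either appear as nonnegative numerical-damping terms $\tfrac{k}{2}\|\td(\cdot)\|^2$ on the left, or give the single $I_k$ contribution $\varepsilon_a\tfrac{k}{2}\|\td\dd^j(\nabla\tP^j-\nabla\Phi^j)\|^2\le\mathcal{K}_d(\tu^j)\mathcal{R}$. The source of $r_k^1(h)$ is different and you skipped over it: when you ``test~\eqref{dis:v} with $\vv^j$'' you cannot, because $\vv^j\notin\f V_h$; you must test with $\mathcal{I}_h\vv^j$ (and similarly $\mathcal{I}_h\tq^j$, $\mathcal{I}_h\td\dd^j$, $\mathcal{I}_h\tP^j$ in the other equations). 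This forces extra terms of the form $(\f v^j,(\mathcal{I}_h-I)\vv^j)$ and $(\varepsilon(\f d^j)\nabla\Phi^j,\nabla(\mathcal{I}_h-I)\tP^j)$ which, after the discrete product rule, sit naturally under a $\td$ --- \emph{that} is what the paper calls $r_k^1(h)$, and it vanishes with $h$ by Lemma~\ref{lem:interpolation}. All the remaining $(\mathcal{I}_h-I)$ errors (including those coming from $\mathcal{A}_d^j(\f u^j)$ acting on $(\mathcal{I}_h-I)$-differences) go into $r_k^2(h)$. Also, the relevant ``mismatch'' in the elastic term is $\PL(\nabla\f d^{j-1})-\nabla\dd^{j-1}$, which the paper splits as $\|\nabla(\f d^{j-1}-\dd^{j-1})\|_{\mathbb L^2}+Ch\|\nabla^2\dd^{j-1}\|_{\mathbb L^2}$: the first piece feeds $\mathcal{K}_2\,\mathcal{R}(\f u^{j-1}|\tu^{j-1})$ (not $r_k^1$), the second feeds $r_k^2(h)$. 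With this bookkeeping straightened out, your plan is the paper's proof.
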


\begin{proof}
We start by decomposing the relative energy into the two energy parts and the mixed parts:
\begin{align}
\begin{split}
\mathcal{R}( \f u^j | \tu) ={}& \frac{1}{2}\left ( \| \fn v \|_{{\mathbb L}^2}^2 + \| \nabla \fn d  \|_{{\mathbb L}^2}^2 + \int_\Omega | \nabla \Phi^j |_{\varepsilon(\fn d)}^2 \, {\rm d}{\f x}  + \| \vvi \|_{{\mathbb L}^2}^2 + \| \nabla \ddi \|_{{\mathbb L}^2}^2 + \int_\Omega | \nabla \tPj |_{\varepsilon(\ddi )}^2 \, {\rm d}{\f x}\right)
\\
& -\bigl( \nabla \fn d, \nabla \ddi\bigr)  -( \fn v , \vvi)  
- \Bigl( \nabla \Phi^j ,\varepsilon(\fn d) \nabla \tPj \Bigr) + \frac{1}{2}\left ( \nabla \tPj , \left ( \varepsilon( \fn d ) - \varepsilon( \ddj)\right ) \nabla \tPj \right ) 
\,.
\end{split}
\label{relencalc}
\end{align}
Similarly, we obtain for the relative dissipation
\begin{align*}
\mathcal{W}_d(\fn u| \tu^j ) ={}&
\mathcal{W}_d(\fn u | \f 0 ) + \mathcal{W}_d(\f 0|\tu^j) - 2 \nu \left ( \nabla \fn v  , \nabla \vvi\right ) - 2 \left ( \fn d \times \fn q , \ddj \times \tqj\right ) _h 
\\&-   2 \Bigl([n^+]^j-[n^-]^j,[\tn^+]^j-[\tn^-]^j \Bigr)_h-
2 \int_\Omega \bigl( [n^+]^j-[n^-]^j \bigr) \nabla \Phi^j \cdot \varepsilon(\fn d )\nabla \tPj \de \f x 
  \, . 
\end{align*}

The energy increment equality for the solution $\f u \in \mathbb X $ is given by (compare to~\eqref{energyincrement}) 
\begin{multline*}
\td \left ( E (\f v^j,\f d^j , \Phi^j) +  \frac{h^\alpha}{2}\| \nabla \f v^j \|^2_{{\mathbb L}^2}
 + \frac{h^\beta}{2} \| \Delta_h \fii d \|_h^2
     \right  ) \\ +
\frac{k}{2}
\left[\| d_t \f v^j \|^2_{{\mathbb L}^2}+ h^\alpha \| d_t \nabla \f v^j \| ^2_{{\mathbb L}^2}
 + \Bigl(\varepsilon(\f d^{j-1})\nabla d_t \Phi^j , \nabla d_t \Phi^j \Bigr) + \varepsilon_a \| \nabla \Phi^j \cdot d_t \f d^j \|^2_{{\mathbb L}^2}   \right]
\\
+
\left ( \nu \| \nabla \f v^j \|^2_{{\mathbb L}^2} + \| \f d ^{j-1/2} \times \f q^j \|^2_h + \Bigl ( ([n^+]^j+[n^-]^j) \nabla \Phi^j , \varepsilon(\f d^{j} ) \nabla \Phi^j   \Bigr) + \|[n^+]^j-[n^-]^j\|^2_h  \right ) 
= 0
\end{multline*}
where $E$ is defined in Theorem~\ref{thm:disex}.
For the test function {$\tilde{u} \in {\mathbb Y}$}, we observe that
\begin{multline*}
\td  E (\vv^j,\dd^j , \tP^j) 
  +
\frac{k}{2}
  \left (\| d_t \vv^j \|^2_{{\mathbb L}^2} 
  + \Bigl(\varepsilon(\dd^{j-1})\nabla d_t \tP^j , \nabla d_t \tP^j \Bigr) + \varepsilon_a \| \nabla \tP^j \cdot d_t \dd^j \|^2_{{\mathbb L}^2}   \right )
\\
+ 
 \left ( \nu \| \nabla \vv^j \|^2_{{\mathbb L}^2} + \| \dd ^{j-1/2} \times \tq^j \|^2_h + \left ( ([\tn^+]^j+[\tn^-]^j) \nabla \tP^j , \varepsilon(\dd^{j} ) \nabla \tP   \right ) + \|[\tn^+]^j-[\tn^-]^j\|^2_h  \right )\\
=
   \left \langle   \mathcal{A}_d(\tu^j) , \begin{pmatrix}
 \vv^j\\ \tq^j\\ \tP^j
\end{pmatrix}\right \rangle 
\end{multline*}
For the mixed terms in the second line of~\eqref{relencalc}, we need several discrete product rules:
\begin{align}
\begin{split}
( \fn v   , 
\vvi
) - ( \fiii v   , \vvii  ) = {}& {k} \left [( \fii v  , d_t \vv^j ) + ( d_t \fii v , \vv^j ) \right  ] -  {k^2}( d_t \fii v  , d_t \vv^j) 
 \, ,\end{split}
\label{intvn}\\
\begin{split}
\left ( \nabla \fn d  , \nabla 
\ddj 
 \right ) - \left (  \nabla \fiii d  ,\nabla\ddii \right )
= {} & 
  \left [  \left ( \nabla \td \fii d , \nabla \ddh    \right) + \left ( \nabla  \fh d , \nabla \td  \ddi    \right)  \right ]
\,,
\end{split}
\label{intd2n}
\end{align}
Indeed by a simple calculation, we deduce the first identity via
\begin{equation*}
( \fn v  , \vvi  ) - ( \fiii v , \vvii  ) =   
  (\fii v- \fiii v ,  \vvi ) + ( \fii v ,\vvi -  \vvii )  - ( \fii v - \fiii v , \vvi -\vvii)  
\end{equation*}
and the second identity follows accordingly.

Testing~\eqref{dis:v} by $
\mathcal{I}_h
[\vv]$ and mimicking the same calculations for the test function tested by $\f v_h$ and using the discrete integration by parts formula~\eqref{intvn} implies 
\begin{align}
\begin{split}
- ( \fii v , \vvi ) &+ ( \fiii v , \vvii  )
\\
 \leq  {}&
  \Bigl( ( \fnn v \cdot \nabla) \fn v ,
  \mathcal{I}_h
  [\vvi] \Bigr) + \frac{1}{2}\left ( (\di \fnn v ) \fn v,
   \mathcal{I}_h
   [\vvi]\right ) + \Bigl( ( \vvii\cdot\nabla) \vvi , \fn v \Bigr)
\\&-
\left (\fh d \times ( \PL[ \nabla \fiii d] 
\mathcal{I}_h
[\vvi]),\fh d \times  \fn q  \right ) _h \\&- \left (\dd^{j-1/2}\times( ( \fn v \cdot \nabla ) \ddii) , \dd^{j-1/2}\times(\tqj) \right ) _h 
\\&+
 \Bigl (([n^+]^j-[n^-]^j) \nabla \Phi^j  ,
  \mathcal{I}_h
  [\vvi] \Bigr ) +2 \nu( \nabla \fn v , \nabla \mathcal{I}_h \vvi ) 
\\&+
\Bigl ( ([\tn^+]^j-[\tn^-]^j) \nabla \tPj , \fn v \Bigr ) - \left \langle \mathcal{A}_d^j(\tu) , \begin{pmatrix}
\fn v \\ {\f 0}\\0
\end{pmatrix}\right \rangle 
\\&+k
 ( d_t \fii v , d_t \vv^j )+h^\alpha
  ( d_t\nabla  \fii v ,  \nabla \mathcal{I}_h \vv^j ) + \Bigl( \td \fn v , ( \mathcal{I}_h - I) \vvi\Bigr) 
  \,.
\end{split}
\label{vtestvtil}
\end{align}
Testing~\eqref{dis:d} by $\mathcal{I}_h[ \tqj]$, adding~\eqref{dis:q} tested by $\mathcal{I}_h[\td \ddi]$ and mimicking the same calculations for the test function tested by $\fn q$ implies 
\begin{multline}
- \left ( \nabla \fii d , \nabla \ddi \right ) + \left ( \nabla \fiii d ,\nabla \ddii \right )  
\\={}
\left ( \fh d \times ( \PL[ \fiii d]\fii v  ),\fh d \times \mathcal{I}_h[ \tq] \right )_h  
+ \left ( \ddi\times((\vvi\cdot\nabla) \ddii) ,\ddi\times \fn q \right )_h 
 \\ 
 +
  \left ( \fh d \times \fn q , \fh d \times \mathcal{I}_h[ \tqj] \right ) _h + \left ( \dd^{j-1/2} \times \tqj , \dd ^{j-1/2} \times \fn q \right ) _h 
\\
- \varepsilon _a\left (
  \left ( \td \fn d \nabla \tPj , \nabla \tPj \cdot \ddii \right ) + \left ( \td \mathcal{I}_h[\ddi] \cdot \nabla \Phi^j , \nabla \Phi^j \cdot \fiii d \right )   \right ) 
  - 
  \left \langle \mathcal{A}_d^j(\tu) ,\begin{pmatrix}
{\f 0}\\\fn q \\0
\end{pmatrix}  \right \rangle 
\\
+h^\beta  \left (  \Delta_h \fh d , \Delta_h \mathcal{I}_h \td \fn d   \right )+ \int_\Omega ( \mathcal{I}_h - I ) \left ( \td \fn d \cdot \tqj \right ) \de \f x 
\end{multline}

For the remaining terms in the relative energy, we use after some manipulations equation~\eqref{simp:Phi} 
\begin{align*}
- \bigl( \nabla \Phi^j , \varepsilon(\f d^j) \nabla \tP ^j\bigr)
={}&-
\left ([n^+]^j -[n^+]^j  ,  \tP^j\right )_h
- \Bigl( \varepsilon(\f d^j) \nabla \Phi^j, \nabla \left ( I- \mathcal{I}_h \right )\tP^j \Bigr )
\,.
 \end{align*}

We observe another integration by parts rule:
\begin{align*}
\td \Bigl( [n^+]^j-[n^-]^j , \mathcal{I}_h [\tP]\Bigr )_h =  \Bigl (\td \left ([n^+]^j-[n^-]^j \right ) , \mathcal{I}_h [\tPj]\Bigr)_h + \Bigl( [n^+]^j-[n^-]^j , \mathcal{I}_h [\td\tPj]\Bigr)_h \\
-k \Bigl(\td\left ( [n^+]^j-[n^-]^j\right ) , \td\mathcal{I}_h [\tPj]\Bigr)_h 
\\
=
 \Bigl(\td \left ([n^+]^j-[n^-]^j\right )  , \mathcal{I}_h [\tP]\Bigr)_h + \Bigl( [n^+]^j-[n^-]^j  , \mathcal{I}_h [\td\tPj]\Bigr)_h \\
 -k \Bigl(\td \left ( \varepsilon(\f d^j) \nabla \Phi^j \right ) ,\nabla \td \mathcal{I}_h [\tPj]\Bigr) 
\,.
\end{align*}

Testing~\eqref{dis:Phi} by $\mathcal{I}_h[\td \tPj]$ and add~\eqref{simp:c} tested by $\mathcal{I}_h[\tPj]$ and mimicking the same for $\tu$, while replacing $ \td ([\tn^+]^j-[\tn^-]^j)\Phi^j $ by $d_t ( \varepsilon(\ddi) \nabla\tPj) \cdot \nabla\Phi^j $, we find
\begin{eqnarray*}
&&- \td \Bigl ( \nabla  \Phi^j , \varepsilon( \fn d ) \nabla \tPj \Bigr ) 
  = - 
  \Bigl (  \fn v ([n^+]^j-[n^-]^j) , \nabla  \mathcal{I}_h[\tPj]  \Bigr )  - \Bigl(\vvi([\tn^+]^j-[\tn^-]^j),\nabla \Phi^j \Bigr)
\\
&&\qquad -
 \Bigl ( (\varepsilon(\fn d ) \nabla \Phi^j  ,   \nabla \mathcal{I}_h[\td\tPj] \Bigr ) + \Bigl( d_t ( \varepsilon(\ddi) \nabla\tPj) , \nabla\Phi^j \Bigr)   \\
&&\qquad  +
  \Bigl ( \varepsilon(\fn d ) \nabla\left ( [n^+]^j-[n^-]^j\right ) ,\nabla  \mathcal{I}_h[\tPj] \Bigr ) + \Bigl (\varepsilon(\ddi) \nabla ([\tn^+]^j-[\tn^-]^j) , \nabla \Phi^j \Bigr ) 
\\&&\qquad  +
 \Bigl( \varepsilon(\fn d) \nabla \Phi^j  , \left ([n^+]^j+[n^-]^j\right )  \nabla \mathcal{I}_h[\tPj] \Bigr ) +\Bigl ( \varepsilon(\ddi) \nabla \tPj , ([\tn^+]^j+[\tn^-]^j) \nabla \Phi^j \Bigr )   
\\
&&\qquad   - 
\left \langle 
 \mathcal{A}_d^j(\tu^j), \begin{pmatrix}
{\f 0}\\{\f 0}\\ \Phi^j 
\end{pmatrix} 
\right \rangle
 + k
  \Bigl(\td \left ( \varepsilon(\f d^j) \nabla \Phi^j \right ), \nabla \td\mathcal{I}_h [\tPj]\Bigr )  
\\&&\qquad- \td \Bigl ( \varepsilon(\f d^j) \nabla \Phi^j, \nabla \left ( I- \mathcal{I}_h \right )\tP^j \Bigr )\,. 
\end{eqnarray*}

For the different terms on the right-hand side, we infer
\begin{align*}
&\left ( \varepsilon(\fn d ) \nabla [n^{\pm}]^j ,  \nabla \mathcal{I}_h[\tPj] \right ) + \left (\varepsilon(\ddi) \nabla [\tn^{\pm}]^j , \nabla \Phi^j \right )  
\\
&\quad = \left ( \varepsilon(\ddi ) \nabla [n^{\pm}]^j ,  \nabla \tPj  \right ) + \left (\varepsilon(\fn d ) \nabla \mathcal{I}_h[\tn^{\pm} ]^j, \nabla \Phi^j \right )\\& \qquad + \left ( \varepsilon(\fn d )\nabla \mathcal{I}_h[\tPj]-\varepsilon(\fn d)\nabla \tPj, \nabla [n^{\pm}]^j \right ) + \left (\varepsilon(\ddi)\nabla [\tn^{\pm}]^j - \varepsilon(\fn d)\nabla \mathcal{I}_h[\tn^{\pm}]^j ,\nabla \Phi^j\right ) 
\\ &\quad = \Bigl ( [\tn^+]^j-[\tn^-]^j, [n^{\pm}]^j\Bigr )_h + \Bigl ( [n^+]^j-[n^-]^j , \mathcal{I}_h[\tn^{\pm} ]^j\Bigr )_h
 \\ &\qquad + \left ( (\varepsilon(\fn d )-\varepsilon(\ddi)) \left (  \nabla [n^{\pm}]^j-\nabla [\tn^{\pm}]^j \right ) ,  \nabla\tPj \right ) + \left ((\varepsilon(\ddi)-\varepsilon(\fn d)) \nabla [\tn^{\pm}]^j , \nabla \Phi^j -\nabla \tPj \right ) 
\\ &\qquad + \Bigl ( \varepsilon(\fn d )\nabla (\mathcal{I}_h[\tPj]- \tPj), \nabla [n^{\pm}]^j \Bigr) - \Bigl( \varepsilon(\fn d)\nabla (\mathcal{I}_h[\tn^{\pm}]^j -[ \tn^{\pm}]^j ) ,\nabla \Phi^j\Bigr) 
\end{align*}
and
\begin{align*}
 &\Bigl( \varepsilon(\fn d) \nabla \Phi^j  , ([n^+]^j+[n^-]^j)  \nabla \mathcal{I}_h[\tPj] \Bigr ) +\Bigl( \varepsilon(\ddi) \nabla \tPj , ([\tn^+]^j+[\tn^-]^j) \nabla \Phi^j \Bigl)
\\
&\quad = 2\Bigr ( \varepsilon(\fn d) \nabla \Phi^j  ,  ([n^+]^j+[n^-]^j)  \nabla \tPj \Bigr ) \\ & \qquad + \Bigl( \varepsilon(\ddi) ([\tn^+]^j+[\tn^-]^j )- \varepsilon(\fn d)  ([n^+]^j+[n^-]^j) , \nabla \tPj \o\nabla \Phi^j \Bigr )
\\& \qquad  + \Bigl( \varepsilon(\fn d) \nabla \Phi^j  , ([n^+]^j+[n^-]^j)  \nabla ( \mathcal{I}_h[\tPj] -\tPj)\Bigr )
\end{align*}
as well as
\begin{align*}
-\Bigl( \varepsilon(\fn d ) \nabla \Phi^j  ,   \nabla \mathcal{I}_h[\td\tPj] \Bigr) +{}& \Bigl ( \td ( \varepsilon(\ddi) \nabla\tPj) , \nabla\Phi^j \Bigr ) \\
={}& - \left ( ( \varepsilon(\fn d)- \varepsilon(\ddi) ) 
\nabla \Phi^j
, \nabla \td \tPj \right ) 
 \\
& + \left ( \td \varepsilon(\ddi)\nabla \tP^{j-1} , \nabla \Phi^j \right ) - \Bigl ( \varepsilon( \fn d ) \nabla \Phi^j , \nabla \td ( \mathcal{I}_h - I ) \tPj \Bigr ) \,.
\end{align*}

From calculating the discrete derivative of $\left ( \nabla \tPj , (\varepsilon(\ddj ) - \varepsilon(\fn d ) ) \nabla \tPj\right  )$, we find
\begin{align*}
&\frac{1}{2}d_t \left ( \varepsilon(\fn d) - \varepsilon ( \ddj) , \nabla \tPj \otimes \nabla \tPj \right ) =  \frac{1}{2} \left ( d_t \left ( \varepsilon(\fn d) - \varepsilon(\ddj) \right ) , \nabla \tPj \otimes \nabla \tPj \right )  \\
&\qquad + \frac{1}{2} \left ( \varepsilon(\fn d) - \varepsilon ( \ddj) , d_t \left ( \nabla \tPj \otimes \nabla \tPj\right ) \right ) - \frac{k}{2} \left ( d_t \left ( \varepsilon(\fn d) - \varepsilon(\ddj) \right ) , d_t \left ( \nabla \tPj \otimes \nabla \tPj\right )\right ) 
\\
&\quad =\varepsilon_a \left ( \fn d \otimes \td \fn d - \ddj \otimes \td \ddj , \nabla \tPj\otimes \nabla  \tPj \right ) + \left ( \varepsilon(\fn d ) - \varepsilon(\ddj) , \nabla \tPj \otimes \td \nabla \tPj \right ) 
\\
&\qquad - \frac{k}{2} \varepsilon_a  \left ( \td \fn d \otimes \td \fn d - \td \ddj \otimes \td \ddj , \nabla \tPj \otimes \nabla \tPj \right )\\&  \qquad - \frac{k}{2} \left ( \left ( \varepsilon(\fiii d) - \varepsilon(\ddii) \right ) , d_t  \nabla \tPj \otimes \td \nabla \tPj\right ) 
- {k} \left ( d_t \left ( \varepsilon(\fn d) - \varepsilon(\ddj) \right ) , d_t  \nabla \tPj \otimes \nabla \tPj\right ) 
\,.
\end{align*}

We find for the terms incorporating $\varepsilon_a$ that
\begin{align*}
\frac{1}{2}d_t& \Bigl ( \varepsilon(\fn d) - \varepsilon ( \ddj) , \nabla \tPj \otimes \nabla \tPj \Bigr )  + k \Bigl ( \td ( \varepsilon ( \fn d) \nabla \Phi^j), \nabla \td \tPj \Bigr ) 
-\varepsilon_a \left ( d_t \fn d \nabla \tPj, \nabla \tPj \ddii \right ) \\
-& \left ( ( \varepsilon(\fn d ) -\varepsilon(\ddi) ) \nabla \Phi^j , \nabla \td \tPj \right )-\varepsilon_a \left ( \td \ddj \nabla \Phi^j, \nabla \Phi^j \cdot \fiii d \right ) + \left ( \td \varepsilon(\ddj) \nabla \tP^{j-1} , \nabla \Phi^j \right ) \\
= {}
&\varepsilon_a \left ( \fn d \otimes \td \fn d - \ddj \otimes \td \ddj , \nabla \tPj\otimes \nabla  \tPj \right ) + \left ( \varepsilon(\fn d ) - \varepsilon(\ddj) , \nabla \tPj \otimes \td \nabla \tPj \right ) 
\\
&- \left ( ( \varepsilon(\fn d ) -\varepsilon(\ddi) ) \nabla \Phi^j , \nabla \td \tPj \right )-\varepsilon_a \left ( d_t \fn d \nabla \tPj, \nabla \tPj \ddi \right )-\varepsilon_a \left ( \td \ddj \nabla \Phi^j, \nabla \Phi^j \cdot \fii d \right ) 
\\
&+\varepsilon_a \left ( \td \ddj \nabla \tPj , \nabla \Phi^j \ddj \right ) +\varepsilon_a \left ( \td \ddj \cdot \nabla \Phi^j , \ddj \cdot \nabla \tPj \right ) 
\\&- \varepsilon_a k \left ( \td \ddj \cdot \nabla \Phi^j , \td \ddj \cdot \nabla \tPj\right ) - k \left ( \td \varepsilon(\ddj) \td \nabla \tPj , \nabla \Phi^j \right )  + k \left ( \td ( \varepsilon ( \fn d) \nabla \Phi^j), \nabla \td \tPj \right ) 
\\
&+\varepsilon_a k \left ( d_t \fn d \nabla \tPj, \nabla \tPj \td \ddi \right )+ \varepsilon_ak \left ( \td \ddj \nabla \Phi^j, \nabla \Phi^j \cdot \td \fii d \right )\\
&- \varepsilon_a\frac{k}{2} \left ( \td \fn d \otimes \td \fn d - \td \ddj \otimes \td \ddj , \nabla \tPj \otimes \nabla \tPj \right ) - \frac{k}{2} \left ( \left ( \varepsilon(\fiii d) - \varepsilon(\ddii) \right ) , d_t  \nabla \tPj \otimes \td \nabla \tPj\right )
\\
&
- {k} \left ( d_t \left ( \varepsilon(\fn d) - \varepsilon(\ddj) \right ) , d_t  \nabla \tPj \otimes \nabla \tPj\right ) \\
={}& 
\varepsilon_a \left ( \td \fn d - \td \ddj , \nabla \tPj \left (\nabla \tPj \cdot ( \fn d - \ddj )\right )\right ) +\varepsilon_a \left ( \td \ddj \cdot ( \nabla \Phi^j - \nabla \tPj ) , \nabla \tPj \ddj-\nabla \Phi^j \fn d \right )\\&  +\varepsilon_a \left ( \td \ddj \cdot \nabla \tPj , ( \fn d - \ddj ) \cdot \nabla( \tPj-\Phi^j ) \right ) +  \left (  \left ( \varepsilon( \fiii d )- \varepsilon( \ddii) \right ) ( \nabla \Phi^j - \nabla \tPj ) , \td \nabla \tPj \right )  
\\&- \frac{k}{2} \left ( \left ( \varepsilon(\fiii d) - \varepsilon(\ddii) \right ) , d_t  \nabla \tPj \otimes \td \nabla \tPj\right ) + k \left ( \varepsilon(\fiii d ) \td\nabla  \Phi^j , \td \nabla \tPj \right )  
\\& + \varepsilon_a \frac{k}{2} \| \td \fn d  \cdot \nabla \Phi^j \|^2_{{\mathbb L}^2} +  \varepsilon_a \frac{k}{2}  \| \td \ddj \cdot \nabla \tPj \|^2_{{\mathbb L}^2} -  \varepsilon_a \frac{k}{2}  \| ( \td \fn d - \td \ddj ) \cdot \nabla \Phi^j \|^2  _{{\mathbb L}^2}
\\& 
- \varepsilon_a \frac{k}{2} \| ( \td \fn d - \td \ddj ) \cdot \nabla \tPj \|^2_{{\mathbb L}^2}
+ \varepsilon_a \frac{k}{2} \| \td \ddj ( \nabla \tPj - \nabla \Phi^j) \|^2_{{\mathbb L}^2} \, .
\end{align*}
The first two lines on the right-hand side are similar to the contributions in the continuous case. 
The terms in the third and fourth line on the right-hand side contribute to the positive terms on the left-hand side of the following inequality. 
The term in the last line has to be estimated later on. 

Putting the pieces together, we observe the inequality
\begin{align*}
 \td   \mathcal{R} ( \fn u | \tu^j  ) &+\frac{h^\alpha }{2} \td  \| \nabla 
 \f v^j
 \|^2_{{\mathbb L}^2} 
 + \frac{h^\beta}{2} \| \Delta_h
  \fii d
    \|_h^2 
+\frac{ k}{2}  \left [\| \td \fn v -  d_t \vv^j \|^2_{{\mathbb L}^2} + h^\alpha \|  \td \nabla 
\fn v 
 \| ^2_{{\mathbb L}^2} 
\right ]
\\
&+\frac{ k}{2}  \Bigl [ \Bigl(\varepsilon(\dd^{j-1})\nabla d_t (\Phi^j- \tP^j) , \nabla d_t (\Phi^j-\tP^j) \Bigr)) 
+  \varepsilon_a   \| ( \td \fn d - \td \ddj ) \cdot \nabla \Phi^j \|^2_{{\mathbb L}^2} 
\\& + \varepsilon_a \| ( \td \fn d - \td \ddj ) \cdot \nabla \tPj \|^2_{{\mathbb L}^2}
 \Bigr ] +\frac{1}{2}   \mathcal{W}( \fn u , \tu^j)
\\
 \leq&
 \Bigl( ( \fnn v \cdot \nabla) \fn v ,
\vvi
\Bigr) + \frac{1}{2}\Bigl( (\di \fnn v ) \fn v, 
\vvi
\Bigr) + \Bigl( ( \vvii\cdot\nabla) \vvi , \fn v \Bigr)+ \frac{1}{2}\Bigl( (\di \vvii ) \vvi, \fii v \Bigr)   
\\&-
\left (\fh d \times (  \PL [ \nabla \fiii d]\vvi ),\fh d \times  \fn q  \right )_h + \left (\dd^{j-1/2}\times( ( \fn v \cdot \nabla ) \ddii ) , \dd^{j-1/2}\times\tqj s\right ) _h
\\&+ 
 \left ( \fh d \times ( \PL[ \nabla\fiii d]\fn v  ),\fh d \times 
\tqj
 \right )_h + \left ( \ddi\times((\vvi\cdot\nabla) \ddii) ,\ddi\times \fn q \right ) _h
\\&+ \Bigl(([n^+]^j-[n^-]^j) \nabla \Phi^j  , 
\vvi
 \Bigr) + \Bigl ( ([\tn^+]^j-[\tn^-]^j) \nabla \tPj , \fn v \Bigr)  
\\ 
&- 
 \Bigl( (\fn v [n^+-n^-]^j , \nabla
  \tPj
   ) \Bigr)  - \Bigl(\vvi([\tn^+]^j-[\tn^-]^j),\nabla \Phi^j \Bigr)+
  \left ( \fh d \times \fn q , \fh d \times 
  \tqj
  \right )_h \\
 &+ \left ( \dd^{j-1/2} \times \tqj , \dd ^{j-1/2} \times \fn q \right ) _h- 2 \left ( \dd^{j-1/2} \times \tqj , \f d ^{j-1/2} \times \fn q \right ) _h
 \\&+
 \Bigl ( (\varepsilon(\fn d )-\varepsilon(\ddi)) \Bigl (  \nabla [n^{+}]^j-\nabla [\tn^{+}]^j \Bigr) ,  \nabla\tPj \Bigr ) + \Bigl ((\varepsilon(\ddi)-\varepsilon(\fn d)) \nabla [\tn^{+}]^j , \nabla [\Phi^j - \tPj] \Bigr) 
\\&-
 \left ( (\varepsilon(\fn d )-\varepsilon(\ddi)) \left (  \nabla [n^{-}]^j-\nabla [\tn^{-}]^j \right ) ,  \nabla\tPj \right ) - \Bigl ((\varepsilon(\ddi)-\varepsilon(\fn d)) \nabla [\tn^{-}]^j , \nabla [\Phi^j - \tPj] \Bigr )
 \\& +
  \left ( \varepsilon(\ddi) ([\tn^+]^j+[\tn^-]^j)- \varepsilon(\fn d)  ([n^+]^j+[n^-]^j) , \nabla \tPj \o(\nabla [\Phi^j -  \tPj] ) \right )
\\& +
  \left ( ( \varepsilon(\fiii d)- \varepsilon(\ddii) ) ( \nabla \Phi^j-\nabla \tPj ), \nabla \td \tPj \right  )  - \left ( \varepsilon( \fn d ) \nabla \Phi^j , \nabla \td ( \mathcal{I}_h - I ) \tPj \right )
\\
&+ k \left ( \td ( \varepsilon ( \fn d) \nabla \Phi^j), \nabla \td ( \mathcal{I}h-I)\tPj \right ) 
\\
&+ \varepsilon_a  
\left ( 
 \left ( \td \fn d - \td \ddj , \nabla \tPj \nabla \tPj \cdot ( \fn d - \ddj )\right ) + \left ( \td \ddj \cdot ( \nabla \Phi^j - \nabla \tPj ) , \nabla \tPj \ddj-\nabla \Phi^j \fn d \right ) \right ) 
\\& 
+
\varepsilon_a \left ( \td \ddj \cdot \nabla \tPj , ( \fn d - \ddj ) \cdot \nabla( \tPj-\Phi^j ) \right ) + \int_\Omega ( \mathcal{I}_h - I ) \left ( \td \fn d \cdot \tqj \right ) \de \f x 
 \\&+ 
 \left \langle \mathcal{A}_d^j(\tu^j) , \begin{pmatrix}
\vvi-\fn  v \\ \tqj - \fn q \\ \tPj- \Phi^j \end{pmatrix}  \right \rangle 
+ 
 \left \langle \mathcal{A}_d^j(\f u^j) , \begin{pmatrix}
( 
\mathcal{I}_h
 - I ) \vvi \\
( \mathcal{I}_h-I) \td \ddi 
\\ ( \mathcal{I}_h - I) \tqj \\ ( \mathcal{I}_h - I ) \tPj \\ (\mathcal{I}_h -I) ( [\tn^+]^j-[\tn^-]^j -\td \tPj ) 
\end{pmatrix}  \right \rangle 
\\&+ \varepsilon_a \frac{k }{2}  \| \td \ddj ( \nabla \tPj - \nabla \Phi^j) \|^2- \td \Bigl ( \varepsilon(\f d^j) \nabla \Phi^j, \nabla \left ( I- \mathcal{I}_h \right )\tP^j \Bigr )
\\
&+ h^\alpha \left ( \nabla \td \fn v , \nabla \vvi \right )  + h^ \beta \left ( \Delta _h \fh d , \Delta _h \td \ddi \right ) 
\\=:{}&I+ I_h+I_k+
 \left \langle \mathcal{A}_d^j(\tu^j) , \begin{pmatrix}
\vvi-\fn  v \\ \tqj - \fn q \\ \tPj- \Phi^j \end{pmatrix}  \right \rangle 
 \,,
\end{align*}
where we used the definition of the discrete solution operator for the continuous solution~\eqref{Adj} and similarly the definition of the discrete solution operator, which is given analogously to~\eqref{Adj}, but according to the discrete system~\eqref{dis},\textit{i.e.},
$ \langle \mathcal{A}_d^j(\f u^j ) , ( \f a , \f b , \f c , e^{\pm}, g)^T \rangle $ is given by the sum of the left-hand sides of the five equations in~\eqref{dis}. 

Above, $I_h$ abbreviates all terms that vanish for vanishing spacial discretization parameter $h$, \textit{i.e.},
\begin{align*}
I_h={}& 
 \left \langle \mathcal{A}_d^j(\f u^j) , \begin{pmatrix}
( \mathcal{I}_h - I ) 
\vvi \\
( \mathcal{I}_h-I) \td \ddi 
\\ ( \mathcal{I}_h - I) \tqj \\ ( \mathcal{I}_h - I ) \tPj \\ (\mathcal{I}_h -I) ( [\tn^+]^j-[\tn^-]^j -\td \tPj ) 
\end{pmatrix}  \right \rangle 
+ k \Bigl ( \td ( \varepsilon ( \fn d) \nabla \Phi^j), \nabla \td ( \mathcal{I}_h-I)\tPj \Bigr )  \\
&+ \int_\Omega ( \mathcal{I}_h - I ) \bigl ( \td \fn d \cdot \tqj \bigr ) \de \f x - \td \Bigl ( \varepsilon(\f d^j) \nabla \Phi^j, \nabla \left ( I- \mathcal{I}_h \right )\tP^j \Bigr )
\\
&+ h^\alpha \left ( \nabla \td \fn v , \nabla \vvi \right )  + h^ \beta \left ( \Delta _h \fh d , \Delta _h \td \ddi \right ) 
\\={}&
\td \left ( \fn v , ( \mathcal{I}_h-I) \vvi \right  ) + h^\alpha \left (  \nabla \td  \fii v , \nabla \mathcal{I}_h \vvi \right )   + h^\beta \left (  \Delta _h \fh d , \Delta _h \mathcal{I}_h  \td \ddi \right ) -  ( \fiii v , ( \mathcal{I}_h-I)\td \vvi )
\\&
+ \td \left ( \varepsilon(\f d^j) \nabla \Phi^j, \nabla \left (  \mathcal{I}_h-I  \right )\tP^j \right ) - \left ( ( \varepsilon ( \fiii d) \nabla \Phi^{j-1}), \nabla \td ( \mathcal{I}_h-I)\tPj \right )
\\& + \left ( (\fii v \cdot \nabla ) \fii v , ( \mathcal{I}_h-I)  \vvi\right ) + \frac{1}{2}\left ( ( \di \fiii v) \fii v , ( \mathcal{I}_h-I)  \vvi\right ) + \left ( ( [ n^+]^j-[n^-]^j)\nabla \Phi^j ,( \mathcal{I}_h-I)  \vvi\right ) 
\\& +\left ( \nabla \fh d , \nabla ( \mathcal{I}_h-I)  \td \ddj ) \right ) - \varepsilon _a \left ( \nabla \Phi^j \left ( \fiii d \cdot \nabla \Phi^j \right ) , ( \mathcal{I}_h-I)  \td \ddj \right ) \\
&+ \Bigl ( \varepsilon(\fn d ) \nabla [n^{\pm}]^j , \nabla ( \mathcal{I}_h-I)  \tPj \Bigr ) \pm \Bigl ( [ n^{\pm}]^j \varepsilon(\ddj) \nabla \Phi^j , \nabla ( \mathcal{I}_h-I)  \tPj \Bigr ) 
\\&- \Bigl ( \fii v [n^{\pm} ]^j , \nabla ( \mathcal{I}_h-I) \tPj \Bigr )  \pm \Bigl( \varepsilon(\fii d ) \nabla \Phi^j , \nabla [\tn^{\pm}]^j \Bigr) 
\,.
\end{align*}
Note that all terms including mass-lumping vanish immediately since 
\begin{align*}
\left ( \f a ,  \mathcal{I}_h\f b\right ) _h = \left ( \f a , \f b \right ) _h \,.
\end{align*}
The term $I_k$ abbreviates the term vanishing for vanishing temporal discretization parameter, \textit{i.e.},
\begin{align*}
I_k =  \varepsilon_a \frac{k }{2}  \| \td \ddj ( \nabla \tPj - \nabla \Phi_h^j) \|^2_{{\mathbb L}^2} \leq \varepsilon_a k \| \td \ddj \|_{{\mathbb L}^\infty}^2 \| \nabla (\Phi^j - \tPj) \|_{{\mathbb L}^2}^2 \,,
\end{align*}
and $I$ incorporates the terms, which are similar to the continuous case, such that the associated manipulations are also similar, \textit{i.e.},
\begin{align*}
I={}&  ( ( \fnn v \cdot \nabla) \fn v ,
\vvi
) + \frac{1}{2}\bigl ( (\di \fnn v ) \fn v, 
\vvi
\bigr ) + \bigl( ( \vvii\cdot\nabla) \vvi , \fn v \bigr)+ \frac{1}{2}\bigl ( (\di \vvii ) \vvi, \fii v \bigr )   
\\&-
\left (\fh d \times ( \PL [ \nabla \fiii d]\vvi  ),\fh d \times  \fn q  \right )_h + \left (\dd^{j-1/2}\times( ( \fn v \cdot \nabla ) \ddii ) , \dd^{j-1/2}\times\tqj \right ) _h
\\&+ 
 \left ( \fh d \times (  \PL [ \nabla \fiii d]\fn v ),\fh d \times 
\tqj
 \right )_h + \Bigl ( \ddh\times((\vvi\cdot\nabla) \ddii) ,\ddh\times \fn q \Bigr ) _h
\\&+ \Bigl (([n^+]^j-[n^-]^j) \nabla \Phi^j  , 
\vvi
 \Bigr ) + \Bigl ( ([\tn^+]^j-[\tn^-]^j) \nabla \tPj , \fn v \Bigr )  
\\ 
&- 
 \Bigl ( (\fn v [n^+-n^-]^j , \nabla
  \tPj
   ) \Bigr)  - \Bigl(\vvi([\tn^+]^j-[\tn^-]^j),\nabla \Phi^j \Bigr)\\
 &+
  \Bigl( \fh d \times \fn q , \fh d \times 
  \tqj
  \Bigr) _h+ \Bigl ( \dd^{j-1/2} \times \tqj , \dd ^{j-1/2} \times \fn q \Bigr) _h- 2 \Bigl( \dd^{j-1/2} \times \tqj , \f d ^{j-1/2} \times \fn q \Bigr) _h
 \\&+
 \left ( (\varepsilon(\fn d )-\varepsilon(\ddi)) \left (  \nabla [n^{+}]^j-\nabla [\tn^{+}]^j \right ) ,  \nabla\tPj \right ) + \left ((\varepsilon(\ddi)-\varepsilon(\fn d)) \nabla [\tn^{+}]^j , \nabla \Phi^j -\nabla \tPj \right ) 
\\&-
\left ( 
 \left ( (\varepsilon(\fn d )-\varepsilon(\ddi)) \left (  \nabla [n^{-}]^j-\nabla [\tn^{-}]^j \right ) ,  \nabla\tPj \right ) + \left ((\varepsilon(\ddi)-\varepsilon(\fn d)) \nabla [\tn^{-}]^j , \nabla \Phi^j -\nabla \tPj \right )
\right) 
 \\& +
  \left ( \varepsilon(\ddi) ([\tn^+]^j+[\tn^-]^j)- \varepsilon(\fn d)  ([n^+]^j+[n^-]^j) , \nabla \tPj \o(\nabla \Phi^j - \nabla \tPj ) \right )
\\& +
  \left ( ( \varepsilon(\fiii d)- \varepsilon(\ddii) ) ( \nabla \Phi^j-\nabla \tPj ), \nabla \td \tPj \right  ) 
\\
&+ \varepsilon_a  
\left ( 
 \left ( \td \fn d - \td \ddj , \nabla \tPj \nabla \tPj \cdot ( \fn d - \ddj )\right ) + \left ( \td \ddj \cdot ( \nabla \Phi^j - \nabla \tPj ) , \nabla \tPj \ddj-\nabla \Phi^j \fn d \right ) \right ) 
\\& 
+
\varepsilon_a \left ( \td \ddj \cdot \nabla \tPj , ( \fn d - \ddj ) \cdot \nabla( \tPj-\Phi^j ) \right ) 
\\={} &\left ( (( \fiii v - \vvii)\cdot \nabla) ( \fn v - \vvi ) , \vvi \right ) + \frac{1}{2} \left ( ( \di \fiii v ) \vvi , \fii v - \vvi \right ) 
\\
 &+ \left ( \fh d\times \left (  ( \PL[\nabla \fiii d] - \nabla \ddii ) (\fii v - \vvi) \right ) , \fh d \times \tqj \right ) _h \\&+ \left ( ( \fh d -\ddh ) \times\left (  (( \fii v -\vvi) \cdot \nabla )\ddii \right ), \fh d \times \tqj\right ) _h
 \\ 
& + \left ( \ddh \times \left ( ( \fii v -\vvi) \cdot\nabla ) \ddii \right ), ( \fh d -\ddh) \times \tqj\right ) _h
\\&  +
  \left ( \fh d \times \left (  (\PL[\nabla\fiii d] - \nabla \ddii )\vvi \right ), \ddh \times \tqj-\fh d \times \fn q\right ) _h
 \\
 &+ \left ( ( \fh d - \ddh ) \times \left ( (\vvi \cdot \nabla) \ddii \right ) , \ddh \times \tqj - \fh d \times \fn q \right )_h
\\&  + \left ( \fh d \times \left ((\mathcal{P}[\nabla \fiii d] - \nabla\ddii ) \vvi \right) , ( \fh d - \ddh ) \times \tq\right ) _h
 \\&+   \left ((\fh d - \ddh ) \times \left ( ( \vvi\cdot\nabla)\ddii\right ) , ( \fh d - \ddh ) \times \tqj\right )_h\\& +  \left ( \fh d \times \fn q -\ddh\times \tqj, (\fh d-\ddh) \times \tqj\right ) _h
 \\&+
  \left ( \nabla \mathcal{I}_h \left (( \fh d -\ddh ) \times\tilde{\f a}^j\right ) ,   \nabla \fh d - \nabla \ddh  \right )
  \\
& 
+h^\beta 
  \left ( \Delta _h  \mathcal{I}_h \left (( \fh d -\ddh ) \times \tilde{\f a}^j\right ) ,   \Delta_h \fh d \right ) 
\\{}&-\varepsilon_a  \left (\mathcal{I}_h \left ( (  \fh d- \ddh  )\times  \tilde{\f a}^j\right ) , \left ( \nabla \Phi^j (\nabla \Phi^j \cdot \fiii d) - \nabla \tPj ( \nabla \tPj \cdot \ddii)\right ) \right ) 
\\
 &+ \Bigl( ([n^+]^j-[n^-]^j) -([\tn^+]^j-[\tn^-]^j) , \vvi \cdot  (\nabla \Phi^j-\nabla\tPj) \Bigr ) \\
  &  + \Bigl ( ([n^+]^j-[n^-]^j) -([\tn^+]^j-[\tn^-]^j) , (\vvi -\fn v)\cdot \nabla\tPj  \Bigr ) 
\\&-    \Bigl (  ([n^+]^j-[n^-]^j) -([\tn^+]^j-[\tn^-]^j)  , \di (\varepsilon(\fn d )-\varepsilon(\ddi)) \cdot \nabla  \tPj + (\varepsilon(\fn d )-\varepsilon(\ddj)) : \nabla^2 \tPj \Bigr ) 
\\& + \Bigl ((\varepsilon(\ddj)-\varepsilon(\fn d))\left (  \nabla ([\tn^+]^j-[\tn^-]^j)\right ) , \nabla (\Phi^j -  \tPj) \Bigr )
\\&
 +\Bigl ( (\varepsilon(\ddj) -\varepsilon(\fn d))([\tn^+]^j-[\tn^+]^j)
, \nabla \tPj \o(\nabla (\Phi^j - \tPj)) \Bigr )\\&
 +\Bigl ( \varepsilon(\f d)\left (([n^+]^j-[n^-]^j) -([\tn^+]^j-[\tn^-]^j) \right ), \nabla \tPj \o(\nabla (\Phi^j - \tPj)) \Bigr )\\
&+ \varepsilon_a  
 \Bigl( \td \fn d - \td \ddj , \nabla \tPj \nabla \tPj \cdot ( \fn d - \ddj )\Bigr ) + \Bigl ( \td \ddj \cdot \nabla (\Phi^j -  \tPj ) , \nabla (\tPj \ddj- \Phi^j \fn) d \Bigr )  
\\& 
+  \Bigl( ( \varepsilon(\fiii d)- \varepsilon(\ddii) ) ( \nabla \Phi^j-\nabla \tPj ), \nabla \td \tPj \Bigl  ) 
+ 
\varepsilon_a \Bigl ( \td \ddj \cdot \nabla \tPj , ( \fn d - \ddj ) \cdot \nabla( \tPj-\Phi^j ) \Bigr) 
\\& - \left ( \nabla \ddh , \nabla \left ( \left ( \mathcal{I}_h-I\right ) \left ( (\ddh - \fh d ) \times \tilde{\f a}^j\right ) \right )\right )
 \\&+ \varepsilon_a \left ( \nabla \tPj ( \nabla \tPj \cdot \ddii ) , \left ( \mathcal{I}_h-I\right ) \left ( (\ddh - \fh d ) \times  \tilde{\f a}^j \right )\right )
\\& + \int_\Omega  \left ( I - \mathcal{I}_h \right ) \left ( \left (( \fh d - \ddh ) \times  \tilde{\f a}^j \right ) \cdot \tqj \right ) \de \f x 
 \,,
 \end{align*}
where we employed~\eqref{dis:q} and the definition of $\tqj$ (see Proposition~\ref{prop:disrel}).
For convenience, we introduced the abbreviation $ \tilde{\f a}^j := \ddh \times ( ( \vvi \cdot \nabla ) \ddii + \tqj) $. 

The term incorporating the difference in the discrete time derivative may be handled as follows: for any function $ \f a : \ov \Omega \times [0,T] \ra \R^d$, we find
\begin{align}
\begin{split}
&\left ( \td \fn d - \td \ddj , \f a \right ) =  \left \langle \mathcal{A}_d^j( \f u^j), \begin{pmatrix}
\f 0\\(I - \mathcal{I}_h)\f a \\0
\end{pmatrix}  \right \rangle-\left \langle \mathcal{A}_d^j( \tu^j), \begin{pmatrix}
\f 0\\\f a\\ 0 
\end{pmatrix} \right \rangle + \left ( \td \fn d , \f a \right ) - \left ( \td \fn d , \f a \right )_h 
\\
&\qquad + \left ( \dd^{j-1/2} \times (( \vvi\cdot \nabla) \ddii), \dd^{j-1/2} \times \f a \right )_h - \left ( \fh d \times( \PL[ \nabla  \fiii d] \fii v , \fh d \times \f a   \right )_h
\\
&\qquad + \left ( \dd^{j-1/2} \times \tqj , \dd^{j-1/2} \times \f a \right ) _h- \left ( \fh d \times \fn q , \fh d \times \f a   \right ) 
\\&\quad = \left \langle \mathcal{A}_d^j( \f u^j), \begin{pmatrix}
0\\(I - \mathcal{I}_h)\f a \\0
\end{pmatrix}  \right \rangle-\left \langle \mathcal{A}_d^j( \tu^j), \begin{pmatrix}
0\\\f a\\ 0 
\end{pmatrix} \right \rangle +\int_\Omega ( I - \mathcal{I}_h)( \td \fn d , \f a ) \de \f x 
\\
&\qquad + \left ( \dd^{j-1/2} \times (( \vvi\cdot \nabla) \ddii), ( \dd^{j-1/2}- \fh d)  \times \f a \right ) _h
\\&\qquad + \left ( ( \dd^{j-1/2} - \fh d ) \times (( \vvi\cdot \nabla) \ddii), \fh d \times \f a \right )_h
\\&\qquad + \left ( \fh d \times \left (
 \PL[\nabla \fiii d]( \vvi-\fii v ) 
 \right ) , \fh d \times \f a \right ) _h\\&\qquad + \left ( \fh d \times ( (\PL[\nabla \fiii d] -\nabla  \ddii )\vvi ) , \fh d \times \f a \right ) _h
\\
&\qquad + \left ( \dd^{j-1/2} \times \tqj , (\dd^{j-1/2} - \fh d )  \times \f a \right )_h + \left ( \dd^{j-1/2} \times \tqj -  \fh d \times \fn q , \fh d \times \f a   \right )_h \,.
\end{split}\label{timedis}
\end{align}
Concerning the ${\mathbb L}^2$-projection, we may estimate
\begin{align*}
\| \PL(\nabla \fiii d) -\nabla  \ddii \|_{{\mathbb L}^2} \leq{}& \| \PL(\nabla \fiii d) -\PL(\nabla  \ddii) \|_{{\mathbb L}^2} + \| \PL(\nabla \ddii ) - \nabla \ddii \|_{{\mathbb L}^2}
\\
\leq{}&  \| \nabla (\fiii d -   \ddii) \|_{{\mathbb L}^2} + c h  \| \nabla^2  \ddii \|_{{\mathbb L}^2} \,. 
\end{align*}
Inserting this, we may start to estimate the right-hand side of the relative energy inequality.
Note that the interpolation operator is stable with respect to the  ${\mathbb H}^1$- and ${\mathbb L}^\infty $-norm, \textit{i.e.},
\begin{align*}
\|\nabla  \mathcal{I}_h [ f ] \|_{{\mathbb L}^2} \leq C \|\nabla  f \|_{{\mathbb L}^2} \text \quad \forall\, f \in {\mathbb H}^1\,,  \qquad\text{and}\qquad \| \mathcal{I}_h[f]\|_{{\mathbb L}^\infty} \leq \| f \|_{{\mathbb L}^\infty}  \quad \forall\, f \in \C (\ov\Omega)\,.
 \end{align*}

First, we may estimate the terms also occurring in the continuous setting as in the proof of Proposition~\ref{prop:cont}; we will not repeat the details here. Keeping only the additional terms stemming from the discretization, 
we end up with
\begin{align*}
&\td \mathcal{R}( \f u^j | \tu^j ) + \td \frac{h^\alpha }{2}\| \nabla 
\f v^j
 \|^2_{{\mathbb L}^2} 
 + \frac{h^\beta}{2} \| \Delta _h 
  \fn d 
   \|_h^2 
   +\td  r_k^1(h) \\
&\quad +\frac{ k}{2}  \left [\| \td \fn v -  d_t \vv^j \|^2_{{\mathbb L}^2} + h^\alpha \|  \td \nabla \fn v
 \| ^2_{{\mathbb L}^2}
 + \| \nabla \td \fn d - \nabla d_t \dd^j \|^2_{{\mathbb L}^2}
    \right ]
\\
&\quad +\frac{ k}{2}  \Bigl [ \Bigl(\varepsilon(\dd^{j-1})\nabla d_t (\Phi_h^j- \tP^j) , \nabla d_t (\Phi_h^j-\tP^j) \Bigr) \\
&\quad +  \varepsilon_a   \| ( \td \fn d - \td \ddj ) \cdot \nabla \Phi_h^j \|^2_{{\mathbb L}^2} + \varepsilon_a \| ( \td \fn d - \td \ddj ) \cdot \nabla \tPj \|^2_{{\mathbb L}^2} 
 \Bigr ]
 +\frac{1}{2} \mathcal{W}( \fn u , \tu^j)
\\
 \leq&
 \Bigl (\mathcal{K}_1(\tu ^j,\tu^{j-1})+\varepsilon_a \frac{k}{2} \| \td \ddj\|_{{\mathbb L}^\infty}^2 \Bigr ) \mathcal{R}( \fn u | \tu^j ) \\
 & +
\mathcal{K}_2(\tu ^j,\tu^{j-1}) \left ( \mathcal{R}( \fiii u | \tu^{j-1} ) + \| \PL ( \nabla \ddii ) - \nabla \ddii \|_{{\mathbb L}^2}^2  \right )   \\& +\frac{1}{2} \left [
  \|  [n^+]^j-[\tn^+]^j \|_{{\mathbb L}^2}^2 + \|  [n^-]^j-[\tn^-]^j \|_{{\mathbb L}^2}^2 \right  ]
 \\&+ 
 \left \langle \mathcal{A}_d^j(\tu^j) , \begin{pmatrix}
\vvi-\fn  v \\ \tqj - \fn q + \nabla \tPj \left (\nabla \tPj \cdot ( \ddj - \fn d ) \right ) \\ \tPj-  \Phi^j\end{pmatrix}  \right \rangle 
\\&+  \left \langle \mathcal{A}_d^j( \f u^j), \begin{pmatrix}
0\\(I - \mathcal{I}_h)\left (\nabla \tPj \left (\nabla \tPj \cdot ( \fn d- \ddj  \right  )\right ) \\0
\end{pmatrix}  \right \rangle
\\&- \int_\Omega ( I - \mathcal{I}_h)( \td \fn d \cdot(
  \nabla \tPj \nabla \tPj \cdot ( \ddj - \fn d ) ) ) \de \f x   + I_h
\\& 
- \left ( \nabla \ddh , \nabla \left ( \left ( \mathcal{I}_h-I\right ) \left ( (\ddh - \fh d ) \times \tilde{\f a}^j\right ) \right )\right )
\\&
 + h^\beta \left ( \Delta \fh d , \Delta \mathcal{I}_h \left ( \left ( (\ddh - \fh d ) \times \tilde{\f a}^j\right ) \right )\right )
 \\&+ \varepsilon_a \left ( \nabla \tPj ( \nabla \tPj \cdot \ddii ) , \left ( \mathcal{I}_h-I\right ) \left ( (\ddh - \fh d ) \times \tilde{\f a}^j\right )\right )
\\& + \int_\Omega  \left ( I - \mathcal{I}_h \right ) \left ( \left (( \fh d - \ddh ) \times  \tilde{\f a}^j \right ) \cdot \tqj \right ) \de \f x 
\\
 \leq&
 \left (\mathcal{K}_1(\tu ^j,\tu^{j-1}) +\varepsilon_a \frac{k}{2} \| \td \ddj\|_{L^\infty(\Omega)}^2 \right ) \mathcal{R}( \fn u | \tu^j )+
\mathcal{K}_2(\tu^j,\tu ^{j-1}) \mathcal{R}( \fiii u | \tu^{j-1} )+ r^2_k(h) 
\\
&
 + h^\beta \left \| \Delta \fh d \right \|_{\mathbb L^2} ^2    \left \| \ddh \times \left ( ( \vvi \cdot \nabla) \ddii + \tqj \right ) \right \|_{\mathbb W^{2,\infty}} 
 \\& +\frac{1}{2} \left [
  \| [n^+]^j-[\tn^+]^j \|_{{\mathbb L}^2}^2 + \| [n^-]^j-[\tn^-]^j \|_{{\mathbb L}^2}^2 \right  ]
\\& + 
 \left \langle \mathcal{A}_d^j(\tu^j) , \begin{pmatrix}
\vvi-\fn  v \\ \tqj - \fn q + \nabla \tPj\left ( \nabla \tPj \cdot ( \ddj - \fn d )\right )   \\ \tPj- \Phi^j \end{pmatrix}  \right \rangle 
 \,.
\end{align*}
Above, we defined 
\begin{align*}
r^1_k(h) :=  \Bigl ( \fn v , ( \mathcal{I}_h-I) \vvi \Bigr  ) 
+  \Bigl( \varepsilon(\f d^j) \nabla \Phi^j, \nabla \left (  \mathcal{I}_h-I  \right )\tP^j \Bigr) 
\end{align*}
and correspondingly 
\begin{align*}
r^2_k(h):={}& I_h - r^1_k(h) - \int_\Omega ( I - \mathcal{I}_h)\Bigl( \td \fn d \cdot \bigl( \tqj  + \nabla \tPj \nabla \tPj \cdot [ \ddj - \fn d ] \bigr) \Bigr) \de \f x   
\\
&+ \Biggl ( \tilde{\f a}^j, ( \ddh - \fh d ) \times \left ( I - \mathcal{I}_h\right ) \fn q  \Biggr)  
\\& 
- \Biggl ( \nabla \ddh , \nabla \Biggl [ \left ( \mathcal{I}_h-I\right ) \left ( (\ddh - \fh d ) \times\tilde{\f a}^j\right ) \Biggr ]\Biggr)
 \\&+ \varepsilon_a \Biggl( \nabla \tPj ( \nabla \tPj \cdot \ddii ) , \left ( \mathcal{I}_h-I\right ) \left ( (\ddh - \fh d ) \times \tilde{\f a}^j\right )\Biggr )
\\& + \int_\Omega  \left ( I - \mathcal{I}_h \right ) \Bigl ( ( \fh d - \ddh ) \times \tilde{\f a}^j \cdot \tqj \Bigr ) \de \f x 
 \\
& 
+  \mathcal{K}_2(\tu^j,\tu^{j-1})  \| \mathcal{P}_{{\mathbb L}^2} (\nabla \ddii) - \nabla \ddii \| ^2_{{\mathbb L}^2} 
\\& 
+ h^\beta \left ( \Delta_h \fh d , \Delta _h \mathcal{I}_h \left ( \left ( (\ddh- \fh d) \times \tilde{\f a}^j\right ) \right )\right )
\\& 
- h^\beta \left ( \Delta_h \fh d ,  \mathcal{I}_h \left ( \left ( (\ddh- \Delta _h\fh d) \times \tilde{\f a}^j\right ) \right )\right )
\,.
\end{align*}

Thus, it remains to show that $r^1_k(h)\ra 0$ and  $\sumi | r^2_k(h)| \ra 0 $ as $h\ra 0$. 
With regard to $r^1_k$, we may estimate by Lemma~\ref{lem:interpolation}
\begin{align*}
| r^1_k(h)| \leq{}&  \| \fn v \|_{{\mathbb L}^2} \| ( \mathcal{I}_h-I) \vvi \|_{{\mathbb L}^2} 
+ \| \varepsilon(\f d^j) \nabla \Phi^j\|_{{\mathbb L}^2} \| \nabla \left (  \mathcal{I}_h-I  \right )\tP^j \|_{{\mathbb L}^2} \\
 \leq {}& C h \Bigl (  \| \fn v \|_{{\mathbb L}^2} \| \nabla  \vvi \|_{{\mathbb L}^2} 
 + \| \varepsilon(\f d^j) \nabla \Phi^j\|_{{\mathbb L}^2} \| \nabla ^2 \tP^j \|_{{\mathbb L}^2}\Bigr )\,.
\end{align*}
For $r^2_k$, we may conclude
\begin{align*}
&\sumi |r^2_k(h)|-  \mathcal{K}_2(\tu^j,\tu^{j-1})  \| \mathcal{P}_{{\mathbb L}^2} (\nabla \ddii) - \nabla \ddii \| ^2_{{\mathbb L}^2} \\
&\leq 
 k \sum_{j=1}^J \Bigl[ \| \fiii v \|_{{\mathbb L}^2}  \| ( \mathcal{I}_h-I)\td \vvi \|_{{\mathbb L}^2} 
+ h^\alpha  \| \nabla \fiii v\|_{{\mathbb L}^2} \|  \nabla \td \vvi \|_{{\mathbb L}^2} 
\\
&\qquad 
+   \| \varepsilon(\fiii d) \nabla \Phi^{j-1} \|_{{\mathbb L}^2} 
\|  \td \nabla \left ( \mathcal{I}_h-I\right ) \tPj\|_{{\mathbb L}^2}   
 + \| \nabla \fn v \|_{{\mathbb L}^2}  \| \nabla ( \mathcal{I}_h - I ) \vvi \|_{{\mathbb L}^2} \Bigr]\\
&\quad + k \sum_{j=1}^J \Bigl[\left (\| \fiii v \|_{{\mathbb L}^{10/3}} \| \nabla \fii v \|_{{\mathbb L}^2} + \| \di \fiii v \|_{{\mathbb L}^2} \| \fii v \|_{{\mathbb L}^{10/3}}\right ) \| (\mathcal{I}_h - I ) \vvi \|_{{\mathbb L}^{5}}  \Bigr]  \\
&\quad +  k\sum_{j=1}^J \Bigl[ \| [ n^{+}]^j - [n^-]^j \|_{{\mathbb L}^\infty} \| \nabla \Phi^j \|_{{\mathbb L}^2} \| (\mathcal{I}_h - I ) \vvi \| _{{\mathbb L}^2}\Bigr]
\\
&\quad +\sumii{  \| \nabla \fh d \|
 \| \nabla (\mathcal{I}_h - I ) \td \ddj \|
 + \varepsilon_a \| \nabla \Phi^j \|_{{\mathbb L}^2}^2 \| \fh d \|_{{\mathbb L}^\infty} \| ( \mathcal{I}_h - I) \td \ddj \|_{{\mathbb L}^\infty} } \\
&\quad +k\sum_{j=1}^J \Bigl[ \Bigl(\| \varepsilon(\fn d ) \nabla [n^{\pm}]^j \|_{{\mathbb L}^2}
+ \| [n^{\pm}]^j \varepsilon(\fn d) \nabla \Phi^j \|_{{\mathbb L}^2}
+ \| \fn v [n^{\pm}]^j \|_{{\mathbb L}^2}
 \Bigr ) \| \nabla ( \mathcal{I}_h- I) \tPj \|_{{\mathbb L}^2}
 \Bigr]
\\
&\quad +k\sum_{j=1}^J \Bigl[\|  \varepsilon(\fn d) \nabla \Phi^j \|_{{\mathbb L}^2}
 \left \|
 (\mathcal{I}_h -I) ( [\tn^+]^j-[\tn^-]^j ) 
 \right \|_{{\mathbb L}^2}
 \Bigr]
\\& \quad
  + hk \sum_{j=1}^J \Biggl[ \| \td \fn d\|_{{\mathbb L}^{3/2}}  \left[ \| \nabla  \tqj \|
  _{{\mathbb L}^3} 
  + \| \nabla \left ( \nabla \tPj \bigl[ \nabla \tPj \cdot ( \ddj - \fn d ) \bigr] \right) \|
  _{{\mathbb L}^3}
  \right] \Biggr]
\\
&\quad +k\sum_{j=1}^J \| \nabla \ddh  \|_{{\mathbb L}^\infty} \left \| \nabla \Biggl[ \left ( \mathcal{I}_h-I\right ) \Biggl ( (\ddh - \fh d ) \times \tilde{\f a}^j\Biggr ) \Biggr]\right \|_{{\mathbb L}^1} 
\\
&\quad + \sumii{\| \nabla \tPj \|_{{\mathbb L}^4}^2 \|  \ddii  \|_{{\mathbb L}^\infty} \left \|  \left ( \mathcal{I}_h-I\right ) \Biggl ( (\ddh - \fh d ) \times \tilde{\f a}^j\Biggr)\right \|_{\mathbb L^2 }
}
\\&\quad +  h \sumii{\| \fh d - \ddh \|_{{\mathbb H}^1} \| \tilde{\f a}^j\cdot \tqj \|_{{\mathbb H}^1} }
\\& \quad +\sumii{ h^\beta \left \| \Delta \fh d \right \| _{\mathbb L^2} \left \| \Delta \mathcal{I}_h \left ( \left ( \ddh  \times \tilde{\f a}^j\right ) \right )\right\|_{\mathbb L^2}}
\\
& \quad +\sumii{ h^\beta \left \| \Delta_h  \fh d \right \| _{\mathbb L^2} \left \| \fh d \right \|_{\mathbb{W}^{1,2}} \left \|  \tilde{\f a}^j\right\|_{\mathbb W^{1,\infty}}}
\\
&+\quad  \sumii{  h^\alpha  \left \|  \nabla   \fii v \right \|_{\mathbb L^2} \left \| \nabla  \td  \vvi \right \|_{\mathbb L^2} + h^\beta \left \|  \Delta _h \fh d \right \| _{\mathbb L^2} \left \| \Delta _h   \td \ddi \right \|_{\mathbb L^2} } 
\\
\leq{}& 
h  \sumii{ \| \fiii v \|_{{\mathbb L}^2}  \| \nabla \td \vvi \|_{{\mathbb L}^2}
+ h^\alpha  \| \nabla \fiii v\|_{{\mathbb L}^2} \|  \nabla^2 \td \vvi \|_{{\mathbb L}^2}
+   \| \varepsilon(\fiii d) \nabla \Phi^{j-1} \|_{{\mathbb L}^2} \|  \td \nabla ^2 \tPj\|_{{\mathbb L}^2}}
\\&+h
\sumii{ \| \nabla \fn v \|_{{\mathbb L}^2}  \| \nabla ^2 \vvi \|_{{\mathbb L}^2}}
\\& +h \sumii{ \left (\| \fiii v \|_{{\mathbb L}^{10/3}} \| \nabla \fii v \|_{{\mathbb L}^2} + \| \di \fiii v \|_{{\mathbb L}^2} \| \fii v \|_{{\mathbb L}^{10/3}}\right ) \| \nabla  \vvi \|_{{\mathbb L}^{5}} }  \\&+  \sumii{\| [ n^{+}]^j - [n^-]^j \|_{{\mathbb L}^\infty} \| \nabla \Phi^j \|_{{\mathbb L}^2} \| \nabla  \vvi \| _{{\mathbb L}^2} }
\\
&+h \sumii{  \| \nabla \fh d \|_{{\mathbb L}^2}
 \| \nabla ^2 \td \ddj \|_{{\mathbb L}^2}
 + \varepsilon_a \| \nabla \Phi^j \|_{{\mathbb L}^2}^2 \| \fh d \|_{{\mathbb L}^\infty} \| \nabla  \td \ddj \|_{{\mathbb L}^\infty} } \\
&+ hk\sum_{j=1}^J \Bigl[\| \varepsilon(\fn d ) \nabla [n^{\pm}]^j \|_{{\mathbb L}^2}
+ \| [n^{\pm}]^j \varepsilon(\fn d) \nabla \Phi^j \|_{{\mathbb L}^2}
+ \| \fn v [n^{\pm}]^j \|_{{\mathbb L}^2}
 \Bigr] \| \nabla ^2 \tPj \|_{{\mathbb L}^2}
\\
&+hk\sum_{j=1}^J \|  \varepsilon(\fn d) \nabla \Phi^j \|_{{\mathbb L}^2}
 \left \|
 \nabla  ( [\tn^+]^j-[\tn^-]^j ) 
 \right \|_{{\mathbb L}^2}
\\& 
  +hk
   \sum_{j=1}^J  \| \td \ddj \|_{{\mathbb L}^{3/2}}  \Bigl[ \| \nabla  \tqj \|_{{\mathbb L}^3}
  + \| \nabla \bigl[ \nabla \tPj \bigl( \nabla \tPj \cdot ( \ddj - \fn d ) \bigr) \bigr] \|_{{\mathbb L}^3}
  \Bigr]
\\
& +h \sumii{\| \nabla \ddh  \|_{{\mathbb L}^\infty}  \left \| \nabla   \fh d  \right \|_{{\mathbb L}^2}  \left \| \ddh \times \bigl[ ( \vvi \cdot \nabla) \ddii + \tqj \bigr]\right \|_{{\mathbb W}^{2,6/5}} } 
\\
& +h \sumii{\| \nabla \ddh  \|_{{\mathbb L}^\infty}   \left \| \ddh \times \left (\ddh \times \bigl[ ( \vvi \cdot \nabla) \ddii + \tqj \bigr] \right ) \right \|_{{\mathbb W}^{2,1}} } 
\\
&+h  \sumii{\| \nabla \tPj \|_{{\mathbb L}^4}^2 \|  \ddii  \|_{{\mathbb L}^\infty} \left \|  \fh d \right \|_{{\mathbb W}^{1,2}}   \left \|  \ddh \times \bigl[ ( \vvi \cdot \nabla) \ddii + \tqj \bigr] \right \|_{{\mathbb W}^{2,3}\cap \mathbb L^\infty} }
\\
&+h  \sumii{\| \nabla \tPj \|_{{\mathbb L}^4}^2 \|  \ddii  \|_{{\mathbb L}^\infty}  \left \|\ddh \times \left (  \ddh \times \bigl[ ( \vvi \cdot \nabla) \ddii + \tqj \bigr]\right )  \right \|_{{\mathbb W}^{2,2}} 
}
\\&+  h \sumii{\| \fh d - \ddh \|_{{\mathbb H}^1} \| \left ( \ddh \times \bigl[ ( \vvi \cdot \nabla ) \ddii + \tqj) \bigr]\right ) \cdot \tqj \|_{{\mathbb H}^1} }
\\&  +h^{\beta/2} \sumii{ h^{\beta/2} \left \| \Delta \fh d \right \| _{\mathbb L^2} \left \| \Delta \mathcal{I}_h \left ( \left ( \ddh  \times \left ( \ddh \times \left ( ( \vvi \cdot \nabla) \ddii + \tqj \right ) \right )\right ) \right )\right\|_{\mathbb L^2}}
\\
&  +h^{\beta/2}\sumii{ h^{\beta/2} \left \| \Delta_h  \fh d \right \| _{\mathbb L^2} \left \| \fh d \right \|_{\mathbb{W}^{1,2}} \left \|  \left ( \ddh \times \left ( ( \vvi \cdot \nabla) \ddii + \tqj \right ) \right )\right\|_{\mathbb W^{1,\infty}}}
\\
&+  \sumii{  h^{\alpha/2}   \left (h^{\alpha/2} \left \|  \nabla   \fii v \right \|_{\mathbb L^2} \right )\left \| \nabla  \td  \vvi \right \|_{\mathbb L^2} + h^{\beta/2}\left (h^{\beta/2} \left \|  \Delta _h \fh d \right \| _{\mathbb L^2}\right ) \left \| \Delta _h   \td \ddi \right \|_{\mathbb L^2} } 
\\
\,.
\end{align*}
Above, we used Lemma~\ref{lem:interpolation}, Lemma~\ref{lem:masslumping}, and~\eqref{estdtime}.

\end{proof}
\begin{corollary}
Let the assumptions of Proposition~\ref{prop:disrel} be fulfilled.
Then it holds that
\begin{multline}
-k \sum_{j=0}^J \td \phi^{j+1}\left ( \mathcal{R}( \fn u | \tu^j) + \frac{h^\alpha}{2}\| \nabla
 \fn v
 \|^2_{{\mathbb L}^2}  + \frac{h^\beta}{2}\| \Delta_h \fn d \|_{\mathbb L^2}^2 + r^1_k(h)  \right )   \prod_{l=1}^j \frac{1}{\omega^l } \\+ \sum_{j=1}^J \phi^j \frac{\mathcal{W}_d(\fn u | \tu^j ) }{1- k \mathcal{K}_1(\tu^j) }\prod_{l=1}^j\frac{1}{\omega^l }  
\\
\leq \phi^0\left (  \mathcal{R}( \f u ^0| \tu^0)  + \frac{h^\alpha}{2}\| \nabla \f v^0 
\|^2_{{\mathbb L}^2}  + \frac{h^\beta }{2 } \| \Delta _h \f d^0 \| _{\mathbb{ L}^2} \right ) 
\\+ \sum_{j=1}^J \phi^j \frac{1 }{1- k \mathcal{K}_1(\tu^j) }
\frac{1}{2} \Bigl[
  \| [n^+]^j-[\tn^+]^j \|_{{\mathbb L}^2}^2 + \| [n^-]^j-[\tn^-]^j \|_{{\mathbb L}^2}^2 \Bigr]
\prod_{l=1}^j\frac{1}{\omega^l } \\+ \sum_{j=1}^J \phi^j \frac{1 }{1- k \mathcal{K}_1(\tu^j) }
\left ( 
 \left \langle \mathcal{A}_d^j(\tu^j) , \begin{pmatrix}
\vvi-\fn  v \\ \tqj - \fn q + \nabla \tPj \left (\nabla \tPj \cdot ( \ddj - \fn d ) \right )  \\ \tPj- \Phi^j \end{pmatrix}  \right \rangle
 + r^2_k(h)
 \right ) 
\prod_{l=1}^j\frac{1}{\omega^l }   \label{reldisint}
\end{multline}
for all  $ \phi\in\C^\infty_c([0,T)) ${ with } $\phi \geq 0 $, and $ \phi'\leq 0$ on $[0,T]$, where $$ \omega^j := \frac{1+k\mathcal{K}_2(\tu^j)}{1-k\mathcal{K}_1(\tu^j)}\,.$$

\end{corollary}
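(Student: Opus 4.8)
The plan is to read the discrete relative energy inequality~\eqref{relendis} as a difference inequality of the form~\eqref{grondiff} and then apply the discrete Gronwall Lemma~\ref{lem:disgron} essentially verbatim. Concretely, I would put
$$y^j := \mathcal{R}(\f u^j | \tu^j) + \frac{h^\alpha}{2}\|\nabla \fn v\|^2_{\mathbb{L}^2} + \frac{h^\beta}{2}\|\Delta_h \fn d\|_h^2 + r^1_k(h),\qquad g_1^j := \mathcal{K}_1(\tu^j,\tu^{j-1}) + \mathcal{K}_d(\tu^j),\qquad g_2^j := \mathcal{K}_2(\tu^j,\tu^{j-1}),$$
and take $f^j := \frac12\mathcal{W}_d(\fn u,\tu^j) - S^j$, where $S^j$ collects the remaining terms on the right-hand side of~\eqref{relendis}, namely
$$S^j := \frac12\bigl(\|[n^+]^j-[\tn^+]^j\|_{\mathbb{L}^2}^2 + \|[n^-]^j-[\tn^-]^j\|_{\mathbb{L}^2}^2\bigr) + \Bigl\langle \mathcal{A}_d^j(\tu^j),\ \bigl(\vvi-\fn v,\ \tqj-\fn q+\nabla\tPj\,(\nabla\tPj\cdot(\ddj-\fn d)),\ \tPj-\Phi^j\bigr)^{\top}\Bigr\rangle + r^2_k(h).$$
After bounding $\mathcal{R}(\f u^j|\tu^j)+\frac{h^\beta}{2}\|\Delta_h \fn d\|_h^2 \le y^j$ (legitimate up to the $\mathcal{O}(h)$ term $r^1_k(h)$, the remaining contributions to $y^j$ being nonnegative), inequality~\eqref{relendis} becomes exactly $\td y^j + f^j \le g_1^j y^j + g_2^j y^{j-1}$, which is~\eqref{grondiff}.

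I would then check the hypotheses of Lemma~\ref{lem:disgron}: $y^j\ge 0$ because $\mathcal{R}\ge 0$, the $h^\alpha$- and $h^\beta$-weighted terms are nonnegative, and $r^1_k(h)\to 0$; moreover $g_2^j=\mathcal{K}_2\ge 0$ while $1-kg_1^j>0$ for $k$ small (recall $\mathcal{K}_d(\tu^j)\to 0$ as $k\to 0$), so $\omega^j:=(1+kg_2^j)/(1-kg_1^j)>0$ is well defined. Lemma~\ref{lem:disgron} then delivers, for every cutoff $\phi\in\C^\infty_c([0,T))$ with $\phi\ge 0$ and $\phi'\le 0$,
$$-k\sum_{j=0}^{J}\td\phi^{j+1}\Bigl(y^j\prod_{l=1}^j\frac{1}{\omega^l}\Bigr) + k\sum_{j=1}^{J}\phi^j\frac{f^j}{1-kg_1^j}\prod_{l=1}^j\frac{1}{\omega^l} \le \phi^0\,y^0\,,$$
and substituting $f^j=\frac12\mathcal{W}_d(\fn u,\tu^j)-S^j$ and transferring the $-S^j$ part back to the right-hand side yields~\eqref{reldisint}; the boundary term $\phi^0 y^0$ reproduces the initial data on the right of~\eqref{reldisint} (up to the quantity $r^1_k(h)$ at $j=0$, which vanishes as $h\to 0$). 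That the statement records the denominator as $1-k\mathcal{K}_1(\tu^j)$ and $\omega^j$ without the $\mathcal{K}_d$-term is immaterial: since $\mathcal{K}_d=\mathcal{O}(k)$ controls the same quantity $y^j$ as $\mathcal{K}_1$, it may be grouped with $\mathcal{K}_1$, altering each factor $\omega^l$ only by $1+\mathcal{O}(k)$, which is absorbed into the constants.

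The main obstacle I anticipate is not the Gronwall step itself but the bookkeeping needed to cast~\eqref{relendis} into the template~\eqref{grondiff} without slippage. One has to check that the various $h^\alpha$- and $h^\beta$-weighted terms appearing in~\eqref{relendis}---some of them without a discrete time derivative---can be folded consistently into $y^j$ and $f^j$ without spoiling $y^j\ge 0$, and that the residuals $r^1_k(h)$ and $r^2_k(h)$ supplied by Proposition~\ref{prop:disrel} are small enough (they tend to zero) not to disturb these sign conditions, provided $h\le h_0(\Omega)$ and $k\le k_0(\Omega)$ stay below the thresholds already fixed in Theorem~\ref{thm:disex}. Once this matching is carried out, the corollary is an immediate consequence of Lemma~\ref{lem:disgron}.
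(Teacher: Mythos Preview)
Your proposal is correct and matches the paper's own proof, which consists of the single sentence ``This result follows from applying Lemma~\ref{lem:disgron} to~\eqref{relendis}.'' The identifications $y^j$, $f^j$, $g_1^j$, $g_2^j$ you spell out are exactly what is needed, and your remarks on absorbing $\mathcal{K}_d(\tu^j)=\mathcal{O}(k)$ into $\mathcal{K}_1$ and on the sign of $r^1_k(h)$ for $h$ small are the right way to close the minor bookkeeping gaps that the paper leaves implicit.
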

\begin{proof}
This result follows from applying Lemma~\ref{lem:disgron} to~\eqref{relendis}. 
\end{proof}

\subsection{Convergence to a dissipative solution}\label{convergence1}
The \textit{a priori} estimates achieved in the Theorem~\ref{thm:disex} allow to apply well established standard results to conclude convergence of a subsequence. For $k$, $h \ra 0$ as given above, there exists a subsequence such that
\begin{subequations}
\label{conv}
\begin{align}
\fno {v}\,,\,\fnf v \,,\, \fnu v \stackrel{*}{\rightharpoonup}& \f v  \quad \text{in } L^\infty(0,T;{\mathbb L}^2 ) \cap L^2(0,T;{\mathbb V})\,,\\
\fno d\,,\,\fnf  d\,, \, \fnu d  \stackrel{*}{\rightharpoonup} & \f d \quad \text{in }L^\infty(0,T;{\mathbb L}^{4/3} ) \cap L^\infty(0,T;{\mathbb L}^\infty )\cap W^{1,2}(0,T; {\mathbb L}^{3/2})  \,,\\
[\ov{n}^{\pm}]^k_h\,,\,[n^{\pm}]^k_h \stackrel{*}{\rightharpoonup} &  n^{\pm} \quad \text{in }L^\infty(0,T;{\mathbb L}^2) \cap  L^2(0,T;{\mathbb H}^1)\cap W^{1,2}\bigl(0,T;
  ({\mathbb H}^1)^*\bigr)
     \,,\\
\overline{\Phi} ^k_h \,,\,\Phi ^k_h \stackrel{*}{\rightharpoonup} & \Phi \quad \text{in } L^\infty (0,T; {\mathbb H}^1/_{\R})\label{w:Phi}\,,
\\
\fno q \stackrel{*}{\rightharpoonup} & \f q  \quad \text{in } L^\infty (0,T; ({\mathbb W}^{2,2}\cap \mathbb W^{1,2}_0 )^*)\label{w:q}\,,
\intertext{Due to the Lions-Aubin lemma, we infer the strong convergences}
[\ov{n}^{\pm}]^k_h\,,\,[n^{\pm}]^k_h \ra & n^{\pm} \quad \text{in }   L ^2(0,T; {\mathbb L}^2)  \,,\label{strong:N}\\
\fno d\,,\,\fnf  d\,, \, \fnu d  \ra & \f d \quad \text{in }  L ^p (0,T;{\mathbb L}^p) \quad \bigl(\text {for any } p\in [1,\infty)\bigl)\,,\label{strong:d}
\end{align}
\end{subequations}
where we employed the standard notations
\begin{align*}
\fno u (t) := \tu(jk)\,, \quad  \fnu u (t):= \tu((j-1)k) \,, \quad \fnf u  (t) = \frac{j k-t}{k} \tu((j-1)k) + \frac{t-(j-1)k}{k} \tu( jk )   \,, 
\end{align*}
for $(j-1)k < t \leq  j  k$.
Additionally, we use the abbreviation $ {\fnuo d} := \frac{1}{2}(\fno d + \fnu d )$. 
Using these convergences, going to the limit in~\eqref{dis:Phi} gives immediately the weak formulation of~\eqref{simp:Phi} and thus~\eqref{weak:Phi}. 
The convergence of the Nernst--Planck--Poisson system may be verified as in~\cite{numap} and~\cite{andreas} due to the strong convergence~\eqref{strong:d}.
Passing to the limit in the formulation~\eqref{dis:N}, we find the weak formulation~\eqref{weak:c},

Testing~\eqref{dis:d} by $ \mathcal{I}_h \f h  $ for $\f h\in \C^\infty(\Omega \times (0,T))$ implies 
\begin{align*}
\int_0^T \left ( \t \fnf d ,\mathcal{I}_h \f h \right )_h + \left (
  \fnuo d
   \times  (  \PL[  \nabla \fnu d] \fno v)   ,
\fnuo d    
     \times \mathcal{I}_h \f h \right )_h  + \left (
\fnuo d       
       \times  \fno q ,
\fnuo d         
         \times \mathcal{I}_h \f h \right )_h  \de t =0 \,.
\end{align*}
 In the limit as $k$, $h\ra 0$, we find that~\eqref{weak:d} is fulfilled. 
 Indeed, the only non-obvious point may be the change from mass-lumping to ${\mathbb L}^2$-inner products. With respect to this point, we observe 
 by~\eqref{estdtime} as well as  
 Lemma~\eqref{lem:masslumping} for $p=2$
 that
 \begin{align*}
\left | \int_0^T \left ( \t \fnf d ,
\f h \right )_h -  \left ( \t \fnf d ,
 \f h \right ) \de s\right |  \leq \int_0^T h \| \t \fnf d \| _{{\mathbb L}^2} \|  
  \f h \|_{{\mathbb W}^{1,2}} \, {\rm d}s\leq c h^{(6-d)/6} \|   \f h \|_{{\mathbb W}^{1,2}} \,
 \end{align*}
and
\begin{multline}
 \int_0^T \left (
\fnuo d   
  \times  \fno q ,
\fnuo d  
   \times 
    \f h \right )_h - \left ( 
\fnuo d    
    \times  \fno q , 
\fnuo d     
    \times 
    \f h \right ) \de s
\\  \leq C h  \left \| 
  \fnuo d \times \left ( \fnuo d \times 
   \fno q\right )  \right \|_{L^2(\Omega\times (0,T))}  \| \f h \|_{L^2(0,T;{\mathbb W}^{1,2})} \, .
 \end{multline}
By Lemma~\ref{lem:masslumping}, we find for the remaining  term that
\begin{multline*}
\left|\int_0^T \left (
\fnuo d  
  \times  (  \PL[  \nabla \fnu d] \fno v)   ,
\fnuo d    
    \times 
    \f h \right )_h-\left ( 
\fnuo d      
      \times  (  \PL[  \nabla \fnu d] \fno v)   , 
\fnuo d       
      \times 
      \f h \right )\de s \right|   \\
\leq  C  h \int_0^T\left \|
  \fnuo d \times \left ( \fnuo d \times 
  (  \PL[  \nabla \fnu d] \fno v) \right )  \right \| _{{\mathbb L}^{3/2}} \| \f h \|_{W^{1,3} (\Omega)}\de s  \\
\leq C  h \| \nabla \fnu d \|_{L^\infty(0,T;{\mathbb L}^2)} \| \fno v \|_{L^2(0,T;{\mathbb L}^6}\| \f h \|_{L^2(0,T;{\mathbb W}^{1,3})}  \,.
\end{multline*}

In order to pass to the limit in equation~\eqref{dis:q}, we first establish strong convergence of $\nabla \ov\Phi^k_h$. 
Therefore, we use a standard trick for strongly elliptic problems together with the additional regularity of the limit $\nabla \Phi $ (see~\ref{itemi}). 
\begin{align}
\begin{split}
&\| \nabla (\Phi - \ov\Phi^k_h) \|_{{\mathbb L}^2}^2 + \varepsilon _a \|\f d \cdot  \nabla \Phi -\fno d  \cdot  \nabla \ov\Phi^k_h  \| _{{\mathbb L}^2}^2 \\
&\quad = \left ( \varepsilon( \f d)\nabla \Phi , \nabla \Phi \right ) + \left ( \varepsilon(\fno d) \nabla \ov\Phi^k_hj , \nabla \ov\Phi^k_h \right ) - 2 \left ( \nabla \ov\Phi^k_h , \varepsilon(\fno d ) \mathcal{I}_h \nabla \Phi \right ) \\&\qquad - 2 \left ( \nabla \ov\Phi^k_h , \varepsilon(\fno d ) \left ( I - \mathcal{I}_h \right ) \nabla \Phi \right ) - 2\varepsilon_a \left ( ( \fno d - \f d ) \nabla \Phi , \fno d \cdot \nabla \ov\Phi^k_h \right ) \\
& \quad \leq  \left ( n^+-n^-, \Phi \right ) +  \left ( [n^+]^j - [ n^-]^j ,  \ov\Phi^k_h - 2 \mathcal{I}_h  \Phi \right )_h\\& \qquad +2 \| \nabla \ov\Phi^k_h  \| _{{\mathbb L}^2} \left ( 1+ \varepsilon_a\| \fno d\|_{{\mathbb L}^\infty}^2 \right ) \| (I - \mathcal{I}_h ) \nabla \Phi \|_{{\mathbb L}^2} 
\\
&\qquad + 2 \varepsilon_a \| \f d - \fno d \|_{{\mathbb L}^{2p/(p-2)}} \| \nabla \Phi \|_{{\mathbb L}^p} \| \fno d \|_{{\mathbb L}^\infty} \| \ov\Phi^k_h \|_{{\mathbb L}^2} 
\end{split}\label{StrongPhi}
\end{align}
The strong convergences~\eqref{strong:N} and~\eqref{strong:d} as well as the weak convergence~\eqref{w:Phi} allows us together with the additional regularity of the limit (see~\eqref{addregPhi})
to pass to zero on the right-hand side as $k$, $h\ra 0$. 

Testing~\eqref{dis:q} by $ \mathcal{I}_h 
 \f h
   $ for $\f h\in \C^\infty(\Omega \times (0,T); {\mathbb R}^3)$, we may observe 
\begin{align}
\int_0^T
h^\beta \left ( \Delta_h \fnuo d , \Delta _h \mathcal{I}_h \f h \right ) + 
 \left ( \nabla  \fnuo d ,   \nabla \mathcal{I}_h
   \f h
    \right )  -\varepsilon_a  \left ( \nabla \overline{\Phi}^k_h (\nabla \ov{\Phi}_h^k \cdot \fnu d ) , \mathcal{I}_h 
   \f h
   \right ) \de t =
\int_0^T \left (  \fno q , \mathcal{I}_h 
  \f h
   \right )_h \de t \,.\label{disqeq}
\end{align}
For the last term, we observe by~\eqref{qmass} that
\begin{align*}
 \left (  \fno q , 
   \f h
    \right )_h 
 = {}&  \left (
    \fno q ,    \f h \right ) -\left (  \left (
     \fno q ,    \f h \right ) _h- \left (
       \fno q ,    \f h \right ) \right )
      \\ \leq {}& \left (    \fno q ,    \f h \right )  + C h^{1+d/3} \left ( h^{-d/2} + h^{- d/(12)}  + h^{-(2-\beta/2)}\right ) \| \f h \|_{\mathbb W^{1,6}} 
   \,
\end{align*}
such that its convergence is inferred from~\eqref{w:q}. 

For the second term on the left-hand side of~\eqref{disqeq}, we first estimate the influence of the Interpolation operator by Lemma~\ref{lem:masslumping}
\begin{align*}
\left ( \nabla  \fnuo d ,   \nabla (\mathcal{I}_h-I) 
  \f h
 \right ) 
\leq 
{}& \| \nabla \fnuo d \|_{{\mathbb L}^2} \|  (\mathcal{I}_h-I) 
 \f h
 \|_{{\mathbb W}^{1,2}} 
 \leq{} C h  \| \nabla \fnuo d \|_{{\mathbb L}^2} 
\| \f h \| _{\mathbb W^{2,2}}   
  \,.
\end{align*}

such that this term actually converges to the first term on the right-hand side of~\eqref{weak:q}. 

Using the additional regularity of the limit $\Phi$, we may observe for the second term in~\eqref{disqeq}
\begin{align*}
& \left | \Bigl ( \nabla \Phi (\nabla \Phi \cdot \f d) , 
\f h \Bigr) - \left ( \nabla \overline{\Phi}^k_h (\nabla \ov{\Phi}_h^k \cdot \fnu d ) , \mathcal{I}_h (
 \f h ) \right ) \right | 
 \\
 ={}&
 \Big |\int_0^T  \left ( \nabla \Phi \left ((\nabla \Phi \cdot \f d)- ( \nabla \ov{\Phi}_h^k \cdot \fno d )\right ) , 
 \f h \right ) +  \left ( \nabla \Phi ( \nabla \ov{\Phi}_h^k ( \fno d - \fnu d ) ) ,
  \f h \right ) 
 \\
 &
 + \left ( \nabla \Phi ( \nabla \ov{\Phi}_h^k \cdot \fnu d) , \left ( I -\mathcal{I}_h\right ) ( 
 \f h ) \right ) 
 + \left ( ( \nabla \Phi - \nabla \ov{\Phi}_h^k ) ( \nabla \ov{\Phi}_h^k \cdot \fnu d ) , \mathcal{I}_h ( 
 \f h ) \right ) \de t 
 \Big|
 \\
\leq {}&
  \| \nabla \Phi \|_{L^2(\Omega\times(0,T))} \| \nabla \Phi\cdot \f d - \nabla \ov\Phi_h^k \cdot \fno d \|_{L^2(\Omega \times (0,T))}\| \f h \|_{L^\infty(\Omega \times (0,T))} 
  \\& +
  \| \nabla \Phi\|_{L^p(\Omega\times (0,T))} \| \nabla \ov\Phi_h^k \|_{L^2(\Omega\times(0,T))} \| \fno d - \fnu d \|_{L^{2p/(p-2)}(\Omega\times(0,T))}\| \f h \|_{L^\infty(\Omega \times (0,T))}
  \\&
+ \| \fnu d\|_{L^\infty(\Omega\times(0,T))}
   \| \nabla \Phi\|_{L^p(\Omega\times (0,T))} \| \nabla \ov\Phi_h^k \|_{L^2(\Omega\times(0,T))} \left \|  \left ( I -\mathcal{I}_h\right ) ( 
   \f h )\right \| _{L^{2p/(p-2)}(\Omega\times(0,T))} 
  \\& 
+ 
 \| \nabla \Phi - \nabla \ov \Phi_h^k \|_{L^2(\Omega\times(0,T))} \| \nabla \ov\Phi_h^k \|_{L^2(\Omega\times(0,T))} \| \fnu d \|_{L^\infty(\Omega\times(0,T))}\| \f h \|_{L^\infty(\Omega \times (0,T))} \,.
\end{align*}
The right-hand side vanishes as $k$, $h\ra 0$ due to the strong convergences~\eqref{strong:d} and \eqref{StrongPhi}.
For the first term in~\eqref{disqeq}, we observe that
\begin{align*}
h^\beta \left ( \Delta_h \fnuo d , \Delta_h \mathcal{I}_h \f h \right ) \leq h^{\beta /2} \left ( h^{\beta/2} \| \Delta_h \fnuo d \|_h \right ) \| \f h \|_{\mathbb W^{2,2}} \ra 0 \quad \text{as } h \ra 0\,.
\end{align*}
We may conclude 
  that~\eqref{disqeq} converges to the limit equation~\eqref{weak:q} as $k$, $h\ra 0$. 

It remains to pass to the limit in the relative energy inequality. 
Therefore, we define the linear and constant interpolates also for the test function $\tu\in \mathcal{C}([0,T]; \mathbb{Y})$ and $ \tu \in \mathcal{C}([0,T])$, \textit{i.e.},
\begin{align*}
\ov{\tu} (t) := \tu(jk)\,, \quad  \underline{\tu}(t):= \tu \bigl((j-1)k\bigr) \,, \quad \hat{\tu} (t) = \frac{(j+1)k-t}{k} \tu(jk) + \frac{t-jk}{k} \tu((j+1)k )   \,, 
\end{align*}
for $(j-1)k < t \leq  j k$.
The inequality~\eqref{reldisint} may be interpreted as
\begin{align*}
- \int_0^T &\t \hat{\phi} \left ( \mathcal{R}( \fno{\f u}|\ov{\tu}^k ) + \frac{h^\alpha}{2}\| \nabla \fno v 
\|_{{\mathbb L}^2}^2 + \frac{h^\beta}{2}\| \Delta_h \fnuo d
\|_{{\mathbb L}^2}^2 +r^1_k(h) \right ) \zeta_k(t)   \de t
\\+& \int_0^T \ov\phi \left ( 1 + \gamma_k(t)  \right ) \mathcal{W}_k ( \fno{u} | \ov{\tu}^k ) \zeta_k (t) \de s 
\\
\leq {}&\phi(0) \left ( \mathcal{R}( \f u^0_h | \tu(0)) + \frac{h^\alpha}{2} \| \nabla \f v^0_h
 \|_{{\mathbb L}^2}^2+ \frac{h^\beta}{2}\| \Delta_h \f d^0
\|_{{\mathbb L}^2}^2 \right )  \\
&+ \int_0 ^T\ov\phi  \left ( 1 + \gamma_k(t)  \right )\left \langle \mathcal{A}_d ( \fno{u} ), \begin{pmatrix}
\ov{\vv}^k - \fno{u} \\ \ov{\tq}^j - \fno{q} + \nabla \ov{\tP}^k ( \nabla \ov{\tP}^k \cdot ( \ov{\dd }^k - \fno{d})) \\\ov{\tP}^k - \ov{\Phi}_h^k
\end{pmatrix}\right \rangle  \zeta_k(t)  \de t 
\\&+ \int_0 ^T\ov\phi  \left ( 1 + \gamma_k(t)  \right )\frac{1}{2}\left [ \| [\ov{n}^{+}]^k_h - [\ov{\tilde n}^+]^k   \|_{{\mathbb L}^2}^2+ \|  [\ov{n}^{-}]^k_h - [\ov{\tilde n}^-]^k  \|_{{\mathbb L}^2}^2 + r^2_k(h) \right ]  \zeta_k(t)  \de t 
\,
\end{align*}
 for all  $ \phi\in\C^\infty_c([0,T)) ${ with } $\phi \geq 0 $, and $ \phi'\leq 0$ on $[0,T]$, where we defined  
\begin{align*}
\gamma_k(t) ={}& k \frac{\mathcal{K}_1 (\ov{\tu}(t) , \underline{\tu}(t))}{1- k \mathcal{K}_1 (\ov{\tu}(t) , \underline{\tu}(t)) }  \intertext{and}  
\zeta_k(t) = {}&
\prod_{l=1}^{t_k}  \frac{1}{ 1 + k( 1+ \gamma_k(lk)) \left ( \mathcal{K}_1 (\ov{\tu}(lk) , \underline{\tu}(lk))  + \mathcal{K}_2 (\ov{\tu}(lk) , \underline{\tu}(lk) )\right ) }
 \,,
\end{align*}
where $t_k:= jk\,, \text{ for } (j-1)k < t \leq  j  k $ and the error terms $r^1_k$ and $r^2_k$ are interpreted accordingly. 
For the above terms, we observe  that
\begin{align*}
\gamma_k(t) \ra 0 \quad 
\text{and}\quad  \zeta_k(t) \ra e^{- \int_0^t \mathcal{K}(\tu) \, {\rm d}s} \quad \text{as } k \ra 0 \,,
\end{align*}
where we used $ \mathcal{K}_1(\tu,\tu)+ \mathcal{K}_2(\tu,\tu)=  \mathcal{K}(\tu)$. 
Note that the regularizing terms may be estimated form below by zero on the left hand-side of the above inequality. 
The regularizing terms of the initial values vanish in the limit $h\ra0$ due to the additional regularity of the initial values (compare to the assumptions of Theorem~\eqref{thm:cont}).

Passing to the limit with $h$ and $k$, we find the inequality
\begin{multline*}
- \int_0^T \t \phi \mathcal{R}(\f u | \tu) e^{- \int_0^t \mathcal{K}(\tu) \de \tau} \de s + \int_0^T \phi \mathcal{W}(\f u| \tu) e^{- \int_0^t \mathcal{K}(\tu) \de \tau} \de t \\
\leq 
\phi(0) \mathcal{R}(\f u_0 | \tu(0)) + \int_0^T \phi \left \langle \mathcal{A}(\tu ) , \begin{pmatrix}
 \vv - \f v \\ \tq - \f q - \nabla \tP \nabla\tP \cdot ( \dd - \f d) \\ \tP-\Phi 
\end{pmatrix}\right \rangle e^{- \int_0^t \mathcal{K}(\tu) \de \tau} \de t
\\ + \int_0^T \phi \frac{1}{2}\left [ \| n^+-\tn^+ \|_{{\mathbb L}^2}^2+\| n^--\tn^- \|_{{\mathbb L}^2}^2  \right ] e^{- \int_0^t \mathcal{K}(\tu) \de \tau} \de t
 \,.
\end{multline*}
for all  $ \phi\in\C^\infty_c([0,T)) ${ with } $\phi \geq 0 $, and $ \phi'\leq 0$ on $[0,T]$. 
A variation of the fundamental lemma of variational calculus (see~\cite[Lemma~2.2]{maxdiss}) and multiplying by $ e^{ \int_0^t \mathcal{K}(\tu) \de \tau}$ implies~\eqref{relencont}.

\section{Computational studies\label{sec:comp}}

We set $\Omega=(-0.5,0.5)^d$, $d=2,3$ and consider a slight modification of the numerical scheme (\ref{dis}):
{\small \begin{subequations}
\begin{align}
\begin{split}
\left ( d_t \f v ^j, \f a\right  ) + {\nu}\left ( \nabla \f v^j , \nabla \f a \right ) 
+ \left ( ( \f v^{j-1} \cdot \nabla ) \f v^j , \f a \right ) + \frac{1}{2} \left ( \di \f v ^{j-1} \f v^j , \f a \right )
\qquad\qquad\qquad\qquad \qquad\quad
\\
+ \lambda_{npp} \left ( ([N^+]^j-[N^-]^j )\nabla \Phi^j , \f a \right ) 
+ \nu_{el} \left ( (\PL[\nabla \f d ^{j-1}])^T  (\f d ^{j-1/2} \times (\f d ^{j-1/2}\times \f q ^{j})) , \f a \right )_h ={}& 0\,,
\end{split} \label{dis:v_s5}
\\
  A\left( \nabla \fh d, \nabla \f b \right ) 
- {\mu_{\Phi}}  \left ( \varepsilon_a \nabla \Phi^{j} ( \f d^{j-1} \cdot \nabla \Phi^j), \f b \right )
-\left (  \f q^j ,\f b \right ) _h  ={}&0\,,\label{dis:q_s5}
\\ 
\begin{split}
\left ( d_t \f d^j , \f c \right ) _h
+ \nu_{el} \left ( \f d ^{j-1/2}\times( \PL [\nabla \f d ^{j-1}] \fn v  ) ,  \f d ^{j-1/2} \times \f c \right )_h 
\qquad \qquad
\\
+ \left (\f d ^{j-1/2}\times \f q^j , \f d ^{j-1/2}\times  \f c \right )_h ={}&0\,, 
\end{split}\label{dis:d_s5}
\\
\begin{split}
\left (d_t [ N ^{\pm}]^j , e^{\pm} \right )_h  + \mu_{\Phi} \left ( \varepsilon(\f d^{j})\nabla [ N ^{\pm}]^j , \nabla e ^{\pm} \right )
\qquad\qquad\qquad\qquad\qquad
\\ 
\pm \left ([N^{\pm}]^j \varepsilon(\f d^{j} ) \nabla \Phi^{j} , \nabla e^{\pm} \right )  - \lambda_{npp} \left ( \f v^j [ N^{\pm}]^j, \nabla e^{\pm} \right ) ={}&0\,,
\end{split}
\label{dis:N_s5}
\\
\mu_{\Phi} \left ( \varepsilon(\f d ^{j})\nabla \Phi ^j , \nabla  g \right )
-\left ( [ N ^+]^j-  [N^-]^j, g \right)_h = {}& 0  \, , \label{dis:Phi_s5}
\end{align}
\end{subequations}}
where $\varepsilon(\f d ) = \varepsilon_\perp I + \varepsilon_a \f d \otimes \f d$;
we introduced additional constants $\varepsilon_\perp$, $A$, $\lambda_{npp}$, $\mu_{\Phi}$, $\nu_{el}$
in order to control the strength of interactions between the different physical variables in the system.
If not mentioned otherwise, we set $\f v_0 = \f{0}$, $\varepsilon_\perp=0.1$, $\varepsilon_a=10$,
$A=0.01$, $\mu_{\Phi}=0.25$, $\nu_{el}=1$, {$\nu =1$}.
In \eqref{dis:v_s5} we use  homogeneous Dirichlet boundary conditions for the velocity,
and in (\ref{dis:q_s5}), \eqref{dis:N_s5}-\eqref{dis:Phi_s5}
we employ homogeneous Neumann-type boundary conditions;
{\em i.e.}, we use the same boundary conditions as given in System~\eqref{simp}, except for homogeneous Neumann boundary conditions for the director in~\eqref{dis:d_s5} and~\eqref{dis:q_s5}.

In (\ref{dis:v_s5}), we neglect the stabilization terms $h^{\alpha} \left ( \nabla d_t \f v^j , \nabla \f a \right )$ from (\ref{dis:v}) and $h^{\beta} (\Delta_h \f d^{j-1/2}, \Delta_h \f b)$ from (\ref{dis:q}), which was not required to preserve the
discrete maximum principle for $ [n^\pm]^j$
in the presented experiments --- as opposed to part {\em d)} in the proof of
Lemma \ref{exist-lemma2}.
In addition, we note that a suitable choice of the nonlinear solver guarantees that
the discrete constraint $|\f d^j| = 1$, $j=0,\dots,J$ is always satisfied at the nodes of the finite element
mesh up to machine accuracy independently of $\tau$, $h$; cf.~\cite{sllg} and below.

The velocity in the equation \eqref{dis:v_s5} is approximated using the $P2$-$P1$ Taylor-Hood element, see \textit{e.g.}~\cite{mhd},
the remaining unknowns are approximated using standard continuous piecewise linear finite elements.
To solve the nonlinear algebraic system related to the coupled equations 
\eqref{dis:v_s5}-\eqref{dis:Phi_s5}, we use a simple 
fixed-point iterative scheme analogous to \cite[Algorithm A$_1$]{andreas} (cf.~also \cite[Algorithm~A]{mhd}).
The stopping criterion for the iterative solvers was the $\ell_\infty$-norm of the subsequent iterates
with respective tolerance $tol=10^{-9}$
in the fixed algorithm,
and tolerance $tol=10^{-14}$ in the arising linear and nonlinear systems in each fixed-point iteration 
to eliminate a possible effect of the algebraic solvers on the numerical approximation; 
we note that more efficient implementations of the algorithm are possible.
In each iteration of the fixed-point algorithm the equations \eqref{dis:v_s5}-\eqref{dis:Phi_s5} are linearized, cf.~\cite{mhd, andreas},
in a way that the resulting respective equations are decoupled and can be solved separately. 
All {resulting} equations, except for \eqref{dis:d_s5},
are linear; the nonlinear algebraic system that corresponds to (the linearized version of)
\eqref{dis:d_s5}-\eqref{dis:q_s5}
is solved using a Newton-multigrid algorithm; cf.~\cite{sllg}.
Linear systems arising from \eqref{dis:v_s5} in $d=3$ are solved using the Vanka-multigrid method, 
cf.~\cite{mhd}.

A simple modification of the proof of Theorem~\ref{thm:disex}~ii) implies that the above numerical scheme satisfies the following discrete energy law
\begin{multline}
E (\f v^J,\f d^J , \Phi^J)   
+\frac{ k^2}{2} \sum_{j=1}^J \left (\| d_t \f v^j \|^2_{{\mathbb L}^2}
+ \mu_\Phi \bigl(\varepsilon(\f d^{j-1})\nabla d_t \Phi^j , \nabla d_t \Phi^j \bigr) + \mu_\Phi\varepsilon_a \| \nabla \Phi^j \cdot d_t \f d^j \|^2_{{\mathbb L}^2}   \right )
\\
+ k \sum_{j=1}^J \Bigg[ \nu \| \nabla \f v^j \|^2_{{\mathbb L}^2} + \| \f d ^{j-1/2} \times \f q^j \|^2_h + \Bigl ( ([N^+]^j+[N^-]^j) \nabla \Phi^j , \varepsilon(\f d^{j} ) \nabla \Phi ^j  \Bigr ) 
+ \|[N^+]^j-[N^-]^j\|^2_h  \Bigg] 
\\
= E (\f v^0,\f d^0 , \Phi^0)
\,, \label{disenin_s5}
\end{multline}
where 
$E(\f v^j,\f d^j , \Phi^j) : = \frac{1}{2}\| \f v^j \|^2_{{\mathbb L}^2} + \frac{A}{2}\| \nabla \fn d  \|^2_{{\mathbb L}^2}
 + \frac{\mu_{\Phi}}{2} \bigl  ( \varepsilon(\f d ^j ) \nabla \Phi^j , \nabla \Phi ^j \bigr )$.
In the experiments below (except for the ones with applied field) we verified the decrease of the physically relevant component in the above energy law,
{\em i.e.}, we neglected the numerical damping term scaled by $\frac{k^2}{2}$ in \eqref{disenin_s5}.

\subsection{Ericksen--Leslie interactions}

In the next two experiments we illustrate the Ericksen--Leslie interactions in the model.
We set $n^{\pm}_0=0$, $\lambda_{npp}=\varepsilon_a=\varepsilon_\perp=0$ which implies that $\Phi^{j}\equiv 0$, $j=1,\dots, J$, and the interactions in the system (\ref{dis:v_s5})-(\ref{dis:Phi_s5})
reduce to the coupling between (\ref{dis:v_s5})-(\ref{dis:d_s5}).

\subsubsection{Defect driven flow}
We choose the parameters analogically to \cite[Example 5.2]{prohl}:
we choose $\f v_0 = 0$,  and $A=1$, $\nu=1$, $\nu_{el}=0.25$, 
and the discretization parameters were $k=0.0005$, $h=2^{-4}$, $T=0.1$. 
To construct the initial condition for the director we set $\f{\hat{d}}(x, y) = (4x^2 + 4y^2 - 0.25, 2y, 0)^T$ and define
$$
\displaystyle
\f{d}_0 = \left\{
\begin{array}{llll}
\displaystyle \f{\hat{d}}/|\f{\hat{d}}| \qquad & \mathrm{if}\quad |\f{\hat{d}}| > 0\,,
\\
(0,0,1)^T \qquad & \mathrm{if}\quad |\f{\hat{d}}| = 0\,.
\\
\end{array}
\right.
$$
We observe that the initial condition above contains two defects, see Figure~\ref{fig1_el1} (left).

The computed results are displayed in Figures~\ref{fig1_el1} and \ref{fig2_el1}.
The observed evolution is similar to the results in \cite[Example 5.2]{prohl} for $d=2$:
the velocity drives the defect towards each other and the director field gradually becomes uniform in space.
\begin{figure}[!htp]
\begin{center}
\includegraphics[width=0.245\textwidth]{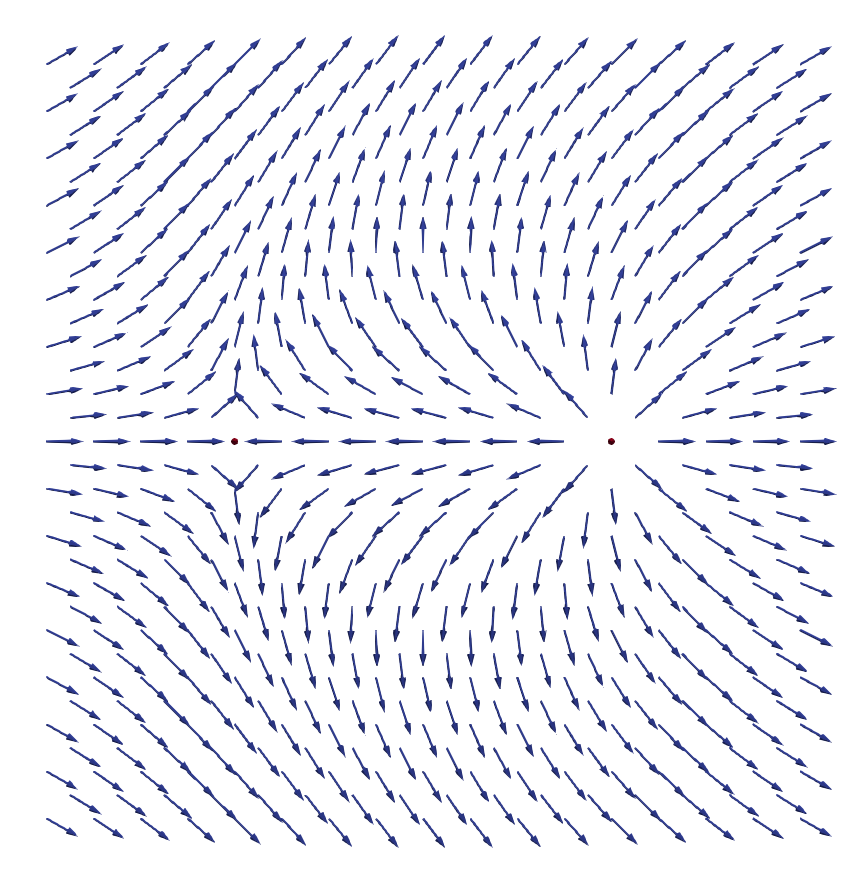}\qquad
\includegraphics[width=0.32\textwidth]{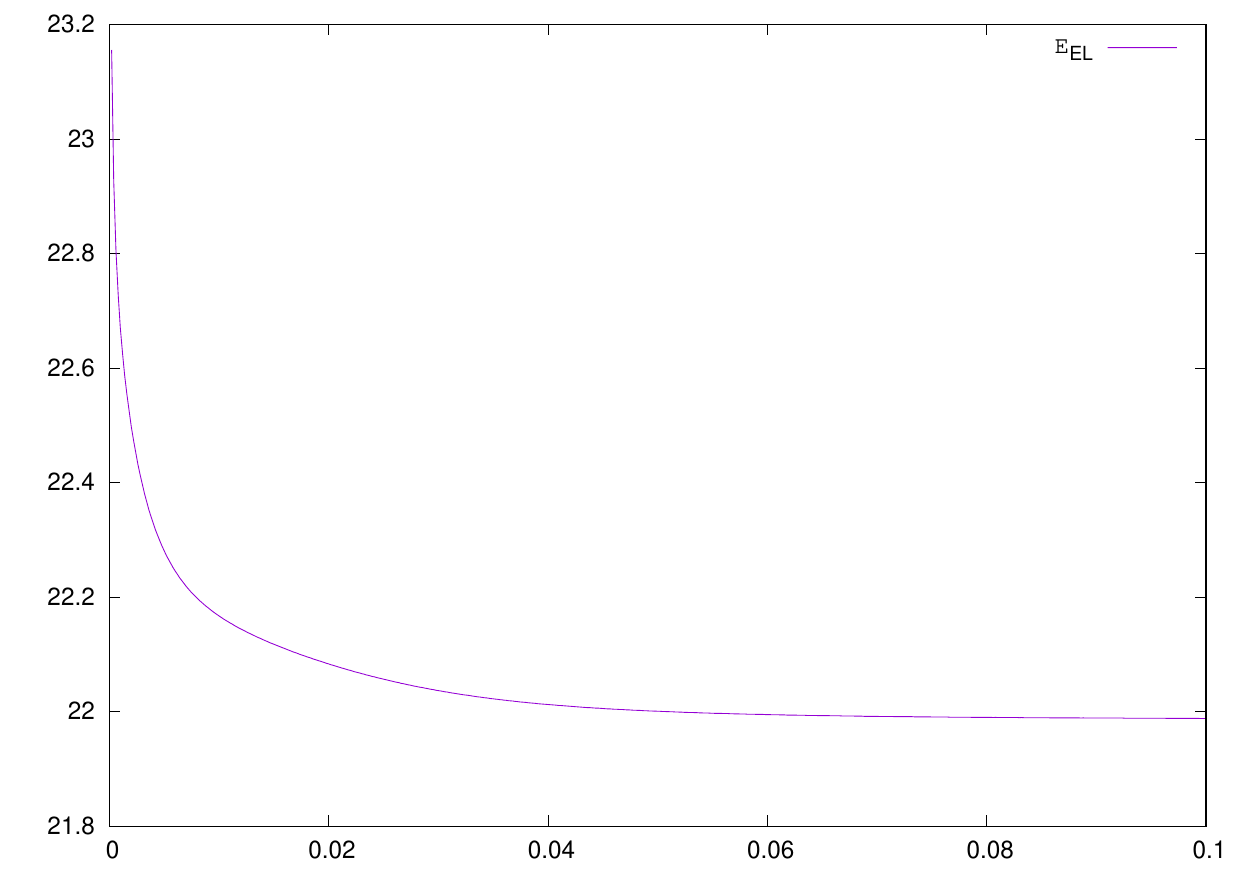}\qquad
\includegraphics[width=0.245\textwidth]{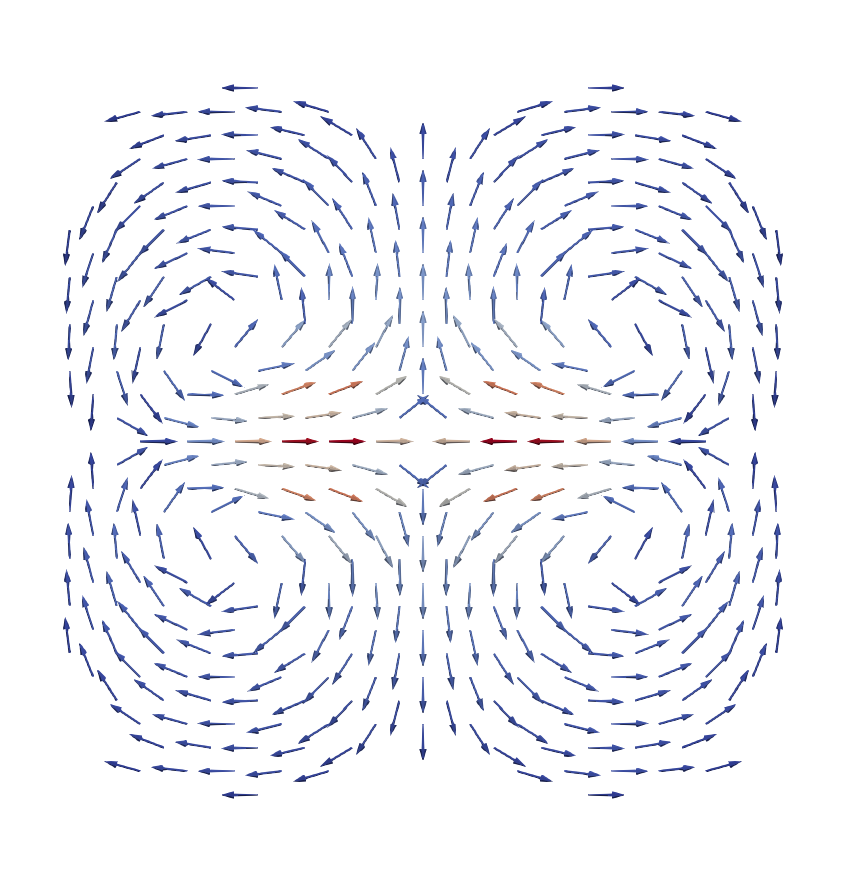}
\end{center}
\caption{From let to right: initial condition $\f{d}_0$ (colored by the $z$-component), evolution of the discrete energy, velocity at $t=0.02$ (colored by the magnitude).}
\label{fig1_el1}
\end{figure}
\begin{figure}[!htp]
\begin{center}
\includegraphics[width=0.245\textwidth]{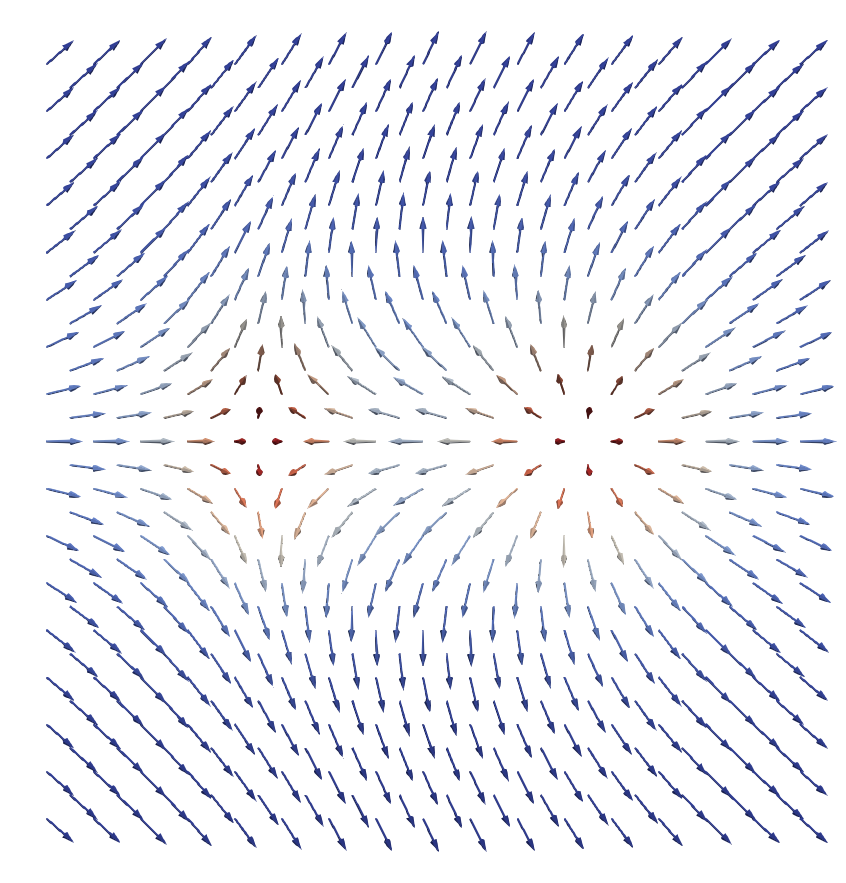}\qquad
\includegraphics[width=0.245\textwidth]{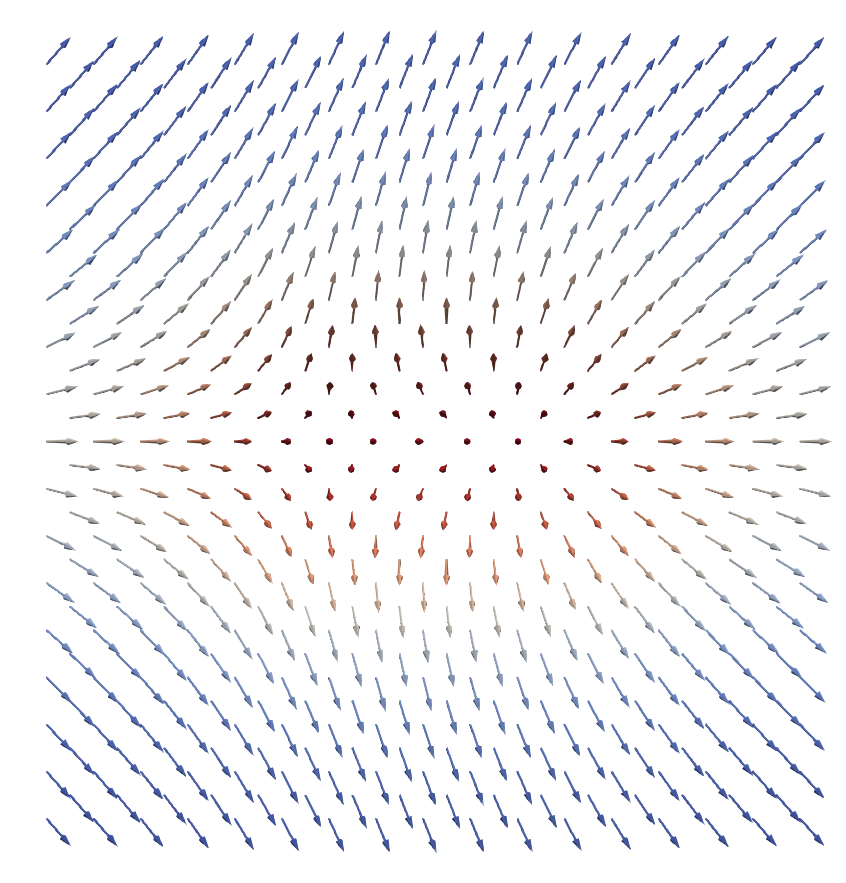}\qquad
\includegraphics[width=0.245\textwidth]{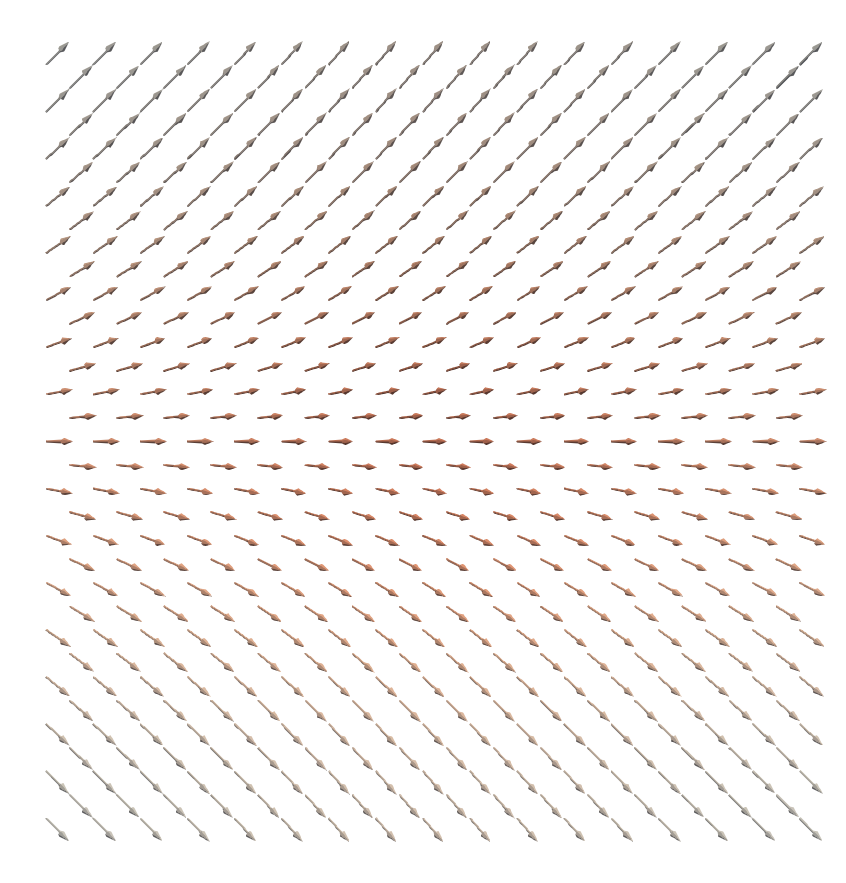}
\end{center}
\caption{Director at time $t=0.02, 0.04, 0.1$, colored by the magnitude of the $z$-component.}
\label{fig2_el1}
\end{figure}

\subsubsection{Velocity driven flow}
Next, we examine the effect of the velocity on the evolution of the director field.
We choose $\f v_0 = 10(-y, x)^T$, $A=0.1$, $\nu=1$, $\nu_{el}=1$, and 
the remaining parameters, as well as the initial condition for the director are the same as in the previous experiment.
In Figure~\ref{fig1_el2} we observe that the defects in the director field rotate
around the center of the domain
due to the advection effect of the velocity field (Figure~\ref{fig1_el2} (right)). 
We note that since the energy decreases over time, the velocity field becomes weaker 
and eventually vanishes.
\begin{figure}[!htp]
\begin{center}
\includegraphics[width=0.245\textwidth]{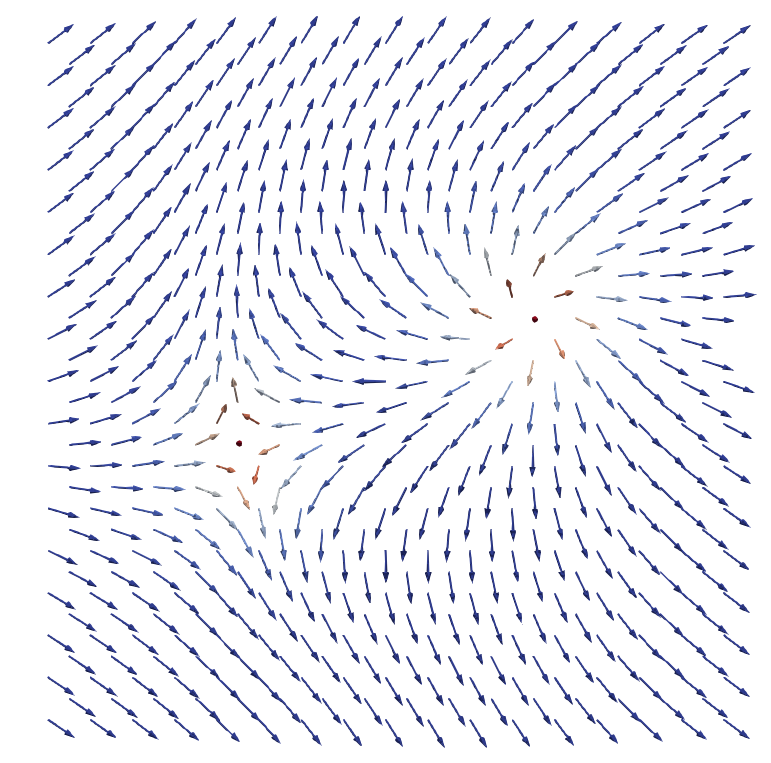}
\includegraphics[width=0.245\textwidth]{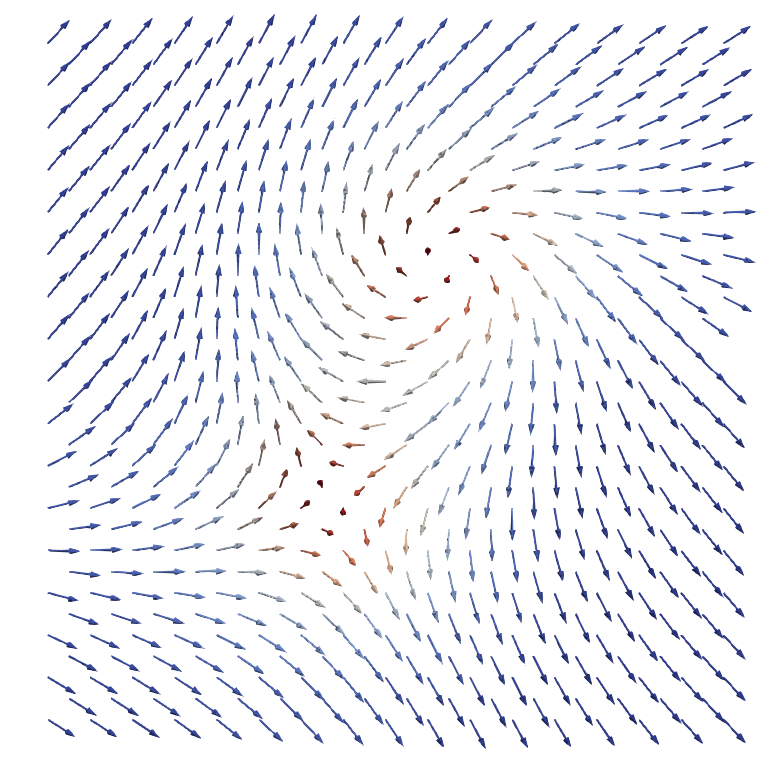}
\includegraphics[width=0.245\textwidth]{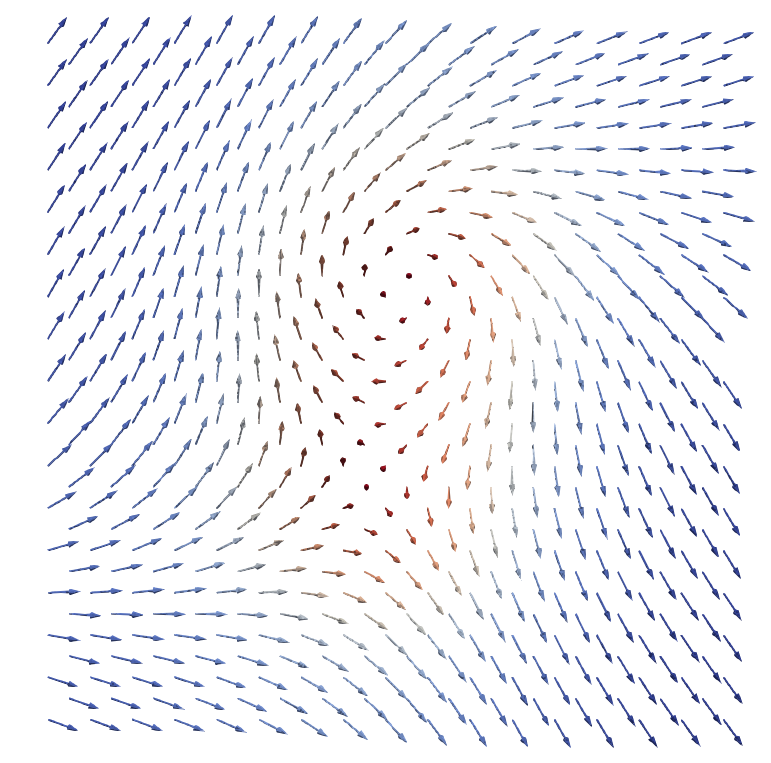}
\includegraphics[width=0.245\textwidth]{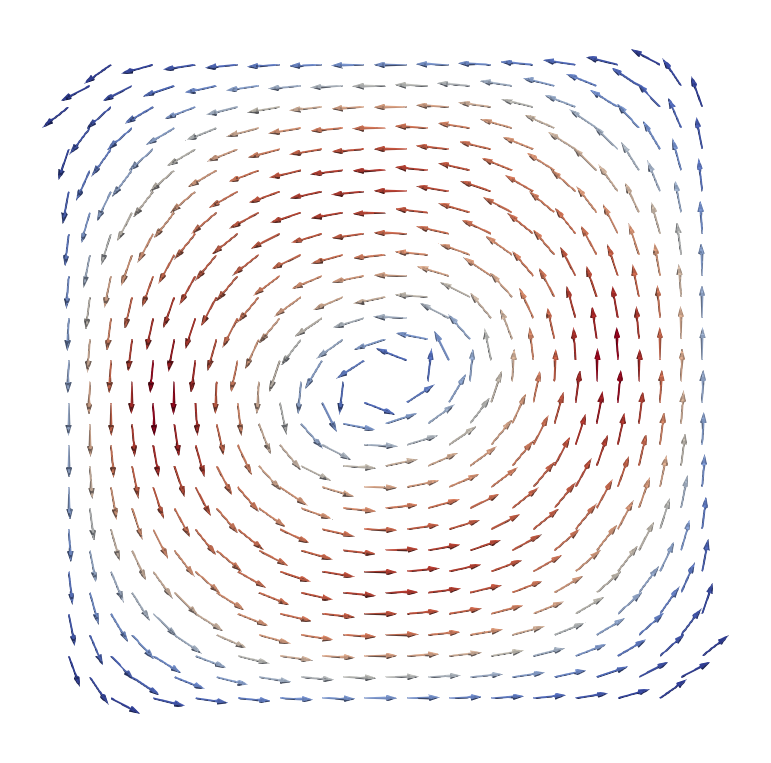}
\end{center}
\caption{Director at time $t=0.05, 0.15, 0.25$ (colored by the magnitude of the $z$-component) and the velocity field at time $t=0.15$ (colored by the magnitude).}
\label{fig1_el2}
\end{figure}

\subsubsection{Effect of the director on the electric field}\label{sec_static}

We demonstrate the anisotropy effect which is created as a consequence of interaction between the electric field
and the director.
We consider $d=3$ and study a stationary situation where 
we only solve (\ref{dis:v_s5}), (\ref{dis:Phi_s5})
with prescribed director field and charge densities that do not evolve over time.
The configuration of the charges is the so-called dipole:
we consider two spherically distributed (stationary) charges with opposite polarity centered at $\f{x}_{0}^\pm=(\pm 0.2,0,0)^T$
given as $n^{\pm}(\f{x}) = \exp(-50|\f{x}-\f{x}_0^\pm|^2)$,
and a constant director field in the $z$-direction  $\f{d} \equiv (0,0,1)^T$.
The remaining parameters were $\nu=1$, $A=0.1$, $\lambda_{npp}=100$,  $\lambda_{el}=0$, $\nu_{el}=1$, $\mu_{\Phi}=0.25$, $\varepsilon_\perp=0.1$, $\varepsilon_a=100$,
and the results were computed with $k=5\times 10^{-4}$, $h=2^{-5}$.

In general the induced (negative) electric field $-\f{E} = \nabla \Phi$ points from the negatively charged region towards
the positively charged one.
Without the director effect the electric field induced by the dipole with $\varepsilon_a=0$ (\textit{i.e.}, no effect of the director)
is radially symmetric along the $x$-axis; see Figure~\ref{fig_stat_dip_phi} (left).
When the director field $\f{d} \equiv (0,0,1)^T$ is included in the system it introduces an anisotropy effect 
in the $z$-direction, \textit{i.e.}, the field is approximately constant in the $z$-direction;
see Figure~\ref{fig_stat_dip_phi} (right). 
For illustration in Figure~\ref{fig_stat_dip_vel} we also display the velocity field induced by the electric field at time $t=0.0005$; 
the velocity is qualitatively similar for both cases. 

\begin{figure}[!htp]
\begin{center}
\includegraphics[width=0.45\textwidth]{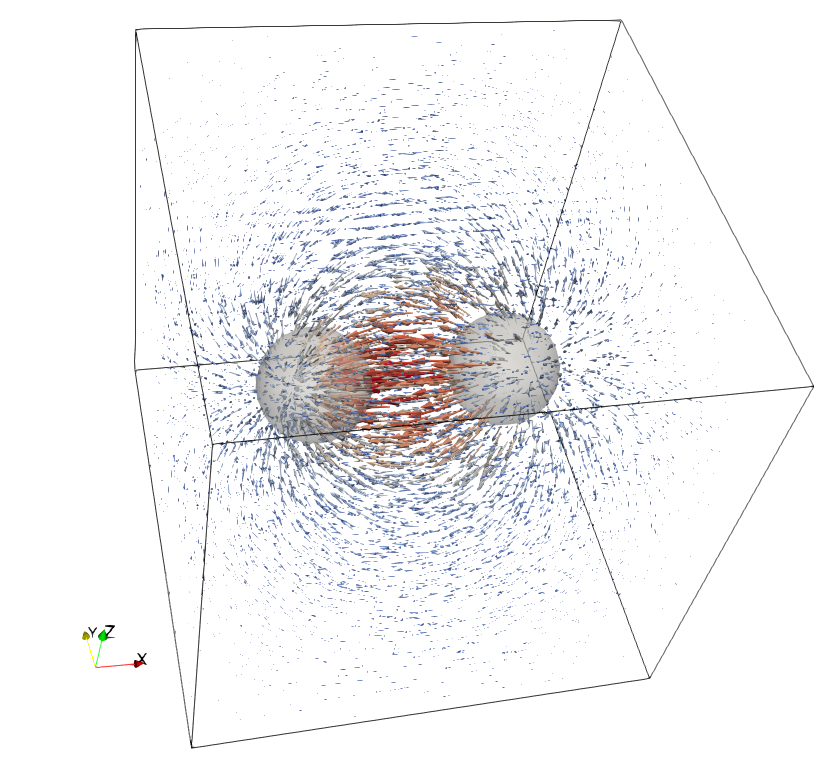}\qquad
\includegraphics[width=0.45\textwidth]{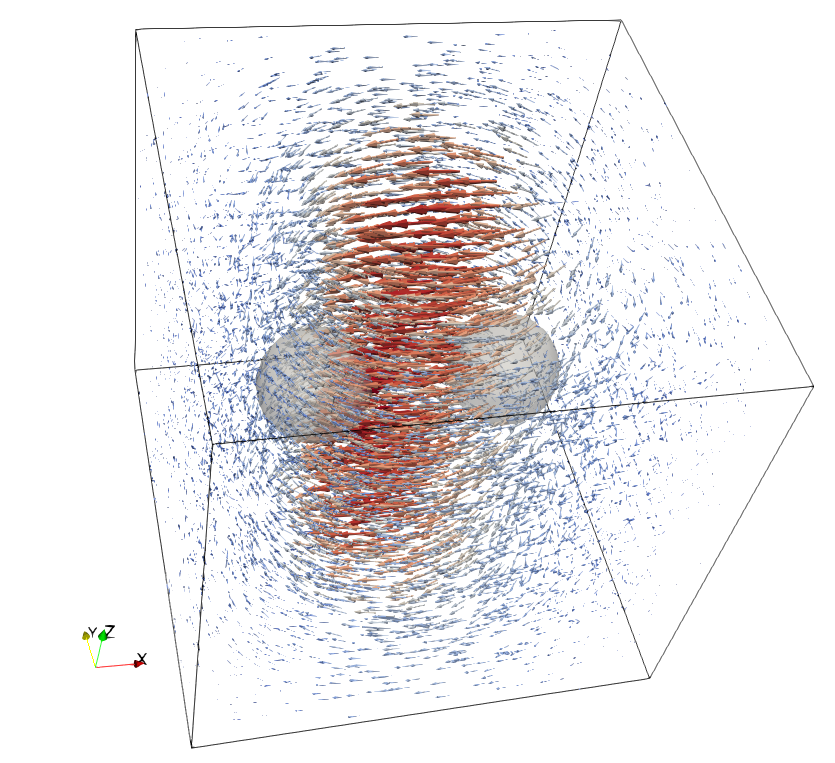}
\end{center}
\caption{Negative electric field $-\f{E} = \nabla \Phi$
and the $\pm 0.5$-level set of $n^{+}-n^{-1}$ for $\f{d} = \f{0}$ (left) and $\f{d} = (0,0,1)^T$ (right).}
\label{fig_stat_dip_phi}
\end{figure}

\begin{figure}[!htp]
\begin{center}
\includegraphics[width=0.45\textwidth]{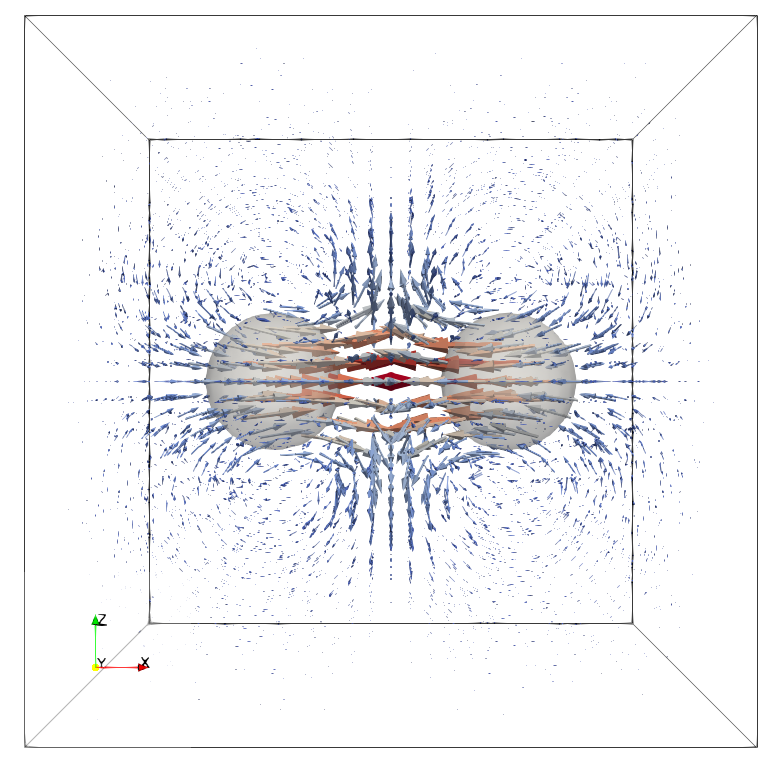}\qquad
\includegraphics[width=0.45\textwidth]{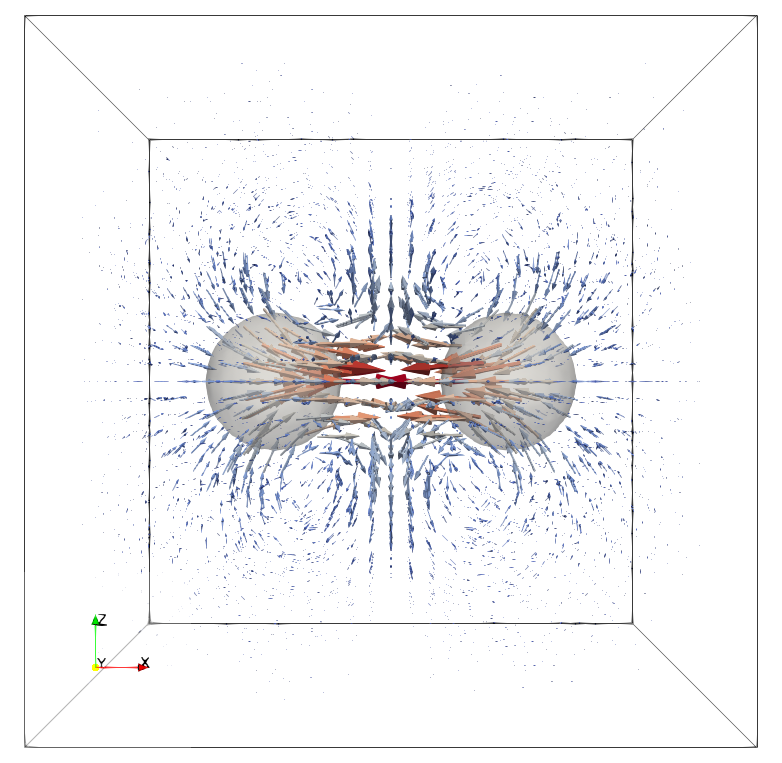}
\end{center}
\caption{Velocity field at $t=5\times 10^{-4}$
and the $\pm 0.5$-level set of $n^{+}-n^{-1}$ for $\f{d} = \f{0}$ (left) and $\f{d} = (0,0,1)^T$ (right).}
\label{fig_stat_dip_vel}
\end{figure}

\subsection{The full system in 3D}

In the subsequent experiments we examine the evolution of the full system (\ref{dis:v_s5})-(\ref{dis:Phi_s5}) in $d=3$
for different configurations of the model parameters. 
The main observations from the presented simulations can be summarized as follows: 
\begin{itemize}
\item 
In the simulations below we want to illustrate physically relevant features which are predominantly due to the effect of the electric field.
Consequently, the parameters are chosen such that the velocity field (which is induced by the interactions of the flow with the director field and the electric field, \textit{i.e.},
the $\lambda_{npp}$, $\nu_{el}$-terms in (\ref{dis:v_s5}))
has a comparably weaker effect on the overall evolution.
Furthermore, except for the last experiments, the director field did not significantly evolve over time.
\item The orientation of the director induces an anisotropy into the system, {\em i.e.},
the charges are transported by the electric field along the director field.
In addition, the orientation of the director determines the direction of the induced electric field, as well as of the velocity field.
In particular, due to the anisotropy effects of the diffusion tensor $\varepsilon(\f{d})$ in \eqref{dis:Phi_s5},
the electric field ($\f{E} = -\nabla \Phi$), which is induced by the difference between the positive and negative charges, 
remains predominantly perpendicular to the director field.
\end{itemize}

\subsubsection{Effect of the director on the diffusion of the charges}

The next experiment is to demonstrate the anisotropy effect due to the orientation of the director.
The initial condition for the director is $\f{d}_0\equiv 2^{-1/2}(0,1,1)^T$,
and the initial charges are taken  as $n^{\pm}_0 = \exp(-25|\f{x}-\f{x}_0^\pm|^2)$ with $\f{x}_{0}^\pm=(\pm 0.2,0,0)^T$.
The remaining parameters were $\nu=1$, $A=0.1$, $\lambda_{npp}=100$, $\nu_{el}=1$, $\mu_{\Phi}=0.125$, $\varepsilon_\perp=0.1$, $\varepsilon_a=100$,
and the results were computed with $k=2.5\times 10^{-4}$, $h=2^{-4}$.

The director field remains approximately constant during the whole evolution,
and the induced velocity field (which exhibits symmetry properties along in plane
perpendicular to $(0,-1,1)^T$; cf.~Figure~\ref{fig_dip3d_phi} (right)) is small.
Consequently, the velocity has a negligible effect, and the evolution 
is driven mainly by the diffusion of the charges and the advective effects of the electric field.

In Figure~\ref{fig_dip3d_phi} we display a typical configuration of the gradient of the electric potential,
the $0.2$-level set of the charges, and the magnitude of the velocity field
along the direction $(0,1,1)^T$ (the direction of the director).
As in Section~\ref{sec_static}, we observe 
anisotropy in the displayed electric field along the $(0,1,1)$-direction,
which is due to the interactions with the director.

\begin{figure}[!htp]
\begin{center}
\includegraphics[width=0.3\textwidth]{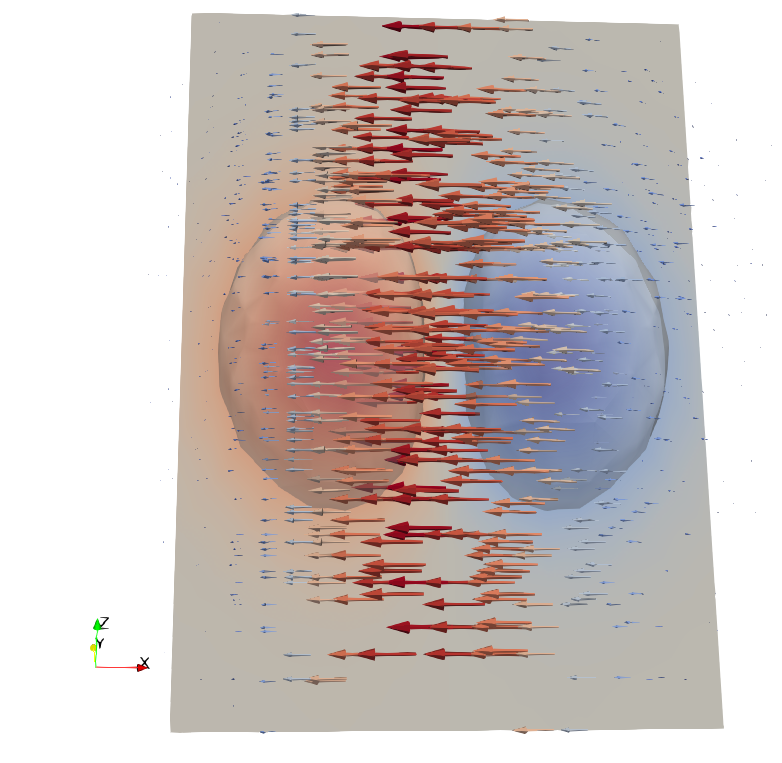}\quad
\includegraphics[width=0.3\textwidth]{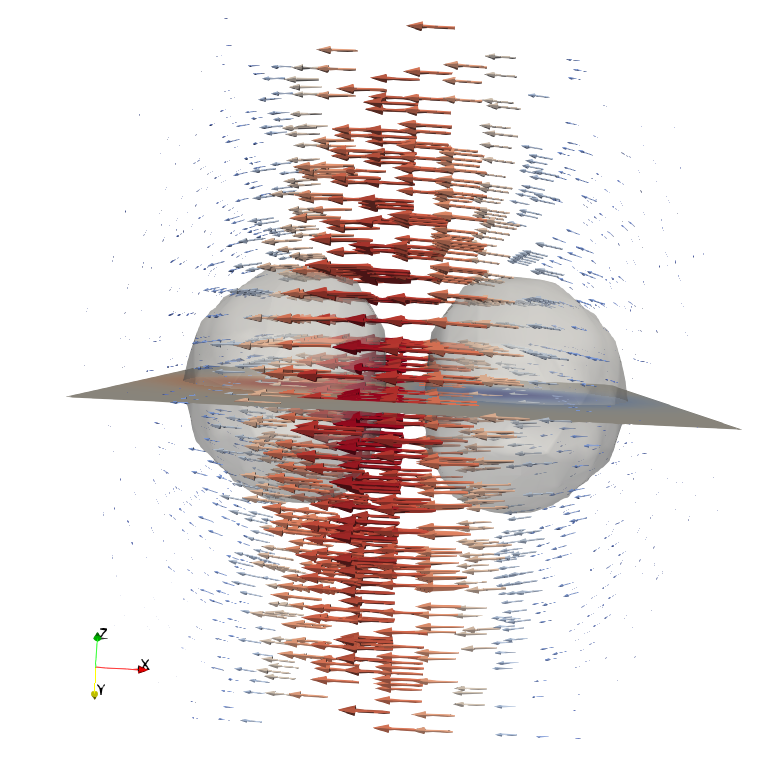}\quad
\includegraphics[width=0.3\textwidth]{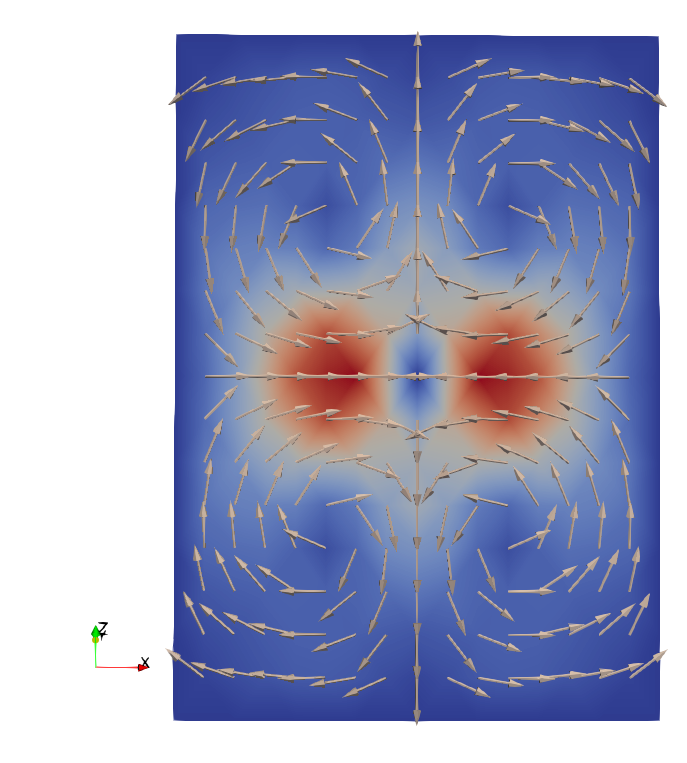}
\end{center}
\caption{Snapshots of the negative electric field from two different angles (left, middle),
and the velocity field in the plane normal to $(0,-1,1)^T$ at time $t=0.015$.}
\label{fig_dip3d_phi}
\end{figure}

In Figure~\ref{fig_dip3d_npm}, we display the evolution of the $0.2$-level set of $n^{\pm}$,
as well as of the value of $n^{+}-n^-$ in the plane normal to $(0,-1,1)^{T}$;
we observe that the charges evolve along the direction of the director, which is
$\approx (0,1,1)^T$.
\begin{figure}[!htp]
\begin{center}
\includegraphics[width=0.3\textwidth]{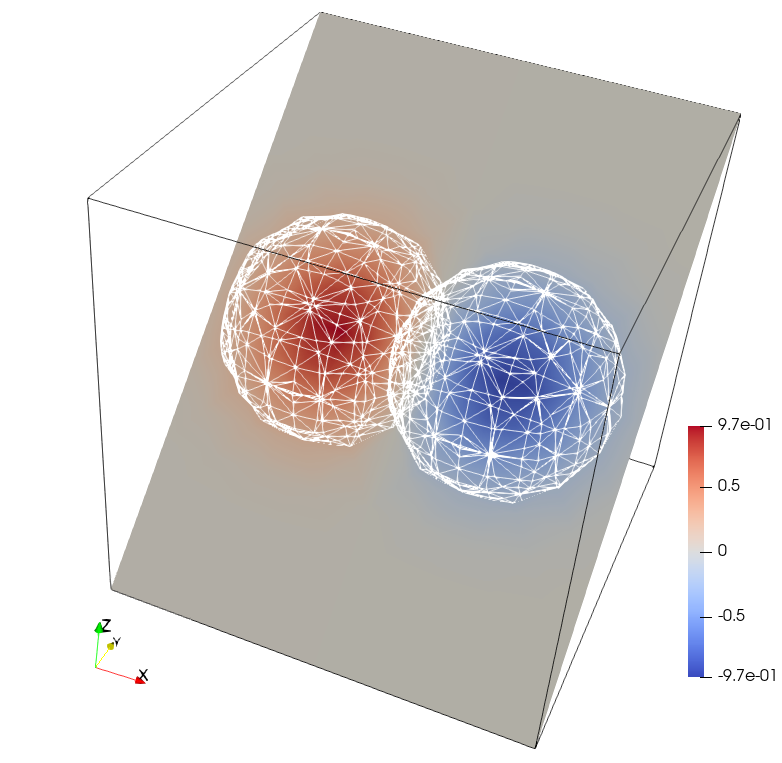}\quad
\includegraphics[width=0.3\textwidth]{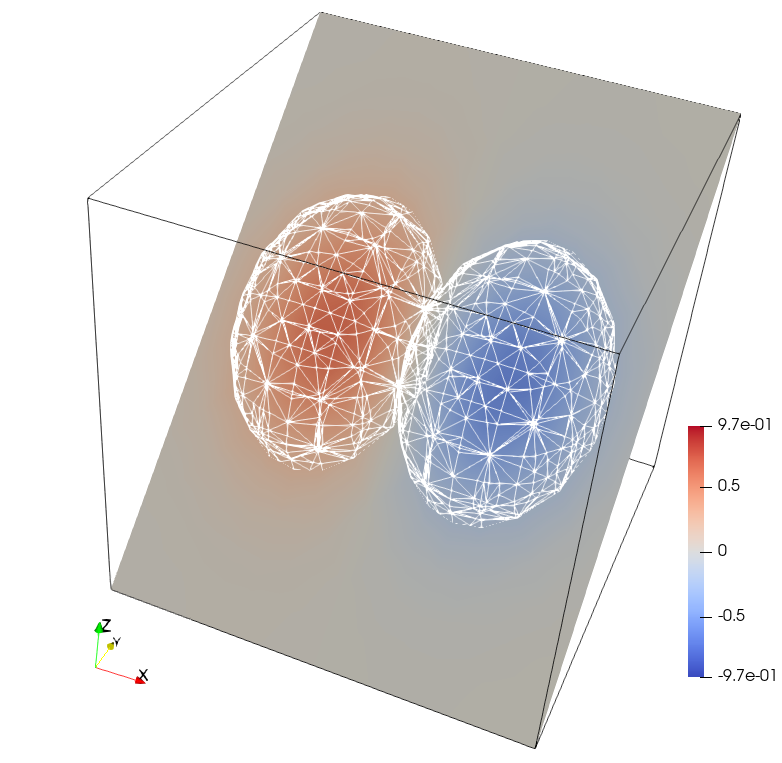}\quad
\includegraphics[width=0.3\textwidth]{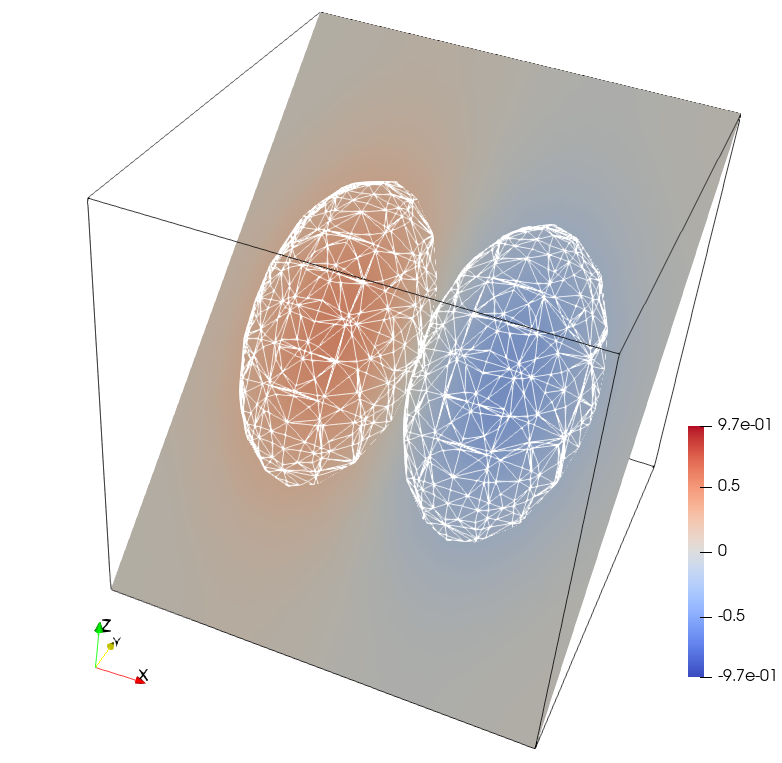}
\end{center}
\caption{Snapshots of the $0.2$-level set of $n^{+}$, $n^{-}$, 
and a cut through $n^{+}-n^{-}$ at $t=0,0.02,0.045$.}
\label{fig_dip3d_npm}
\end{figure}

\subsubsection{Effect of an applied electric field}

Without external influence,  the gradient of the electric potential 
({\em i.e.}, the negative electric field) is generated solely by the difference between the negative and positive charges.
From the previous experiments we deduce that  for $\varepsilon_a \gg \varepsilon_\perp$ the electric field
is induced predominantly in the direction that is perpendicular to the director field,  
and thus only  has a little influence on the director field.
In order to demonstrate the effects of the electric field on the evolution of the director
we apply a uniform external electric field $\f{E}_0=(0.4,0,0)^T$ along the $x$-direction,
\textit{i.e.}, we replace $\nabla \Phi$ by $\nabla \widetilde \Phi = \nabla \Phi - \f{E}_0$ in (\ref{dis:v_s5})-(\ref{dis:N_s5}).
The remaining parameters in the simulation were $\nu=1$, $A=0.1$, $\lambda_{npp}=1000$, $\nu_{el}=1$, $\mu_{\Phi}=1.0$, $\varepsilon_\perp=0.1$, $\varepsilon_a=10$,
and the discretization was performed for $k=10^{-3}$, $h=2^{-4}$.}
The initial distribution of the charges and the initial orientation of 
the director field are chosen to be uniform, {\em i.e.}, $n^{\pm}_0=0.5$ and $\f{d}_0=3^{-1/2}(1,1,1)$.

The applied electric field forces the positive and negative charges to accumulate
according to their polarity in the opposing parts of the spatial domain along the direction of the director field.
Initially the charges accumulate in the opposing corners of the domain along $\f{d}_0$, {\em i.e.}, the $(1,1,1)$-direction; see Figure~\ref{fig_appl3d}.
Due to the effect of the external field, the director 
rotates from its initial orientation towards the direction of the applied field ({\em i.e.}, the direction
parallel to the $x$-axis); see Figure~\ref{fig_appl3d_mh}.
As the system approaches a stationary state,
the charges accumulate along the $x$-direction. 
The induced perpendicular component of the electric field $\nabla \Phi= \nabla \widetilde{\Phi} - \f{E}_0$, and 
the induced velocity at $t=0.03$ 
are displayed in Figure~\ref{fig_appl3d_phi_vel}.

\begin{figure}[!htp]
\begin{center}
\includegraphics[width=0.24\textwidth]{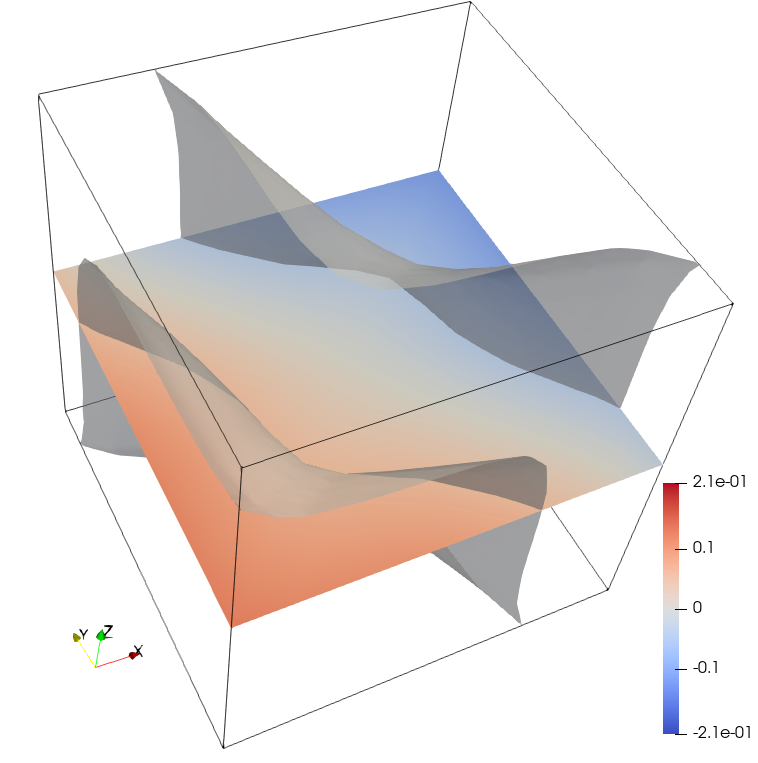}
\includegraphics[width=0.24\textwidth]{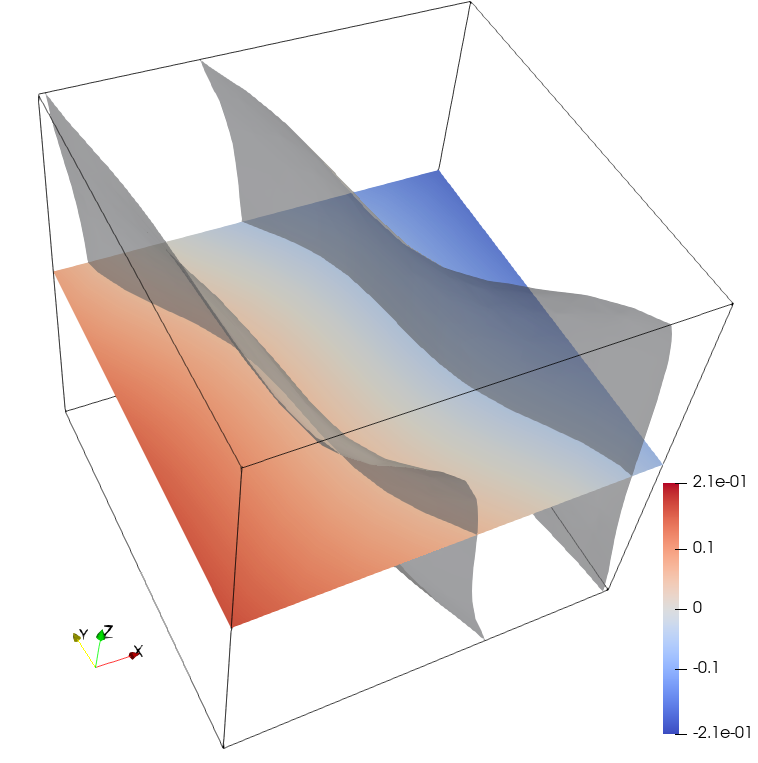}
\includegraphics[width=0.24\textwidth]{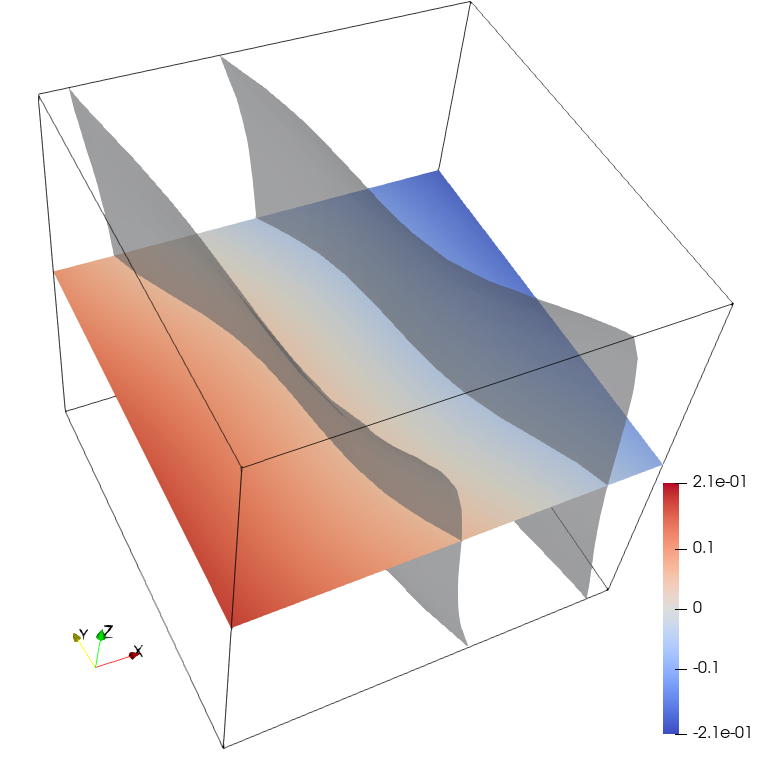}
\includegraphics[width=0.24\textwidth]{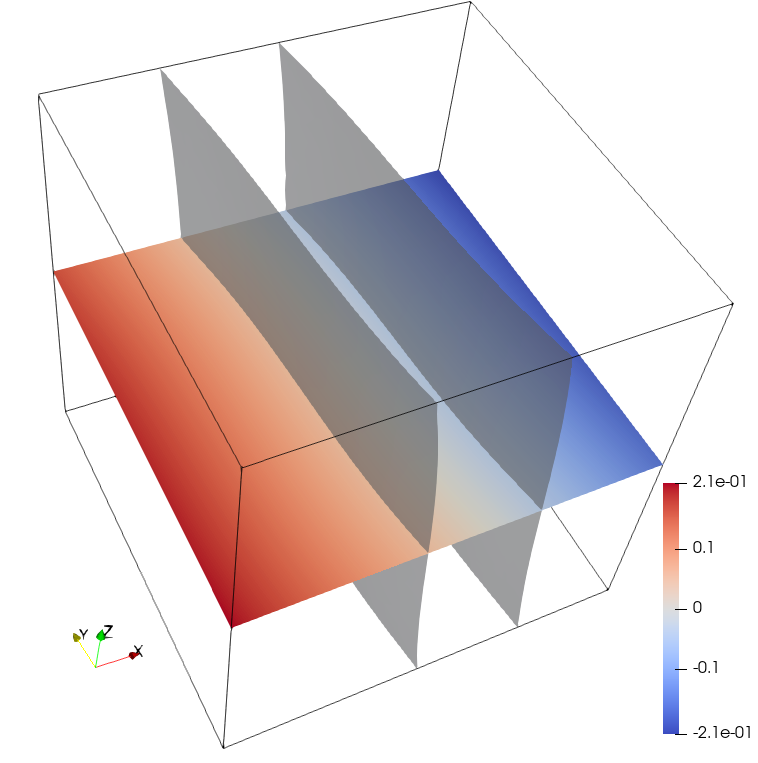}
\end{center}
\caption{Snapshots of the $\pm0.05$-level set of $n^{+}-n^{-}$ at $t=0.001,0.004, 0.006,0.03$, along with a cut through $n^{+}-n^{-}$ at $z=0$.}
\label{fig_appl3d}
\end{figure}

\begin{figure}[!htp]
\begin{center}
\includegraphics[width=0.3\textwidth]{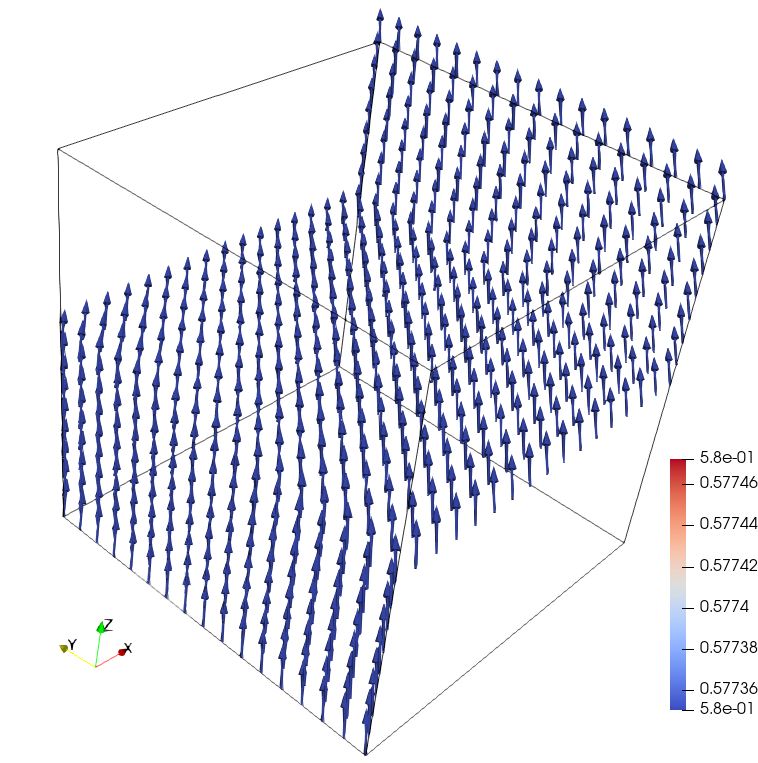}\quad
\includegraphics[width=0.3\textwidth]{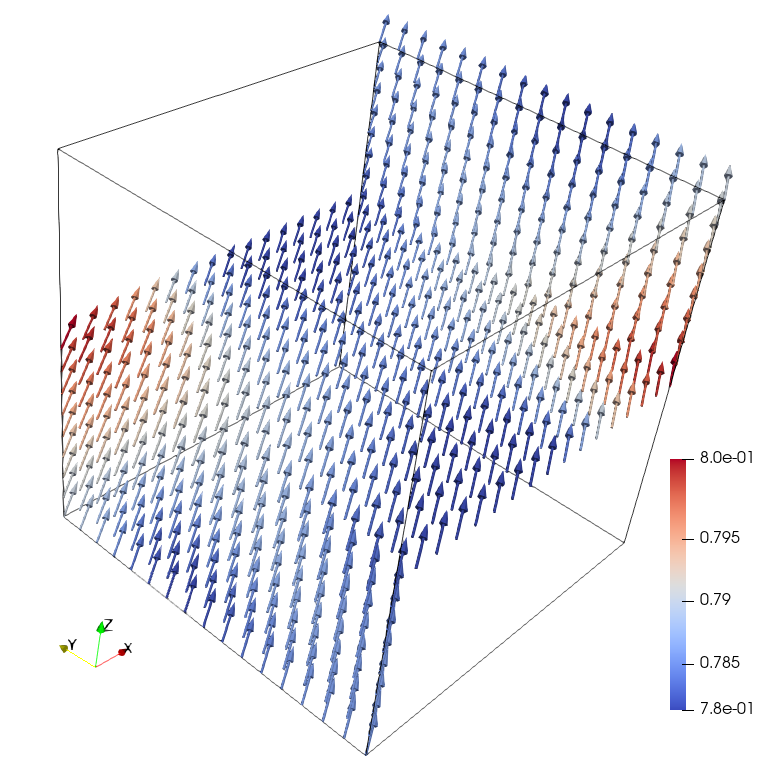}\quad
\includegraphics[width=0.3\textwidth]{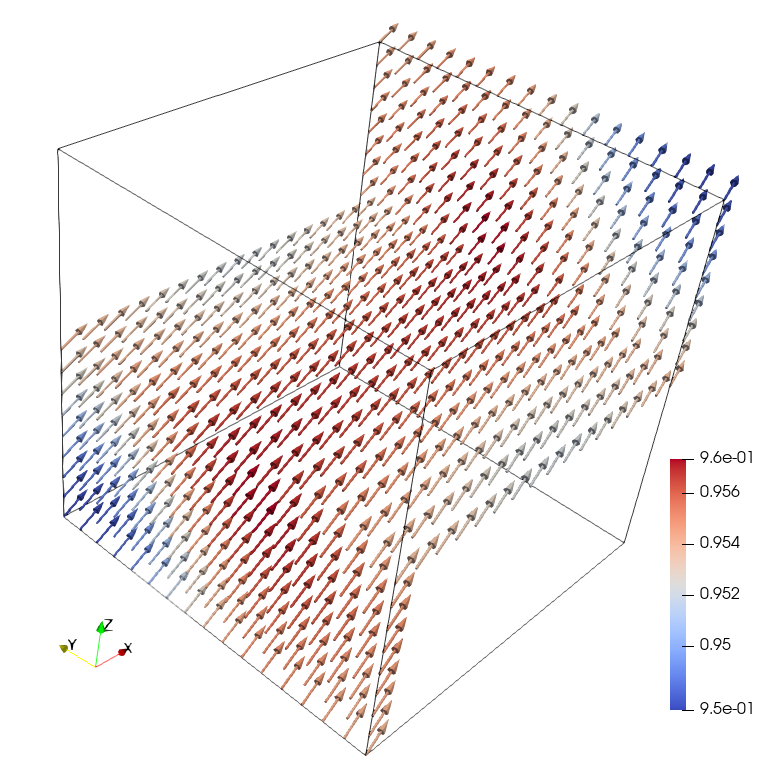}
\end{center}
\caption{Director at time $t=0,0.01,0.03$, colored by the magnitude of the $x$-component.}
\label{fig_appl3d_mh}
\end{figure}

\begin{figure}[!htp]
\begin{center}
\includegraphics[width=0.3\textwidth]{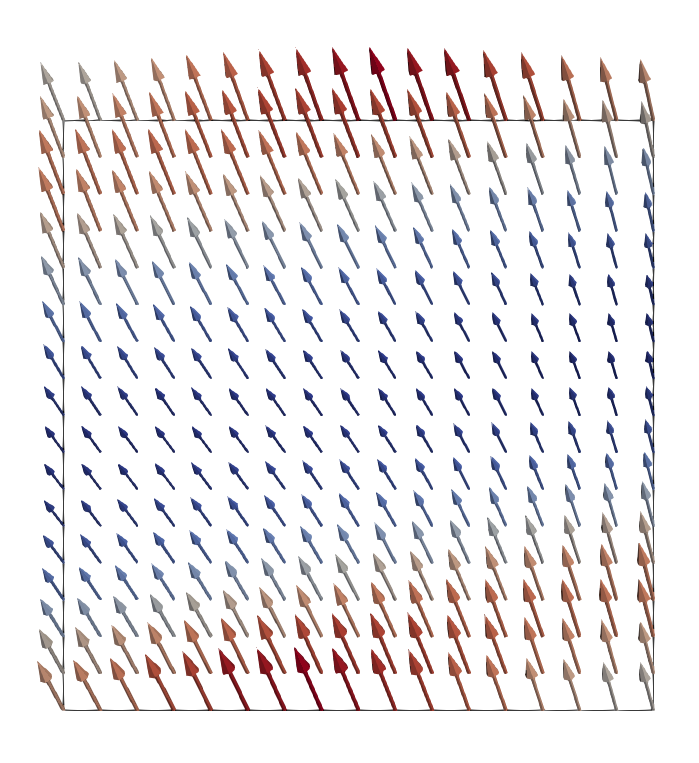}\quad
\includegraphics[width=0.3\textwidth]{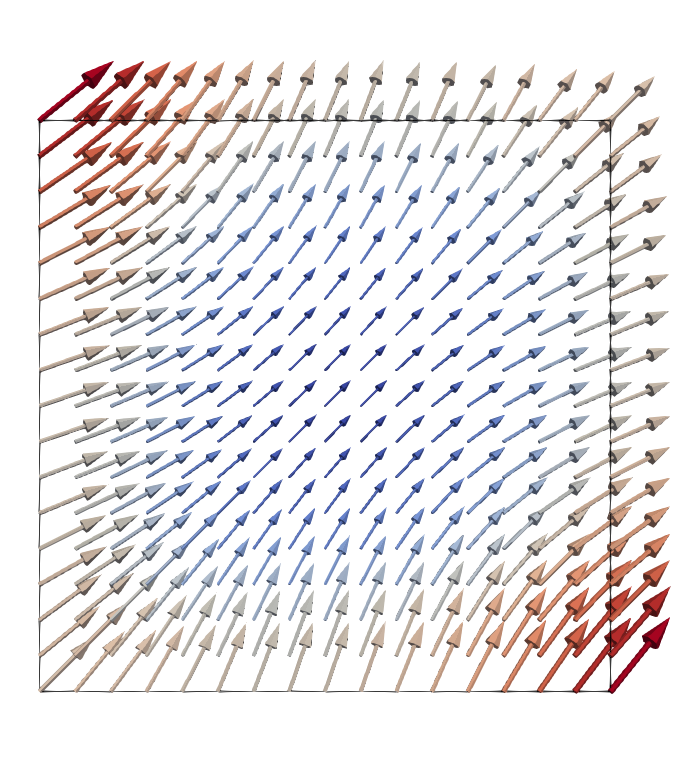}\quad
\includegraphics[width=0.3\textwidth]{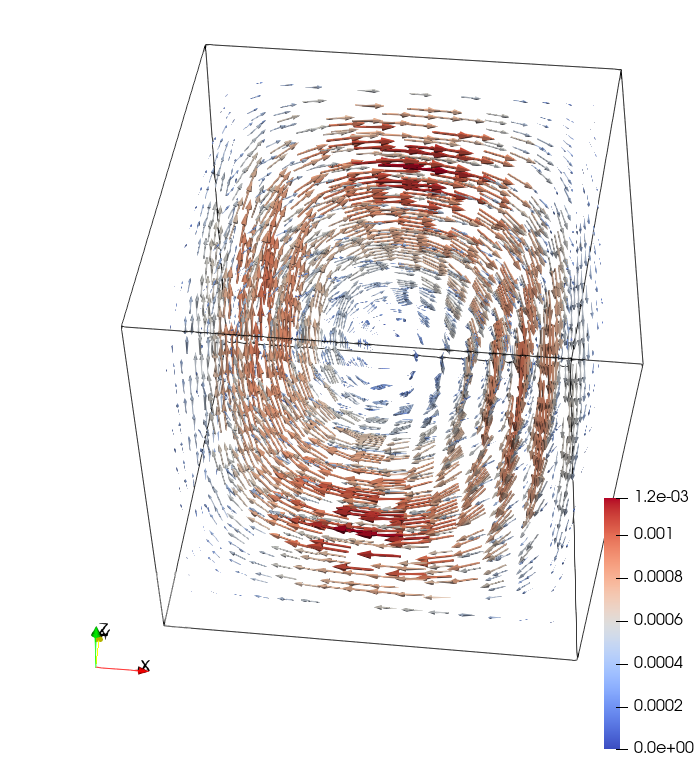}
\end{center}
\caption{The perpendicular component of the (negative) electric field ($\nabla \Phi= \nabla \widetilde{\Phi} - \f{E}_0$)
at $z=0$ and $y=0$ (left,middle), and the velocity (right) at time $t=0.03$.}
\label{fig_appl3d_phi_vel}
\end{figure}

\subsubsection{Effect of the director on the velocity}

In the next experiment we demonstrate how the director can be used to influence the velocity field which is generated by
an applied electric field.
We repeat the previous experiments with applied field $\f{E}_0 = (1,0,0)^T$, $h=2^{-5}$ 
and consider three different initial orientations of the director $\f{d}_0=(1,1,1)^T,\,\, (1,0,1)^T, \,\, (1,1,0)^T$.
The results displayed in Figure~\ref{fig_appl1_vel}
indicate that the direction of the induced velocity field is prescribed by the direction of the director field;
the respective velocities rotate around the respective directions $(0,-1,1)^T,\,\, (0,1,0)^T, \,\, (0,0,1)^T$ that are perpendicular to the 
respective initial conditions for the director.

\begin{figure}[!htp]
\begin{center}
\includegraphics[width=0.3\textwidth]{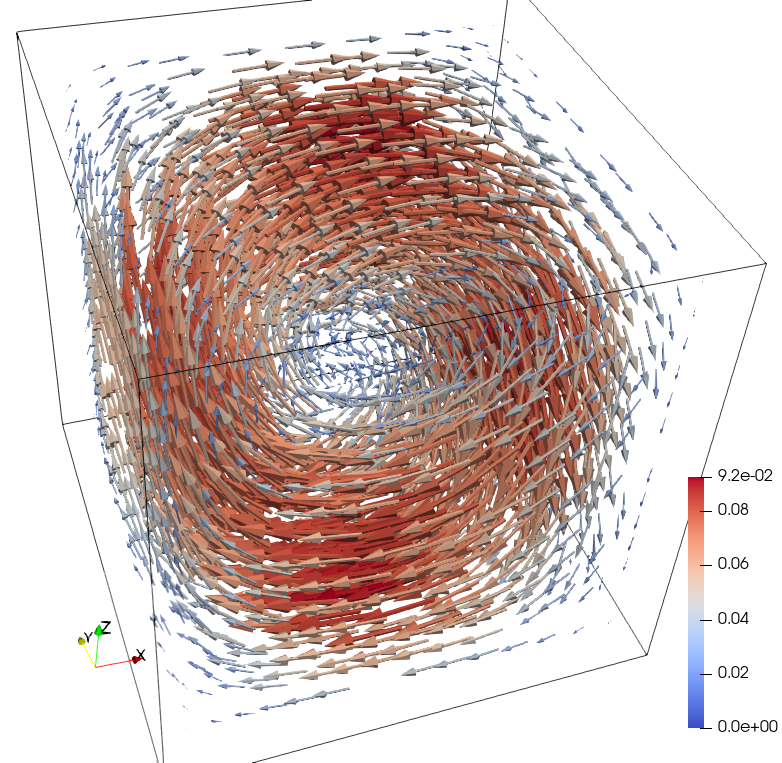}\quad
\includegraphics[width=0.3\textwidth]{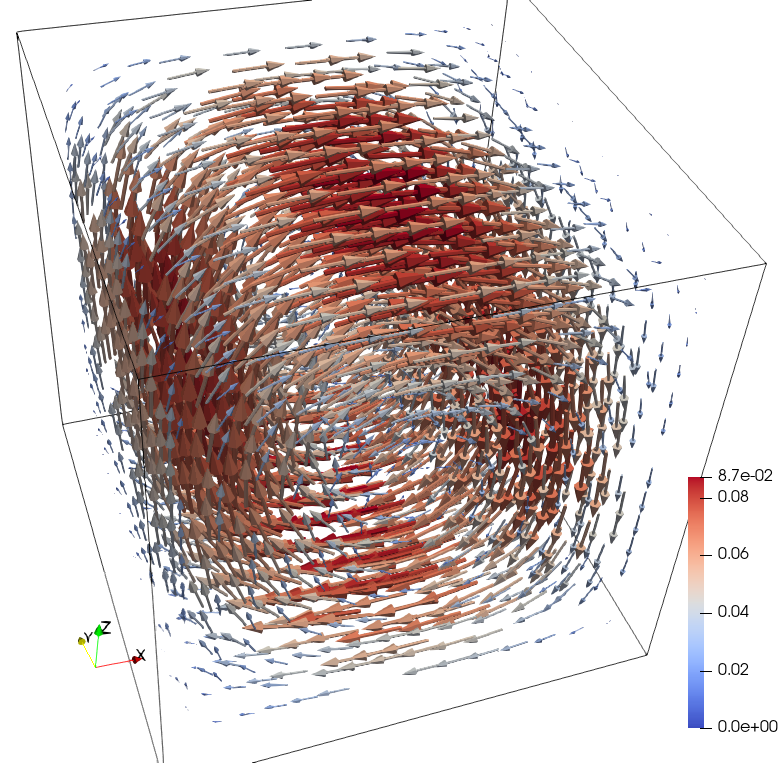}\quad
\includegraphics[width=0.3\textwidth]{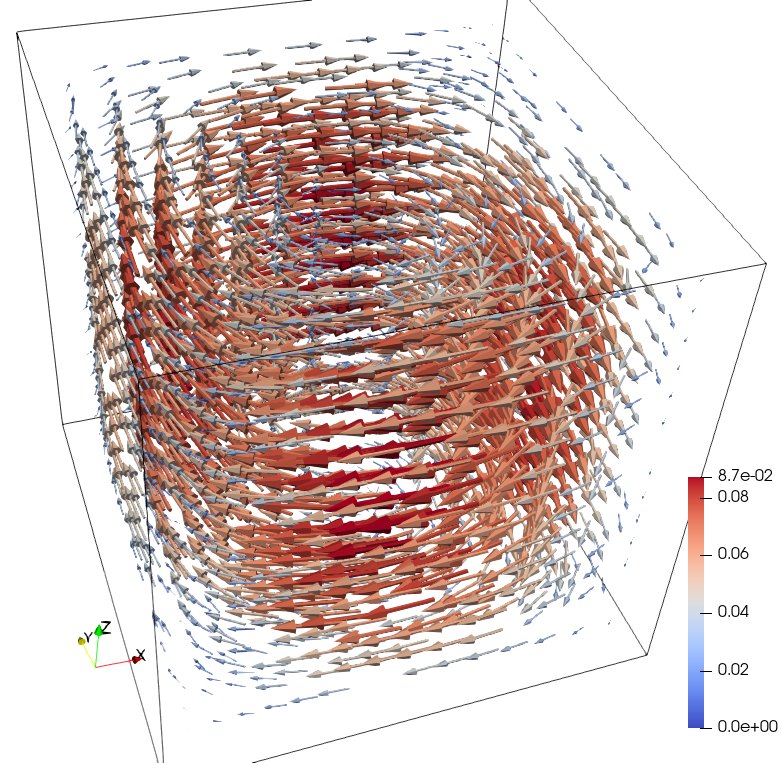}
\end{center}
\caption{(from left to right) Velocity induced by the applied field $\f{E}_0 = (1,0,0)$ at $t=0.04$ for $\f{d}_0=(1,1,1)^T$, $\f{d}_0=(1,0,1)^T$, and $\f{d}_0=(1,1,0)^T$.}
\label{fig_appl1_vel}
\end{figure}

When a constant electric field is applied, the induced velocity field quickly diminishes over time; see Figure~\ref{fig_appl1_vel_noac}.
By using an alternating electric field, it is possible to sustain the flow field over a longer period of time:
we consider an oscillating electric field $\f{E}_0(t) = (\cos(35 \pi t),0,0)$
and observe that the amplitude of the induced velocity oscillates in time 
but the flow retains its direction and persists over a longer time period; see Figure~\ref{fig_appl1_vel_ac} (the maximum amplitude of the
velocity is roughly half of the maximum amplitude in Figure~\ref{fig_appl1_vel_noac}).
Eventually the director aligns parallel to the applied electric field, the associated anisotropy effect vanishes,
and the induced velocity field becomes negligible.
We note that as long as the orientation of the director is fixed, it is possible to produce the desired flow pattern over
an arbitrary period of time; cf.~\cite{experi}.

\begin{figure}[!htp]
\begin{center}
\includegraphics[width=0.3\textwidth]{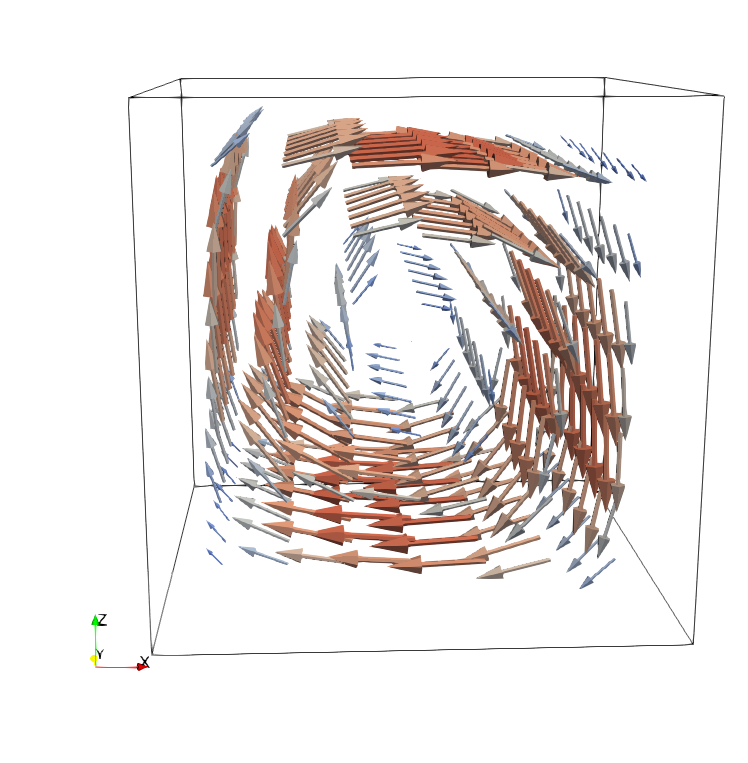}
\includegraphics[width=0.3\textwidth]{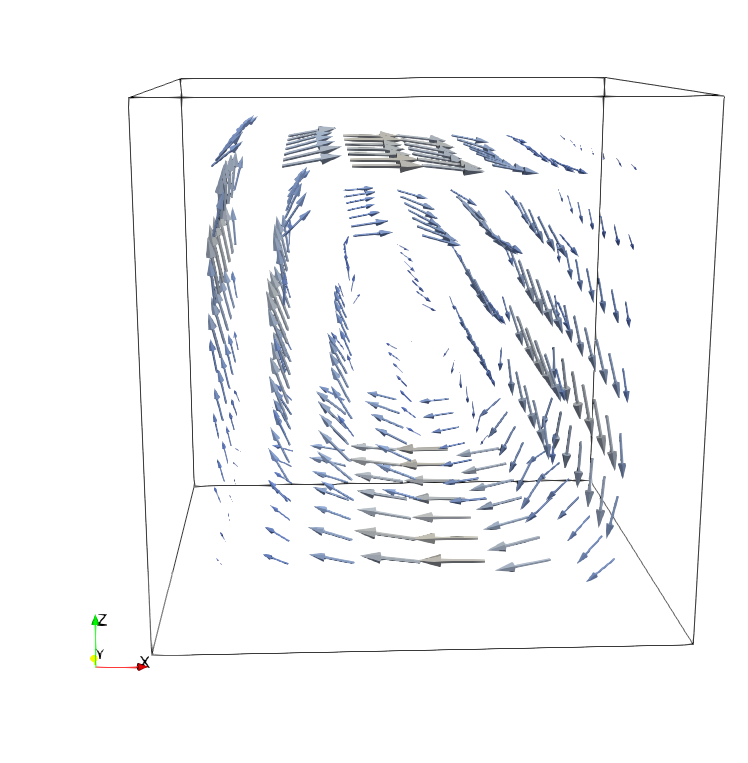}
\includegraphics[width=0.3\textwidth]{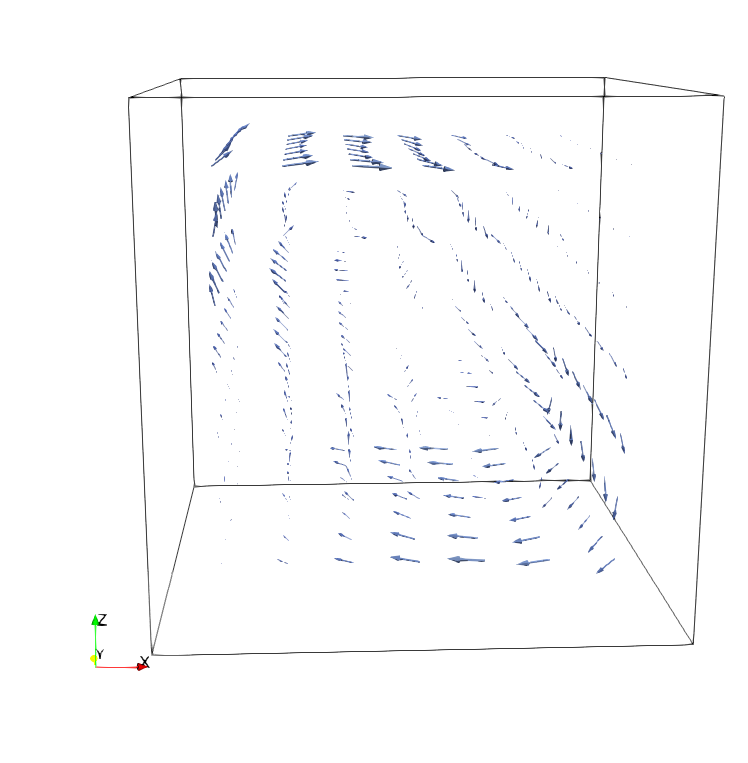}
\end{center}
\caption{Velocity field  at $t=0.04,0.07,0.1$ for $\f{d}_0=(1,0,1)^T$, and a constant applied field $\f{E}_0 = (1,0,0)$ (computed with $h=2^{-4}$).}
\label{fig_appl1_vel_noac}
\end{figure}

\begin{figure}[!htp]
\begin{center}
\includegraphics[width=0.24\textwidth]{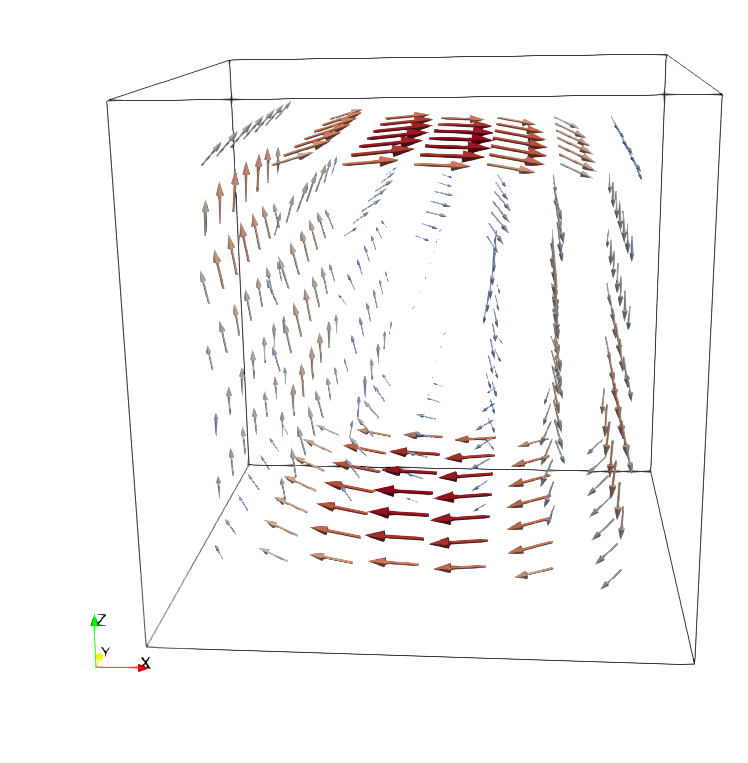}
\includegraphics[width=0.24\textwidth]{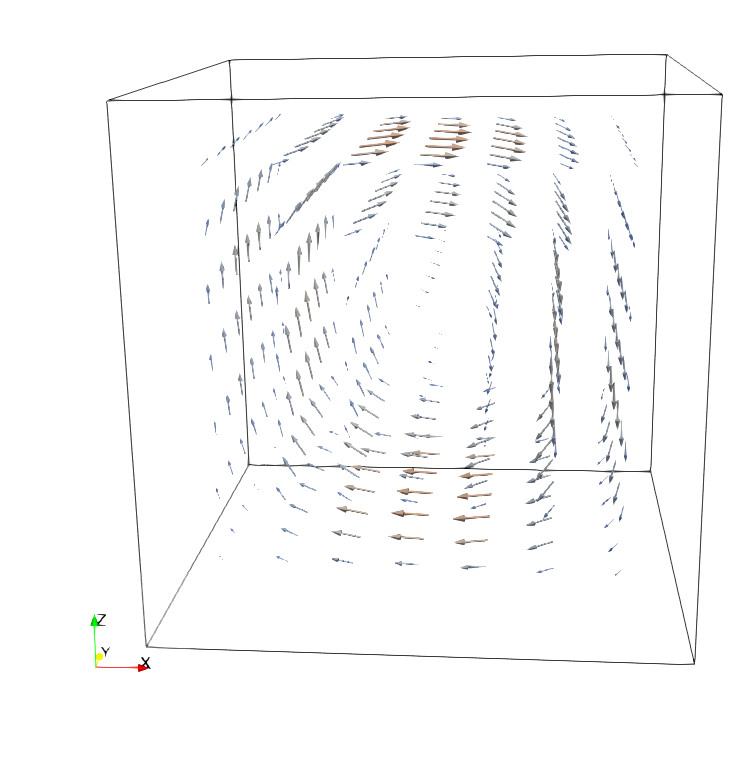}
\includegraphics[width=0.24\textwidth]{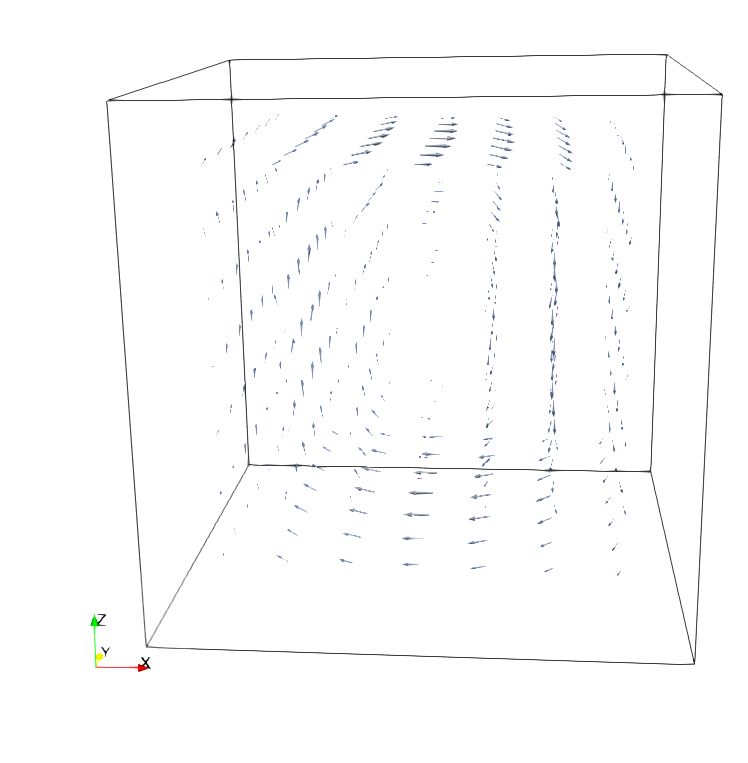}
\includegraphics[width=0.24\textwidth]{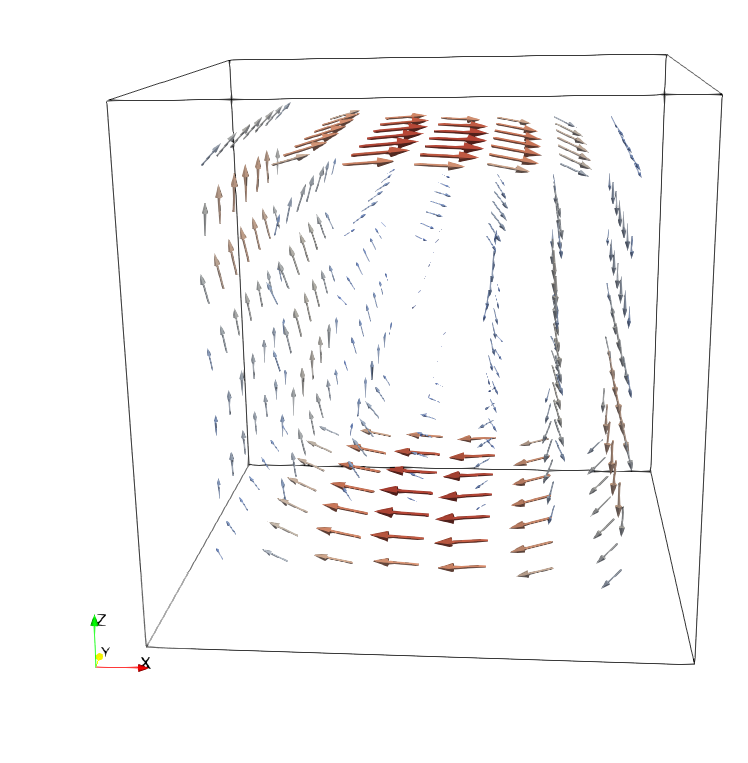}
\end{center}
\caption{Velocity field  at $t=0.12,0.13,0.14,0.15$ for $\f{d}_0=(1,0,1)^T$ and with oscillating applied field $\f{E}_0(t) = (\cos(35 \pi t),0,0)$ (computed with $h=2^{-4}$).}
\label{fig_appl1_vel_ac}
\end{figure}

\end{document}